\newtheorem{theorem}{Theorem}[section]
\newtheorem{claim}[theorem]{Claim}
\newtheorem{corollary}[theorem]{Corollary}
\newtheorem{definition}[theorem]{Definition}
\newtheorem{lemma}[theorem]{Lemma}
\newtheorem{proposition}[theorem]{Proposition}
\newtheorem{remark}[theorem]{Remark}
\numberwithin{equation}{section}
\begin{document}
\title{Dynamics of threshold solutions for the energy-critical inhomogeneous NLS}

 \author[Liu]{Xuan Liu}
\address{ School of Mathematics, Hangzhou Normal   University, Hangzhou 311121, China}
\email{liuxuan95@hznu.edu.cn }

\author[Yang]{Kai Yang}
\address{School of Mathematics, Southeast University, Nanjing, 211189, China}
\email{kaiyang@seu.edu.cn,yangkai99sk@gmail.com }

\author[Zhang]{Ting Zhang}
\address{ School of Mathematical Sciences,  Zhejiang University, Hangzhou 310058, China}
\email{zhangting79@zju.edu.cn }

\date{}

\maketitle
 
\begin{abstract}
In this article, we study the long-time dynamics of threshold solutions for the focusing energy-critical inhomogeneous Schr\"odinger equation and classify the corresponding threshold solutions in dimensions  $d=3,4,5$. We first show the existence of special threshold solutions  $W^\pm$ by constructing a sequence of approximate solutions in suitable Lorentz space, which exponentially approach the ground state  $W$ in one of the time directions. We then prove that solutions with threshold energy either behave as in the subthreshold case or agree with $W, W^+$ or  $W^-$ up to the symmetries of the equation. The proof relies on detailed spectral analysis of the linearized Schr\"odinger operator, the relevant modulation analysis, the global Virial analysis,  and the concentration compactness argument in the Lorentz space. 
	\\ \textbf{Keywords:}  inhomogeneous NLS, energy-critical, ground state, scattering, blow-up. 
\end{abstract}
\section{Introduction}\label{s1}
We consider the following  focusing energy-critical inhomogeneous  nonlinear Schr\"odinger equation 
\begin{equation}\label{nls}
		\begin{cases}
		i\partial_{t}u+\Delta u+|x|^{-b}|u|^\alpha u=0,\qquad (t,x)\in \mathbb{R} \times \mathbb{R} ^d\\
		u|_{t=0}=u_0\in \dot H^{1}(\mathbb{R}^d),
	\end{cases}
\end{equation}
 where  $d\ge3,0<b<\min \left\{ 2,\frac{d}{2} \right\}$ and  $\alpha =\frac{4-2b}{d-2}$.   
  
 This model arises in the setting of nonlinear optics, where the factor  $|x|^{-b}$  represents some inhomogeneity in the medium (see, e.g., \cite{Gill2000,LT1994}).  As pointed out by Genoud and Stuart \cite{GS2008},  the factor  $|x|^{-b}$  appears naturally as a limiting case of potentials that decay polynomially at infinity.
 
 On the interval of existence, the solution preserves its energy 
 \begin{equation}
 	E(u):=\int _{\mathbb{R} ^d}\frac{1}{2}|\nabla u(t,x)|^2-\frac{1}{\alpha +2}|x|^{-b}|u(t,x)|^{\alpha +2}dx=E(u_0).\notag
 \end{equation}
 Equation (\ref{nls}) is referred to as focusing as the  potential energy is negative. Equation (\ref{nls}) is also referred to as energy-critical as the natural scaling of the equation  $u(t,x)\rightarrow \lambda^{\frac{d-2}{2}}u(\lambda^2t,\lambda x)$  keeps the energy invariant.    
 
 The  $\dot H^1$ local well-posedness of (\ref{nls}) was  noted as an open problem in \cite[Remark 1.7, p.252]{Guzman} and \cite[line 38, p. 171]{Dinh2018}. The main difficulty   comes from the singularity of  $|x|^{-b}$ at the origin. Using  Strichartz estimates in some weighted Lebesgue spaces, the authors in \cite{KimLee,LeeSeo} established the  $\dot H^1$ local existence of (\ref{nls}) under some restrictive hypotheses on  $d$ and  $b$.  Recently, by considering Strichartz estimates in Lorentz spaces, Aloui and Tayachi\cite{Aloui}  ultimately established the  $\dot H^1$ local well-posedness  of the Cauchy problem (\ref{nls}) (see Theorem \ref{T:CP} below).
 
The local theory in \cite{Aloui,KimLee,LeeSeo}  also proves scattering for sufficiently small initial data.
Here scattering refers to the fact that
\begin{equation} 
	\exists u_\pm\in \dot{H}^1(\mathbb{R}^d)\quad\text{ such that }\quad  \lim_{t\to\pm\infty}\|u(t)-e^{it\Delta}u_\pm\|_{\dot H^1} = 0,\notag
\end{equation}
where $e^{it\Delta}$ is the linear Schr\"odinger group. 
  The existence of the non-scattering solution (the ground state solution)
 \begin{equation}
 	W(x):= \left(1+\frac{|x|^{2-b}}{(d-2)(d-b)}\right) ^{-\frac{d-2}{2-b}}     \label{W}
 \end{equation}
shows that scattering does not hold for all initial data  $u_0\in \dot H^1(\mathbb{R} ^d)$. Instead, the threshold between blowup and scattering is proved to be determined by the ground state:  
\begin{theorem}[\cite{ChoHongLee,ChoLee,2021JDE, Liu-Zhang}]\label{T:0}
	Let  $d\ge3, 0<b<\min \left\{ 2,\frac{d}{2} \right\}$. Suppose  $u_0\in \dot H^1(\mathbb{R} ^d)$  satisfies  $E(u_0)<E(W)$. \\
	(a) If $ \|\nabla u_0\|_{L^2(\mathbb{R} ^d)}< \|\nabla W\|_{L^2(\mathbb{R} ^d)}  $, then   the corresponding solution  $u$  to (\ref{nls}) is global and scatters as  $t\rightarrow\pm \infty $. \\
	(b)  If  $ \|\nabla u_0\|_{L^2(\mathbb{R} ^d)}\ge\|\nabla W\|_{L^2(\mathbb{R} ^d)}  $ and  either   $xu_0\in L^2(\mathbb{R} ^d)$ or  $u_0\in  H^1(\mathbb{R} ^d)$   is radial, then the corresponding solution  $u$ to (\ref{nls}) blows up in finite time.
\end{theorem}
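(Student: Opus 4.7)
The plan is to follow the Kenig--Merle concentration-compactness/rigidity scheme for part (a) and a virial argument for part (b), with the variational structure of the ground state $W$ from \eqref{W} as the common ingredient. First I would establish the variational trapping: using the sharp constant in the inhomogeneous Sobolev--Hardy embedding $\int |x|^{-b}|u|^{\alpha+2}\,dx \lesssim \|\nabla u\|_{L^2}^{\alpha+2}$, which is attained on the orbit of $W$, one checks that under the assumption $E(u_0)<E(W)$ the two regions $\{\|\nabla u\|_{L^2}<\|\nabla W\|_{L^2}\}$ and $\{\|\nabla u\|_{L^2}\ge\|\nabla W\|_{L^2}\}$ are both invariant under the flow, and that the virial functional
\begin{equation*}
K(u):=\|\nabla u\|_{L^2}^2-\int_{\mathbb{R}^d}|x|^{-b}|u|^{\alpha+2}\,dx
\end{equation*}
satisfies $K(u(t))\ge\delta\|\nabla u(t)\|_{L^2}^2$ in case (a) and $K(u(t))\le-\delta\|\nabla W\|_{L^2}^2$ in case (b), for some $\delta>0$ depending only on $E(W)-E(u_0)$. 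Combined with energy conservation this gives a uniform $\dot H^1$ bound and global existence in case (a).

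For the scattering part (a), I would argue by contradiction. Define $E_c\in(0,E(W)]$ as the infimum of energies of subthreshold initial data whose solutions fail to scatter. Using the $\dot H^1$ local theory of Aloui--Tayachi (Theorem \ref{T:CP}) together with a linear profile decomposition for $e^{it\Delta}$ in the relevant Lorentz spaces, one extracts at level $E_c$ a \emph{critical element} $u_c$ whose $\dot H^1$ orbit is precompact modulo the dilation symmetry; note that translation modulation is unnecessary because the weight $|x|^{-b}$ breaks translation invariance but is consistent with the equation's scaling, so profiles at nonzero spatial centers generate vanishing nonlinear contribution and can be discarded. A localized virial identity with a cutoff matched to the compactness scale of $u_c$ then contradicts the strict positivity $K(u_c(t))\ge\delta\|\nabla u_c(t)\|_{L^2}^2$ coming from Step 1 unless $u_c\equiv 0$, which is incompatible with $E_c>0$.

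For the blow-up part (b), the classical Glassey virial computation, using the criticality relation $\alpha=(4-2b)/(d-2)$, yields
\begin{equation*}
\tfrac{d^2}{dt^2}\int_{\mathbb{R}^d}|x|^2|u(t,x)|^2\,dx \;=\; 8\,K(u(t)) \;\le\; -8\delta\,\|\nabla W\|_{L^2}^2\;<\;0.
\end{equation*}
For finite-variance data $xu_0\in L^2$ this forces the variance to become negative in finite time, giving blow-up. For radial $H^1$ data I would replace $|x|^2$ by a smooth compactly supported radial weight $\varphi_R$ that equals $|x|^2$ on $\{|x|\le R\}$, control the error terms via the radial Strauss embedding and conservation of mass, and close the argument by taking $R$ large; the singular factor $|x|^{-b}$ is actually favorable here since it concentrates the nonlinear contribution near the origin. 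The principal obstacle I anticipate is the rigidity step in (a): ruling out a nontrivial precompact critical element at the threshold $E_c$, where the absence of mass conservation at the energy-critical scaling and the singular weight force a careful tuning of the truncated virial against the compactness scale.
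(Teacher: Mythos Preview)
Your sketch is correct and follows the same Kenig--Merle concentration-compactness/rigidity road map that the cited references \cite{ChoHongLee,ChoLee,2021JDE,Liu-Zhang} use; note that the present paper does not itself prove Theorem~\ref{T:0} but only quotes it as background. In particular, your observation that the inhomogeneity $|x|^{-b}$ kills spatially translated profiles (so the critical element is precompact modulo scaling alone) is exactly the mechanism exploited in \cite{2021JDE,Liu-Zhang}, and the Lorentz-space framework you allude to for the local theory and profile decomposition is the one developed in \cite{Aloui,Liu-Zhang}.
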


Theorem~\ref{T:0}  was first obtained by Cho-Hong-Lee \cite{ChoHongLee} for the radial inhomogeneous NLS when  $d=3,0<b<\frac{4}{3}$,   and then extended by    Cho-Lee \cite{ChoLee}   to the radial inhomogeneous NLS with  $d=3,\frac{4}{3}\le b<\frac{3}{2}$.
In  the paper \cite{ 2021JDE},   Guzm\'an-Murphy proved Theorem \ref{T:0}  for non-radial initial data in the case  $d=3,b=1$.   
The results of \cite{ 2021JDE} are shown by   a concrete concentration compactness argument based on  Hardy's inequality and the fact that, (\ref{nls}) is well-approximated by the linear equation in the regime  $|x|\rightarrow\infty $.  
 The restriction on indices  $b$ and  $d$ are due to the lack of local theory of (\ref{nls}).  
Recently, Aloui and Tayachi \cite{Aloui} established the  $\dot H^1$ local well-posedness  of (\ref{nls})  by considering the contraction argument in the Lorentz spaces, which are properly suited for handing the inhomogeneity  $|x|^{-b}$.  Based on the work of \cite{Aloui},
 Liu-Zhang \cite{Liu-Zhang}   developed the  concentration compactness argument in the Lorentz space and  proved Theorem \ref{T:0} for all  $d\ge3,0<b<\min \left\{ 2,\frac{d}{2} \right\}$.

Observe that the above characterization is obtained only in the subthreshold case, i.e.  $E(u_0)<E(W)$.  
Our purpose of this paper is to continue the study in \cite{ChoHongLee,ChoLee,2021JDE, Liu-Zhang} on what will happen if the solution has the threshold energy, i.e.     $E(u_0)=E(W)$.    We call these solutions "threshold solutions". 
The classification of threshold solutions  was initiated by Duyckaerts–Merle for the focusing energy critical nonlinear Schr\"odinger and wave equation in their seminal works \cite{DM2008,2008GFA} in dimensions  $d=3,4,5$.   
These  results were later extended to higher dimensions in \cite{CFRoud:threshold,2009JFA,LZ2011}. 
See also    \cite{Su}  on the same topic in  the nonradial subcritical case.   
The study on the threshold scattering  has been a topic of recent mathematical interest.  
See e.g. \cite{AHI,AI,AM,AMZ,Campos-Murphy,2022DLR,Hamano,2021KMV,Lixuemei,2023MMZ,MWX,YZZ,YZ} and references therein for more related works on this topic.

To classify the threshold solutions to the focusing energy critical inhomogeneous NLS (\ref{nls}), we  first establish the following theorem, which shows the existence of special threshold solutions converging exponentially to  $W$.   
\begin{theorem}
	\label{T1}
Let  $3\le d\le5$.  There exist   radial global solutions  $W^\pm$  to  (\ref{nls}) with  $E(W^\pm)=E(W)$, defined on  $I^\pm \supset [0,\infty )$, which satisfy 
\begin{equation}
	 \|W^\pm (t)-W\|_{\dot H^1}\lesssim e^{-ct},\notag 
\end{equation}
for some  $c>0$ and all  $t>0$. 

The solution  $W^-$ is global ($I^-=\mathbb{R} $), satisfies    
	\begin{equation}
		\|\nabla W^-(t)\|_{L^2}<\|\nabla W\|_{L^2},\qquad \forall t\in \mathbb{R},\notag
	\end{equation}
	and  scatters in   $\dot H^1(\mathbb{R} ^d)$ as  $t\rightarrow-\infty $.   
	
	The solution  $W^+$ satisfies  
\begin{equation}
	\|\nabla W^+(t)\|_{L^2}>\|\nabla W\|_{L^2},\qquad \forall t\in \mathbb{R}.\notag
\end{equation}
Moreover, if  $d=5$,   $W^+$  blows up in finite negative time ($I^+=(T_-,\infty )$ for some $T_-<\infty $). 
\end{theorem}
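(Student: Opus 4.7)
The plan is to follow the Duyckaerts--Merle construction of threshold solutions \cite{DM2008,2008GFA} adapted to the inhomogeneous singular setting, using the Lorentz-based local well-posedness and Strichartz estimates of Aloui--Tayachi \cite{Aloui} and the concentration-compactness framework of Liu--Zhang \cite{Liu-Zhang} in place of the standard $L^p$ Strichartz theory. The first step is spectral: writing $u = W + h$ with $h = h_1 + i h_2$, the linearized flow is $\partial_t \vec h = \mathcal{L}\vec h$ with
\[
\mathcal{L} = \begin{pmatrix} 0 & L_- \\ -L_+ & 0 \end{pmatrix}, \qquad L_+ := -\Delta - (\alpha+1)|x|^{-b} W^\alpha, \qquad L_- := -\Delta - |x|^{-b} W^\alpha.
\]
Using the variational characterization of $W$ as a minimizer of the inhomogeneous Sobolev quotient, one should establish that $L_+$ has Morse index exactly $1$ with radial negative eigenfunction, $\ker L_+ = \mathrm{span}\{\Lambda W\}$ with $\Lambda W := \tfrac{d-2}{2}W + x\cdot\nabla W$, and $\ker L_- = \mathrm{span}\{W\}$. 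A now-standard argument (applied to $L_- L_+$) then yields a simple pair of real eigenvalues $\pm e_0$ of $\mathcal{L}$ with $e_0 > 0$ and smooth, radial, exponentially decaying eigenfunctions $\mathcal{Y}^\pm$, while the remaining spectrum lies on the imaginary axis, zero corresponding to the finite-dimensional generalized kernel generated by the phase and scaling symmetries.

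Next I would build approximate solutions
\[
U_a^k(t,x) := W(x) + \sum_{j=1}^{k} a^j e^{-j e_0 t}\mathcal{Z}_j(x),
\]
with $\mathcal{Z}_1 := \mathcal{Y}^+$ and, for $j\ge 2$, $\mathcal{Z}_j$ obtained by inverting $(\mathcal{L} - j e_0)$ against the nonlinear error at order~$j$; solvability follows from $j e_0 \notin \sigma(\mathcal{L})$ for $j\ge 2$, and a bootstrap argument yields smooth, radial, rapidly decaying $\mathcal{Z}_j$. The residual $i\partial_t U^k_a + \Delta U^k_a + |x|^{-b}|U^k_a|^\alpha U^k_a$ is then of size $O(e^{-(k+1)e_0 t})$ in the Lorentz--Strichartz norms of \cite{Aloui,Liu-Zhang}. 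With $k$ chosen large, I would close a fixed-point argument for $w := u - U^k_a$ on $[T,\infty)$ in a weighted version of these norms enforcing decay at rate $e^{-(k+1)e_0 t}$. Rescaling $a \mapsto a\, e^{-e_0 T}$ and setting $T = 0$ produces solutions defined on $[0,\infty)$ satisfying $\|u(t) - W\|_{\dot H^1} \lesssim e^{-e_0 t}$. The sign of $\|\nabla u(t)\|_{L^2}^2 - \|\nabla W\|_{L^2}^2$ is asymptotically the sign of $a$ times a fixed nonzero constant (computable from the eigenfunction equation for $\mathcal{Y}^+$), so the two choices $\pm a$ produce the distinct solutions $W^+$ and $W^-$.

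The final step is global extension and asymptotic behavior. Since $E(W^\pm) = E(W)$ and the strict sign of $\|\nabla W^\pm(t)\|_{L^2} - \|\nabla W\|_{L^2}$ is fixed for large $t$, a continuity argument (ruling out crossing at threshold energy except at $W$ itself up to symmetries) forces this sign to persist throughout the maximal interval. For $W^-$, $\|\nabla W^-(t)\|_{L^2} < \|\nabla W\|_{L^2}$ together with $E(W^-) = E(W)$ places each time slice inside the closure of the subthreshold scattering regime; a stability/perturbation argument, approximating $W^-(t_0)$ by data with $E < E(W)$ and invoking Theorem~\ref{T:0}(a) via the Lorentz concentration-compactness machinery of \cite{Liu-Zhang}, yields that $W^-$ is global and scatters as $t \to -\infty$. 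For $W^+$ in dimension $d = 5$, the strict inequality $\|\nabla W^+(t)\|_{L^2} > \|\nabla W\|_{L^2}$ together with radial symmetry triggers a threshold virial blow-up: one shows that $K(W^+(t)) \le -\delta < 0$ uniformly, a refinement of the mechanism behind Theorem~\ref{T:0}(b), forcing finite-time blow-up backwards and hence $T_- > -\infty$.

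I expect the main obstacle to be the spectral analysis of Step~1, since the singular weight $|x|^{-b}$ disrupts the standard elliptic and variational toolbox used by Duyckaerts--Merle for the pure-power equation. Determining the Morse index of $L_+$ and the simplicity of its kernel requires combining weighted Hardy--Sobolev inequalities with ODE analysis in the radial sector, while the exponential decay of $\mathcal{Y}^\pm$ needs an Agmon-type estimate adapted to the slow polynomial decay of the potential $|x|^{-b} W^\alpha$. A secondary difficulty is controlling the iterates $\mathcal{Z}_j$ near the origin as the $|x|^{-b}$ singularity propagates through the nonlinear error; the finiteness $W(0) = 1$ and careful Taylor expansion of $|z|^\alpha z$ near $z = W$ should keep each $\mathcal{Z}_j \in \dot H^1$, but the bookkeeping is delicate and must be matched with the Lorentz-space functional framework used in the fixed-point argument.
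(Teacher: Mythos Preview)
Your construction of $W^\pm$ via the spectral analysis, approximate solutions $U^k_a$, and fixed-point closure is essentially the same as the paper's (Lemma~\ref{l:app} and Proposition~\ref{P:approximate}), and the sign dichotomy for $\|\nabla W^\pm\|_{L^2}$ is handled identically. The gaps are in the backward-time behavior.

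For the scattering of $W^-$ as $t\to-\infty$, your perturbation argument does not close: the scattering bound in Theorem~\ref{T:0}(a) is \emph{not} uniform as $E(u_0)\uparrow E(W)$, so approximating $W^-(t_0)$ by subthreshold data and invoking stability gives no control on the Strichartz norm. The paper instead deduces scattering from the full subcritical classification (Proposition~\ref{p2}): if $W^-$ failed to scatter backward, the compactness/modulation/virial machinery would apply in both time directions, forcing $\int_{-\infty}^{+\infty}\mathbf d(u(t))\,dt=0$ and hence $\mathbf d(u_0)=0$, contradicting $\|\nabla W^-\|_{L^2}<\|\nabla W\|_{L^2}$.

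For the blow-up of $W^+$ in $d=5$, your claim that $K(W^+(t))\le-\delta<0$ uniformly is false: since $W^+(t)\to W$ in $\dot H^1$ as $t\to+\infty$, one has $K(W^+(t))\to K(W)=0$, so no uniform negative gap exists and the standard convexity/Glassey argument does not apply directly. The paper's mechanism (Corollary~\ref{c:210}) is different: it first shows $W^+\in L^2$ when $d=5$ (via the mass computation in Proposition~\ref{P:approximate}, which requires $W\in L^2$), then argues by contradiction that if $W^+$ were global, Proposition~\ref{p3} applied forward and backward gives $\mathbf d(u(t))\to 0$ as $t\to\pm\infty$, hence $\partial_t V_R(t)\to 0$ at both ends while $\partial_{tt}V_R<0$ throughout, which is impossible. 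You should also flag the $L^2$ regularity of $W^+$, since this is what restricts the blow-up conclusion to $d=5$.
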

\begin{remark}
	The proof of  $T_-<\infty $ relies heavily on the  $L^2$ regularity of  $W^+$.    
As   $W$   belongs to  $L^2(\mathbb{R} ^d)$  if and only if  $d\ge5$,  there is no  $L^2$ regularity for  $W^+$ in dimensions  $d=3,4$ from the construction in  Proposition \ref{P:approximate}.  
We still expect  $T_-<\infty $ for   $d=3,4$.  
\end{remark}
\begin{remark}
	The restriction  $d\le5$ together with the restriction for  $b$ in Theorem \ref{T2}  ensures that   $\alpha=\frac{4-2b}{d-2} \ge1$, 
	which provides sufficient regularity to handle the non-smoothness issue of the nonliearity.   Using the method of \cite{CFRoud:threshold}, our results can be extended to  $d\ge6$ with   certain restrictions on  $b$.   We will address the high dimension problem elsewhere.  
\end{remark}

Using the special threshold solutions  $W^\pm$  constructed in Theorem \ref{T1} and the ground state  $W$, we can classify all threshold solutions to (\ref{nls}). 
\begin{theorem}
	\label{T2}
	Let  $3\le d\le 5,0<b<-\frac{(d-4)^2}{2}+1$,  $u_0\in \dot H^{1}(\mathbb{R}^d)$ be such that  $E(u_0)=E(W)$. Let  $u$ be the corresponding maximal-lifespan solution of (\ref{nls}) on  $I\times \mathbb{R}^d$. We have\\
	(a) If  $\|\nabla u_0\|_{L^2}<\|\nabla W\|_{L^2}$, then either  $u=W^-$ up to symmetries or u scatters in both time directions.\\
	(b) If  $\|\nabla u_0\|_{L^2}=\|\nabla W\|_{L^2}$, then  $u=W$ up to symmetries. \\
	(c) If  $\|\nabla u_0\|_{L^2}>\|\nabla W\|_{L^2}$ and  $u_0\in L^2(\mathbb{R}^d)$ is radial, then either  $|I|$ is finite or  $u=W^+$ up to symmetries.  
\end{theorem}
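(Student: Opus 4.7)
The plan is to follow the Duyckaerts--Merle scheme, adapted to the inhomogeneous setting where the only symmetries of the energy-critical flow are scaling $u\mapsto \mu^{(d-2)/2}u(\mu\cdot)$ and phase $u\mapsto e^{i\theta}u$: translations are broken by $|x|^{-b}$, which actually simplifies the modulation analysis (no $\dot H^1$ spatial parameter to track). Part (b) is a pure variational statement: under $E(u_0)=E(W)$ and $\|\nabla u_0\|_{L^2}=\|\nabla W\|_{L^2}$, the datum $u_0$ achieves equality in the sharp weighted Sobolev inequality $\|u\|_{L^{\alpha+2}(|x|^{-b}dx)}^{\alpha+2}\lesssim \|\nabla u\|_{L^2}^{\alpha+2}$ whose optimizers are, modulo the two surviving symmetries, multiples of $W$; hence $u_0=e^{i\theta_0}\mu_0^{(d-2)/2}W(\mu_0\cdot)$, and the conclusion follows from uniqueness for the Cauchy problem.

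For parts (a) and (c) I would set up a near-/far-from-$W$ dichotomy via a distance $\bd(u(t))$ to the symmetry orbit of $W$. Whenever $\bd(u(t))$ is small on an interval, choose modulation parameters $\theta(t),\mu(t)$ so that
\begin{equation}
u(t,x)=e^{i\theta(t)}\mu(t)^{(d-2)/2}[W+h(t)](\mu(t)x),\notag
\end{equation}
with $h(t)$ orthogonal in $\dha$ to the kernel directions $iW$ and $\tfrac{d-2}{2}W+x\cdot\nabla W$ of the linearized operator $\CL$ around $W$. Spectral analysis of $\CL$ (this is where the hypothesis $0<b<1-\tfrac{(d-4)^2}{2}$ enters, via a Hardy-type coercivity on the good subspace) yields a simple positive eigenvalue $e_0$, a simple negative eigenvalue $-e_0$ with radial eigenfunctions $\mathcal{Y}_\pm$, and a coercive quadratic form on the orthogonal complement of $\{iW,\,\tfrac{d-2}{2}W+x\cdot\nabla W,\,\mathcal{Y}_+,\mathcal{Y}_-\}$. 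The threshold constraint $E(u)=E(W)$ annihilates the leading energy contribution of $h$ on the good subspace, and the evolution of $h$ reduces, after coercive control of the remainder and of the modulation parameters, to the ODE system
\begin{equation}
\tfrac{d}{dt}a_\pm(t)=\pm e_0 \, a_\pm(t)+O(\bd(u(t))^2),\qquad a_\pm(t):=\langle h(t),\mathcal{Y}_\pm\rangle_{\dha}.\notag
\end{equation}

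A standard ODE trichotomy on $(a_+,a_-)$ then shows: either $a_\pm(t)\to 0$ exponentially as $t\to+\infty$ (so $\bd(u(t))\to 0$ exponentially), or the solution exits every small neighborhood of the $W$-orbit in finite time with a definite sign of $\|\nabla u(t)\|_{L^2}^2-\|\nabla W\|_{L^2}^2$. Any solution approaching $W$ exponentially in the positive time direction is then forced, by a fixed-point/uniqueness argument of Duyckaerts--Merle type that reuses the approximation scheme of Theorem \ref{T1}, to coincide with $W$, $W^+$, or $W^-$ up to symmetries. The sign of $\|\nabla u\|_{L^2}^2-\|\nabla W\|_{L^2}^2$ distinguishes the three cases, identifying $W^-$ in case (a) and $W^+$ in case (c).

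For solutions that escape every small neighborhood of the $W$-orbit, I close the argument via the global compactness in the Lorentz space of \cite{Liu-Zhang} combined with a sign-definite global Virial identity. In case (a), the escape forces $\|\nabla u(t)\|_{L^2}^2\le \|\nabla W\|_{L^2}^2-\eta$ off an arbitrarily short near-$W$ window, so the critical-energy refinement of the subthreshold scattering argument yields scattering in both time directions. In case (c), the corresponding strict inequality $\|\nabla u(t)\|_{L^2}^2\ge \|\nabla W\|_{L^2}^2+\eta$ together with the radial $L^2$ hypothesis drives the global Virial identity to produce $|I|<\infty$. The main obstacle will be the threshold critical-element step: at $E=E(W)$ one must exclude any minimal compact-up-to-symmetries solution other than $W$ itself, which requires the spectral/modulation package above to upgrade compactness to exponential convergence to $W$, whereupon the uniqueness of $W^\pm$ rules out this remaining alternative.
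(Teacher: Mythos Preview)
Your outline follows the same Duyckaerts--Merle scheme as the paper, and the overall logic (modulation near the $W$-orbit, Virial away from it, exponential convergence to $W$ for the borderline solutions, then the uniqueness/rigidity of $W^a$) is correct. Three points where your sketch diverges from what the paper actually does are worth flagging.

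First, the paper never runs an ODE trichotomy on the eigencomponents $(a_+,a_-)$. Instead the engine is a truncated Virial identity: in the subcritical case, once non-scattering forces precompactness of $\{u_{[\lambda(t)]}(t)\}$ (profile decomposition in Lorentz spaces), one gets $\int_a^b \mathbf{d}(u(t))\,dt\lesssim \sup_{[a,b]}\lambda(t)^{-2}[\mathbf{d}(u(a))+\mathbf{d}(u(b))]$, and together with the modulation bound $|\beta'|\lesssim\mu^2\mathbf{d}(u)$ and a lower bound on $\lambda(t)$ this Gronwalls to $\mathbf{d}(u(t))\lesssim e^{-ct}$. In the supercritical $L^2$ radial case the Virial second derivative is globally sign-definite (the $L^2$ mass controls the tail error $A_R$), and the same integral-to-pointwise upgrade applies. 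Your ODE picture is a reasonable heuristic but would have to confront re-entry into the $\delta_0$-neighborhood, which the Virial monotonicity handles automatically.

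Second, the restriction $0<b<1-\tfrac{(d-4)^2}{2}$ does not enter in the spectral coercivity (the paper's Proposition on $Q|_{H^\perp}$ holds for the full range of $b$). It enters in the modulation estimates, specifically in bounding $(\Delta\widetilde u,W)_{\dot H^1}=-(\nabla\widetilde u,\nabla(|x|^{-b}W^{\alpha+1}))_{L^2}$ and similar terms, where the origin singularity of $\nabla(|x|^{-b}W^{\alpha+1})$ forces $b<\min\{1,\tfrac{d-2}{2}\}$.

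Third, your phrasing ``escape $\Rightarrow$ scattering'' in case (a) is not a step one can carry out directly at threshold energy; what the paper proves is the contrapositive: non-scattering $\Rightarrow$ compact modulo scaling $\Rightarrow$ exponential convergence to $W_{[\theta_0,\mu_0]}$ $\Rightarrow$ $u=W^-$ up to symmetries. The ``threshold critical-element step'' you mention at the end is the whole argument, not a residual obstacle.
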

\begin{remark}
	The assertion that  $u=v$ up to symmetries means that there exist  $\lambda _0>0,\ \theta _0\in \mathbb{R}/2\pi \mathbb{Z}$ and  $t_0\in \mathbb{R}$ such that either 
	\begin{equation}
		u(t,x)=e^{i\theta _0}\lambda _0^{\frac{d-2}{2 }}v(\lambda _0^2t-t_0,\lambda _0x) \quad\text{or}\quad u(t,x)=e^{i\theta _0}\lambda _0^{\frac{d-2}{2 }}\overline{v}(\lambda _0^2t-t_0,\lambda _0x).\notag
	\end{equation}
\end{remark}
\begin{remark}
	Case (b) follows directly from the variational characterization of the ground state (see  Proposition \ref{P:GN}). Furthermore, using assumption  $E(u_0)=E(W)$, it follows that the assumptions  $ \|\nabla u_0\|_{L^2}< \|\nabla W\|_{L^2},  \|\nabla u_0\|_{L^2}> \|\nabla W\|_{L^2} $  do not depend on the choice of the initial time  $t_0$ (see Lemma \ref{l:coercive}).      We call  $ \|\nabla u_0\|_{L^2}< \|\nabla W\|_{L^2}  $ "subcritical case" and  $ \|\nabla u_0\|_{L^2}> \|\nabla W\|_{L^2}  $ "supercritical case". 
\end{remark}
\begin{remark}
	In the subcritical case, Theorem \ref{T2} does not require a radial assumption about  $u_0$.   
	 Note that for focusing energy critical NLS,  the corresponding results were proved in the radial case (\cite{CFRoud:threshold,2008GFA,2009JFA}). Recently, Su-Zhao \cite{Su} removed the radial assumption in  $d\ge5$ by using the interaction Morawetz estimate.     
\end{remark}
\begin{remark}
	Due to the singularity of  $|x|^{-b}$  at the origin,  the modulation analysis  in Lemma \ref{L:modulation} leads   to the restriction  $0<b<-\frac{(d-4)^2}{2}+1$.   See Claim \ref{C:221} for the details.  
\end{remark}

For focusing energy critical NLS, the ground state is given by the smooth bounded function 
\begin{equation}
	W_0(x)=(1+\frac{|x|^2}{d(d-2)})^{-\frac{d-2}{2}},\notag
\end{equation}
which  was also proved to be the  threshold of scattering   in the earlier work \cite{Dodson,KM,KV}, except for  $d=3$  within the class of radial data. They showed that for the Cauchy problem with   the initial datum  satisfies the a priori condition   $E(u_0)<E(W_0)$: (i) if  $ \|\nabla u_0\|_{L^2}< \|\nabla W_0\|_{L^2}  $, then the solution exists globally in time and scatters; (ii) if  $ \|\nabla u_0\|_{L^2}> \|\nabla W\|_{L^2}  $ and    $u_0$ is radial or  $u_0$ has finite variance ($xu_0\in L^2$), then finite time blowup  occurs.      Later, Duyckaerts-Merle \cite{2008GFA} studied the case of  $E(u_0)=E(W_0)$ in dimensions  $d=3,4,5$  for radial initial data.   They demonstrated the existence of  special threshold solutions  $W_0^\pm $  exponentially approach  the ground state  $W_0$, and proved that solutions with threshold energy either behave as in the subthreshold case, or it agrees with  $W_0,W_0^+,W_0^-$ up to the symmetries of the equation.

Our aim of this paper is to extend the classification results  of \cite{2008GFA}  for the classical Schr\"odinger equation ($b=0$) to the inhomogeneous case ($b>0$). 
For the   inhomogeneous NLS (\ref{nls}), the presence of the  inhomogeneity  $|x|^{-b}$  makes substantial differences. It breaks the translation symmetry of the equation and, at the same time, creates nontrivial singularity at the origin.  
As a  consequence  of  the singularity of  $|x|^{-b}$, we see that the ground state (\ref{W}),   which is also a stationary solution of  (\ref{nls}),  becomes singular at the origin. This will make the spectral analysis of the linearized operator (Proposition \ref{H}) and the estimates of the   modulation parameters  (Lemma \ref{L:modulation}) more difficult.    Similar situations also occur in the study of threshold solutions for the intercritical inhomogeneous NLS  and the energy critical  NLS with inverse square  potential (see \cite{Campos-Murphy,YZZ}).    To address this issue, we adapt the argument of \cite{Campos-Murphy,2008GFA,YZZ} within the Lorentz framework and work with a restricted range of \( b \) throughout the paper to ensure better regularity of \( W \) at the origin.

Furthermore, the construction of special threshold solutions  $W^\pm$ in Theorem \ref{T1} relies heavily on the expansion of \(J(W^{-1}v_k)\), where \(J\) is real-analytic for \(\{|z|<1\}\) (see (\ref{213})), and the function \(v_k\) is the difference between the approximate solution and the ground state (see (\ref{3272})). Unlike \cite{Campos-Murphy,CFRoud:threshold,2008GFA,2009JFA}, where either \(v_k\) is a Schwartz function or the nonlinearity is polynomial, in our case,   these conditions no longer hold. We therefore need to make additional efforts to expand \(J(W^{-1}v_k)\). Specifically, we work in  dimensions  $3\le d\le 5$ and use Sobolev embedding to prove that the eigenfunctions \(\mathcal{Y}_\pm \in L^\infty(\mathbb{R}^d)\)(see Lemma \ref{l:eigen}).   Then, using the spectral properties of the linearized operator, we inductively construct \( v_k \) such that it belongs to \( L_x^\infty \) and also  includes a time exponential decay factor. 
Consequently,  utilizing the real analyticity of  $J$ for  $|z|<1$, \(J(W^{-1}v_k)\) can be expanded when time is sufficiently large (see Lemma \ref{l:app} and the last part of Appendix \ref{App:3} for details).

On the other hand,  although the inhomogeneity  $|x|^{-b}$ breaks the translation symmetry (thus breaking conservation of momentum and Galilean invariance),  it  also brings some advantages. 
In fact, due to the decay of the inhomogeneous coefficient \(|x|^{-b}\) at infinity, we are able to construct scattering solutions associated with   initial data involving translation parameters \(x_n\) with \(|x_n| \to \infty\) (Proposition \ref{P:embed}). Therefore, in Section \ref{S:subcritical}, when we apply profile decomposition to nonscattering subcritical threshold solutions, the resulting moving spatial center \(x(t)\) must be bounded. Consequently, after a translation, we can choose  \(x(t) \equiv 0\) for the nonscattering compact solution. This effectively places us in the same situation as in the radial case. Hence, in Theorem \ref{T2}, we do not need to assume radial symmetry for the initial data. Recall that for the classical Schr\"odinger equation in dimensions \(d = 3, 4\), the initial data still needs to be radially symmetric (\cite{CFRoud:threshold,2008GFA,2009JFA,Su}).

The argument for Theorems \ref{T1} and \ref{T2} proceeds as follows:

The first main step (carried out in Section \ref{s:3}) is to construct  the  special threshold solutions  $W^\pm$  in Theorem \ref{T1}  and prove that they    exponentially  approach  the ground state in the positive time direction.  
The analysis starts with linearizing (\ref{nls}) around the ground state  $W$ and obtaining the linearized equation (\ref{Linear Eq}) with the  linearized  Schr\"odinger operator  $\mathcal{L}$.  Based on  the spectral properties  of  $\mathcal{L}$ (Lemma \ref{l:eigen}),  we construct approximate solutions  $W^a_k$ of (\ref{nls}) in suitable Lorentz spaces which is quite different from 
previous construction (Lemma \ref{l:app}).    Finally, in  Proposition \ref{P:approximate}, we upgrade  the approximate solutions  $W^a_k$ to true solutions  $W^a$ via a fixed point argument. 
The solutions  $W^a$ are essentially the special threshold solutions  $W^\pm$    appearing in Theorem \ref{T1} (Corollary \ref{cor1}).

The second main step (carried out in Section \ref{S:subcritical} and Section \ref{s4}) is to classify  forward-global threshold solutions  in certain scenarios.  
In  Proposition  \ref{p2}, we first show that if a forward-global subcritical  threshold solution  $u$  fails to scatter, then there exist  $\theta \in \mathbb{R} $ and  $\mu,c>0$ such that for all  $t\ge0$ 
\begin{equation}
	 \|u(t,x)-W_{[\theta ,\mu]}(x)\|_{\dot H^1}\lesssim  e^{-ct}, \label{2281}
\end{equation}   
where  $W_{[\theta ,\mu]}(x):=e^{i\theta }\lambda^{-\frac{d-2}{2}}W(\lambda^{-1}x)$.   
The idea is first to use the concentration compactness arguments to  show it satisfies a compactness property in  $\dot H^1$ (Proposition \ref{p1}).  Then we  combine the Virial estimates (Lemma \ref{L:virial}) and modulation analysis (Lemma \ref{L:modulation}) to establish the desired convergence   (\ref{2281}).    In  Proposition \ref{p3}, we prove the exponential convergence similar to (\ref{2281}) for forward-global supercritical  threshold solutions, relying once again on Virial estimates and modulation analysis.   The Virial estimates and the modulation analysis are prerequisite for both  arguments.   When the solution is away from the  orbit of the ground state,  we use the monotonicity formula arising from Virial to control the solution. 
When the solution  approaches the orbit of  $W$, we use modulation analysis to obtain  a suitable decomposition to control the solution. 
These estimates will eventually  ensure that the distance between the solution and the ground state
\begin{equation}
	\mathbf{d} (u(t)):=\left|  \int_{\mathbb{R} ^d} \left(|\nabla u(t,x)|^2-|\nabla W(x)|^2\right)dx \right| \notag
\end{equation}
converges to zero as \( t \rightarrow \infty \), thus   the modulation decomposition   (\ref{814w1}) holds  for sufficiently large \( t \). Furthermore, combining the estimates in Lemma \ref{L:modulation} with    $\lim_{t\rightarrow\infty }\mathbf{d} (u(t))=0$, we see that the parameters in   (\ref{814w1}) converge as \( t \rightarrow \infty \). Finally, by replacing the modulation parameters in (\ref{814w1}) with their corresponding limit functions, we obtain (\ref{2281}).

The last step (carried out in Section \ref{s:8}) is to use the positivity of the quadratic  form $Q$ (Lemma \ref{L:G})  to analyze
the property of the exponentially small solution of the linearized equation, then apply it to establish  the  uniqueness property for solutions converging exponentially to the ground state. The uniqueness property shows that 
for any threshold solution  $u$ that satisfies (\ref{2281}), there exists  $a\in \mathbb{R} $  such that  $u=(W^a)_{[\theta ,\mu]}$. 
   As a corollary of the uniqueness property (Corollary \ref{cor1}),  all of the solutions  $W^a$ constructed in Lemma \ref{l:app} are in fact  equal to  $W^+$ or  $W^-$   (up to the symmetries). 
Therefore, combining the first and second steps above, we can then obtain the desired  classification results  in Theorem \ref{T2}.   

The outline of the paper is as follows. In Section \ref{s:2}, we recall the Cauchy theory for (\ref{nls}) and the variational property of the ground state. We also  analyze  the linearized equation associated with  (\ref{nls}) near the ground  $W$, and  perform the   
detailed  spectral analysis of the linearized   operator  $\mathcal{L}$.   
 In Section \ref{s:3}, we use the contraction argument to construct    special threshold solutions  $W^\pm$ in Theorem \ref{T1}.  In Section \ref{s:4},  we perform the modulation analysis for solutions around the ground state. In Section \ref{s:5}, we establish Virial estimates by incorporating the modulation estimates developed in Section \ref{s:4}. In Section \ref{S:subcritical} and Section \ref{s4}, we study the forward-global threshold solutions  in the subcritical case and supercritical case,  respectively. 
In Section \ref{s:8}, we establish the uniqueness of the special solutions and this will imply the classification results of threshold solutions in  Theorem \ref{T2}. In Appendix, we show the asymptotic behavior of  $G(r)$ introduced in  Proposition \ref{H} and give the proof of Lemma \ref{l:eigen} and Lemma \ref{L:modulation}.

\section{Preliminaries}\label{s:2}
Throughout the paper, we fix  $3\le d\le 5$,  $0<b<-\frac{(d-4)^2}{2}+1$ and  $\alpha =\frac{4-2b}{d-2}$.    
We write  $A\lesssim  B$ to denote  $A\le CB$ for some  $C>0$. If  $A\lesssim B$ and  $B\lesssim A$, then we write  $A\approx B$.       Moreover, we use  $O(Y)$ to denote any quantity  $X$  such that  $|X|\lesssim  Y$. We use Japanese bracket  $\langle x \rangle $ to denote  $(1+|x|^2)^{\frac{1}{2}}$.       By  $H^s(\mathbb{R} ^d)$  we denote the usual Sobolev space of smoothness  $s$ in spatial variable.   We write $L_t^q L_x^r$ to denote the Banach space with norm
\[
\|u\|_{L_t^q L_x^r (\mathbb{R} \times \mathbb{R}^d)} := \left( \int_{\mathbb{R}} \left( \int_{\mathbb{R}^d} |u(t, x)|^r \, dx \right)^{q/r} \, dt \right)^{1/q},
\]
with the usual modifications when $q$ or $r$ are equal to infinity, or when the domain $\mathbb{R} \times \mathbb{R}^d$ is replaced by spacetime slab such as $I \times \mathbb{R}^d$. 
\subsection{Lorentz spaces and Strichartz estimates}
Let $f$ be a measurable function on $\mathbb{R}^d$. The distribution function of $f$ is defined by
\begin{equation}
	d_f(\lambda):= |\{x\in \mathbb{R}^d : |f(x)|>\lambda\}|, \quad \lambda>0,\notag
\end{equation}
where $|A|$ is the Lebesgue measure of a set $A$ in $\mathbb{R}^d$. The decreasing rearrangement of $f$ is defined by
\begin{equation}
	f^*(s):= \inf \left\{ \lambda>0 : d_f(\lambda)\leq s\right\}, \quad s>0.\notag
\end{equation}

\begin{definition}[Lorentz spaces] ~\\
	Let $0<r<\infty$ and $0<\rho\leq \infty$. The Lorentz space $L^{r,\rho}(\mathbb{R}^d)$ is defined by
	\begin{equation}
		L^{r,\rho}(\mathbb{R}^d):= \left\{ f \text{ is measurable on } \mathbb{R}^d : \|f\|_{L^{r,\rho}}<\infty\right\}, \notag
	\end{equation}
	where
	\[
	\|f\|_{L^{r,\rho}}:= \left\{
	\begin{array}{cl}
		( \frac{\rho}{r} \int_0^\infty (s^{1/r} f^*(s))^\rho \frac{1}{s}ds)^{1/\rho} &\text{ if } \rho <\infty, \\
		\sup_{s>0} s^{1/r} f^*(s) &\text{ if } \rho=\infty.
	\end{array}
	\right.
	\]
\end{definition}

We collect the following basic properties of $L^{r,\rho}(\mathbb{R}^d)$ in the following lemmas.  
\begin{lemma}[Properties of Lorentz spaces \cite{Grafakos}] ~
	\begin{itemize}
		\item For $1<r<\infty$, $L^{r,r}(\mathbb{R}^d) \equiv L^r(\mathbb{R}^d)$ and by convention, $L^{\infty,\infty}(\mathbb{R}^d)= L^\infty(\mathbb{R}^d)$.
		\item For $1<r<\infty$ and $0<\rho_1<\rho_2\leq \infty$, $L^{r,\rho_1}(\mathbb{R}^d)\subset L^{r,\rho_2}(\mathbb{R}^d)$. 
		\item For $1<r<\infty$, $0<\rho \leq \infty$, and $\theta>0$, $\||f|^\theta\|_{L^{r,\rho}} = \|f\|^\theta_{L^{\theta r, \theta \rho}}$.
		\item For $b>0$, $|x|^{-b} \in L^{\frac{d}{b},\infty}(\mathbb{R}^d)$ and $\||x|^{-b}\|_{L^{\frac{d}{b},\infty}} = |B(0,1)|^{\frac{b}{d}}$, where $B(0,1)$ is the unit ball of $\mathbb{R}^d$.
	\end{itemize}
\end{lemma}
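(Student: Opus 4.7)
The plan is to verify each of the four listed properties directly from the definitions of the distribution function $d_f$, the decreasing rearrangement $f^*$, and the norm $\|\cdot\|_{L^{r,\rho}}$; none of the items requires deep machinery. I would present the argument as four short, essentially independent verifications, in the same order as stated, and rely on the standard identities $(|f|^\theta)^\ast(s) = (f^\ast(s))^\theta$ and $\|f\|_{L^r}^r = r\int_0^\infty \lambda^{r-1} d_f(\lambda)\,d\lambda$.

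For item (i), the plan is to observe the layer-cake identity
\begin{equation}
\|f\|_{L^r}^r \;=\; r\int_0^\infty \lambda^{r-1} d_f(\lambda)\,d\lambda \;=\; \int_0^\infty (s^{1/r} f^\ast(s))^r \,\frac{ds}{s},\notag
\end{equation}
where the second equality follows from the change of variable $\lambda = f^\ast(s)$ together with the basic duality $d_f(\lambda) \le s \iff f^\ast(s) \le \lambda$. Matching with the definition of $\|f\|_{L^{r,r}}$ gives equality of norms, hence $L^{r,r} = L^r$. The case $L^{\infty,\infty} = L^\infty$ is immediate since $\sup_s f^\ast(s) = \|f\|_{L^\infty}$.

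For item (ii), I would use the standard trick of reading off pointwise lower bounds for $f^\ast$. Since $f^\ast$ is nonincreasing,
\begin{equation}
s^{1/r} f^\ast(s) \;\le\; \left(\tfrac{\rho_1}{r}\right)^{-1/\rho_1} \|f\|_{L^{r,\rho_1}},\notag
\end{equation}
which gives the embedding into $L^{r,\infty}$; the general case $\rho_1 < \rho_2 < \infty$ follows by splitting $\int_0^\infty (s^{1/r} f^\ast(s))^{\rho_2} s^{-1}\,ds = \int_0^\infty (s^{1/r} f^\ast(s))^{\rho_2-\rho_1}(s^{1/r} f^\ast(s))^{\rho_1} s^{-1}\,ds$ and bounding the first factor by the $L^{r,\infty}$ norm. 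Item (iii) is a direct substitution: from $(|f|^\theta)^\ast(s) = (f^\ast(s))^\theta$ one computes
\begin{equation}
\||f|^\theta\|_{L^{r,\rho}}^\rho \;=\; \tfrac{\rho}{r}\int_0^\infty (s^{1/r}(f^\ast(s))^\theta)^{\rho}\,\tfrac{ds}{s} \;=\; \theta\cdot\tfrac{\rho\theta}{r\theta}\int_0^\infty (s^{1/(\theta r)} f^\ast(s))^{\theta\rho}\,\tfrac{ds}{s} \cdot \tfrac{1}{\theta},\notag
\end{equation}
which rearranges to $\|f\|_{L^{\theta r, \theta \rho}}^{\theta \rho}$; the case $\rho = \infty$ is similar and easier.

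For item (iv), the computation is explicit. Setting $f(x) = |x|^{-b}$ with $b>0$, I compute $d_f(\lambda) = |\{x: |x|^{-b} > \lambda\}| = |B(0, \lambda^{-1/b})| = |B(0,1)|\,\lambda^{-d/b}$. Inverting,
\begin{equation}
f^\ast(s) \;=\; \inf\{\lambda : |B(0,1)|\,\lambda^{-d/b} \le s\} \;=\; \left(\tfrac{s}{|B(0,1)|}\right)^{-b/d},\notag
\end{equation}
and therefore $\sup_{s>0} s^{b/d} f^\ast(s) = |B(0,1)|^{b/d}$, giving both the membership in $L^{d/b,\infty}$ and the precise value of the norm. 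I do not anticipate any genuine obstacle: the only mildly delicate point is keeping the constants $\rho/r$ straight in (i) and (iii), and being careful that the equality $d_f(\lambda) \le s \iff f^\ast(s) \le \lambda$ (used in the change of variable) holds modulo points of nonstrict monotonicity of $f^\ast$, which does not affect integrals.
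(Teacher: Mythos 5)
Your verification is correct and follows the standard, direct route from the definitions. The paper itself gives no proof here — this lemma is stated with a citation to Grafakos's textbook — so there is no paper argument to compare against; your sketch is essentially the textbook one. A few small observations: in item (i), it is cleaner to invoke equimeasurability of $f$ and $f^*$ directly to get $\int_0^\infty (f^*(s))^r\,ds = \|f\|_{L^r}^r$, which with the $\tfrac{\rho}{r}=1$ normalization immediately gives $\|f\|_{L^{r,r}}=\|f\|_{L^r}$, avoiding the change-of-variable caveat you flag at the end. In item (ii), with the paper's normalization the sharp pointwise bound is $s^{1/r}f^*(s)\le \|f\|_{L^{r,\rho_1}}$ with constant $1$ (the $\tfrac{\rho_1}{r}$ prefactor in the norm exactly cancels the $\tfrac{r}{\rho_1}$ coming from $\int_0^s t^{\rho_1/r-1}\,dt$); your stated constant $(\rho_1/r)^{-1/\rho_1}$ is what one gets with the unnormalized quasinorm, but since only a constant is needed for the embedding this does not affect the conclusion. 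In item (iii) the displayed algebra in the middle is a bit garbled (the $\theta\cdot\tfrac{1}{\theta}$ juggling), but the endpoint is right, and it is worth noting that the normalization $\tfrac{\rho}{r}$ is chosen precisely so that $\tfrac{\rho}{r}=\tfrac{\theta\rho}{\theta r}$ and this identity comes out with no stray constant. Item (iv) is exactly right.
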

\begin{lemma}[H\"older's inequality \cite{ONeil}] ~
	\begin{itemize}
		\item  Let $1<r, r_1, r_2<\infty$ and $1\leq \rho, \rho_1, \rho_2 \leq \infty$ be such that
		\[
		\frac{1}{r}=\frac{1}{r_1}+\frac{1}{r_2}, \quad \frac{1}{\rho} \leq \frac{1}{\rho_1}+\frac{1}{\rho_2}.
		\]
		Then for any $f \in L^{r_1, \rho_1}(\mathbb{R}^d)$ and $g\in L^{r_2, \rho_2}(\mathbb{R}^d)$
		\[
		\|fg\|_{L^{r,\rho}} \lesssim \|f\|_{L^{r_1, \rho_1}} \|g\|_{L^{r_2,\rho_2}}.
		\]
		\item Let  $1<r_1,r_2<\infty $ and  $1\le \rho_1,\rho_2\le \infty $  be such that 
		\begin{equation}
			1=\frac{1}{r_1}+\frac{1}{r_2},\quad 1\le \frac{1}{\rho_1}+\frac{1}{\rho_2}.\notag
		\end{equation}
		Then  for any $f \in L^{r_1, \rho_1}(\mathbb{R}^d)$ and $g\in L^{r_2, \rho_2}(\mathbb{R}^d)$
		\begin{equation}
			 \|fg\|_{L^1}\lesssim  \|f\|_{L^{r_1, \rho_1}} \|g\|_{L^{r_2,\rho_2}}.\notag
		\end{equation}
	\end{itemize}
\end{lemma}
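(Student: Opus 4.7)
The plan is to reduce both claims to the classical H\"older inequality applied to decreasing rearrangements. The key ingredient is the pointwise submultiplicativity estimate
\[
(fg)^*(2t) \le f^*(t)\,g^*(t),\qquad t>0,
\]
which follows from the set-theoretic inclusion $\{|fg|>\lambda\mu\} \subset \{|f|>\lambda\}\cup\{|g|>\mu\}$ (giving $d_{fg}(\lambda\mu)\le d_f(\lambda)+d_g(\mu)$) together with the definition of $f^*$ in terms of $d_f$.

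For the first claim, I would first treat the diagonal case $\frac{1}{\rho}=\frac{1}{\rho_1}+\frac{1}{\rho_2}$. Starting from the definition of the $L^{r,\rho}$-quasinorm and performing the change of variable $t\mapsto 2t$, the pointwise bound above yields
\[
\|fg\|_{L^{r,\rho}} \lsm \bigl\|\,t^{1/r_1}f^*(t)\cdot t^{1/r_2}g^*(t)\,\bigr\|_{L^\rho(\R_+,\,dt/t)}.
\]
Since $\frac{1}{r}=\frac{1}{r_1}+\frac{1}{r_2}$ and $\frac{1}{\rho}=\frac{1}{\rho_1}+\frac{1}{\rho_2}$, the classical H\"older inequality on the measure space $(\R_+,dt/t)$ with exponents $\rho_1,\rho_2$ then produces the desired bound. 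The general hypothesis $\frac{1}{\rho}\le\frac{1}{\rho_1}+\frac{1}{\rho_2}$ is reduced to the diagonal case by choosing $\tilde\rho\le\rho$ realising equality and invoking the continuous embedding $L^{r,\tilde\rho}(\R^d)\hookrightarrow L^{r,\rho}(\R^d)$, which is among the basic Lorentz properties recorded just above.

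The second claim is handled by the same strategy. Writing
\[
\|fg\|_{L^1} = \int_0^\infty (fg)^*(s)\,ds \lsm \int_0^\infty \bigl(t^{1/r_1}f^*(t)\bigr)\bigl(t^{1/r_2}g^*(t)\bigr)\,\frac{dt}{t},
\]
which uses $\frac{1}{r_1}+\frac{1}{r_2}=1$ after the change of variables, I would then apply classical H\"older on $(\R_+,dt/t)$ with exponents $\rho_1',\rho_2'$ satisfying $1=1/\rho_1'+1/\rho_2'$ and $\rho_i'\ge \rho_i$ (such a choice is possible because $1/\rho_1+1/\rho_2\ge 1$, simply take $1/\rho_i'=(1/\rho_i)/(1/\rho_1+1/\rho_2)$), and finally invoke the Lorentz embedding $L^{r_i,\rho_i}\hookrightarrow L^{r_i,\rho_i'}$ to pass from $\|f\|_{L^{r_1,\rho_1'}}\|g\|_{L^{r_2,\rho_2'}}$ back to the stated norms. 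The main point requiring care is the endpoint behaviour when one of $\rho_i$ equals $\infty$: there the $L^\rho(\R_+,dt/t)$-norm must be interpreted as an essential supremum of $t^{1/r_i}f^*(t)$, and both the H\"older and embedding steps need to be checked in this interpretation; the verification is straightforward but should be done to ensure the constants remain uniform.
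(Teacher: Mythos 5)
Your proof is correct. The paper does not actually prove this lemma; it is stated as a known result cited to O'Neil's 1963 paper, so there is no in-text argument to compare against. Your rearrangement-based approach — the submultiplicativity estimate $(fg)^*(2t)\le f^*(t)g^*(t)$, followed by classical H\"older on $(\mathbb{R}_+,dt/t)$ with respect to the exponents $\rho_i$, then reduction of the off-diagonal hypothesis to the diagonal case via the nesting $L^{r,\tilde\rho}\hookrightarrow L^{r,\rho}$ for $\tilde\rho\le\rho$ — is essentially O'Neil's own argument and is the standard way to derive the Lorentz H\"older inequality. One small point worth making explicit: in the first claim, when $1/\rho_1+1/\rho_2>1$ the auxiliary exponent $\tilde\rho$ drops below $1$, so $\|\cdot\|_{L^{r,\tilde\rho}}$ is only a quasinorm; this is harmless because the H\"older step on $(\mathbb{R}_+,dt/t)$ is carried out with the genuine exponents $\rho_1/\tilde\rho,\rho_2/\tilde\rho\ge 1$, and the nesting $L^{r,\tilde\rho}\subset L^{r,\rho}$ from the paper's preceding lemma is stated for all $0<\tilde\rho<\rho$, so the chain of inequalities closes. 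The endpoint cases you flag ($\rho_i=\infty$) do go through with the essential-supremum interpretation, as you indicate, since the $2t$ change of variable only costs a fixed constant $2^{1/r}$.
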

\begin{lemma}[Interpolation \cite{ONeil}]\label{L:inter}
	Let  $1<p,p_1,p_2<\infty ,1\le q,q_1,q_2\le \infty $ and  $0<\theta <1$ be such that 
	\begin{equation}
		\frac{1}{p}=\frac{\theta }{p_1}+\frac{1-\theta }{p_2}\quad\text{and}\quad  \frac{1}{q}\le \frac{\theta }{q_1}+\frac{1-\theta }{q_2}.\notag
	\end{equation}  
	Then for any  $f\in L^{p_1,q_1}(\mathbb{R} ^d)\cap L^{p_2,q_2}(\mathbb{R} ^d)$ 
	\begin{equation}
		 \|f\|_{L^{p,q}}\lesssim   \|f\|_{L^{p_1,q_1}}^\theta  \|f\|_{L^{p_2,q_2}}^{1-\theta }.\notag   
	\end{equation}
	\end{lemma}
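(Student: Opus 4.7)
The plan is to reduce to the equality case \(\frac{1}{q}=\frac{\theta}{q_1}+\frac{1-\theta}{q_2}\) by a Lorentz inclusion, and then combine the pointwise factorization of \(s^{1/p}f^*(s)\) with Hölder's inequality on \((0,\infty)\) equipped with \(\tfrac{ds}{s}\). Specifically, first I would set \(q_0\in[1,\infty]\) by \(\frac{1}{q_0}:=\frac{\theta}{q_1}+\frac{1-\theta}{q_2}\); the hypothesis \(\frac{1}{q}\le\frac{1}{q_0}\) gives \(q\ge q_0\), and the nesting property of Lorentz spaces at fixed first index (recalled in the previous lemma) yields
\[
\|f\|_{L^{p,q}}\lesssim\|f\|_{L^{p,q_0}}.
\]
Thus it suffices to establish the inequality with \(q\) replaced by \(q_0\).

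Next, the identity \(\frac{1}{p}=\frac{\theta}{p_1}+\frac{1-\theta}{p_2}\) allows the factorization
\[
s^{1/p}f^*(s)=\bigl(s^{1/p_1}f^*(s)\bigr)^{\theta}\bigl(s^{1/p_2}f^*(s)\bigr)^{1-\theta},\qquad s>0.
\]
Raising to the \(q_0\)-th power and integrating against the Haar measure \(\tfrac{ds}{s}\), I would apply Hölder's inequality with conjugate exponents \(\frac{q_1}{q_0\theta}\) and \(\frac{q_2}{q_0(1-\theta)}\); by the very definition of \(q_0\) these exponents are conjugate, which is precisely the point of introducing \(q_0\). This produces
\[
\Bigl(\tfrac{q_0}{p}\int_0^\infty\bigl(s^{1/p}f^*(s)\bigr)^{q_0}\tfrac{ds}{s}\Bigr)^{1/q_0}\lesssim\|f\|_{L^{p_1,q_1}}^{\theta}\|f\|_{L^{p_2,q_2}}^{1-\theta},
\]
which is exactly \(\|f\|_{L^{p,q_0}}\lesssim\|f\|_{L^{p_1,q_1}}^{\theta}\|f\|_{L^{p_2,q_2}}^{1-\theta}\).

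Finally I would handle the boundary cases separately. If \(q_1=\infty\) or \(q_2=\infty\), the corresponding \(L^{p_i,\infty}\)-norm is \(\sup_{s>0}s^{1/p_i}f^*(s)\), and the Hölder step is replaced by pulling that supremum out of the integral; if \(q_0=\infty\) (hence \(q_1=q_2=\infty\)), the factorization of \(s^{1/p}f^*(s)\) gives the bound directly by taking suprema. The main obstacle — or really the only subtle point — is recognizing that the inequality hypothesis on the second indices is not used within the Hölder step itself but is what allows the reduction to the equality case via the Lorentz inclusion; once this is isolated, the remaining computations are routine manipulations of \(f^*\).
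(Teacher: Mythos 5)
Your proof is correct. The paper itself gives no proof of this lemma; it is stated as a known result with a citation to O'Neil, so there is no in-paper argument to compare against. Your argument is the standard one and it is complete: you correctly isolate the role of the inequality $\frac{1}{q}\le\frac{\theta}{q_1}+\frac{1-\theta}{q_2}$ as feeding only the Lorentz nesting step (with $q_0\le q$ giving $\|f\|_{L^{p,q}}\lesssim\|f\|_{L^{p,q_0}}$), and the Hölder step with exponents $\frac{q_1}{q_0\theta}$ and $\frac{q_2}{q_0(1-\theta)}$ is legitimate since the defining relation for $q_0$ forces both of these to be $\ge 1$ and to be Hölder-conjugate. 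The boundary cases $q_i=\infty$ are handled correctly by pulling out suprema. One could object that you should also remark that the Hölder step requires $f^*$ to be measurable and the integrals to make sense, but this is automatic for the decreasing rearrangement, so the proof stands as written.
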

\begin{lemma}[Convolution inequality \cite{ONeil}] ~
	Let $1<r,r_1, r_2<\infty$ and $1\leq \rho, \rho_1, \rho_2 \leq \infty$ be such that
	\[
	1+\frac{1}{r} =\frac{1}{r_1}+\frac{1}{r_2}, \quad \frac{1}{\rho}\leq \frac{1}{\rho_1}+\frac{1}{\rho_2}.
	\]
	Then 	for any $f\in L^{r_1, \rho_1}(\mathbb{R}^d)$ and $g\in L^{r_2, \rho_2}(\mathbb{R}^d)$ 
	\[
	\|f\ast g\|_{L^{r,\rho}} \lesssim \|f\|_{L^{r_1,\rho_1}} \|g\|_{L^{r_2,\rho_2}}. 
	\]
\end{lemma}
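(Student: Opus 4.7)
The plan is to follow O'Neil's original approach, whose core is a pointwise rearrangement inequality for convolutions. Writing $f^{**}(t):=\tfrac{1}{t}\int_0^t f^*(s)\,ds$, the first step is to establish
\begin{equation}
(f\ast g)^{**}(t)\le t\,f^{**}(t)\,g^{**}(t)+\int_t^\infty f^*(s)g^*(s)\,ds,\notag
\end{equation}
valid for every $t>0$. I would derive this by splitting $f=f_1+f_2$ with $f_1=f\,\chi_{\{|f|\le f^*(t)\}}$ (and analogously $g=g_1+g_2$), writing $f\ast g$ as a sum of four convolutions, and bounding each piece using the Hardy--Littlewood inequality $\int h_1 h_2\le \int h_1^* h_2^*$ together with the layer-cake formula. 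Combined with $(f\ast g)^*(t)\le (f\ast g)^{**}(t)$, this reduces the lemma to estimating the right-hand side in a suitable weighted norm.

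With that pointwise bound in hand, I would exploit the equivalent norm
\begin{equation}
\|h\|_{L^{r,\rho}}\approx \bigl\|t^{1/r}\,h^{**}(t)\bigr\|_{L^\rho(dt/t)}\qquad(1<r<\infty),\notag
\end{equation}
so that $\|f\ast g\|_{L^{r,\rho}}$ is controlled by the $L^\rho(dt/t)$-norm of $t^{1/r}$ times the right-hand side of the pointwise inequality. For the first term I use $1+1/r=1/r_1+1/r_2$ to factor
\begin{equation}
t^{1+1/r}\,f^{**}(t)\,g^{**}(t)=\bigl(t^{1/r_1}f^{**}(t)\bigr)\bigl(t^{1/r_2}g^{**}(t)\bigr),\notag
\end{equation}
apply H\"older's inequality in the measure $dt/t$ with exponents $\rho_1,\rho_2$ (allowed since $1/\rho\le 1/\rho_1+1/\rho_2$), and then invoke the Hardy inequality $\bigl\|t^{1/r_i}f^{**}\bigr\|_{L^{\rho_i}(dt/t)}\lsm \bigl\|t^{1/r_i}f^*\bigr\|_{L^{\rho_i}(dt/t)}\approx \|f\|_{L^{r_i,\rho_i}}$, which is valid precisely because $r_i>1$. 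For the tail $\int_t^\infty f^*(s)g^*(s)\,ds$, I would apply H\"older inside the integral, split the resulting factors in the same way, and then use a Hardy weighted inequality on $(0,\infty)$ to transfer the outer $t$-integration back to the same product of Lorentz norms.

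The main technical obstacle will be the endpoint analysis: when some $\rho_i$ equals $\infty$ both the H\"older step and the Hardy inequality need their $L^\infty$ versions, and at the outset one must justify that $f\ast g$ is defined almost everywhere. The latter follows from the bilinear H\"older inequality in Lorentz spaces (stated earlier in this section) combined with $1/r_1+1/r_2=1+1/r>1$, which guarantees $f^*\ast g^*$ is finite a.e.\ on $(0,\infty)$. The standing assumption $r>1$ is what keeps all the Hardy inequalities quantitative; without it, the argument degenerates at the $L^1$ endpoint, which is exactly the case the lemma excludes.
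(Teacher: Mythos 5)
The paper does not supply its own proof of this lemma; it is recalled from O'Neil's paper and cited as a black box. Your outline reproduces O'Neil's original argument — the pointwise rearrangement bound $(f\ast g)^{**}(t)\le t\,f^{**}(t)g^{**}(t)+\int_t^\infty f^*g^*\,ds$ followed by the $f^{**}$-equivalent Lorentz norm, H\"older in $dt/t$, and Hardy's inequalities with exponents $1/r_1,1/r_2\in(0,1)$ and $1/r>0$ — and the ingredients and their roles are correctly identified, so the plan is sound and is exactly the proof the citation points to.

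One small organizational remark: for the tail term it is cleaner to apply the Hardy inequality in $t$ \emph{before} splitting the integrand, i.e.\ bound $\bigl\|t^{1/r}\int_t^\infty sf^*(s)g^*(s)\,\frac{ds}{s}\bigr\|_{L^\rho(dt/t)}\lesssim \bigl\|t^{1+1/r}f^*(t)g^*(t)\bigr\|_{L^\rho(dt/t)}$ using $1/r>0$, and only then H\"older the product; applying H\"older ``inside'' the $s$-integral first forces a three-way H\"older whose third exponent requires $1/\rho_1+1/\rho_2\le 1$, which the hypotheses do not guarantee. Also, when $1/\rho<1/\rho_1+1/\rho_2$ the intermediate exponent $\tilde\rho$ with $1/\tilde\rho=1/\rho_1+1/\rho_2$ may be less than $1$; the nesting $L^{r,\tilde\rho}\hookrightarrow L^{r,\rho}$ still holds in the quasi-normed range, but this deserves an explicit word.
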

Next,  in Lemma \ref{L:sobolev}--Lemma \ref{L:6141},  we  recall the Sobolev embedding, product rule, and chain rule in Lorentz spaces. We start by introducing the following definition.

Let  $s\ge0,1<r<\infty $ and  $1\le\rho\le \infty $. We define the Sobolev-Lorentz spaces 
\begin{equation}
	W^sL^{r,\rho}(\mathbb{R} ^d)=\left\{f\in \mathcal{S}'(\mathbb{R} ^d):(1-\Delta )^{s/2}f\in L^{r,\rho}(\mathbb{R} ^d) \right\},\notag
\end{equation}  
\begin{equation}
	\dot W^sL^{r,\rho}(\mathbb{R} ^d)=\left\{f\in \mathcal{S}'(\mathbb{R} ^d):(-\Delta )^{s/2}f\in L^{r,\rho}(\mathbb{R} ^d) \right\},\notag
\end{equation}
where  $\mathcal{S}'(\mathbb{R} ^d)$ is the space of tempered distributions on  $\mathbb{R} ^d$ and 
\begin{equation}
	(1-\Delta )^{s/2}f=\mathcal{F}^{-1}\left((1+|\xi|^2)^{s/2}\mathcal{F} (f)\right),\qquad (-\Delta )^{s/2}f=\mathcal{F} ^{-1}(|\xi|^s\mathcal{F} (f))\notag
\end{equation}  
with  $\mathcal{F} $ and  $\mathcal{F} ^{-1}$ the Fourier and its inverse Fourier transforms respectively. The spaces  $W^sL^{r,\rho}(\mathbb{R} ^d)$ and  $\dot W^sL^{r,\rho}(\mathbb{R} ^d)$ are endowed respectively with the norms
\begin{equation}
	 \|f\|_{W^sL^{r,\rho}} = \|f\|_{L^{r,\rho}}+ \|(-\Delta )^{s/2}f\|_{L^{r,\rho}},\qquad  \|f\|_{\dot WL^{r,\rho}} = \|(-\Delta )^{s/2}f\|_{L^{r,\rho}}.\notag   
\end{equation}    
For simplicity, when  $s=1$ we write  $WL^{r,\rho}:=W^1L^{r,\rho}(\mathbb{R} ^d)$  and  $\dot WL^{r,\rho}:=\dot W^1L^{r,\rho}(\mathbb{R} ^d)$.  

\begin{lemma}[Sobolev embedding\cite{DinhKe}]\label{L:sobolev}
	Let  $1<r<\infty ,1\le \rho\le \infty $ and  $0<s<\frac{d}{r}$. Then 
	\begin{equation}
		 \|f\|_{L^{\frac{dr}{d-sr},\rho}}\lesssim   \|(-\Delta )^{s/2}f\|_{L^{r,\rho}} \quad\text{for any}\quad   f\in \dot W^sL^{r,\rho}(\mathbb{R} ^d).\notag
	\end{equation}  
	\end{lemma}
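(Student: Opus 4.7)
The plan is to reduce the Sobolev embedding to the O'Neil convolution inequality already stated in the paper, via the Riesz potential representation. Set $g:=(-\Delta)^{s/2}f$, so by assumption $g\in L^{r,\rho}(\mathbb{R}^d)$. Under the condition $0<s<d/r$, one has the pointwise identity
\begin{equation}
f(x)=c_{d,s}\,(I_s g)(x)=c_{d,s}\,\bigl(|\cdot|^{-(d-s)}\ast g\bigr)(x),\notag
\end{equation}
where $I_s$ is the Riesz potential of order $s$ and $c_{d,s}>0$ is an explicit constant. This representation is valid for tempered distributions $f$ with $(-\Delta)^{s/2}f\in L^{r,\rho}$ because $L^{r,\rho}\hookrightarrow L^{r,\infty}\subset \mathcal{S}'$ and $s<d/r$ guarantees that the convolution makes sense and reconstructs $f$ modulo polynomials; the homogeneity together with $r<\infty$ rules out the polynomial ambiguity.

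Next I would invoke the property $|x|^{-(d-s)}\in L^{\frac{d}{d-s},\infty}(\mathbb{R}^d)$, which is recorded in the list of basic Lorentz properties above. Then I apply the convolution inequality from the preceding lemma with parameters
\begin{equation}
r_1=\tfrac{d}{d-s},\ \rho_1=\infty,\qquad r_2=r,\ \rho_2=\rho.\notag
\end{equation}
The exponent condition $1+\tfrac{1}{r_{\mathrm{out}}}=\tfrac{1}{r_1}+\tfrac{1}{r_2}$ forces $r_{\mathrm{out}}=\tfrac{dr}{d-sr}$, which is positive and finite precisely because $0<s<d/r$, while $\tfrac{1}{\rho_{\mathrm{out}}}\le \tfrac{1}{\infty}+\tfrac{1}{\rho}$ allows us to take $\rho_{\mathrm{out}}=\rho$. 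The convolution inequality then yields
\begin{equation}
\|f\|_{L^{\frac{dr}{d-sr},\rho}}\lesssim \bigl\||\cdot|^{-(d-s)}\bigr\|_{L^{\frac{d}{d-s},\infty}}\,\|g\|_{L^{r,\rho}}\lesssim \|(-\Delta)^{s/2}f\|_{L^{r,\rho}},\notag
\end{equation}
which is the claimed embedding.

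The main obstacle I anticipate is the justification of the Riesz potential representation at the generality required: for general $f\in\dot W^sL^{r,\rho}$ one needs to verify that $f$ and $c_{d,s}I_sg$ agree (not just up to a polynomial) and that the convolution $|x|^{-(d-s)}\ast g$ defines a locally integrable function. Both points follow from standard Calderón–Zygmund theory together with the real interpolation identification of $L^{r,\rho}$ between Lebesgue spaces, since the result is already classical on $L^r$ and extends to $L^{r,\rho}$ by real interpolation on the bilinear map $(|x|^{-(d-s)},g)\mapsto |x|^{-(d-s)}\ast g$. Once this identification is in place, the remainder of the argument is purely the algebra of Lorentz exponents in the O'Neil convolution inequality.
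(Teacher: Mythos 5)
The paper cites \cite{DinhKe} for this lemma and does not supply its own proof, so there is nothing internal to compare against. Your argument is the standard one and is essentially correct: the exponent arithmetic in the O'Neil convolution inequality works out exactly as you compute, with $r_1=\tfrac{d}{d-s}$, $\rho_1=\infty$, $r_2=r$, $\rho_2=\rho$ giving output exponents $\tfrac{dr}{d-sr}$ and $\rho$; and all the admissibility side conditions ($1<r_1,r_2,r_{\mathrm{out}}<\infty$) are satisfied under $1<r<\infty$ and $0<s<d/r$. The one step worth tightening is the Riesz potential representation $f=c_{d,s}I_s g$: the cleanest way to avoid circularity is to first note that $I_sg$ is a well-defined locally integrable function for $g\in L^{r,\rho}$ with $r<d/s$ (e.g.\ by splitting $g\in L^{r_1}+L^{r_2}$ for two nearby Lebesgue exponents and invoking Hardy–Littlewood–Sobolev, or by the very convolution bound you then apply), observe $(-\Delta)^{s/2}(I_sg)=(-\Delta)^{s/2}f$ on the Fourier side so that $f-c_{d,s}I_sg$ is a polynomial, and then use the standard convention that the homogeneous space is realized modulo polynomials (or restrict to the closure of Schwartz functions). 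Your phrase about "real interpolation on the bilinear map" is superfluous once this is set up; the extension from Lebesgue to Lorentz is already furnished by the O'Neil lemma itself. With that small repair, the proof is complete.
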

\begin{lemma}[Product rule\cite{Cruz}]\label{L:leibnitz}
	Let  $s\ge0,1<r,r_1,r_2,r_3,r_4,<\infty $, and   $1\le\rho,\rho_1,\rho_2,\rho_3,\rho_4\le\infty $ be such that 
	\begin{equation}
		\frac{1}{r}=\frac{1}{r_1}+\frac{1}{r_2}=\frac{1}{r_3}+\frac{1}{r_4},\qquad \frac{1}{\rho}=\frac{1}{\rho_1}+\frac{1}{\rho_2}=\frac{1}{\rho_3}+\frac{1}{\rho_4}.\notag
	\end{equation} 
	Then for any  $f\in \dot W^sL^{r_1,\rho_1}(\mathbb{R} ^d)\cap L^{r_3,\rho_3}$  and  $g\in \dot W^sL^{r_4,\rho_4}\cap L^{r_2,\rho_2}(\mathbb{R} ^d)$, we have 
	\begin{equation}
		 \|(-\Delta )^{s/2}(fg)\|_{L^{r,\rho}} \lesssim   \|(-\Delta )^{s/2}f\|_{L^{r_1,\rho_1}} \|g\|_{L^{r_2,\rho_2}}+ \|f\|_{L^{r_3,\rho_3}} \|(-\Delta )^{s/2}g\|_{L^{r_4,\rho_4}}.\notag    
	\end{equation}
	\end{lemma}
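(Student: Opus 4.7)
The plan is to reduce the estimate to the classical fractional Leibniz inequality in Lebesgue spaces and then transfer it to the Lorentz setting by real interpolation. First I would recall the Kato--Ponce / Kenig--Ponce--Vega inequality: for any $s\ge 0$ and exponents $1<q,q_i<\infty$ satisfying $\tfrac{1}{q}=\tfrac{1}{q_1}+\tfrac{1}{q_2}=\tfrac{1}{q_3}+\tfrac{1}{q_4}$,
\begin{equation*}
\|(-\Delta)^{s/2}(fg)\|_{L^q}\lesssim \|(-\Delta)^{s/2}f\|_{L^{q_1}}\|g\|_{L^{q_2}}+\|f\|_{L^{q_3}}\|(-\Delta)^{s/2}g\|_{L^{q_4}}.
\end{equation*}
This is a standard result provable via Coifman--Meyer multiplier theory or Littlewood--Paley paraproducts, and it exhibits the bilinear operator $T(f,g):=(-\Delta)^{s/2}(fg)$ as bounded from $\dot W^s L^{q_1}\times L^{q_2}$ and from $L^{q_3}\times\dot W^s L^{q_4}$ into $L^q$.

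Next, I would apply bilinear real interpolation of Lions--Peetre/Janson type. Because $L^{r,\rho}=(L^{p_0},L^{p_1})_{\theta,\rho}$ with $\tfrac{1}{r}=\tfrac{1-\theta}{p_0}+\tfrac{\theta}{p_1}$, one lifts the Lebesgue bounds for $T$ argument by argument: first fix the second factor in a Lebesgue space and interpolate in the first to reach $L^{r_1,\rho_1}$, then interpolate the second factor to reach $L^{r_2,\rho_2}$. The two-step procedure yields the H\"older-type constraint $\tfrac{1}{\rho}=\tfrac{1}{\rho_1}+\tfrac{1}{\rho_2}$ on the secondary indices, matching precisely the admissible relation from the H\"older inequality on Lorentz spaces stated earlier. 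The symmetric argument on $(r_3,r_4,\rho_3,\rho_4)$ handles the second piece of the claim.

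An alternative, more hands-on route is a direct paraproduct proof. Writing $fg=\pi_f(g)+\pi_g(f)+R(f,g)$ via Bony's decomposition, one uses the Littlewood--Paley square-function characterization
\begin{equation*}
\|(-\Delta)^{s/2}h\|_{L^{r,\rho}}\approx \Bigl\|\Bigl(\sum_j 2^{2js}|\Delta_j h|^2\Bigr)^{1/2}\Bigr\|_{L^{r,\rho}}
\end{equation*}
and distributes the factors by Bernstein's inequality and H\"older's inequality in Lorentz spaces. The main obstacle, common to either route, is the same: honest bilinear real interpolation is subtle because one cannot interpolate both arguments simultaneously, while the square-function identity and the vector-valued Fefferman--Stein maximal inequality in Lorentz spaces both require extending their $L^p$ analogues by real interpolation. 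Once these ingredients are established, the desired estimate follows cleanly; this is essentially the path implemented in \cite{Cruz}.
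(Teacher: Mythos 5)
Note first that the paper gives no proof of this lemma; it is taken as a black box from \cite{Cruz}, so there is no in-paper argument to compare against. Turning to your sketch, the interpolation route as written does not close. You assert that fixing $g$ in a Lebesgue space, interpolating in $f$ with parameter $\rho_1$, and then interpolating in $g$ with parameter $\rho_2$, ``yields the H\"older-type constraint $1/\rho=1/\rho_1+1/\rho_2$.'' Two successive applications of \emph{linear} real interpolation do not give that. After the first step the output lies in $L^{r',\rho_1}$; the second step then lands in $(L^{r_0',\rho_1},L^{r_1',\rho_1})_{\theta,\rho_2}=L^{r,\rho_2}$, so the surviving secondary index is simply $\rho_2$ (with $r_0'\neq r_1'$ in the generic case relevant here). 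Since $L^{r,\rho}\subsetneq L^{r,\rho_2}$ whenever $\rho<\rho_2$, the two-step procedure yields a strictly weaker conclusion than the lemma. Obtaining the genuine splitting of the secondary indices requires a simultaneous \emph{bilinear} real-interpolation theorem (O'Neil, Janson), which is precisely the subtlety you flag and then set aside; as it stands, this reduction is broken.

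The paraproduct route, by contrast, does deliver the $\rho$-splitting, and for a concrete reason worth making explicit. After Bony's decomposition and the square-function characterization of $\dot W^s L^{r,\rho}$, each paraproduct piece obeys a pointwise bound of the form $2^{js}|\Delta_j\pi(f,g)|\lesssim M\bigl(2^{js}\Delta_{j'} f\bigr)\cdot Mg$ (with symmetric and resonant analogues). Taking the $\ell^2_j$ sum factors out $Mg$, and it is the Lorentz-space H\"older inequality---applied to the product of $Mg$ with the vector-valued square function in $f$---that hands you $1/\rho=1/\rho_1+1/\rho_2$ directly. One then closes with the Hardy--Littlewood and vector-valued Fefferman--Stein maximal bounds in $L^{r,\rho}$, both of which do follow from their $L^p$ versions by ordinary \emph{linear} real interpolation (no bilinearity enters). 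So the secondary-index arithmetic comes from a pointwise H\"older application, not from interpolating the bilinear map; you should carry out this second route in detail and drop the misleading two-step interpolation claim.
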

	\begin{lemma}[Chain rule\cite{Aloui}]\label{L:6141}
		Let  $s\in [0,1],F\in C^1(\mathbb{C},\mathbb{C})$ and  $1<p,p_1,p_2<\infty $,  $1\le q,q_1,q_2<\infty $ be  such that 
		\begin{equation}
			\frac{1}{p}=\frac{1}{p_1}+\frac{1}{p_2},\qquad\frac{1}{q}=\frac{1}{q_1}+\frac{1}{q_2}.\notag
		\end{equation}   
		Then
		\begin{equation}
			\|(-\Delta )^{s/2}F(f)\|_{L^{p,q}(\mathbb{R} ^d)}\lesssim  \|F'(f)\|_{L^{p_1,q_1}(\mathbb{R} ^d)} \|(-\Delta )^{s/2}f\|_{L^{p_2,q_2}(\mathbb{R} ^d)}.\notag   
		\end{equation}
	\end{lemma}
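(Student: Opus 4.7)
The plan is to establish the Lorentz-space chain rule by reducing to its classical Lebesgue-space analogue (Christ--Weinstein) and then passing to the Lorentz scale via real interpolation, treating the two boundary values of $s$ separately.

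First I would dispose of the endpoints. The case $s=0$ is nothing but H\"older's inequality in Lorentz spaces (the lemma preceding this one), once one notes that $F(f) = \int_0^1 F'(\tau f)\,d\tau\cdot f$ when $F(0)=0$ (and the general case is absorbed into a constant). The case $s=1$ follows from the classical pointwise identity $\nabla F(f) = F'(f)\nabla f$ (using Wirtinger derivatives for complex-valued $F$) combined with H\"older in Lorentz spaces. Thus only the genuinely fractional range $s\in(0,1)$ requires work.

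For $s\in(0,1)$ I would employ the singular integral representation
\[
(-\Delta)^{s/2} g(x) = c_{d,s}\,\mathrm{p.v.}\!\int_{\mathbb{R}^d} \frac{g(x)-g(y)}{|x-y|^{d+s}}\,dy,
\]
applied to $g = F(f)$, together with the mean-value bound $|F(f(x))-F(f(y))| \le (|F'(f(x))|+|F'(f(y))|)|f(x)-f(y)|$. Splitting the integral at scales $|x-y|<r$ and $|x-y|>r$ and optimizing the threshold $r$ against the Hardy--Littlewood maximal function yields a pointwise domination of the form
\[
|(-\Delta)^{s/2} F(f)(x)| \lesssim M(F'(f))(x)\cdot \widetilde{M}\bigl((-\Delta)^{s/2} f\bigr)(x),
\]
where $M$ is the Hardy--Littlewood maximal function and $\widetilde M$ is a variant whose $L^{r,\rho}$-boundedness is controlled by $M$. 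The conclusion then follows from H\"older's inequality in Lorentz spaces, combined with the boundedness of $M$ on $L^{r,\rho}(\mathbb{R}^d)$ for $1<r<\infty$, $1\le\rho\le\infty$, which is a standard consequence of the Marcinkiewicz real-interpolation theorem applied to the weak-$(1,1)$ and the trivial $L^\infty$ bounds for $M$.

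The main technical obstacle is justifying the pointwise domination by maximal functions when $F$ is merely $C^1$ and $f$ is complex valued: the singular-integral representation of $(-\Delta)^{s/2}$ must be interpreted carefully, and the mean-value step produces an integrand that is only conditionally integrable. One handles this by approximating $F$ by smooth truncations and $f$ by Schwartz functions, verifying the pointwise bound in that class, and passing to the limit using Fatou's lemma in the Lorentz scale together with the density statements available from the preceding Sobolev--Lorentz framework. Since the lemma is quoted directly from Aloui--Tayachi, an alternative is simply to invoke their proof, which proceeds essentially along the lines sketched above.
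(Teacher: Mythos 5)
The paper does not prove this lemma; it is quoted verbatim from Aloui--Tayachi and used as a black box, so there is no internal proof to compare your argument against. That said, your sketch has a genuine gap in the fractional range $s\in(0,1)$, which is the only case where something nontrivial happens.

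The problematic step is the claimed pointwise domination
\[
|(-\Delta)^{s/2} F(f)(x)| \lesssim M\bigl(F'(f)\bigr)(x)\cdot \widetilde{M}\bigl((-\Delta)^{s/2} f\bigr)(x).
\]
Once you replace the principal-value kernel by absolute values via $|F(f(x))-F(f(y))|\le(|F'(f(x))|+|F'(f(y))|)|f(x)-f(y)|$, you have discarded the cancellation that makes $(-\Delta)^{s/2}$ well-defined, and the near-field integral $\int_{|x-y|<r}|f(x)-f(y)|\,|x-y|^{-d-s}\,dy$ is \emph{not} controlled pointwise by a maximal function of $(-\Delta)^{s/2}f$. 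Running the split-and-optimize argument honestly, the near part is bounded by $r^{1-s}M(\nabla f)(x)$ (via a first-order difference estimate) and the far part by $r^{-s}M f(x)$; optimizing in $r$ produces $M f(x)^{1-s}M(\nabla f)(x)^{s}$, i.e.\ an interpolation between $f$ and $\nabla f$, not a bound in terms of $(-\Delta)^{s/2}f$. There is no clean way to convert this into the stated pointwise inequality, and to my knowledge no such pointwise chain-rule estimate holds; this is precisely why the Christ--Weinstein proof works through a Littlewood--Paley/paraproduct decomposition and delivers an $L^p$ inequality directly rather than a pointwise one. The correct route to the Lorentz-space statement is therefore to first obtain the Lebesgue chain rule by the dyadic argument, and then upgrade to Lorentz norms either by rerunning the dyadic estimates with Lorentz-space H\"older and the boundedness of the square function on $L^{r,\rho}$, or by a careful real-interpolation argument (noting that the estimate is not linear or multilinear in a form that makes Marcinkiewicz interpolation immediate, so some care is required with the $F'(f)$ factor). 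Your fallback of simply invoking Aloui--Tayachi's proof is, in effect, exactly what the paper does.
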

	At the end of this subsection, we recall the Strichartz estimates in the Lorentz spcae.   
\begin{definition}[Admissibility] 
	A pair $(p,n)$ is said to be Schr\"odinger admissible, for short $(p,n)\in \Lambda $, where
	\begin{equation}
		\Lambda=\left\{(p,n): 2\le p,n\le \infty,\  	\frac{2}{p}+\frac{d}{n}=\frac{d}{2},\ (p,n,d)\neq (2,\infty ,2)\right\}.\notag
	\end{equation}
\end{definition}

\begin{proposition}[Strichartz estimates\cite{Keel-Tao,Taggart}]\label{P:SZ}\ \\
	\begin{itemize}
		\item Let $(m,n)\in \Lambda $ with $r<\infty$. Then for any $f\in L^2(\mathbb{R}^d)$
		\begin{align}  
			\|e^{it\Delta }f\|_{L^m_t L^{n,2}_x(\mathbb{R}\times \mathbb{R}^d)} \lesssim \|f\|_{L^2_x(\mathbb{R}^d)}.\notag
		\end{align}
		
		\item Let $(q_1, r_1), (q_2,r_2)\in \Lambda $ with $r_1, r_2<\infty$, $t_0\in \mathbb{R}$ and $I\subset \mathbb{R}$ be an interval containing $t_0$. Then 		for any $F\in L_t^{q_2'}L^{r_2',2}_x(I\times \mathbb{R}^d)$ 
		\begin{align} 
			\left\|\int_{t_0}^t e^{i(t-\tau)\Delta } F(\tau) d\tau\right\|_{L^{q_1}_tL^{r_1,2}_x(I\times \mathbb{R}^d)} \lesssim \|F\|_{L^{q_2'}_tL^{r_2',2}_x(I\times \mathbb{R}^d)}.\notag
		\end{align}
	\end{itemize}
\end{proposition}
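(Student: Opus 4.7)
The plan is to reduce both bounds to the Keel--Tao $TT^\ast$ scheme, upgraded to Lorentz spaces on the spatial side via real interpolation. The two base ingredients are mass conservation $\|e^{it\Delta}f\|_{L^2_x}=\|f\|_{L^2_x}$ and the classical dispersive bound $\|e^{it\Delta}f\|_{L^\infty_x}\lesssim |t|^{-d/2}\|f\|_{L^1_x}$. Real interpolation between them (Marcinkiewicz in Lorentz form) produces the refined fixed-time estimate
\[
\|e^{it\Delta}f\|_{L^{n,2}_x} \lesssim |t|^{-d(\frac{1}{2}-\frac{1}{n})}\|f\|_{L^{n',2}_x}, \qquad 2\le n < \infty.
\]
The improvement over the scalar $L^n_x$ bound is that the Lorentz second index is pushed down to $2$; this slack is exactly what lets the endpoint survive later.

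For the homogeneous estimate I would apply $TT^\ast$: boundedness of $f\mapsto e^{it\Delta}f$ from $L^2_x$ into $L^m_tL^{n,2}_x$ is equivalent to boundedness of the bilinear form
\[
B(F,G)=\int\!\!\int\langle e^{i(t-s)\Delta}F(s),G(t)\rangle_{L^2_x}\,ds\,dt
\]
on $L^{m'}_tL^{n',2}_x\times L^{m'}_tL^{n',2}_x$, since $(L^{n,2}_x)^\ast=L^{n',2}_x$. Inserting the fixed-time Lorentz dispersive bound reduces matters to the time fractional integral with kernel $|t-s|^{-d(\frac{1}{2}-\frac{1}{n})}$; in the non-endpoint range $m>2$ this is handled by the Hardy--Littlewood--Sobolev inequality, which in its Lorentz form is precisely the convolution inequality already recorded above.

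The main obstacle is the endpoint $m=2$, $n=\tfrac{2d}{d-2}$, admissible for $d\ge 3$, where the time fractional integral has critical homogeneity and HLS fails. Here I would invoke the Keel--Tao bilinear argument: dyadically decompose $|t-s|\sim 2^k$, interpolate at each scale between the trivial $L^2_x\to L^2_x$ bound and the full dispersive bound, and sum the pieces via an atomic decomposition on the time variable. It is precisely the Lorentz refinement $L^{n,2}_x$ (rather than $L^n_x$) that delivers the summability needed to close the dyadic sum, which is the content of Taggart's Lorentz-space extension of Keel--Tao.

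For the inhomogeneous estimate I would again dualize: the desired bound is equivalent to a bilinear pairing between $F$ and a test function in $L^{q_1'}_tL^{r_1',2}_x$, and the \emph{unretarded} version with $\int_\mathbb{R}$ follows from the homogeneous estimate applied to both $(q_1,r_1)$ and $(q_2,r_2)$. To pass to the retarded integral $\int_{t_0}^t$ I would invoke the Christ--Kiselev lemma, valid whenever $q_1>q_2'$; the diagonal endpoint $q_1=q_2=2$ has to be obtained directly via the same Keel--Tao bilinear machinery rather than Christ--Kiselev. Throughout, the only genuinely delicate step is the endpoint bilinear estimate; the rest is bookkeeping with Lorentz interpolation, duality, and H\"older/HLS in Lorentz spaces.
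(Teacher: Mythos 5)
The paper states this proposition as a quoted result, citing Keel--Tao and Taggart, and gives no proof of its own. Your sketch is precisely the standard argument from those references: real interpolation of the dispersive bound into Lorentz form, $TT^\ast$ plus Hardy--Littlewood--Sobolev (in Lorentz convolution form) off the endpoint, the Keel--Tao dyadic bilinear decomposition at the $L^2_t$ endpoint, duality for the untruncated inhomogeneous estimate, and Christ--Kiselev (with the double endpoint $q_1=q_2=2$ handled separately) for the retarded integral. So your proposal is correct and matches the approach of the cited sources; no comparison with a different method is warranted since the paper itself does not supply a proof.
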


\subsection{Preliminaries on the Cauchy problem.}\label{s:2.2}
In this subsection, we recall some results on the Cauchy problem  (\ref{nls}). Let  $I$ be an interval and denote 
 \begin{equation}
 	\|f\|_{S(I)}:=\|f\|_{L^ \gamma _tL_x^{p,2}(I\times \mathbb{R} ^d)},\ \|f\|_{Z(I)}:=\|\nabla f\|_{L^ \gamma _tL_x^{\rho,2}(I\times \mathbb{R} ^d)},\ \|f\|_{N(I)}:=\|f\|_{L^2_tL_x^{\frac{2d}{d+2},2}(I\times \mathbb{R} ^d)},\notag
 \end{equation}
 where 
  \begin{equation}
 	\gamma:=2(\alpha +1),\qquad \rho:=\frac{2d(\alpha +1)}{d+2-2b+2\alpha },\qquad p:=\frac{2d(\alpha +1)}{d-2b}\notag
 \end{equation}
 satisfy 
 	\begin{equation}
 	\frac{d+2}{2d}=\frac{b}{d}+\frac{\alpha }{p}+\frac{1}{\rho}=\frac{b+1}{d}+\frac{\alpha +1}{p},\ \frac{1}{2}=\frac{\alpha +1}{\gamma }.\label{zb}
 \end{equation}

\begin{theorem}[\cite{Aloui,Liu-Zhang}]\label{T:CP}
	For any  $u_0\in \dot H^1(\mathbb{R} ^d)$ and  $t_0\in \mathbb{R} $, there exists  a unique  maximal solution  $u:(-T_-(u_0),T_+(u_0))\times \mathbb{R} ^d\rightarrow \mathbb{C}$  to (\ref{nls}) with  $u(t_0)=u_0$.     This solution also has the following properties: \\
	(a) If  $T_+=T_+(u_0)<\infty $, then  $ \|u\|_{S(0,T_+)} =+\infty $. An analogous result holds for  $T_-(u_0)$.  \\
	(b) If  $ \|u\|_{S(0,T_+)}<+\infty  $, then  $T_+=\infty $ and  $u$ scatters as  $t\rightarrow+\infty $.      An analogous result holds for  $T_-(u_0)$. \\
	(c)  For any $\psi\in\dot H^1(\mathbb{R}^d)$, there exist $T>0$ and a solution $u:(T,\infty)\times\mathbb{R}^d\to\mathbb{C}$ to \eqref{nls} obeying $e^{-it\Delta}u(t)\to \psi$ in $\dot H^1$ as $t\to\infty$.  The analogous statement holds backward in time.\\
	(d) There exists  $\eta_0>0$ such that if  $ \|u_0\|_{\dot H^1}<\eta_0 $, then  $u$ is a global solution and scatters to  $0$ in  $\dot H^1(\mathbb{R} ^d)$.            
\end{theorem}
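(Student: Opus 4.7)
The plan is to set up a Banach fixed-point argument for the Duhamel formulation
$$u(t) = e^{i(t-t_0)\Delta} u_0 + i \int_{t_0}^{t} e^{i(t-s)\Delta}\bigl(|x|^{-b}|u|^\alpha u\bigr)(s)\,ds,$$
working inside a Strichartz-type ball defined by the Lorentz norms $S(I)$, $Z(I)$, together with $\|\nabla u\|_{L^\infty_t L^2_x(I)}$. The Strichartz bounds of Proposition \ref{P:SZ} reduce everything to a nonlinear estimate on $F(u):=|x|^{-b}|u|^\alpha u$ in the dual space $N(I)$. Combining the Lorentz H\"older inequality (via O'Neil) with Sobolev embedding in Lorentz spaces (Lemma \ref{L:sobolev}) and the crucial observation that $|x|^{-b}\in L^{d/b,\infty}(\mathbb{R}^d)$, the exponent identities in (\ref{zb}) are exactly what is needed so that the product and chain rules of Lemmas \ref{L:leibnitz}--\ref{L:6141} yield
$$\|\nabla F(u)\|_{N(I)} \lesssim \|u\|_{S(I)}^{\alpha}\, \|u\|_{Z(I)}, \qquad \|F(u)-F(v)\|_{N(I)} \lesssim \bigl(\|u\|_{S(I)}+\|v\|_{S(I)}\bigr)^{\alpha}\|u-v\|_{S(I)}.$$

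With these in hand, the contraction closes on any interval $I\ni t_0$ on which $\|e^{i(t-t_0)\Delta}u_0\|_{S(I)}$ is sufficiently small, producing a unique maximal solution on $(-T_-(u_0),T_+(u_0))$. Since Sobolev embedding together with Strichartz gives $\|e^{it\Delta}u_0\|_{S(\mathbb{R})}\lesssim \|\nabla u_0\|_{L^2}$, this immediately yields the small-data scattering statement (d); for general $u_0\in\dot H^1$ one partitions $\mathbb{R}$ into finitely many subintervals on which the free evolution is small and iterates. Part (a) follows from the standard blow-up alternative: if $T_+<\infty$ but $\|u\|_{S(0,T_+)}<\infty$, split $[0,T_+)$ into pieces on which the $S$-norm lies below the contraction threshold, apply the fixed-point argument starting from $u(T_+-\varepsilon)$, and extend past $T_+$, contradicting maximality.

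For part (b), once $\|u\|_{S(0,\infty)}<\infty$, a global application of the nonlinear estimate and Strichartz gives $\|u\|_{Z(\mathbb{R})}+\|\nabla u\|_{L^\infty_t L^2_x}<\infty$, whence for $s<t$
$$\bigl\|e^{-it\Delta}u(t)-e^{-is\Delta}u(s)\bigr\|_{\dot H^1} \lesssim \|\nabla F(u)\|_{N([s,t])}\,\longrightarrow 0 \text{ as } s,t\to\infty,$$
so $e^{-it\Delta}u(t)$ is Cauchy in $\dot H^1$ and defines the asymptotic state $u_+$. For (c), given $\psi\in \dot H^1$, the tail smallness $\|e^{it\Delta}\psi\|_{S([T,\infty))}\to 0$ as $T\to\infty$ lets us run a contraction on $[T,\infty)$ centred at the free profile $e^{it\Delta}\psi$ for $T$ large enough, producing a solution $u$ with $\|u(t)-e^{it\Delta}\psi\|_{\dot H^1}\to 0$; the backward case is symmetric.

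The main technical obstacle is the singularity of $|x|^{-b}$: since $|x|^{-b}\notin L^{d/b}(\mathbb{R}^d)$, the required nonlinear bound cannot be obtained in ordinary Lebesgue spaces and pure Sobolev Strichartz theory fails. The proof therefore hinges on working throughout in Lorentz-refined Strichartz spaces, placing the weight in $L^{d/b,\infty}$ via the O'Neil H\"older inequality so that it is absorbed cleanly once and for all; with this setup the analysis reduces to the familiar energy-critical template, and the four conclusions of the theorem follow in the order (d)$\Rightarrow$(a)$\Rightarrow$(b)$\Rightarrow$(c) sketched above.
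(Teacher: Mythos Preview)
The paper does not prove this theorem at all: it is quoted as a black box from \cite{Aloui,Liu-Zhang}, with no proof or sketch given. Your proposal is in fact an accurate outline of the argument carried out in those references---a contraction in Lorentz--Strichartz spaces $S(I)\cap Z(I)$, with the key point being that $|x|^{-b}\in L^{d/b,\infty}$ lets O'Neil's H\"older inequality close the nonlinear estimate where ordinary Lebesgue exponents would fail---so there is nothing to compare against here. One small slip: your difference estimate should carry a gradient, i.e.\ $\|\nabla(F(u)-F(v))\|_{N(I)}$, since the contraction is run at the $\dot H^1$ level; as written the bound on $\|F(u)-F(v)\|_{N(I)}$ alone would not control $\|u-v\|_{Z(I)}$.
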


	Next, we record the following stability result of the Cauchy problem (\ref{nls}). 

	\begin{proposition}[\cite{Liu-Zhang}] \label{P:stab} Suppose $\tilde u:I\times\mathbb{R}^d\to\mathbb{C}$ obeys
		\begin{equation}
			\|\tilde u\|_{L_t^\infty \dot H_x^1(I\times\mathbb{R}^d)} + \|\tilde u\| _{S(I)}\leq E<\infty. \notag
		\end{equation}
		There exists $\varepsilon _1 = \varepsilon_1(E)>0$ such that if
		\begin{align*}
			\| \nabla\bigl\{(i\partial_t+\Delta)\tilde u + |x|^{-b}|\tilde u|^\alpha \tilde u\bigr\} \|_{L^2_tL_x^{\frac{2d}{d+2},2}(I\times \mathbb{R} ^d)}& \leq \varepsilon<\varepsilon_1,\\
			\|e^{i(t-t_0)\Delta}[u_0-\tilde u|_{t=t_0}]\|_{S(I)}& \leq \varepsilon<\varepsilon_1,
		\end{align*}
		for some $t_0\in I$ and $u_0\in\dot H^1(\mathbb{R}^d)$ with $\|u_0-\tilde u|_{t={t_0}}\|_{\dot H^1}\lesssim_E 1$, then there exists a unique solution $u:I\times\mathbb{R}^d\to\mathbb{C}$ to (\ref{nls}) with $u|_{t=t_0}=u_0$, which satisfies
		\begin{equation}
			\|u-\tilde u\|_{S(I)}\lesssim \varepsilon \quad\text{and}\quad \|u\|_{L_t^\infty \dot H_x^1(I\times\mathbb{R}^d)}+\|u\|_{S(I)}\lesssim_E 1.\notag
		\end{equation}
	\end{proposition}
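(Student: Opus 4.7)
The plan is to set $w := u - \tilde u$ and close a Strichartz bootstrap for $w$ on a partition of $I$ into finitely many subintervals where $\tilde u$ has small $S$-norm. Subtracting the two equations gives
\begin{equation}
(i\partial_t + \Delta) w = -|x|^{-b}\bigl(F(\tilde u + w) - F(\tilde u)\bigr) + e, \qquad F(z) := |z|^\alpha z, \notag
\end{equation}
where $e := -(i\partial_t + \Delta)\tilde u - |x|^{-b} F(\tilde u)$ is the prescribed error. Since $\|\tilde u\|_{S(I)} \le E$, I partition $I = \bigcup_{j=1}^J I_j$ so that $\|\tilde u\|_{S(I_j)} \le \eta$ for a small $\eta = \eta(E) > 0$ to be chosen; the number of pieces $J$ depends only on $E$ and $\eta$.

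On each $I_j$, applying Strichartz (Proposition \ref{P:SZ}) to the differentiated Duhamel formula yields
\begin{equation}
\|w\|_{Z(I_j)} + \|w\|_{S(I_j)} \lesssim \bigl\|e^{i(t-t_j)\Delta} w(t_j)\bigr\|_{S(I_j)} + \bigl\|\nabla\bigl[|x|^{-b}(F(\tilde u + w) - F(\tilde u))\bigr]\bigr\|_{N(I_j)} + \|\nabla e\|_{N(I_j)}. \notag
\end{equation}
The nonlinear term is estimated by using the product rule (Lemma \ref{L:leibnitz}) to distribute $\nabla$, the chain rule (Lemma \ref{L:6141}) applied to $F$ (allowed because $d \le 5$ forces $\alpha \ge 1$), H\"older in Lorentz spaces together with $|x|^{-b} \in L^{d/b,\infty}$, and the exponent identities (\ref{zb}). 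This yields a bound of the form
\begin{equation}
C \bigl(\|\tilde u\|_{S(I_j)}^\alpha + \|w\|_{S(I_j)}^\alpha\bigr)\bigl(\|\tilde u\|_{Z(I_j)} + \|w\|_{Z(I_j)}\bigr). \notag
\end{equation}
With $\eta$ chosen sufficiently small, a standard continuity argument in the quantity $\|w\|_{S(I_j)} + \|w\|_{Z(I_j)}$ closes
\begin{equation}
\|w\|_{Z(I_j)} + \|w\|_{S(I_j)} \lesssim \bigl\|e^{i(t-t_j)\Delta} w(t_j)\bigr\|_{S(I_j)} + \|\nabla e\|_{N(I_j)}. \notag
\end{equation}

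I then iterate from $t_0$ outward across the $J$ subintervals, controlling the free evolution of $w$ at the new initial time through the inhomogeneous Strichartz bound. Each passage multiplies the output by a universal constant, so after $J$ iterations (with $J$ depending only on $E$) the accumulated estimate becomes $\|w\|_{S(I)} + \|w\|_{Z(I)} \lesssim_E \varepsilon$. Existence of $u$ on all of $I$ and the $\lesssim_E 1$ bound then follow from Theorem \ref{T:CP} combined with these a priori estimates, while uniqueness is immediate from the Cauchy theory. The main technical obstacle is verifying the nonlinear estimate within the Sobolev--Lorentz framework: the singular weight $|x|^{-b}$ forces H\"older in Lorentz spaces rather than ordinary Lebesgue spaces, and the chain rule for $F(z) = |z|^\alpha z$ requires $\alpha \ge 1$, which is exactly what the hypothesis $d \le 5$ ensures.
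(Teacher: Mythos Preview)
The paper does not prove this proposition; it merely records it with a citation to \cite{Liu-Zhang}. Your outline follows the standard stability argument (partition by small $S$-norm, Strichartz plus nonlinear estimates in the Lorentz framework, bootstrap, iterate), which is indeed the method used in that reference.

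Two small points of sloppiness are worth flagging. First, the schematic nonlinear bound you wrote,
\[
C\bigl(\|\tilde u\|_{S(I_j)}^\alpha + \|w\|_{S(I_j)}^\alpha\bigr)\bigl(\|\tilde u\|_{Z(I_j)} + \|w\|_{Z(I_j)}\bigr),
\]
contains the term $\|\tilde u\|_{S(I_j)}^\alpha\|\tilde u\|_{Z(I_j)}$, which has no factor of $w$ and cannot be absorbed; the correct estimate for $\nabla[|x|^{-b}(F(\tilde u+w)-F(\tilde u))]$ always carries at least one factor of $\|w\|_{S(I_j)}$ or $\|w\|_{Z(I_j)}$. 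Second, the hypotheses give only $\|\tilde u\|_{L_t^\infty\dot H_x^1}+\|\tilde u\|_{S(I)}\le E$, not $\|\tilde u\|_{Z(I_j)}$; you must first run Strichartz on $\tilde u$ itself (using the approximate equation and the smallness of $\|\tilde u\|_{S(I_j)}$ to absorb the nonlinearity) to obtain $\|\tilde u\|_{Z(I_j)}\lesssim_E 1$ before estimating $w$. Finally, your remark that $d\le 5$ is needed for the chain rule is specific to this paper's standing hypotheses; the stability result in \cite{Liu-Zhang} is stated for all $d\ge 3$, $0<b<\min\{2,d/2\}$, and handles $\alpha<1$ as well.
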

	
	Given \(\phi \in \dot H^1(\mathbb{R}^d)\) and a diverging sequence \(\{x_n\}_{n=1}^{\infty }\),     we can deduce from Proposition   \ref{P:stab} that,  for  \(n\)  sufficiently large, the solution  $v_n$ to the Cauchy problem (\ref{nls}) with initial data \(\phi(x - x_n)\) exists globally and scatters. In fact, by approximating \(e^{it\Delta} \phi(x)\) with a smooth, compactly supported function \(\psi(t, x)\) and  using the decay of  \(|x|^{-b}\) at infinity, we can directly verify that the scattering solutions \(\widetilde{u}_n(t, x) =: e^{it\Delta} \phi(x - x_n)\)  are 	 good approximate solutions of  $v_n$ when $ n $ is  sufficiently large.  
	Furthermore, performing a spatial rescaling, we can obtain the following result:
	\begin{proposition}[\cite{Liu-Zhang}]\label{P:embed} Let $\lambda_n \in (0,\infty)$, $x_n\in \mathbb{R}^d$, and $t_n\in\mathbb{R}$ satisfy
		\[
		\lim_{n\to\infty} \tfrac{|x_n|}{\lambda_n} = \infty \quad\text{and}\quad t_n\equiv 0  \quad\text{or}\quad  t_n\to\pm\infty.
		\]
		Let $\phi\in \dot H^1(\mathbb{R}^d)$ and define
		\begin{equation}
			\phi_n(x) = \lambda_n^{-\frac{d-2}2}[e^{it_n\Delta}\phi](\tfrac{x-x_n}{\lambda_n}). \notag
		\end{equation}
		Then for all $n$ sufficiently large, there exists a global solution $v_n$ to \eqref{nls} satisfying
		\[
		v_n(0) = \phi_n  \quad\text{and}\quad   \|v_n\|_{S(I)} \lesssim  \|\nabla v_n\|_{Z(I)}\lesssim 1,
		\] 	
		with implicit constant depending only on $\|\phi\|_{\dot H^1}$.
	\end{proposition}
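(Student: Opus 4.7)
The strategy is to exhibit a linear approximate solution with controlled Strichartz norms and vanishing nonlinear error, then invoke the stability result Proposition~\ref{P:stab} to upgrade to the genuine solution $v_n$. I will set
\[
\tilde v_n(t,x) := \lambda_n^{-\frac{d-2}{2}}\bigl[e^{i(t/\lambda_n^2 + t_n)\Delta}\phi\bigr]\!\left(\tfrac{x-x_n}{\lambda_n}\right),
\]
so that $\tilde v_n(0,\cdot)=\phi_n$ and the initial-data hypothesis of Proposition~\ref{P:stab} is trivially satisfied (with zero error). Using the scaling symmetry $u(t,x)\mapsto \lambda^{(d-2)/2}u(\lambda^2 t,\lambda x)$, under which the equation, the norm $\dot H^1$, and the spacetime norms $S$, $Z$, $N$ are all invariant, I reduce to the case $\lambda_n\equiv 1$, so $|x_n|\to\infty$. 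Proposition~\ref{P:SZ} combined with the Sobolev--Lorentz embedding (Lemma~\ref{L:sobolev}) then gives the uniform bounds $\|\tilde v_n\|_{L^\infty_t\dot H^1_x}+\|\tilde v_n\|_{S(\mathbb R)}+\|\nabla \tilde v_n\|_{Z(\mathbb R)}\lesssim \|\phi\|_{\dot H^1}$.

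The crux is to verify the error bound
\[
\bigl\|\nabla\bigl\{(i\partial_t+\Delta)\tilde v_n+|x|^{-b}|\tilde v_n|^\alpha \tilde v_n\bigr\}\bigr\|_{N(\mathbb R)}=\bigl\|\nabla\bigl(|x|^{-b}|\tilde v_n|^\alpha \tilde v_n\bigr)\bigr\|_{N(\mathbb R)}\longrightarrow 0.
\]
First, approximate $\phi$ in $\dot H^1$ by $\psi\in C_c^\infty(\mathbb R^d)$; the difference contributes a term that is small uniformly in $n$, by the product and chain rules in Lorentz spaces (Lemmas~\ref{L:leibnitz}--\ref{L:6141}) applied with $|x|^{-b}\in L^{d/b,\infty}$ together with the Strichartz bounds above. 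For the main piece, change variables $y=x-x_n$, so that the factor $|x|^{-b}$ becomes $|y+x_n|^{-b}$ and the approximate solution becomes $\tilde w_n(t,y)=[e^{i(t+t_n)\Delta}\psi](y)$, a Schwartz-like evolution uniformly bounded in the relevant Strichartz--Lorentz norms. I will decompose $|y+x_n|^{-b}=\chi_n(y)|y+x_n|^{-b}+(1-\chi_n(y))|y+x_n|^{-b}$, where $\chi_n$ is a smooth cutoff of $\{|y+x_n|\le |x_n|/2\}$. On the support of $1-\chi_n$, the potential is uniformly bounded by $(|x_n|/2)^{-b}\to 0$, which after the product/chain rules yields decay; on the support of $\chi_n$ one has $|y|\ge |x_n|/2$, and the spatial decay of $\tilde w_n$ (supplemented, when $t_n\to\pm\infty$, by dispersive smallness of $e^{i(t+t_n)\Delta}\psi$ in the relevant Strichartz norm on any finite time interval) forces the integrand to be small in the region of integration. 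Combining the two contributions, and letting the approximation error vanish, produces the desired $o(1)$ bound.

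With the Strichartz bounds, error bound, and initial-data matching in hand, Proposition~\ref{P:stab} produces a unique global solution $v_n$ with $v_n(0)=\phi_n$ satisfying $\|v_n\|_{S(\mathbb R)}+\|\nabla v_n\|_{Z(\mathbb R)}\lesssim_{\|\phi\|_{\dot H^1}}1$; undoing the scaling restores the original statement, and the inequality $\|v_n\|_{S(I)}\lesssim\|\nabla v_n\|_{Z(I)}$ is precisely the Sobolev--Lorentz embedding $\dot W^1 L^{\rho,2}\hookrightarrow L^{p,2}$ from Lemma~\ref{L:sobolev}, whose exponents match via the identities in \eqref{zb}. The main obstacle is the second step: controlling the singular weight $|x|^{-b}$ together with a gradient in Lorentz norms when $\tilde v_n$ is not compactly supported, which necessitates the approximation-plus-spatial-cutoff scheme above and careful bookkeeping of the product and chain rules in Lorentz spaces.
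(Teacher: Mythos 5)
Your strategy matches the paper's in its essentials: linear flow of the rescaled, translated datum as the approximate solution, reduction by scaling to $\lambda_n\equiv 1$, approximation by smooth compactly supported data, exploitation of the decay of $|x|^{-b}$, and conclusion via Proposition~\ref{P:stab}. The difference is in how the error $\|\nabla(|x|^{-b}|\tilde v_n|^{\alpha}\tilde v_n)\|_{N(\mathbb R)}$ is controlled. The paper approximates the full \emph{spacetime} linear flow $e^{it\Delta}\phi$ by a $\psi(t,x)$ compactly supported in $\mathbb R\times\mathbb R^d$, so that the nonlinear error is supported on a fixed compact set on which $|y+x_n|^{-b}$ tends to zero in the strong $L^{d/b}$ norm (hence in $L^{d/b,\infty}$), and the H\"older estimate closes immediately. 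You approximate only the initial datum and then cut off the \emph{potential}, which introduces two subtleties your sketch elides. First, on the far region $\{|y+x_n|>|x_n|/2\}$ the truncated potential $(1-\chi_n)|y+x_n|^{-b}$ is small only in $L^\infty$; its $L^{d/b,\infty}$ norm does \emph{not} tend to zero, since that norm is translation-invariant and dominated by the fixed constant $\||x|^{-b}\|_{L^{d/b,\infty}}$. The product/chain rules (Lemmas~\ref{L:leibnitz},~\ref{L:6141}) place the potential in $L^{d/b,\infty}$ exactly, so the claimed decay is not automatic: one must either interpolate between the $L^\infty$ and $L^{d/b,\infty}$ bounds and shift the companion Lorentz exponents (permissible because the Schwartz evolution of $\psi$ lies in stronger spaces), or else truncate the nonlinearity instead of the potential. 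Second, the near-region estimate invokes spatial decay of $e^{i(t+t_n)\Delta}\psi$ on $\{|y|\ge|x_n|/2\}$, but this decay is not uniform over all $t$; a split into a compact time window (where spatial decay applies) and the complementary tail (where the finite global Strichartz norm gives a vanishing remainder) is needed in both cases, not only when $t_n\to\pm\infty$ as your parenthetical suggests. These points are repairable, but the paper's spacetime approximation sidesteps them by making the error term compactly supported from the start.
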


\subsection{Variational property of  the ground state  $W$. }
The ground state  (\ref{W})  solves  the nonlinear elliptic equation 
\begin{equation}
	\Delta W+|x|^{-b}W^{\alpha +1}=0,\label{Eq:W}
\end{equation}
and is characterized as the optimizers in Sobolev embedding inequality (Proposition  \ref{P:GN}).

Multiplying (\ref{Eq:W}) by   $W$ and integrating by parts yields the Pohozhaev's identity
\begin{equation}
	\|\nabla W\|_{L^2}^2= \||x|^{-b}W^{\alpha +2}\|_{L^1}.\label{identity:pohozhaev}
\end{equation}  

\begin{proposition}\label{P:GN}
	Let  $d\ge 3$. Then for any  $f\in \dot H^1(\mathbb{R} ^d)$ 
	\begin{equation}
		\||x|^{-b}|f|^{\alpha +2}\|_{L^1}\le  \||x|^{-b}W^{\alpha +2}\|_{L^1} \|\nabla W\|_{L^2}^{-(\alpha +2)} \|\nabla f\|_{L^2}^{\alpha +2}   .\label{2161}
	\end{equation}   
	Moreover, equality holds in  (\ref{2161}) if and only if  $f(x)=zW(\lambda x)   $  for some   $z\in \mathbb{C}$  and some  $\lambda>0$.  
\end{proposition}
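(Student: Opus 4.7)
The plan is to recast (2.6) as a sharp Sobolev-type inequality and identify $W$ as the extremizer of the Weinstein-type functional
$$
J(f) := \frac{\||x|^{-b}|f|^{\alpha+2}\|_{L^1}}{\|\nabla f\|_{L^2}^{\alpha+2}}, \qquad f \in \dot H^1(\mathbb{R}^d)\setminus\{0\}.
$$
Because $\alpha+2=\frac{2(d-b)}{d-2}$, $J$ is invariant under the energy-critical scaling and under phase multiplication $f \mapsto z\lambda^{(d-2)/2} f(\lambda\cdot)$, and Pohozaev's identity (2.5) gives $J(W)=\|\nabla W\|_{L^2}^{-2}$. Thus (2.6) is equivalent to $\sup_f J(f)=J(W)$, and the equality characterization reduces to identifying the full set of extremizers.

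The first step is a symmetrization reduction. Since $|\nabla|f||\leq|\nabla f|$, we may assume $f\geq 0$. The weight $|x|^{-b}$ coincides with its own radial decreasing rearrangement, so Hardy–Littlewood gives $\int|x|^{-b}f^{\alpha+2}\leq\int|x|^{-b}(f^*)^{\alpha+2}$, while P\'olya–Szeg\H{o} yields $\|\nabla f^*\|_{L^2}\leq\|\nabla f\|_{L^2}$; hence $J(f^*)\geq J(f)$, and it suffices to take a maximizing sequence of nonnegative radial decreasing functions. Normalize via the scaling symmetry so that $\|\nabla f_n\|_{L^2}=1$ and extract a weak limit $f_\infty$ in $\dot H^1$. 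The Strauss-type compactness of radial $\dot H^1$ into the weighted Lebesgue space $L^{\alpha+2}(|x|^{-b}\,dx)$---which exploits the local integrability of $|x|^{-b}$ at the origin (using $b<d/2$) together with its decay at infinity---shows that the nonlinear functional passes to the limit, yielding a nontrivial maximizer $f_\infty$.

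Any such maximizer satisfies an Euler–Lagrange equation of the form $\Delta f_\infty+c\,|x|^{-b}f_\infty^{\alpha+1}=0$ for some $c>0$; rescaling absorbs $c$ to produce a positive radial $\dot H^1$-solution of (2.4). An Emden–Fowler change of variables $u(r)=r^{-(d-2)/2}\phi(\log r)$ converts this into an autonomous ODE whose finite-energy trajectories constitute a one-parameter family of dilations of $W$, identifying $f_\infty$ with $W$ up to dilation and phase. Evaluating $J(W)$ via (2.5) delivers the sharp constant in (2.6). For the equality case, one traces backwards: equality in (2.6) forces equality in both Hardy–Littlewood and P\'olya–Szeg\H{o}, so $|f|$ is radial decreasing up to a phase, and the Euler–Lagrange step then forces $f(x)=zW(\lambda x)$.

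The principal obstacle is the compactness step: a priori a maximizing sequence could lose mass through concentration near the origin or spreading to infinity. The scaling normalization controls the latter, and the weight $|x|^{-b}$ with $b<\min\{2,d/2\}$ controls the former through a direct weighted estimate. A cleaner alternative would be to avoid symmetrization altogether and apply Lions' concentration-compactness principle directly, which both supplies an extremizer and characterizes the equality case in one step, at the cost of more elaborate profile decomposition bookkeeping.
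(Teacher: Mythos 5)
Your overall architecture — symmetrize to radial decreasing functions, extract an optimizer from a maximizing sequence, derive the Euler--Lagrange equation, and classify its positive radial $\dot H^1$ solutions via a Fowler-type change of variables — matches the paper's strategy. The radial reduction and the ODE step are essentially the paper's (the paper sets $\phi = r^{(d-2)/2}f$, multiplies the resulting equation by $2r^2\partial_r\phi$ and integrates once, which is the same Emden--Fowler reduction in different notation), so no issue there.

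The gap is in the compactness step. You claim a ``Strauss-type compactness'' of $\dot H^1_{\mathrm{rad}}$ into $L^{\alpha+2}(|x|^{-b}\,dx)$, but this embedding is \emph{not} compact here, precisely because $\alpha+2 = \tfrac{2(d-b)}{d-2}$ sits at the scaling-critical exponent for the weight $|x|^{-b}$. The dilation $f\mapsto \lambda^{\frac{d-2}{2}}f(\lambda\,\cdot)$ preserves both $\|\nabla f\|_{L^2}$ \emph{and} $\int|x|^{-b}|f|^{\alpha+2}\,dx$, so the rescaled bubbles $f_n = \lambda_n^{\frac{d-2}{2}}W(\lambda_n\,\cdot)$ with $\lambda_n\to\infty$ or $\lambda_n\to 0^+$ are radial decreasing, have $\|\nabla f_n\|_{L^2}=\|\nabla W\|_{L^2}$, converge weakly to $0$ in $\dot H^1$, yet $J(f_n)\equiv J(W)$: compactness visibly fails. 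Your normalization $\|\nabla f_n\|_{L^2}=1$ does not help, since it is invariant under exactly this dilation and so fixes neither the concentration scale nor the spreading scale; likewise the local integrability and decay of $|x|^{-b}$ (guaranteed by $b<d/2$) do not compensate for the criticality of the exponent. The pointwise Strauss bound $|f(x)|\lesssim |x|^{-(d-2)/2}\|f\|_{\dot H^1}$ gives $|x|^{-b}|f|^{\alpha+2}\lesssim |x|^{-d}\|f\|_{\dot H^1}^{\alpha+2}$, which is borderline non-integrable at both ends and supplies no uniform integrability. What is actually needed is an additional normalization of the concentration scale (e.g.\ requiring $\int_{|x|\le 1}|\nabla f_n|^2=\tfrac12$) \emph{or} a profile decomposition to quantify the possible scaling losses.

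The paper resolves this exactly by the route you relegate to a footnote: it applies G\'erard's bubble decomposition (Lemma~\ref{L:bubble}) to the radial maximizing sequence, obtains the kinetic and potential decoupling \eqref{11232}--\eqref{11233}, and deduces from $\sum_j\|\phi^j\|_{\dot H^1}^2\le 1$ and $1\le \sum_j\|\phi^j\|_{\dot H^1}^{\alpha+2}$ that there is exactly one nontrivial profile with $\|\phi^1\|_{\dot H^1}=1$. (Claim~\ref{C:361} then rules out a translated profile, though this is automatic once you have reduced to radial sequences since the space centers may be taken $x_n^j\equiv 0$.) So the ``cleaner alternative'' you mention at the end --- concentration-compactness --- is not an alternative at all: it is the indispensable replacement for the compactness step, and the rest of your argument sits on top of it unchanged.
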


The main tool that we need to prove  Proposition  \ref{P:GN}  is the following bubble decomposition  of \cite{Gerard}. 
\begin{lemma}[\cite{Gerard}]\label{L:bubble}
	For any bounded sequence  $\left\{f_n \right\}$ in  $\dot H^1$, the following holds up to a subsequence. There exist  $J^{*}\in \mathbb{N} \cup\left\{\infty  \right\}$; profiles  $\phi^j\in \dot H^1\setminus\left\{0 \right\}$; scaling parameters   $\lambda_n^j\in (0,\infty )$; space translation parameters  $x_n^j\in \mathbb{R} ^d$;  and remainders  $w_n^J$ so that the following decomposition holds for  $1\le J\le J^*$: 
	\begin{equation}
		f_n(x)=\sum_{j=1}^{J} (\lambda_n^j)^{-\frac{d-2}{2}}\phi^j(\frac{x-x_n^j}{\lambda_n^j})+w_n^J,\label{11231}
	\end{equation}       
	satisfying
	\begin{equation}
		\limsup_{J\rightarrow J^*}\limsup _{n\rightarrow \infty }  \|w_n^J\|_{L^{\frac{2d}{d-2}}} =0,\label{1124x2} 
	\end{equation}
	\begin{equation}
		\lim_{n\rightarrow\infty }\{ \|\nabla f_n\|_{L^2}^2-\sum_{j=1}^{J} \|\nabla \phi^j\|_{L^2}^2- \|\nabla w_n^J\|_{L^2}^2   \}=0,\label{11232}
	\end{equation}
	\begin{equation}
		\lim_ {n\rightarrow \infty } \{\int |x|^{-b}|f_n|^{\alpha +2}dx-\sum_{j=1}^{J}\int |x|^{-b}|\phi^j(x-\frac{x_n^j}{\lambda_n^j})|^{\alpha +2}dx-\int |x|^{-b}|w_n^J|^{\alpha +2}dx\}=0.\label{11233}
	\end{equation}
	In addition, we may assume that either  $x_n^j\equiv0$ or  $\frac{|x_n^j|}{\lambda_n^j}\rightarrow \infty $.   
\end{lemma}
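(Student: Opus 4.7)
The plan is to deduce this refined profile decomposition from the classical Gerard profile decomposition \cite{Gerard} by post-processing the translation parameters and then verifying the extra weighted decoupling identity (\ref{11233}). The classical version, obtained by iterated bubble extraction based on the refined Sobolev embedding (equivalently, a Littlewood-Paley or Besov characterization of $\dot H^1$), provides (\ref{11231}), (\ref{1124x2}), and (\ref{11232}) together with the pairwise asymptotic orthogonality
\[
\tfrac{\lnj}{\lnk} + \tfrac{\lnk}{\lnj} + \tfrac{|\xnj - \xnk|^2}{\lnj \lnk} \to \infty, \qquad j \ne k.
\]

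The next step is to normalize the translation parameters. Setting $y_n^j := \xnj/\lnj$ and passing to a diagonal subsequence, each $y_n^j$ either diverges in modulus or converges to some $y_\infty^j \in \mathbb{R}^d$. In the convergent case I will absorb the limit translation into the profile by replacing $\phi^j$ with $\tilde\phi^j(x) := \phi^j(x - y_\infty^j)$ and setting $\xnj \equiv 0$; the resulting change in the rescaled bubble is $o_n(1)$ in $\dot H^1$ by continuity of translation on $\dot H^1$, and is absorbed into the remainder $w_n^J$ without disturbing (\ref{1124x2}) or (\ref{11232}). This enforces the dichotomy $\xnj \equiv 0$ or $|\xnj|/\lnj \to \infty$.

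Finally, the weighted decoupling (\ref{11233}) is the main new ingredient and the principal technical step. For each bubble, the scale invariance of $\int |x|^{-b}|\cdot|^{\alpha+2}\,dx$ under $u(x)\mapsto \lambda^{(d-2)/2}u(\lambda x)$ reduces its contribution to $\int|x|^{-b}|\phi^j(x - y_n^j)|^{\alpha+2}\,dx$, matching the right-hand side of (\ref{11233}); under the dichotomy of the previous step this either equals $\int|x|^{-b}|\phi^j|^{\alpha+2}dx$ (when $\xnj\equiv 0$) or tends to zero (when $|y_n^j|\to\infty$, by a $C_c^\infty$ density argument and the decay of $|x|^{-b}$ at infinity). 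Cross-terms between distinct bubbles vanish by asymptotic orthogonality combined with the same density argument, since the rescaled and translated supports become effectively disjoint in the limit. The interaction with the remainder is controlled via the Lorentz-H\"older estimate $\||x|^{-b}|w_n^J|^{\alpha+2}\|_{L^1} \lesssim \|w_n^J\|_{L^{2d/(d-2),\alpha+2}}^{\alpha+2}$, together with real interpolation between $\|w_n^J\|_{L^{2d/(d-2),2}} \lesssim \|\nabla w_n^J\|_{L^2}$ (bounded) and the smallness $\|w_n^J\|_{L^{2d/(d-2)}}\to 0$ from (\ref{1124x2}). The main obstacle is precisely this decoupling (\ref{11233}): because the weight $|x|^{-b}$ breaks translation invariance, the usual scale-orthogonality of profiles must be combined carefully with the dichotomy normalization to handle cross-terms between bubbles whose centers remain at the origin and those escaping to infinity.
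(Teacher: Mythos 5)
The paper does not actually give a proof of this lemma: it cites Gérard and then, in the remark immediately following, asserts in one sentence that the same iterated-bubble-extraction argument used there to prove $L^{2d/(d-2)}$-decoupling suffices for the weighted functional in (\ref{11233}). Your plan is the natural fleshing-out of that remark and follows the same route (classical profile decomposition, then normalize the translations to the dichotomy, then verify the weighted decoupling).

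There is one genuine gap. The identity (\ref{11233}) is a single limit in $n$ for each fixed $J$, so the delicate point is the cross terms between each bubble and $w_n^J$ for that fixed $J$, not the size of the pure remainder term $\int |x|^{-b}|w_n^J|^{\alpha+2}$ (which appears explicitly on the right of (\ref{11233}) and need not vanish at fixed $J$). The Lorentz--H\"older/interpolation estimate you invoke controls only the pure remainder term and only after the additional limit $J\to J^*$; it is precisely the paper's (\ref{241}), which is used afterwards in the proof of Proposition~\ref{P:GN}, not in proving the lemma. To close (\ref{11233}) you instead need, for each fixed $j\le J$, that the renormalized remainder $\tilde w_n^j(y) := (\lambda_n^j)^{\frac{d-2}{2}}w_n^J(\lambda_n^j y + x_n^j)$ converges weakly to $0$ in $\dot H^1$ (this is built into Gérard's construction), and then the cross term, which after the same change of variables becomes $\int |y+y_n^j|^{-b}|\phi^j|^{\alpha+1}|\tilde w_n^j|\,dy$ and similar, is killed by Rellich compactness combined with a $C_c^\infty(\mathbb{R}^d\setminus\{0\})$ density argument (a weighted Brezis--Lieb type step); when $|y_n^j|\to\infty$ one instead uses the decay of the weight. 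As written, your plan never invokes this weak convergence and so does not actually reach (\ref{11233}). The dichotomy normalization and the bubble-bubble cross terms are handled correctly, and the observation about scale invariance of $\int|x|^{-b}|\cdot|^{\alpha+2}$ is exactly right.
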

\begin{remark}
	In \cite{Gerard}, the potential energy decoupling in (\ref{11233}) is given in terms of the  $L^{\frac{2d}{d-2}}$  norm. 
	However, the same arguments suffices to establish decoupling for the functional appearing in (\ref{11233}).  
\end{remark}
\begin{proof}[\textbf{Proof of  Proposition \ref{P:GN}}.] 
	By the standard rearrangement inequalities (c.f. \cite[Ch. 3]{Lieb}), the optimal constant can determined by the consideration of
	radial functions alone. Moreover, as   $\int |x|^{-b}|f(x)|^{\alpha +2}$  is strictly monotone under rearrangement, any optimizer must be radial.
	Let  $f_n$  be an optimizing sequence of radial functions for the problem
	 \begin{equation}
	 	\text{maximize} \qquad J(f):= \||x|^{-b}|f|^{\alpha +2}\|_{L^1}\div  \|\nabla f\|_{L^2}^{\alpha +2}\quad\text{subject to the constraint }\quad    \|f\|_{\dot H^1}=1.\notag 
	 \end{equation}
	 Applying Lemma \ref{L:bubble}, and passing to the requisite subsequence yields 
	 \begin{equation}
	 	\sup _{f}J(f)=\lim_{n\rightarrow\infty }J(f_n)\le \limsup_{J\rightarrow J^*}\limsup _{n\rightarrow\infty }  \sum_{j=1}^{J} \int |x|^{-b}|\phi^j(x-\frac{x_n^j}{\lambda_n^j})|^{\alpha +2}dx\le \sup_f J(f)\sum_{j=1}^{J^*} \|\phi^j\|_{\dot H^1}^{\alpha +2},\label{2162} 
	 \end{equation}
	 where in the first inequality we  used (\ref{11233}) and 
	 	\begin{equation}
	 	\limsup_{J\rightarrow J^*}\limsup _{n\rightarrow \infty }  \int |x|^{-b}|w_n^J|^{\alpha +2}dx=0.\label{241}
	 \end{equation}
	In fact,  using H\"older's inequality  and the interpolation in Lemma \ref{L:inter}, we have 
	\begin{equation}
		\int |x|^{-b}|w_n^J|^{\alpha +2}dx\lesssim   \||x|^{-b}\|_{L^{\frac{d}{b},\infty }} \|w_n^J\|^{\alpha +2}_{L^{\frac{2d}{d-2},\alpha +2}}\lesssim    \|w_n^J\|^{\alpha +2-\frac{d\alpha }{2}}_{L^{\frac{2d}{d-2},2}}\|w_n^J\|^{\frac{d\alpha }{2}}_{L^{\frac{2d}{d-2},\frac{2d}{d-2}}},\notag
	\end{equation} 
	where  $\alpha +2-\frac{d\alpha }{2}>0$ by the assumptions made at the begining of section \ref{s:2}.  
	This inequality together with  (\ref{1124x2}) and the embedding  $ \|w_n^J\|_{L^{\frac{2d}{d-2},2}} \lesssim  \|\nabla w_n^J\|_{L^2}\lesssim 1 $ yields (\ref{241}). 

On the other hand,  the kinetic decoupling (\ref{11232})  guarantees that
\begin{equation}
	\sum_{j=1}^{J^*}  \|\phi^j\|_{\dot H^1}^2\le \limsup_{n\rightarrow\infty }  \|\nabla f_n\|_{L^2}=1. \label{2163}
\end{equation}
Combining (\ref{2162}) and (\ref{2163}), we see that  $J^*=1$ and  $ \|\phi^1\|_{\dot H^1}=1 $.   
Since  $f_n$  is an optimizing sequence, $\phi^1$ must be an optimizer for  $J$  and hence for the embedding  (\ref{2161}).  

 The existence of optimizers is known, we turn to their characterization.  Let   $0\neq f\in \dot H^1$   denote such an  optimizer.  Replacing  $f$  by  $\beta f$  for some  $\beta>0$ , if necessary, we may assume that 
\begin{equation}
	 \|f\|_{\dot H^1(\mathbb{R} ^d)}^2=\int _{\mathbb{R} ^d}|x|^{-b}|f(x)|^{\alpha +2}dx.\notag 
\end{equation}
By assumption,   $f$ maximizes  $\int |x|^{-b}|f(x)|^{\alpha +2}dx$  among all functions that  subject to the constraint  $ \|f\|_{\dot H^1}=1 $.   Thus, it  satisfies  the Euler–Lagrange equation   $\Delta f+|x|^{-b}|f|^\alpha f=0$; here we exploited the normalization of  $f$  to determine the Lagrange multiplier.  Let  $\phi=r^{\frac{d-2}{2}}f$. Recalling that all optimizers must be radial, we obtain the equation for  $\phi:$ 
\begin{equation}
	\partial_{rr}\phi=-\frac{\partial_{r}\phi}{r}+(\frac{d-2}{2})^2\frac{\phi}{r^2}-\frac{1}{r}|\phi|^\alpha \phi.\notag
\end{equation}  
Multiplying the above equation  by  $2r^2\partial_{r}\phi$ and then integrating, we get 
\begin{equation}
	r^2(\partial_{r}\phi)^2=(\frac{d-2}{2})^2\phi^2-\frac{2}{\alpha +2}|\phi|^{\alpha +2}+c. \label{216w1}
	\end{equation} 
	As  $f\in \dot H^1(\mathbb{R} ^d)$ , there is a sequence  $r_n\rightarrow\infty $ so that   $|(\partial_{r}f_n)(r_n)|+|\frac{1}{r_n}f(r_n)|=o(r_n^{-d/2})$.  
	 Thus, the constant  $c$ in (\ref{216w1})   is  zero.  The resulting first-order ODE is separable:
	 \begin{equation}
	 	\int \frac{d\phi}{\sqrt{(\frac{d-2}{2})^2\phi^2-\frac{2}{\alpha +2}|\phi|^{\alpha +2}}}=\pm \int \frac{dr}{r}.\notag
	 \end{equation} 
	  Letting  $g=\sqrt{1-\frac{4}{(d-2)(d-b)}|\phi|^\alpha }$ and carrying out the  requisite integrals, we then deduce that   $f(x)=\lambda^{\frac{d-2}{2}}W(\lambda x)$ for some  $\lambda>0$.
\end{proof}

Finally, we  show the persistence of the Kinetic energy. 
\begin{lemma}
	\label{l:coercive}
	Let  $u\in C(I,\dot H^1(\mathbb{R} ^d))$  be a solution of (\ref{nls}) with initial data  $u_0$, and  $I=(-T_-,T_+)$ its maximal interval of existence. Assume that  $E(u_0)=E(W)$\\
	(a) if  $ \|\nabla u_0\| _{L^2}< \|\nabla W\| _{L^2}   $, then  $ \|\nabla u(t)\|_{L^2}< \|\nabla W\|_{L^2}  $ for  $t\in I$;\\
	(b) if  $ \|\nabla u_0\|_{L^2}= \|\nabla W\|_{L^2}  $, then  $u=W$ up to the symmetry of the equation;\\
	(c) if  $ \|\nabla u_0\|_{L^2}> \|\nabla W\|_{L^2}  $, then  $ \|\nabla u(t)\|_{L^2}> \|\nabla W\|_{L^2}  $    for  $t\in I$.  
\end{lemma}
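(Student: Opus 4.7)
\medskip

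\noindent\textbf{Proof proposal.} The plan is standard in the threshold theory: combine conservation of energy with the sharp Sobolev embedding of Proposition~\ref{P:GN}, prove the borderline case (b) by rigidity of the equality case in that inequality, and then deduce (a) and (c) by a continuity argument. Throughout, $\|\nabla u(t)\|_{L^2}$ is continuous in $t$ because $u\in C(I,\dot H^1)$, and we may freely use the Pohozaev identity $\|\nabla W\|_{L^2}^2=\||x|^{-b}W^{\alpha+2}\|_{L^1}$, which combined with the definition of $E$ gives $E(W)=\tfrac{\alpha}{2(\alpha+2)}\|\nabla W\|_{L^2}^2$.

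\emph{Step 1: the identity case (b).} Assume $\|\nabla u_0\|_{L^2}=\|\nabla W\|_{L^2}$ and $E(u_0)=E(W)$. Writing $E(u_0)=\tfrac12\|\nabla u_0\|_{L^2}^2-\tfrac{1}{\alpha+2}\||x|^{-b}|u_0|^{\alpha+2}\|_{L^1}$ and subtracting from $E(W)$, one finds $\||x|^{-b}|u_0|^{\alpha+2}\|_{L^1}=\||x|^{-b}W^{\alpha+2}\|_{L^1}$. Feeding this back into Proposition~\ref{P:GN} yields equality in \eqref{2161}, so the rigidity clause of that proposition forces $u_0(x)=zW(\lambda x)$ for some $z\in\mathbb{C}$ and $\lambda>0$. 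Matching the kinetic norm $\|\nabla u_0\|_{L^2}=\|\nabla W\|_{L^2}$ determines $|z|=\lambda^{(d-2)/2}$, hence $u_0(x)=e^{i\theta_0}\lambda^{(d-2)/2}W(\lambda x)$. Since $W$ solves the elliptic equation \eqref{Eq:W}, the function $t\mapsto e^{i\theta_0}\lambda^{(d-2)/2}W(\lambda x)$ is a (stationary, hence global) solution of \eqref{nls} with the same initial datum; uniqueness in Theorem~\ref{T:CP} then identifies $u$ with this symmetry-image of $W$.

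\emph{Step 2: the strict cases (a) and (c).} Fix a maximal-lifespan solution $u$ with $E(u_0)=E(W)$ and suppose first that $\|\nabla u_0\|_{L^2}<\|\nabla W\|_{L^2}$. The map $\varphi(t):=\|\nabla u(t)\|_{L^2}$ is continuous on $I$ with $\varphi(0)<\|\nabla W\|_{L^2}$. If the conclusion of (a) failed at some $t_1\in I$, the intermediate value theorem would produce $t_0\in I$ with $\varphi(t_0)=\|\nabla W\|_{L^2}$. Applying Step~1 to the initial time $t_0$ (using that $E(u(t_0))=E(W)$ by conservation) would force $u$ to agree with $W$ up to symmetries on all of $I$, contradicting $\varphi(0)<\|\nabla W\|_{L^2}$. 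This yields (a); the proof of (c) is the same continuity argument with the opposite strict inequality.

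\emph{Anticipated difficulty.} There is essentially no deep obstacle here: the lemma is a soft consequence of sharp interpolation and energy conservation, and the only mildly delicate point is the rigidity in Proposition~\ref{P:GN}, which has already been established in the optimizer discussion preceding the statement. One should simply be careful that the symmetry representative obtained in Step~1 is indeed a solution of \eqref{nls} on all of $\mathbb{R}$ (to close the uniqueness argument), which is immediate since $W$ is stationary for \eqref{nls}; and that the continuity argument in Step~2 uses only $u\in C(I,\dot H^1)$, which is part of the Cauchy theory recalled in Theorem~\ref{T:CP}.
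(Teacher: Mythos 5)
Your proposal is correct and follows essentially the same route as the paper: case (b) from the rigidity in Proposition~\ref{P:GN}, and cases (a), (c) by continuity of $t\mapsto\|\nabla u(t)\|_{L^2}$ together with case (b) applied at a hypothetical time where equality would hold. The paper's own proof is terser (it does not spell out the energy-conservation computation that gives equality in \eqref{2161}, nor the intermediate-value step), but the underlying argument is identical.
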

\begin{proof}
	Case  $(b)$ is a direct consequence of the variational characterization of  $W$ given by  Proposition \ref{P:GN}.  We now prove (a). If  $ \|\nabla u(t_0)\|_{L^2}= \|\nabla W\|_{L^2}  $ for some  $t_0\in I$, then  (b) implies that    $u=W$ up to the symmetry of the equation. This contradicts $ \|\nabla u_0\| _{L^2}< \|\nabla W\| _{L^2}   $. The proof of  (c) is similar to (a), so we omit the details.   
\end{proof}
 \subsection{The linearized operator around the ground state  $W$.}
 Consider  a solution  $u$ of
(\ref{nls}) close to  $W$  and let  $v(t,x):=u(t,x)-W(x)$, then  $v=v_1+iv_2$ satisfies the Schr\"odinger equation  
 \begin{equation}
	i\partial_{t}v+\Delta v+\mathcal{V}(v)+iR(v)=0,\notag
\end{equation}
where the linear operator 
 \begin{equation}
	\mathcal{V}(v):=(\alpha +1)|x|^{-b}W^\alpha  v_1+i|x|^{-b}W^\alpha  v_2, \label{V}
\end{equation}
and the remainder 
\begin{equation}
	R(v):=-i|x|^{-b}|W+v|^\alpha (W+v)+i|x|^{-b}W^{\alpha +1}+i(\alpha +1)|x|^{-b}W^\alpha v_1-|x|^{-b}W^\alpha v_2.\label{Rv}
\end{equation}

We will always  write equally    $f=\left(\begin{array}{    c   }
	f_1 \\ 
	f_2  
\end{array}\right)$ for a complex valued
function  $f$ with real part  $f_1$ and imaginary part  $f_2$.  Then  $v$ is a solution of the equation  
\begin{equation}
	\partial_{t}v+\mathcal{L}(v)+R(v)=0,\qquad  \mathcal{L}:=\left(\begin{array}{  c   c   }
		0	& \Delta +|x|^{-b}W^{\alpha } \\ 
		-\Delta -(\alpha +1)|x|^{-b}W^\alpha 	& 0   
	\end{array}\right) \label{Linear Eq}
\end{equation}

We will use the following linear and nonlinear estimates  about the linearized equation (\ref{Linear Eq}). Recall that the function spaces  $N(I),Z(I)$  are introduced in subsection \ref{s:2.2}.  
 \begin{lemma}[Linear estimates]
 	\label{lestimate}
Let  $\mathcal{V}$ be defined by (\ref{V}) and  $I$ be a finite time interval of length  $|I|$.  Then 
 	\begin{equation}
 		\|\nabla \mathcal{V}(f)\|_{N(I)}\lesssim  |I|^{\frac{\alpha }{\gamma }}  \|f\|_{Z(I)}.    \notag
 	\end{equation}
 \end{lemma}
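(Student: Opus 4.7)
The plan is to reduce to a fixed-time Lorentz-space estimate by H\"older in time, and then handle the spatial side by Leibniz plus H\"older in Lorentz spaces. Since $\mathcal V$ satisfies the pointwise bound $|\mathcal V(f)|\lesssim |x|^{-b}W^\alpha|f|$ and hence $|\nabla \mathcal V(f)|\lesssim |\nabla(|x|^{-b}W^\alpha)||f|+|x|^{-b}W^\alpha|\nabla f|$, the task reduces to estimating these two summands in $L^2_tL^{\frac{2d}{d+2},2}_x$. The identity $\frac{\alpha+1}{\gamma}=\frac{1}{2}$ from (\ref{zb}) rewrites as $\frac{1}{2}=\frac{1}{\gamma}+\frac{\alpha}{\gamma}$, so H\"older's inequality in $t$ produces the factor $|I|^{\alpha/\gamma}$ and leaves the fixed-time estimate
\begin{equation*}
\|\nabla(|x|^{-b}W^\alpha f)\|_{L^{\frac{2d}{d+2},2}_x}\lesssim \|\nabla f\|_{L^{\rho,2}_x}
\end{equation*}
to verify.

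For the ``main'' summand $|x|^{-b}W^\alpha\nabla f$, I would apply H\"older in Lorentz spaces with $|x|^{-b}\in L^{d/b,\infty}$, $W^\alpha\in L^{p/\alpha,\infty}$, and $\nabla f\in L^{\rho,2}$. The exponent compatibility is precisely the identity $\frac{d+2}{2d}=\frac{b}{d}+\frac{\alpha}{p}+\frac{1}{\rho}$ given in (\ref{zb}), while $W\in L^{p,\infty}$ follows from the fact that $W$ is bounded and decays like $|x|^{-(d-2)}$ at infinity, since the definition of $p$ together with $\alpha(d-2)=4-2b$ gives $p>\frac{d}{d-2}$.

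For the ``bad'' summand $\nabla(|x|^{-b}W^\alpha)f$, the key observation is the pointwise bound
\begin{equation*}
|\nabla(|x|^{-b}W^\alpha)|\lesssim |x|^{-b-1}W^\alpha,
\end{equation*}
obtained by computing $\nabla W=-(d-2)c|x|^{1-b}W^{(d-b)/(d-2)}\hat x$ directly from (\ref{W}), which yields $|x||\nabla W|\lesssim W$ pointwise. I would then close the estimate by H\"older in Lorentz with $|x|^{-b-1}\in L^{d/(b+1),\infty}$, $W^\alpha\in L^{p/\alpha,\infty}$, and $f\in L^{p,2}$; the last factor is controlled by $\|\nabla f\|_{L^{\rho,2}}$ through the Sobolev embedding of Lemma \ref{L:sobolev}, since the relations in (\ref{zb}) imply $\frac{1}{\rho}-\frac{1}{d}=\frac{1}{p}$. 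The matching exponent identity here is $\frac{d+2}{2d}=\frac{b+1}{d}+\frac{\alpha+1}{p}$, again recorded in (\ref{zb}).

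The main subtlety is precisely the extra singularity $|x|^{-b-1}$ created by the derivative falling on the weight $|x|^{-b}W^\alpha$; it is accommodated in the weak Lorentz space $L^{d/(b+1),\infty}$ as long as $b+1<d$, which is automatic from $b<\min\{2,d/2\}$ and $d\ge 3$. The argument thus uses the Lorentz-space framework in an essential way, in line with the Cauchy theory of \cite{Aloui,Liu-Zhang}.
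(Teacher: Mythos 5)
Your argument is correct and essentially the same as the paper's: both start from the pointwise bound $|\nabla\mathcal V(f)|\lesssim |x|^{-b-1}W^\alpha|f|+|x|^{-b}W^\alpha|\nabla f|$, apply H\"older in Lorentz spaces via the exponent identities in (\ref{zb}), use the Sobolev embedding $\dot W L^{\rho,2}\hookrightarrow L^{p,2}$ to control $\|f\|_{L^{p,2}_x}$ by $\|\nabla f\|_{L^{\rho,2}_x}$, and extract the factor $|I|^{\alpha/\gamma}$ from the fact that the weight is time-independent. The only cosmetic difference is that the paper absorbs this time factor into $\|W\|^\alpha_{L^\gamma_t L^{p,2}_x(I)}$ whereas you pull it out first by H\"older in $t$; the content is identical.
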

 \begin{proof}
 By  $|\nabla W|\lesssim  |x|^{-1}W$, we get the pointwise bound 
 \begin{equation}
 	|\nabla \mathcal{V}(f)|\lesssim  |x|^{-b-1}W^\alpha |f|+|x|^{-b}W^\alpha |\nabla f|.\notag
 \end{equation}
It then follows  from (\ref{zb}), H\"older's inequality and the embedding  $\dot W^{1,\rho}(\mathbb{R} ^d)\hookrightarrow L^p(\mathbb{R} ^d)$ that 
\begin{equation}
		 \|\nabla \mathcal{V}(f)\|_{N(I)}\lesssim \|W\|_{L_t^\gamma L_x^{p,2}}^\alpha ( \|f\|_{L^\gamma _tL^{p,2}_x}+ \|\nabla f\|_{L_t^\gamma L_x^{\rho,2}}    )\lesssim   |I|^{\frac{\alpha }{\gamma }}  \|f\|_{Z(I)}.   \notag
\end{equation}
Lemma \ref{lestimate} is proved.  
 \end{proof}
 \begin{lemma}[Non-linear estimates]
 	\label{nestimate}
Let  $R$ be defined by (\ref{Rv}) and  $I$ be a finite time interval.  Then 
 	\begin{equation}
 		 \|R(f)-R(g)\| _{L^{\frac{2d}{d+2},2}}\lesssim   \|f-g\|_{L^{\frac{2d}{d-2},2}}\left[  \|f\|_{L^{\frac{2d}{d-2},2}}+ \|g\|_{L^{\frac{2d}{d-2},2}}+ \|f\|_{L^{\frac{2d}{d-2},2}}^\alpha + \|g\|_{L^{\frac{2d}{d-2},2}}^\alpha    \right] \label{2121}
 	\end{equation}
 and 
 	\begin{eqnarray}
 		\|\nabla R(f)-\nabla R(g)\|_{N(I)}\lesssim  \|f-g\|_{Z(I)} \left[|I|^{\frac{\alpha -1}{\gamma }} ( \|f\|_{Z(I)} + \|g\|_{Z(I)}   )+ \|f\|_{Z(I)} ^\alpha + \|g\|_{Z(I)} ^\alpha    \right]. \notag
 	\end{eqnarray}
 	Recall that  $\alpha \ge1 $ by the assumptions made at the begining of section \ref{s:2}.   
 \end{lemma}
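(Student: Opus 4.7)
The strategy is to reduce both inequalities to pointwise estimates on the nonlinear remainder $R$ and then apply H\"older's inequality in Lorentz spaces, combined with the Lorentz--Sobolev machinery of Lemmas~\ref{L:sobolev}--\ref{L:6141}. Starting from (\ref{Rv}), with $F(w):=|w|^\alpha w$ and $DF(W)\cdot v:=(\alpha+1)W^\alpha v_1+iW^\alpha v_2$, one writes
\begin{equation}
R(v)=-i|x|^{-b}\bigl[F(W+v)-F(W)-DF(W)\cdot v\bigr].\notag
\end{equation}
Since $\alpha\ge1$, the fundamental theorem of calculus together with the pointwise bound $|DF(z)-DF(z')|\lesssim |z|^{\alpha-1}|z-z'|+|z-z'|^\alpha$ yields
\begin{equation}
|R(f)-R(g)|\lesssim |x|^{-b}|f-g|\bigl[W^{\alpha-1}(|f|+|g|)+|f|^\alpha+|g|^\alpha\bigr].\notag
\end{equation}

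For (\ref{2121}), I would estimate each summand in $L^{2d/(d+2),2}_x$ by H\"older in Lorentz spaces. The scaling identity (\ref{zb}) supplies the exact pairing $L^{d/b,\infty}\cdot L^{2d/(d-2),2}\cdot L^{2d/(\alpha(d-2)),2/\alpha}\hookrightarrow L^{2d/(d+2),2}$, and the identity $\||h|^\alpha\|_{L^{2d/(\alpha(d-2)),2/\alpha}}=\|h\|^\alpha_{L^{2d/(d-2),2}}$ then handles the $|f|^\alpha,|g|^\alpha$ contributions. For the $W^{\alpha-1}(|f|+|g|)$ piece, one verifies directly from the asymptotics $W\lesssim 1$ near $0$ and $W\lesssim |x|^{-(d-2)}$ at infinity, together with the standing restriction $b<-(d-4)^2/2+1$, that $|x|^{-b}W^{\alpha-1}\in L^{2d/(6-d),\infty}(\mathbb{R}^d)$; H\"older against two copies of $L^{2d/(d-2),2}$ then closes this term.

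For the spacetime estimate on $\nabla(R(f)-R(g))$ in $N(I)=L^2_t L^{2d/(d+2),2}_x$, I would differentiate $R$ and distribute the gradient: a $\nabla$ on $|x|^{-b}$ gives a $|x|^{-b-1}$ weight; a $\nabla$ on $W$ produces the same after using $|\nabla W|\lesssim |x|^{-1}W$; and the extra $|x|^{-1}$ factor is converted into a gradient on $f$, $g$, or $f-g$ via the Hardy--Lorentz inequality $\||x|^{-1}h\|_{L^{p,q}}\lesssim\|\nabla h\|_{L^{p,q}}$, which follows from $|x|^{-1}\in L^{d,\infty}$ together with Lemma~\ref{L:sobolev}. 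The resulting pointwise bound reproduces the structure above with one factor replaced by its gradient, giving terms of the form $|x|^{-b}[W^{\alpha-1}+|f|^{\alpha-1}+|g|^{\alpha-1}](|\nabla f|+|\nabla g|)|f-g|$ and the symmetric terms with $|\nabla(f-g)|$. After taking spatial norms by H\"older as before (placing $\nabla f,\nabla g,\nabla(f-g)\in L^{\rho,2}_x$ and $f,g,f-g\in L^{p,2}_x$), the time identity $\tfrac{1}{2}=\tfrac{\alpha+1}{\gamma}$ from (\ref{zb}) assembles $\alpha+1$ copies of $L^\gamma_t$ into $L^2_t$ with no $|I|$ weight for the $|f|^{\alpha-1}+|g|^{\alpha-1}$ contributions; for the $W^{\alpha-1}$ contributions only two $L^\gamma_t$ factors are available, so an additional H\"older in time against $\mathbf{1}_I\in L^{\gamma/(\alpha-1)}_t$ produces exactly the weight $|I|^{(\alpha-1)/\gamma}$.

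The main obstacle is the cross term carrying the factor $W^{\alpha-1}$: when $\alpha$ is close to $1$ this weight carries little spatial decay, and one must verify carefully that $|x|^{-b}W^{\alpha-1}$, as well as the gradient-weighted analogue $|x|^{-b-1}W^{\alpha-1}$ obtained after Hardy, lies in the critical Lorentz space dictated by the H\"older pairing. This is precisely the role of the restriction $0<b<-(d-4)^2/2+1$ in the standing assumptions, and it is also the origin of the time weight $|I|^{(\alpha-1)/\gamma}$ in the second estimate.
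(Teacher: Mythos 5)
Your argument is correct and follows the same overall architecture as the paper's: derive the pointwise bound $|R(f)-R(g)|\lesssim |x|^{-b}\bigl[W^{\alpha-1}(|f|+|g|)+|f|^\alpha+|g|^\alpha\bigr]|f-g|$, then close by H\"older in Lorentz spaces. The paper reaches this pointwise bound via the factorization $R(v)=-i|x|^{-b}W^{\alpha+1}J(W^{-1}v)$ and the $C^2$ estimates (\ref{1281}) on $J$, rather than the fundamental theorem of calculus on $F$, but the outcome is identical. The one genuine methodological variation is in the $|x|^{-b-1}$ terms generated by $\nabla|x|^{-b}$ and $\nabla W$: you split off $|x|^{-1}$ and pass it to a gradient via Hardy--Lorentz, whereas the paper absorbs $|x|^{-b-1}$ directly using the second scaling identity $\tfrac{d+2}{2d}=\tfrac{b+1}{d}+\tfrac{\alpha+1}{p}$ in (\ref{zb}), keeping $|x|^{-b-1}\in L^{d/(b+1),\infty}$ and all $\alpha+1$ function factors in $L^{p,2}_x$, converting $\|f-g\|_{L^{p,2}}$ to $\|\nabla(f-g)\|_{L^{\rho,2}}$ by Sobolev only at the end; the two routes are equivalent (Hardy then H\"older versus H\"older then Sobolev), and the paper's is marginally cleaner because (\ref{zb}) supplies the needed exponent identity in closed form. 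Likewise, for (\ref{2121}) you package $|x|^{-b}W^{\alpha-1}$ into a single $L^{2d/(6-d),\infty}$ weight, while the paper keeps $|x|^{-b}\in L^{d/b,\infty}$ and places $W$ in $L^{2d/(d-2),2}$ as a separate H\"older factor — again equivalent. One small point of attribution: what this lemma actually requires is $\alpha\ge 1$, i.e.\ $b\le(6-d)/2$ (this is exactly what makes $J$ of class $C^2$ and what makes your claimed weak-Lorentz memberships hold); the stronger standing restriction $b<1-\tfrac{(d-4)^2}{2}$ implies it, but its full strength is reserved for the modulation analysis, not for Lemma \ref{nestimate}.
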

 \begin{proof}
 	We have 
 \begin{eqnarray}
 		R(f)&=&-i|x|^{-b}\left[|W+v|^\alpha (W+v)-W^{\alpha +1}-\frac{\alpha +2}{2}W^\alpha f-\frac{\alpha }{2}W^\alpha \overline{f}\right]\notag\\
 		&=&-i|x|^{-b}W^{\alpha +1}J(W^{-1}f),\label{211}
 \end{eqnarray}
 	where  $J$ is the function defined on  $\mathbb{C}$ by 
 	\begin{equation}
 		J(z)=|1+z|^\alpha (1+z)-1-\frac{\alpha +2}{2}W^\alpha z-\frac{\alpha }{2}W^\alpha \overline{z}.\label{212}
 	\end{equation}  
 	Since $\alpha \ge1$,  $J$ is of class  $C^2$ on  $\mathbb{C}$ and  $J(0)=\partial_{z}J(0)=\partial_{\overline{z}}J(0)=0$.      
Hence 
 	\begin{equation}
 		|J'(z)|\lesssim  |z|+|z|^\alpha \quad\text{and}\quad |J''(z)|\lesssim  1+|z|^{\alpha -1}.\label{1281}
 	\end{equation}   
 	By (\ref{1281}),  we get the pointwise bound 
 	\begin{equation}
 		|R(f)-R(g)|\lesssim  |x|^{-b}\left[ W^{\alpha -1}(|f|+|g|)+|f|^\alpha +|g|^\alpha \right]|f-g|,\notag
 	\end{equation}
 	which yields (\ref{2121}) using H\"older's inequality 
 	\begin{equation}
 		 \||x|^{-b}abc^{\alpha -1}\|_{L^{\frac{2d}{d+2},2}}\lesssim   \||x|^{-b}\|_{L^{\frac{d}{b},\infty }} \|a\|_{L^{\frac{2d}{d-2},2}}    \|b\|_{L^{\frac{2d}{d-2},2}}    \|c\|_{L^{\frac{2d}{d-2},2}}   ^{\alpha -1}.\notag  
 	\end{equation}
 	On the other hand, by (\ref{1281}) and $|\nabla W|\lesssim  |x|^{-1}W$, we have 
 	\begin{eqnarray}
 		&& |\nabla R(f)-\nabla R(g)|\notag\\
 		&\lesssim  &|x|^{-b-1}[W^{\alpha -1}(|f|+|g|)+|f|^\alpha +|g|^\alpha ]|f-g|\notag\\
 		 &&+|x|^{-b}[W^{\alpha -1}(|\nabla f|+|\nabla g|)+(|f|^{\alpha -1}+|g|^{\alpha -1})(|\nabla f|+|\nabla g|)]|f-g|\notag\\
 		 &&+|x|^{-b} [W^{\alpha -1}(|f|+|g|)+|f|^\alpha +|g|^\alpha ]|\nabla (f-g)|.\notag
 	\end{eqnarray} 
It then follows from (\ref{zb}),  H\"older's inequality and Sobolev's embedding that 
 	\begin{eqnarray}
 		 &&\|\nabla R(f)-\nabla R(g)\|_{N(I)}\notag\\
 		 &\lesssim & \left[ \|W\|_{L^\gamma _t L_x^{p,2}} ^{\alpha -1}( \| f\|_{L^\gamma _t L_x^{p,2}}   + \|g\|_{L^\gamma _t L_x^{p,2}}) + \|f\|_{L^\gamma _t L_x^{p,2}}^\alpha + \|g\|_{L^\gamma _t L_x^{p,2}}^\alpha\right] \|f-g\|_{L^\gamma _t L_x^{p,2}} \notag\\
 		 &&+\left [ \|W\|_{L^\gamma _t L_x^{p,2}} ^{\alpha -1}( \|\nabla f\|_{L^\gamma _t L_x^{\rho,2}} + \|\nabla g\|_{L^\gamma _t L_x^{\rho,2}} ) + (\|f\|_{L^\gamma _t L_x^{p,2}} ^{\alpha -1}+ \|g\|_{L^\gamma _t L_x^{p,2}}^{\alpha -1}) \right. \notag\\
 		 &&\qquad \times  \left.( \|\nabla f\|_{L^\gamma _t L_x^{\rho,2}}+ \|\nabla g\|_{L^\gamma _t L_x^{\rho,2}}  )\right] \|f-g\|_{L^\gamma _t L_x^{p,2}}\notag\\
 		 &&+[ \|W\|_{L^\gamma _t L_x^{p,2}}^{\alpha -1}( \|f\|_{L^\gamma _t L_x^{p,2}}+ \|g\|_{L^\gamma _t L_x^{p,2}}  )+ \|f\|_{L^\gamma _t L_x^{p,2}}^\alpha + \|g\|_{L^\gamma _t L_x^{p,2}}^\alpha    ] \|\nabla (f-g)\|_{L^\gamma _t L_x^{\rho,2}}\notag\\
 		 &\lesssim &     \|f-g\|_{Z(I)} \left[|I|^{\frac{\alpha -1}{\gamma }} ( \|f\|_{Z(I)} + \|g\|_{Z(I)}   )+ \|f\|_{Z(I)} ^\alpha + \|g\|_{Z(I)} ^\alpha    \right],\notag
 	\end{eqnarray}
 	which yields the second estimate in Lemma \ref{nestimate}.  
 \end{proof}
 
 By the Strichartz estimate, Lemma \ref{lestimate} and Lemma \ref{nestimate},  we have  
 \begin{lemma}\label{L:small solution}
 	Let  $v$ be a solution of (\ref{Linear Eq}). Assume for some  $c_0>0$ 
 	\begin{equation}
 		\|v(t)\|_{\dot H^{1}}\lesssim e^{-c_0t},\qquad \forall t\ge0.\notag
 	\end{equation}
 	Then for any Strichartz couple  $(p,q)$,  
 	\begin{equation}
 		\|v\|_{Z(t,+\infty )}+\|\nabla v\|_{L^p(t,+\infty ;L^{q,2})}\lesssim  e^{-c_0t},\qquad \forall t\ge0.\notag
 	\end{equation}
 \end{lemma}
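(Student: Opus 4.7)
The plan is to combine Strichartz estimates for the Duhamel form of (\ref{Linear Eq}) with the linear and nonlinear bounds of Lemmas \ref{lestimate}--\ref{nestimate} on short intervals, and then sum a geometric series in time.

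First, I would rewrite the matrix system (\ref{Linear Eq}) as the scalar Schr\"odinger equation $i\partial_t v + \Delta v = -\mathcal V(v) - iR(v)$, apply Duhamel from a base time $t_0$, and use Proposition \ref{P:SZ} on the gradient of $v$ on an interval $I=[t_0,t_0+T_0]$. For any admissible pair $(p,q)$ with $q<\infty$ this yields
\begin{equation}
\|v\|_{Z(I)} + \|\nabla v\|_{L^p_tL^{q,2}_x(I)} \lesssim \|\nabla v(t_0)\|_{L^2} + \|\nabla\mathcal V(v)\|_{N(I)} + \|\nabla R(v)\|_{N(I)}.\notag
\end{equation}
Lemma \ref{lestimate} bounds the first nonlinear term by $T_0^{\alpha/\gamma}\|v\|_{Z(I)}$, and Lemma \ref{nestimate} (applied with $g=0$, noting $R(0)=0$) controls the second by $T_0^{(\alpha-1)/\gamma}\|v\|_{Z(I)}^2 + \|v\|_{Z(I)}^{\alpha+1}$.

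Next I would run a standard continuity/bootstrap argument: choose one $T_0>0$ so small that $T_0^{\alpha/\gamma}$ lies below a universal threshold, and pick $T_\ast$ large enough that $\|v(t)\|_{\dot H^1}$ is sufficiently small on $[T_\ast,\infty)$ (possible by the hypothesis). For any $t_0\geq T_\ast$ the nonlinear terms are absorbed on the left, yielding
\begin{equation}
\|v\|_{Z([t_0,t_0+T_0])} + \|\nabla v\|_{L^p_tL^{q,2}_x([t_0,t_0+T_0])} \lesssim \|\nabla v(t_0)\|_{L^2} \lesssim e^{-c_0 t_0}.\notag
\end{equation}
Finally, for $t\geq T_\ast$, partitioning $[t,\infty) = \bigsqcup_{n\geq 0}[t+nT_0,t+(n+1)T_0]$ and using that the norms $\|\cdot\|_{Z}$ and $\|\nabla\cdot\|_{L^p_tL^{q,2}_x}$ rest on $L^\gamma_t$ and $L^p_t$ in time (hence sum disjointly in $\gamma$-th and $p$-th power), one obtains the geometric series
\begin{equation}
\|v\|_{Z([t,\infty))}^\gamma \lesssim \sum_{n\geq 0} e^{-c_0\gamma(t+nT_0)} \lesssim e^{-c_0\gamma t},\notag
\end{equation}
and likewise for the $L^p_tL^{q,2}_x$ norm. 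The bound on $0\leq t\leq T_\ast$ then follows from the estimate on $[T_\ast,\infty)$ together with finiteness of the norms on the bounded interval $[t,T_\ast]$ by local Strichartz, after adjusting the implicit constant.

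The main obstacle I expect is closing the bootstrap uniformly in the starting time $t_0$: the linear potential term $\mathcal V(v)$ is not intrinsically small and its contribution is absorbed only through the prefactor $T_0^{\alpha/\gamma}$ from Lemma \ref{lestimate}. This forces $T_0$ to be fixed once and for all, independent of $t_0$, before the assumed decay $\|v(t_0)\|_{\dot H^1}\lesssim e^{-c_0 t_0}$ is invoked to dominate the genuinely nonlinear terms; once this ordering of parameters is in place, the rest of the argument is routine.
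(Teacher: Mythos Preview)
Your proposal is correct and follows essentially the same approach as the paper: Strichartz plus Lemmas \ref{lestimate}--\ref{nestimate} on short intervals $[\tau,\tau+\tau_0]$, then geometric summation over $\tau=t,t+\tau_0,\ldots$. You are in fact slightly more explicit than the paper about the need for a threshold time $T_\ast$ to close the bootstrap on the genuinely superlinear terms, which the paper leaves implicit in the phrase ``the continuous argument''.
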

 \begin{proof}
 	For small  $\tau_0$, by the Strichartz estimate and Lemmas \ref{lestimate}--\ref{nestimate}, we have on  $I_{\tau}=[\tau,\tau+\tau_0]$ 
 	\begin{eqnarray}
 			\|v\|_{Z(I_{\tau} )}+\|\nabla v\|_{L^p(I_{\tau} ;L^{q,2})}&\lesssim &  e^{-c_0 \tau}+ \|\nabla \mathcal{V}(v)\| _{N(I_{\tau})}+ \|\nabla R(v)\|_{N(I_{\tau})}\notag\\
 			&\lesssim &  e^{-c_0 \tau}+\tau_0^{\frac{\alpha }{\gamma }} \|v\|_{Z(I_{\tau})}+ \|v\|_{Z(I_{\tau})}^2+ \|v\|_{Z(I_{\tau})}^{\alpha +1}.\notag    
 	\end{eqnarray} 
 	By choosing sufficiently small  $\tau_0$, the continuous argument gives that 
 	\begin{equation}
 	\|v\|_{Z(I_{\tau} )}+\|\nabla v\|_{L^p(I_{\tau} ;L^{q,2})}\lesssim e^{-c_0 \tau}.\label{3271}
 	\end{equation} 
Summing up (\ref{3271}) at time  $\tau=t,\tau=t+\tau_0,\tau=t+2\tau_0,\cdots ,$ and using the triangle inequality, we get desired estimate in Lemma \ref{L:small solution}.   
 \end{proof}
 
 \subsection{Spectral properties of the linearized operator.}
 Since  $W$ is   a critical point of the energy  $E$, we have the following development of the energy near  $W$: For any  $g\in \dot H^1(\mathbb{R}^d)$ with  $\left\|g\right\|_{\dot H^1(\mathbb{R}^d)}$ small,
 \begin{equation}
 	E(W+g)=E(W)+Q(g)+O(\left\|g\right\|_{\dot H^1}^3), \label{2211}
 \end{equation}         
 where  $Q$    is  the quadratic form   on  $\dot H^{1}(\mathbb{R}^d)$  defined by 
\begin{equation}
	Q(g):= \frac{1}{2}\int |\nabla g|^2-\frac{1}{2}\int |x|^{-b}W^\alpha ((\alpha +1)(\text{Re} g)^2+(\text{Im} g)^2)=\frac{1}{2}\text{Im}\int (\mathcal{L}g)\overline{g},\label{E:Q}
\end{equation}
with the operator  $\mathcal{L}$ defined by (\ref{Linear Eq}).

Let us specify the  important coercivity properties of  $Q$, which will be used in the later Sections.   Consider the three
orthogonal directions  $W,iW$ and 
\begin{equation}
	W_1:=\left.-\frac{d}{d \lambda}\left(\frac{1}{\lambda^{\frac{d-2}{2}}}W(\frac{x}{\lambda})\right)\right|_{\lambda=1}=\frac{d-2}{2}W+x\cdot \nabla W \label{E:W_1}
\end{equation}
  in the real Hilbert space  $\dot H^{1}=\dot H^{1}(\mathbb{R}^d,\mathbb{C})$. 
Let  
\begin{equation}
	H:=\text{span }\left\{W,\ iW,\ W_1\right\}\notag
\end{equation} and  $H^{\bot}$ its orthogonal subspace in  $\dot H^{1}$ for the usual product. 
Inspired by \cite{YZZ}, we can show that 
 the three directions,  $W_1,\ iW,\ W$ are the only nonpositive directions of  the    quadratic form  $Q$.   
\begin{proposition}
	\label{H}
	There is a constant  $c>0$ such that for all function  $f\in H^{\bot}$, we have  
	\begin{equation}
		Q(f)\ge c\|{f}\|_{\dot H^{1}}^2.\notag
	\end{equation}
\end{proposition}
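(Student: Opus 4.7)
My approach is to reduce Proposition~\ref{H} to the spectral analysis of two compact self-adjoint operators on the real Hilbert space $\dot H^1(\mathbb{R}^d;\mathbb{R})$. Writing $f=f_1+if_2\in H^\perp$, the quadratic form splits as
\[
Q(f)=\tilde Q_-(f_1)+\tilde Q_+(f_2),\quad \tilde Q_+(g):=\tfrac12\|\nabla g\|_{L^2}^2-\tfrac12\int|x|^{-b}W^\alpha g^2,\quad \tilde Q_-(g):=\tilde Q_+(g)-\tfrac{\alpha}{2}\int|x|^{-b}W^\alpha g^2.
\]
The conditions defining $H^\perp$ translate into $f_1\perp\{W,W_1\}$ and $f_2\perp W$ in the real $\dot H^1$ inner product, so it suffices to establish $\tilde Q_+\gtrsim\|\nabla\cdot\|_{L^2}^2$ on $\{W\}^\perp$ and $\tilde Q_-\gtrsim\|\nabla\cdot\|_{L^2}^2$ on $\{W,W_1\}^\perp$.

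I introduce compact self-adjoint operators $K_\pm$ on $\dot H^1(\mathbb{R}^d;\mathbb{R})$ by
\[
\langle K_+g,h\rangle_{\dot H^1}=\int|x|^{-b}W^\alpha gh,\qquad\langle K_-g,h\rangle_{\dot H^1}=(\alpha+1)\int|x|^{-b}W^\alpha gh.
\]
Boundedness and compactness follow from H\"older and Sobolev embedding in the Lorentz framework of Section~\ref{s:2}, using $|x|^{-b}W^\alpha\in L^{d/2,\infty}(\mathbb{R}^d)$ (valid since $b<2$ and $W\sim|x|^{-(d-2)}$ at infinity) together with a local truncation exploiting decay at infinity. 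Using $-\Delta W=|x|^{-b}W^{\alpha+1}$ and the scaling identity $-\Delta W_1=(\alpha+1)|x|^{-b}W^\alpha W_1$ (from differentiating the stationary equation in $\lambda$ at $\lambda=1$), direct computation yields $K_+W=W$, $K_-W=(\alpha+1)W$, and $K_-W_1=W_1$. Since $\tilde Q_\pm=\tfrac12\langle(I-K_\pm)\cdot,\cdot\rangle_{\dot H^1}$, the proposition reduces to showing: (i) the top eigenvalue of $K_+$ is $1$ with simple eigenspace $\mathrm{span}\{W\}$; (ii) the only eigenvalues of $K_-$ in $[1,\infty)$ are $\alpha+1$ (simple, eigenfunction $W$) and $1$ (simple, eigenfunction $W_1$). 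Compactness then yields a positive spectral gap on each orthogonal complement, giving coercivity.

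For $K_+$, the estimate $K_+\leq I$ is the second-variation of the sharp Sobolev inequality (Proposition~\ref{P:GN}) at the optimizer $W$. To identify $\ker(I-K_+)$, decompose $u=\sum_l u_l(r)Y_l(\theta)$: on the radial sector $W>0$ is a ground state of $L_+^{(0)}=-\Delta-|x|^{-b}W^\alpha$, so by standard Sturm--Liouville theory its kernel is $\mathrm{span}\{W\}$; on each $l\geq 1$ sector, $L_+^{(l)}=L_+^{(0)}+l(l+d-2)/r^2>L_+^{(0)}\geq 0$ is strictly positive, hence trivial kernel. It is worth noting that this avoids the kernel direction $\partial_jW$ one has in the translation-invariant case $b=0$: once $b>0$, one computes $L_+\partial_jW=(\partial_j|x|^{-b})W^{\alpha+1}+\alpha|x|^{-b}W^\alpha\partial_jW\not\equiv 0$, so broken translation invariance actually helps here.

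The $K_-$ analysis is the main obstacle. An eigenvalue $\mu\geq 1$ of $K_-$ corresponds to a nontrivial $\dot H^1$ solution of $L_{-,\mu}u=0$, where $L_{-,\mu}:=-\Delta-\frac{\alpha+1}{\mu}|x|^{-b}W^\alpha$. The case $\mu>\alpha+1$ is immediate since $L_{-,\mu}>L_+\geq 0$; the delicate case is $\mu\in(1,\alpha+1)$. I plan to track, via Sturm--Liouville oscillation, the second radial eigenvalue of $L_{-,\mu}^{(0)}$: it is strictly positive at $\mu=\alpha+1$ (where $L_+^{(0)}$ has simple kernel $W$) and decreases continuously to $0$ at $\mu=1$ (realized by $W_1$), without vanishing in between. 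I will justify this by pairing $W_1$ with a second fundamental radial solution $G(r)$ of $L_-^{(0)}G=0$ and applying a Wronskian/zero-count argument; the precise asymptotics of $G(r)$ at $r=0$ and $r=\infty$, deferred to the appendix, additionally preclude any eigenvalue of $K_-^{(l)}$ from reaching $1$ in the non-radial sectors $l\geq 1$.
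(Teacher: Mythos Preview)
Your overall architecture---splitting $Q(f)=\tfrac12(L_+f_1,f_1)+\tfrac12(L_-f_2,f_2)$, reducing to compact self-adjoint operators on $\dot H^1$, and analyzing each piece separately---matches the paper's, and your treatment of $K_+$ (the paper's $L_-$) is essentially identical to theirs.

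The genuine gap is in your analysis of $K_-$ (the paper's $L_+=-\Delta-(\alpha+1)|x|^{-b}W^\alpha$). You propose to rule out eigenvalues of $K_-$ in $(1,\alpha+1)$ by Sturm--Liouville tracking of the second radial eigenvalue of $L_{-,\mu}^{(0)}$ as $\mu$ varies; but this only controls the radial sector. For $l\ge1$ you would still need to show $L_{-,\mu}^{(l)}$ has no zero mode for any $\mu\in(1,\alpha+1]$---equivalently, that $L_+^{(l)}\ge0$---and your proposal says nothing about this beyond the endpoint $\mu=1$. The paper bypasses this entirely with a one-line variational observation: since $W$ is the constrained maximizer in Proposition~\ref{P:GN}, the second variation along any curve on the sphere $\{\|\cdot\|_{\dot H^1}=\|W\|_{\dot H^1}\}$ is nonpositive, which gives $(L_+v,v)\ge0$ for every $v\perp_{\dot H^1}W$, radial or not. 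This immediately forces the second eigenvalue of $K_-$ to be at most $1$ in all harmonic sectors simultaneously, making the tracking argument unnecessary.

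Second, your sketch of the $l=1$ kernel analysis (``pairing $W_1$ with a second fundamental radial solution $G(r)$'') misses the actual mechanism. The paper compares a putative positive kernel element $G$ of $L_+^{(1)}$ not with another solution of the same equation, but with $-W'$, which satisfies $L_+^{(1)}(-W')=\tfrac{b}{r^{b+1}}W^{\alpha+1}$; the Wronskian identity $\tfrac{d}{dr}[r^{d-1}(W''G-W'G')]=br^{d-b-2}W^{\alpha+1}G$ then integrates to a strictly positive quantity whose boundary terms vanish (by the asymptotics of $G$), a contradiction. The point is that this step uses $b>0$ in an essential way---it is precisely the broken translation invariance you noted, but exploited through $\partial_rW$ rather than $\partial_jW$.
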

\begin{proof}
	Let  $f_1:=\text{Re}f,\ f_2=\text{Im}f$. We have 
	\begin{equation}
		Q(f)=\frac{1}{2}(L_+f_1,f_1)_{L^2}+\frac{1}{2}(L_-f_2,f_2)_{L^2},\notag
	\end{equation} 
	where 
\begin{equation}
	L_+:=-\Delta -(\alpha +1)|x|^{-b}W^\alpha \quad\text{and }\quad L_-:=-\Delta -|x|^{-b}W^\alpha .\notag
\end{equation}

We first consider the operator  $L_+$ and show that there is only one negative direction in the sense that for any real scalar valued function  $v\in \dot H^1(\mathbb{R} ^d)$ and 
\begin{equation}
	(-\Delta v,W)_{L^2}=0,\label{1221}
\end{equation}  
we have 
\begin{equation}
	(L_+v,v)_{L^2}\ge0. \label{1222}
\end{equation}
Indeed, we will see that this is an implication of the fact that   $W$ is the constrained maximizer.  Define  the trajectory 
\begin{equation}
	 l(s):=\frac{ \|W\|_{\dot H^1}^2 }{( \|W\|_{\dot H^1}^2+s^2 \|v\|_{\dot H^1}^2  )^{1/2}}(W+sv)\quad\text{such that }\quad  \|l(s)\|_{\dot H^1}^2= \|W\|_{\dot H^1}^2.\notag  
\end{equation} 
It can be computed that 
\begin{equation}
	l(0)=W,\ l_s(0)=v,\ l_{ss}(0)=-\frac{ \|v\|_{\dot H^1}^2 }{ \|W\|_{\dot H^1}^2 }W.\notag
\end{equation}
From here and noting that  by  Proposition \ref{P:GN},  $W$ is the constrained maximizer:
\begin{equation}
	\int |x|^{-b}W^{\alpha +2}=\sup _{ \|g\|_{\dot H^1}= \|W\|_{\dot H^1}  }\int |x|^{-b}|g|^{\alpha +2}dx,\notag
\end{equation} 
we have 
\begin{eqnarray}
	0&\ge&\left.  \frac{d^2}{ds^2}\int _{\mathbb{R} ^d}|x|^{-b}|l(s)|^{\alpha +2}dx\right |_{s=0}\notag\\
	&=& (\alpha +2)(\alpha +1)\int _{\mathbb{R} ^d}|x|^{-b}l(0)^\alpha l_s^2(0)+(\alpha +2)\int _{\mathbb{R} ^d}|x|^{-b}l(0)^{\alpha +1}l_{ss}(0)dx\notag\\
	&=& (\alpha +2)(\alpha +1)\int _{\mathbb{R} ^d}|x|^{-b} W^\alpha v^2dx-(\alpha +2) \frac{ \|v\|_{\dot H^1}^2 }{ \|W\|_{\dot H^1}^2 }\int _{\mathbb{R} ^d}|x|^{-b}W^{\alpha +2}dx\notag\\
	&=&   -(\alpha +2)\int _{\mathbb{R} ^d} (-\Delta v-(\alpha +1)|x|^{-b}W^\alpha v)\cdot vdx =-(\alpha +2)(L_+v,v)_{L^2}\notag
\end{eqnarray}
and  (\ref{1222}) is proved.  

Next we investigate the null direction of $L_+$ and it is more convenient to work in $L^2$ setting instead of $\dot H^1$ setting. The operator $L_+$ having only one negative direction in $\dot H^1(\mathbb{R}^d)$ implies $ (-\Delta )^{-1/2}L_+(-\Delta )^{-1/2}$ has only one negative direction in $L^2(\mathbb{R}^d)$. Easily we can write 
\begin{align*}
(-\Delta )^{-1/2}L_+(-\Delta )^{-1/2}=I-(\alpha +1)(-\Delta )^{-1/2}|x|^{-b}W^\alpha (-\Delta )^{-1/2} :=I-K. 
\end{align*}
We have the following result for $K$:
\begin{claim}\label{comp}
	$K:\ L^2(\mathbb{R} ^d)\to L^2(\mathbb{R} ^d)$ is a compact operator. 
\end{claim}

Postponing the proof for the moment, using this claim we know that $I-K$ has at most finitely many  eigenvalues in $(-\infty, \frac 12]$ which can be ordered as
\begin{equation}
	\lambda_1\le \lambda_2\le\cdots\le\lambda_N\notag
\end{equation}
counting multiplicity. 

From the previous discussion and recall that
\begin{equation}
	(I-K)(-\Delta )^{-1/2}W_1=0,\notag
\end{equation}
we know 
\begin{equation}
	\lambda_1<0 \quad\text{and}\quad \lambda_2=0. \notag
\end{equation}
Our goal now is to show $\lambda_3>0$. Note as $I-K$ is symmetric we can choose eigenfunctions as the orthonormal basis of $L^2(\mathbb{R} ^d)$ and evaluate the $L^2$ bilinear form $ ( (I-K)u, u)_{L^2}$. Switching back to $\dot H^1$ setting, we immediately get the desired estimate for $L_+$:
\begin{equation}
(L_+u,u)_{L^2}\ge \lambda_3 \|u\|_{\dot H^1}^2 ,\qquad \forall u \bot_{\dot H^1} W, W_1.\notag
\end{equation}

Therefore it remains to show $\lambda_3>0$ or the kernel of $I-K$ is only one-dimensional in $L^2(\mathbb{R} ^d)$. This is equivalent to showing the kernel of $L_+$ is one dimensional in $\dot H^1(\mathbb{R} ^d)$.   The proof    relies on  the spherical harmonics expansion and careful study on the spatial asymptotics of the resulted ODEs.  

Consider the equation 
\begin{equation}
	L_+u=0,\notag
\end{equation}
we write $u$ in the spherical harmonic expansion:
\begin{equation}
	u(r,\theta)=\sum_{j=0}^\infty f_j(r)Y_j(\theta). \notag
\end{equation}
Here, $Y_j(\theta)$ is the $jth$ spherical harmonics and $\{ Y_j(\theta)\}_{j=0}^\infty$ form an orthonormal basis of $L^2(\mathbb S^{d-1})$. Recall that 
\begin{gather*}
	-\Delta_{\mathbb S^{d-1}} Y_j(\theta) =\mu_j Y_j(\theta), \ j=0, 1, 2,\cdots \\
	0=\mu_0<\mu_1\le\mu_2\le \cdots\ \to \infty, \ Y_0=1, \mu_1=d-1. 
\end{gather*}
In spherical harmonic expansion, we have 
\begin{equation}
	L_+u=-\sum_{j=0}^{\infty }\left((\partial_{rr}+\frac{d-1}{r}\partial_{r}-\frac{\mu_j}{r^2}+\frac{\alpha +1}{r^b}W^\alpha )f_j(r)\right)Y_j(\theta ).\notag
\end{equation}
Therefore we can discuss the contribution to the kernel from each spherical harmonic starting from $j=0$. 

\textbf{Case 1.} $j=0$.

As  $Y_0=1$ , the kernel function in this mode must be a spherically symmetric function  $u(r)$ satisfying  $L_+u=0$, which in the radial coordinate, takes the form
\begin{equation}
	u_{rr}+\frac{d-1}{r}u_r+\frac{\alpha +1}{r^b}W^\alpha u=0.\notag
\end{equation} 
Supposing  $u$  is a solution independent of the known radial solution  $W_1$ , from Abel's theorem, we have 
\begin{equation}
	u_rW_1-(W_1)_ru =\frac{C}{r^{d-1}}.\label{1223}
\end{equation}
In the small neighborhood of  $r=0,W_1\neq 0$,   we can divide both sides of  (\ref{1223}) by  $W_1^2$  and obtain
\begin{equation}
	\left(\frac{u}{W_1}\right)_r=\frac{C}{r^{d-1}W_1^2},\qquad 0<r<\varepsilon .\notag
\end{equation}
Recalling  that by (\ref{E:W_1}) $W_1(r)=O(1)$ as  $r\rightarrow 0^+$  and integrating the above equation from  $r$  to  $\varepsilon $, we have  
\begin{equation}
	\partial_{r}u(r)=O(\frac{1}{r^{d-1}})\quad\text{as}\quad r\rightarrow0^+\notag
\end{equation}
and  $u$  is certainly not an  $\dot H^1$ function. Therefore,  $W_1$  is the unique radial kernel. 

\textbf{Case 2.} $\{j\in \mathbb{N} ,\mu_j=\mu_1= d-1\}$. 

In this case, we claim   that for any  $G(r)\in \dot H^1_{\text{rad}}(\mathbb{R}^d)$,   
\begin{equation}
	\text{if} \qquad L_+(G(r)Y_j(\theta))=0 \quad\text{then}\quad G(r)=0.\label{2201}
\end{equation}

Assume by contradiction that there exists  $0\neq G(r)\in \dot H^1_{\text{rad}}(\mathbb{R}^d)$  such that   $L_+(G(r)Y_j(\theta))=0$.     
 Writing the Laplacian operator in a spherical coordinate, we have
\begin{equation}
	0=L_+(G(r)Y_j(\theta ))=(-\partial_{rr}-\frac{d-1}{r}\partial_{r}+\frac{\mu_1}{r^2}-\frac{\alpha +1}{r^b}W^\alpha )G(r)\cdot Y_j(\theta ),\notag
\end{equation}
which implies 
\begin{equation}
	G(r)\in \text{Ker }(-\Delta +\mu_1 |x|^{-2}-(\alpha +1)|x|^{-b}W^\alpha ).\label{1224}
\end{equation}
Our first goal toward getting a contradiction is to show the positivity of  $G$. To this end, we take any  $v\in \dot H^1(\mathbb{R} ^d)$ in the spherical harmonic expansion
\begin{equation}
	v:=\sum_{k=1}^{\infty }v_k(r)Y_k(\theta ).\notag
\end{equation}
Since  $v_k(r)Y_1(\theta )\bot_{\dot H^1}W=WY_0(\theta ) $, it follows from (\ref{1221}) and (\ref{1222}) that 
 \begin{eqnarray}
 	&&( (-\Delta +\mu_1 |x|^{-2}-(\alpha +1)|x|^{-b}W^\alpha ) v_k(r),v_k(r))_{L^2}\notag\\
 	&=& ((-\Delta +\mu_1 |x|^{-2}-(\alpha +1)|x|^{-b}W^\alpha ) v_k(r)\cdot Y_1(\theta ),v_k(r)\cdot Y_1(\theta ))_{L^2}\notag\\
 	&=& (L_+(v_k(r)Y_1(\theta )),v_k(r)Y_1(\theta ))_{L^2}\ge0.\label{220w1}
 \end{eqnarray}
Using (\ref{220w1}), we can evaluate 
\begin{eqnarray}
		&&((-\Delta +\mu_1 |x|^{-2}-(\alpha +1)|x|^{-b}W^\alpha)v,v)_{L^2}\notag\\
		&=&\sum_{k=1}^{\infty }( (-\Delta +\mu_1 |x|^{-2}-(\alpha +1)|x|^{-b}W^\alpha )v_k(r),v_k(r))_{L^2}\notag\\
		&&+\sum_{k=1}^{\infty } \mu _k\int _{\mathbb{R} ^d} \frac{|v_k(x)|^2}{|x|^2}dx\ge0.\notag
\end{eqnarray}
This together with (\ref{1224}) implies that  $0$  is the first eigenvalue. Hence,   $G(r)>0$.  

We now turn to looking at the equation of   $G$ and  $-W'$,  
\begin{eqnarray}
	&-G''-\frac{d-1}{r}G'+\frac{d-1}{r^2}G-\frac{\alpha +1}{r^b}W^\alpha G=0,\label{E:G}\\
	&-W'''-\frac{d-1}{r}W''+\frac{d-1}{r^2}W'+\frac{b}{r^{b+1}}W^{\alpha +1}-\frac{\alpha +1}{r^b}W^\alpha W'=0.\label{E:W}
\end{eqnarray}
Computing  $[(\ref{E:G})\cdot r^{d-1}W'-(\ref{E:W})\cdot r^{d-1}G]$, we obtain 
\begin{equation}
			r^{d-1} W'''G+(d-1)r^{d-2}W''G-r^{d-1}W'G''-(d-1)r^{d-2}W'G'-br^{d-b-2}W^{\alpha +1}G=0,\notag
\end{equation}
which can be further written into
\begin{equation}
	\frac{d}{dr}[r^{d-1}(W''G-W'G')]-br^{d-b-2}W^{\alpha +1}G=0.\notag
\end{equation}
As  $G>0$, we obtain 
\begin{equation}
	\int _0^\infty \frac{d}{dr}[r^{d-1}(W''G-W'G')]dr=b\int_0^\infty  r^{d-b-2}W^{\alpha +1}Gdr>0.\label{1226}
\end{equation} 
Recalling the asymptotics of  $W$ and  $G$ from   Appendix \ref{App:1}
\begin{equation}
	\begin{cases}
		\text{As }r\rightarrow0^+,G(r)=O(r),\ G'(r)=O(1),\ -W'(r)=O(r^{1-b}),\\
		\text{As }r\rightarrow\infty ,G(r)=O(r^{-(d-1)}),\ G'(r)=O(r^{-d}),\ -W'(r)=O(r^{-(d-1)}),
	\end{cases}\notag
\end{equation}  
we have 
\begin{equation}
	\lim_{r\rightarrow0^+}   r^{d-1}(W''G-W'G')=\lim_{r\rightarrow\infty }r^{d-1}(W''G-W'G')=0,\notag
\end{equation}
which contradicts (\ref{1226}).  Claim (\ref{2201}) is proved. 

\textbf{Case 3.} $\{j\in \mathbb{N},\mu_j>\mu_1\}$.

In this case, we take any function in the form  $G(r)Y_j(\theta),G\neq0$, and compute 
\begin{equation}
	L_+\left(G(r)Y_j(\theta)\right)=(-\Delta +\mu_1 |x|^{-2}-(\alpha +1)|x|^{-b}W^\alpha )G(r)Y_j(\theta)+\frac{\mu_j-\mu_1}{r^2}G(r)Y_j(\theta). \notag
\end{equation}  
Using (\ref{220w1}) we immediately get 
\begin{eqnarray}
		&&(L_+\left(G(r)Y_j(\theta)),G(r)Y_j(\theta)\right))_{L^2} \notag\\
	&=&((-\Delta +\mu_1 |x|^{-2}-(\alpha +1)|x|^{-b}W^\alpha )G(r),G(r))_{L^2}+(\mu_j-\mu_1)\int _{\mathbb{R}^d}\frac{|G(x)|^2}{|x|^2}dx>0.  \notag
\end{eqnarray}
This shows there is no kernel function of  $L_+$ associated to  $j$th spherical  harmonics for those  $j$  such that  $\mu_j>\mu_1$.

The positivity of  $\lambda_3$  is finally proved, and we end the discussion on the operator   $L_+$.

On the other hand,  we can get the results for  $L_-$ quickly.    By H\"older inequality and    Proposition  \ref{P:GN},  we have, for any real-valued  $v\in \dot H^1$,  
\begin{eqnarray}
	\int |\nabla v|^2dx-\int |x|^{-b}W^\alpha v^2dx&\ge& \int |\nabla v|^2dx-\left(\int |x|^{-b}W^{\alpha +2}dx\right)^{\frac{\alpha }{\alpha +2}}\left(\int |x|^{-b}|v|^{\alpha +2}dx\right)^{\frac{2}{\alpha +2}}\notag\\
	&\ge& \left(1-\frac{\int |x|^{-b}W^{\alpha +2}dx}{ \|W\|_{\dot H^1}^2 }\right)\int |\nabla v|^2dx=0,\notag
\end{eqnarray}
with equality if and only if  $v\in \text{Span }\left\{ W\right\}$. This shows that  $\int |\nabla f_2|^2-\int |x|^{-b}W^\alpha |f_2|^2>0$ for  $f_2\neq 0,\ f_2\bot_{\dot H^1} W$. Note that the quadratic form  $\int |\nabla \cdot|^2-\int |x|^{-b}W^\alpha |\cdot|^2$ is a compact perturbation of  $\int |\nabla \cdot |^2$. Therefore, there exists  $c_2>0$ such that for any  $f_2\in \dot H^1, f_2\bot_{\dot H^1} W$,
\begin{equation}
	(L_-f_2,f_2)\ge c_2 \|f_2\|_{\dot H^1}^2.\notag 
\end{equation}       
Combining the two parts together,   we proved the estimate for  $Q(v)$.   

Finally, we complete the proof by verifying Claim \ref{comp}.  Indeed, note that as 
 $	(-\Delta )^{-1/2}:L^2(\mathbb{R} ^d)\rightarrow \dot H^1(\mathbb{R} ^d)$ 
is an isometric operator and the embedding  $L^{\frac{2d}{d+2}}(\mathbb{R} ^d)\hookrightarrow \dot H^{-1}(\mathbb{R} ^d)$ is continuous, it suffices to show that  $|x|^{-b}W^\alpha :\dot H^1(\mathbb{R} ^d)\rightarrow L^{\frac{2d}{d+2}}(\mathbb{R} ^d)$ is compact.

We first claim that 
\begin{equation}
	\left\||x|^{-b}W^\alpha \right\|_{WL^{\frac{d}{2},2}(\mathbb{R}^d)}<+ \infty.\label{221w1}
\end{equation}
In fact, using  Lemma \ref{L:leibnitz} and  $W(x)=O({ \langle x \rangle }^{-(d-2)})$, we have 
\begin{eqnarray}
		&&\left\||x|^{-b}W^\alpha \right\|_{WL^{\frac{d}{2},2}}  \notag\\
		&\lesssim& \left\||x|^{-b}\right\|_{L^{\frac{d}{b}, \infty}}(\left\|W^{\alpha-1} \nabla W\right\|_{L^{\frac{d}{2-b},2}}+\left\|W^\alpha \right\|_{L^{\frac{d}{2-b},2}})+\left\||x|^{-b-1}\right\|_{L^{\frac{d}{b+1}, \infty}}\left\|W^\alpha \right\|_{L^{\frac{d}{1-b},2}}<+ \infty, \notag
\end{eqnarray}  
which yields (\ref{221w1}).

Fix $\varepsilon >0$ sufficiently small.
Then for  $v\in \dot H^1$, we  have, by applying (\ref{221w1})
\begin{eqnarray}
	 &&\||\nabla |^\varepsilon (|x|^{-b}W^\alpha v)\|_{L^{\frac{2d}{d+2}}} \notag\\
	 &\lesssim &   \||\nabla |^\varepsilon (|x|^{-b}W^\alpha )\|_{L^{\frac{d}{2},2}}  \|v\|_{L^{\frac{2d}{d-2},2}}+ \| |x|^{-b}W^\alpha \|_{L^{\frac{d}{2-\varepsilon },2}}  \||\nabla |^\varepsilon v\|_{L^{\frac{2d}{d-2+2\varepsilon },2}}\notag\\
	 &\lesssim &    \||\nabla |^\varepsilon (|x|^{-b}W^\alpha )\|_{L^{\frac{d}{2},2}}  \|\nabla v\|_{L^2}\lesssim    \|v\|_{\dot H^1},\notag 
\end{eqnarray}
 and 
 \begin{equation}
 	 \|\chi_R|x|^{-b}W^\alpha v\|_{L^{\frac{2d}{d+2}}}\lesssim R^{-b} \|W^\alpha \|_{{L^{\frac{d}{2},2}}} \|v\|_{L^{\frac{2d}{d-2},2}}\lesssim  R^{-b} \|v\|_{\dot H^1}.\notag    
 \end{equation}
 The desired compactness of  $|x|^{-b}W^\alpha $ is proved, hence Claim \ref{comp}.  
 Proposition  \ref{H} is finally proved. 
\end{proof}

Following the arguments in \cite{Campos-Murphy,2008GFA,DuyRouden:NLS:ThresholdSolution,2009JFA}, we have the following spectral properties of  $\mathcal{L}$ that defined in (\ref{Linear Eq}).  For the sake of completeness, we will  give the proof in Appendix \ref{App:3}. 
\begin{lemma}\label{l:eigen}
	Let  $\sigma(\mathcal{L})$ denote the spectrum of the operator  $\mathcal{L}$, defined on  $L^2(\mathbb{R} ^d)$ with domain  $H^2(\mathbb{R} ^d)$  and let  $\sigma_{\text{ess}}(\mathcal{L})$ its essential spectrum. Then we have \\
	(a)    	The operator  $\mathcal{L} $ admits two eigenfunctions    $\mathcal{Y}_+,\mathcal{Y}_-\in H^2(\mathbb{R}^d)$ with	real eigenvalues  $\pm e_0,\ e_0>0$, i.e.  $\mathcal{L}\mathcal{Y}_{\pm}=\pm e_0\mathcal{Y}_\pm$,  $\mathcal{Y}_+=\overline{\mathcal{Y}}_-$.    \\
	(b)	If  $\varphi\in C_c^\infty (\mathbb{R}^d\setminus\left\{0\right\} )$, then 
	\begin{equation}
		\|\varphi (\frac{x}{R})\mathcal{Y}_\pm\|_{H^k}\lesssim _{\phi,k,l}\frac{1}{R^l},\qquad \forall R\ge1.\label{decay1}
	\end{equation}
	Moreover,  $\mathcal{Y}_\pm \in  L^\infty (\mathbb{R} ^d)\cap W^3L^{\frac{2d}{d+2},2} (\mathbb{R} ^d)$.\\
	(c) If  $\lambda \in \mathbb{R}\setminus \sigma(\mathcal{L} )$ and  $F\in L^2(\mathbb{R}^d)$ is such that 
	\begin{equation}
		\|\psi (\frac{x}{R})F\|_{H^k}\lesssim _{\psi,k,l}\frac{1}{R^l} \qquad \forall R\ge1, \label{12231}
	\end{equation}
	for any  $\psi\in C_c^\infty (\mathbb{R}^d\setminus\left\{0\right\} )$, then the solution  $f\in H^2(\mathbb{R}^d)$ to 
	\begin{equation}
		\mathcal{L} f-\lambda f=F\notag
	\end{equation}  
	also satisfies  
	\begin{equation}
		\|\phi(\frac{x}{R})f\|_{H^k}\lesssim _{\phi,k,l}\frac{1}{R^l}\label{decay2}
	\end{equation}
	for any  $\phi\in C_c^\infty (\mathbb{R}^d\setminus\left\{0\right\} )$. Moreover, if for some  $\varepsilon >0$ sufficiently small,  $F\in H^{\frac{1}{2}+\varepsilon} (\mathbb{R} ^d)\cap WL^{\frac{2d}{d+2},2} (\mathbb{R} ^d)$, then  $f\in L^\infty (\mathbb{R} ^d) \cap  W^3L^{\frac{2d}{d+2},2} (\mathbb{R} ^d)$. \\
	(d) $\sigma_{\text{ess}}(\mathcal{L})=\{i\xi: \xi\in \mathbb{R} \}$,  $\sigma(\mathcal{L})\cap \mathbb{R} =\{-e_0,0,e_0\}$.  
\end{lemma}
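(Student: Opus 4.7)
The plan is to exploit the block-matrix structure of $\mathcal{L}$ together with the factorization $\mathcal{L}^2=-\mathrm{diag}(L_-L_+,\,L_+L_-)$, where $L_+=-\Delta-(\alpha+1)|x|^{-b}W^\alpha$ and $L_-=-\Delta-|x|^{-b}W^\alpha$, and to reduce every spectral question about $\mathcal{L}$ on the real line to the already-analyzed operators $L_\pm$ from Proposition \ref{H}.

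I would first establish (d). Writing $\mathcal{L}=\mathcal{L}_0+K$, where $\mathcal{L}_0$ is the constant-coefficient part (obtained by setting $W=0$) and $K$ is the off-diagonal multiplication operator by $|x|^{-b}W^\alpha$ (up to multiplicative constants), a direct Fourier diagonalization of $\mathcal{L}_0$ gives pointwise eigenvalues $\pm i|\xi|^2$, so $\sigma(\mathcal{L}_0)=\sigma_{\mathrm{ess}}(\mathcal{L}_0)=i\mathbb{R}$. The bound $|x|^{-b}W^\alpha\in WL^{d/2,2}$ established in \eqref{221w1} plus the compactness argument in Claim \ref{comp} shows that $K(\mathcal{L}_0-z)^{-1}$ is compact for $z$ off $i\mathbb{R}$, and Weyl's theorem yields $\sigma_{\mathrm{ess}}(\mathcal{L})=i\mathbb{R}$. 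For the real discrete spectrum, if $\mathcal{L}f=\mu f$ with $\mu\in\mathbb{R}$ and $f=(f_1,f_2)^T$, then $L_-L_+ f_1=-\mu^2 f_1$; conjugating $L_-L_+$ with $L_-^{1/2}$ on $(\ker L_-)^\perp$ produces the self-adjoint operator $L_-^{1/2}L_+L_-^{1/2}$, whose number of negative eigenvalues equals the number of negative directions of $L_+$, which is exactly one by the spectral analysis in the proof of Proposition \ref{H}. Hence only one pair $\pm e_0$ of real nonzero eigenvalues appears, and the kernel comes from $\ker L_+=\mathrm{span}\{W_1\}$ and $\ker L_-=\mathrm{span}\{W\}$.

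For (a), the variational construction yields $e_0$: minimize $\langle L_+ u,u\rangle$ subject to $\langle L_-^{-1}u,u\rangle=1$ on $(\ker L_-)^\perp$ (equivalently, minimize $\langle L_-^{1/2}L_+L_-^{1/2}\tilde u,\tilde u\rangle_{L^2}$ on the unit sphere). By Proposition \ref{H} this infimum equals $-e_0^2<0$. Setting $y_1$ to be the minimizer, $y_2:=e_0^{-1}L_+ y_1$, and $\mathcal{Y}_+:=(y_1,y_2)^T$, one checks $\mathcal{L}\mathcal{Y}_+=e_0\mathcal{Y}_+$. The symmetry $\mathcal{Y}_-=\overline{\mathcal{Y}_+}$ is immediate: identifying complex conjugation with the involution $(y_1,y_2)\mapsto(y_1,-y_2)$, a direct computation gives $\mathcal{L}(y_1,-y_2)^T=-e_0(y_1,-y_2)^T$. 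For (b) and (c), I would argue by elliptic regularity and Combes--Thomas/Agmon-type exponential decay. Because $\pm e_0\notin\sigma_{\mathrm{ess}}(\mathcal{L})=i\mathbb{R}$, the resolvent is holomorphic across these values and admits exponentially weighted $L^2$ bounds on the exterior region $\{|x|\ge R_0\}$ where the potential $|x|^{-b}W^\alpha$ is small, giving $|\mathcal{Y}_\pm(x)|\lesssim e^{-c|x|}$; bootstrapping through the elliptic equation then yields \eqref{decay1} for all $k,l$. The $L^\infty$ and $W^3 L^{2d/(d+2),2}$ memberships follow by iteratively applying Sobolev embedding (Lemma \ref{L:sobolev}), the product rule (Lemma \ref{L:leibnitz}), and the chain rule (Lemma \ref{L:6141}) to the identity $(\mathcal{L}-\lambda)\mathcal{Y}_\pm=0$, starting from the $L^2$ solution and upgrading with $|x|^{-b}W^\alpha\in WL^{d/2,2}$. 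Part (c) follows the same scheme applied to $f=(\mathcal{L}-\lambda)^{-1}F$: the exterior decay assumption \eqref{12231} on $F$ propagates to $f$ through the elliptic bootstrap, and the Lorentz-space/$H^{1/2+\varepsilon}$ regularity of $F$ propagates to $f\in L^\infty\cap W^3L^{2d/(d+2),2}$ via the same chain of embeddings.

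The main obstacle will be the interior regularity upgrade near $x=0$: both $L_\pm$ carry the singular weight $|x|^{-b}$, which must be absorbed through the Lorentz-space product and chain rules rather than classical $L^p$ rules. The assumption $0<b<-\tfrac{(d-4)^2}{2}+1$ together with $3\le d\le 5$ is precisely what is needed to guarantee that the Lorentz exponents produced when commuting $(-\Delta)^{s/2}$ past $|x|^{-b}W^\alpha$ land within the admissible range for Sobolev embedding, so that the bootstrap closes through $s=1,2,3$ derivatives. Away from the origin the analysis is classical; the delicate step is showing that at each iteration the bound on $\|(-\Delta)^{s/2}(|x|^{-b}W^\alpha f)\|_{L^{p,q}}$ controls the next derivative without losing the exterior decay, which I expect to be the principal technical effort of the proof.
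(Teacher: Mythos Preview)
Your overall architecture matches the paper's: both routes factor $\mathcal{L}^2$ through $L_\pm$, reduce the existence of a real eigenvalue pair $\pm e_0$ to finding a negative eigenvalue of the self-adjoint operator $P=L_-^{1/2}L_+L_-^{1/2}$, use Weyl's theorem for $\sigma_{\mathrm{ess}}(\mathcal{L})=i\mathbb{R}$, and obtain decay and $L^\infty\cap W^3L^{2d/(d+2),2}$ regularity by elliptic bootstrap with the Lorentz product rule and \eqref{221w1}. The paper's bootstrap for \eqref{decay1} goes through the fourth-order operator $\Delta^2+e_0^2$ (an isomorphism $H^{s+1}\to H^{s-3}$) rather than Agmon/Combes--Thomas, but your alternative is fine.

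There is, however, a genuine gap in your argument for (a). You assert that the number of negative eigenvalues of $L_-^{1/2}L_+L_-^{1/2}$ equals the Morse index of $L_+$ ``by Sylvester'', but this inertia transfer requires the conjugating map $L_-^{1/2}$ to hit the negative cone of $L_+$. In $d=5$ the negative direction of $L_+$ is along $W$ (since $(L_+W,W)_{L^2}=-\alpha\int|x|^{-b}W^{\alpha+2}<0$), yet $W\in\ker L_-=\ker L_-^{1/2}$, so $W\perp\overline{\mathrm{Ran}\,L_-^{1/2}}$ and the Sylvester step fails outright. In $d=3,4$ one has $W\notin L^2$, so even identifying an $L^2$ negative direction for $L_+$ and checking it lies in $\mathrm{Ran}\,L_-^{1/2}$ is nontrivial because $0\in\sigma_{\mathrm{ess}}(L_-)$ and $L_-^{1/2}$ is not boundedly invertible. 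The paper closes this gap by constructing explicit test functions for the quadratic form of $P$: for $d=3,4$ a cutoff $W_a=\chi(\cdot/a)W$ together with the density $\overline{\mathrm{Ran}(\Delta+V)}=L^2$ (valid precisely because $W\notin L^2$ forces $\ker(\Delta+V)=\{0\}$); for $d=5$ the combination $Z+\gamma W_1$, where $Z$ is a positive-eigenvalue eigenfunction of $\Delta+(\alpha+1)V$ and $\gamma$ is chosen to enforce $(Z+\gamma W_1,W)_{L^2}=0$, placing the test function in $\overline{\mathrm{Ran}(\Delta+V)}=\{W\}^\perp$. You will need an argument of this type; the abstract index-counting you propose does not suffice.
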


At the end of this subsection, we utilize the spectral properties of    $\mathcal{L}$ from Lemma  \ref{l:eigen} to give a subspace  $G^{\bot}$ of  $\dot H^{1}$,  in which  $Q$ is positive definite.  

Consider the symmetric bilinear form  $B$ on  $\dot H^{1}$ such that  $Q(f)=B(f,f)$:
\begin{align}
		B(f,g):= \frac{1}{2}\text{Im} \int (\mathcal{L} f)\overline{g} 
		=\frac{1}{2}&\int \nabla f_1\nabla g_1-\frac{\alpha +1}{2}\int |x|^{-b}W^\alpha  f_1g_1 \notag\\
	&+\frac{1}{2}\int \nabla f_2 \nabla g_2-\frac{1}{2}\int |x|^{-b} W^\alpha  f_2g_2.\label{B}
\end{align}
As a consequence of the definition of  $B$, we have 
\begin{equation}
	B(f,g)=B(g,f),\ B(iW,f)=B(W_1,f)=0,\ \forall f,g\in \dot H^1, \label{12241}
\end{equation} 
\begin{equation}
	B(\mathcal{L}f,g)=-B(f,\mathcal{L}g),\ \forall f,g\in \dot H^1,\ \mathcal{L}f,\mathcal{L}g\in \dot H^1\label{12242}
\end{equation}
\begin{equation}
	Q(\mathcal{Y}_+)=Q(\mathcal{Y}_-)=0.\label{12243}
\end{equation}
Based on Proposition \ref{H} and (\ref{12241})--(\ref{12243}), we have the following coercivity of \(Q\) on \(G^{\bot}\):
\begin{lemma}\label{L:G}
	Let  $G^\bot=\left\{v\in \dot H^{1}:(iW,v)_{\dot H^{1}}=(W_1,v)_{\dot H^{1}}=B(\mathcal{Y}_+,v)=B(\mathcal{Y}_-,v)=0 \right\} $. There  exists  $c>0$ such that 
	\begin{equation}
		Q(f)\ge c\|f\|_{\dot H^{1}}^2,\qquad \forall f\in G^{\bot}.\notag
	\end{equation}
\end{lemma}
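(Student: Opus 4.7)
The plan is to reduce the coercivity on $G^{\bot}$ to the coercivity on $H^{\bot}$ already established in Proposition \ref{H}. For $f \in G^{\bot}$ I would write $f = \sigma W + g$ with $\sigma = (W, f)_{\dot H^{1}}/\|W\|_{\dot H^{1}}^{2}$ and $g := f - \sigma W$. Since $W$ is $\dot H^{1}$-orthogonal to $iW$ (because $W$ is real) and to $W_{1}$ (by a brief integration by parts using $W_{1} = \tfrac{d-2}{2}W + x\cdot\nabla W$), the remainder $g$ automatically lies in $H^{\bot}$, and Proposition \ref{H} gives $Q(g) \geq c_{0}\|g\|_{\dot H^{1}}^{2}$. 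Expanding by bilinearity yields $Q(f) = \sigma^{2} Q(W) + 2\sigma B(W, g) + Q(g)$, where the Pohozhaev identity (\ref{identity:pohozhaev}) gives $Q(W) = -\tfrac{\alpha}{2}\|\nabla W\|_{L^{2}}^{2} < 0$, so the issue is to control the negative term $\sigma^{2} Q(W)$.

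The required control on $\sigma$ comes from the two $B$-orthogonality constraints. A short calculation using $-\Delta W = |x|^{-b}W^{\alpha+1}$ and the realness of $W$ gives $B(\mathcal{Y}_+, W) = -\tfrac{\alpha}{2}((\mathcal{Y}_+)_{1}, W)_{\dot H^{1}}$, and the conjugation identity $\mathcal{Y}_+ = \overline{\mathcal{Y}_-}$ from Lemma \ref{l:eigen} shows $B(\mathcal{Y}_-, W) = B(\mathcal{Y}_+, W) =: \mu$; the constraints $B(\mathcal{Y}_\pm, f) = 0$ therefore collapse to the single relation $\sigma\mu = -B(\mathcal{Y}_+, g)$. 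Crucially $\mu \neq 0$: if instead $\mu = 0$, then $\sigma W$ would satisfy all four orthogonality conditions defining $G^{\bot}$ for every $\sigma$, and $Q(\sigma W) = \sigma^{2} Q(W) < 0$ would already refute the lemma. Granting $\mu \neq 0$, boundedness of $B$ on $\dot H^{1}$ yields $|\sigma| \leq C_{1}\|g\|_{\dot H^{1}}$. I expect the nondegeneracy $\mu \neq 0$ to be the main obstacle, since Lemma \ref{l:eigen} gives only abstract spectral information about $\mathcal{Y}_\pm$; a proof would exploit the coupled system $L_{+}(\mathcal{Y}_+)_{1} = e_{0}(\mathcal{Y}_+)_{2}$, $L_{-}(\mathcal{Y}_+)_{2} = -e_{0}(\mathcal{Y}_+)_{1}$ extracted from $\mathcal{L}\mathcal{Y}_+ = e_{0}\mathcal{Y}_+$, together with $L_{-}W = 0$ and the one-dimensional description $\ker L_{+} = \spa\{W_{1}\}$ obtained inside Proposition \ref{H}, to rule out $(\mathcal{Y}_+)_{1} \perp_{\dot H^{1}} W$ in the presence of the singular weight $|x|^{-b}$.

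To upgrade from mere positivity to coercivity with a uniform constant, I would argue by contradiction and weak compactness. Suppose there exist $f_{n} \in G^{\bot}$ with $\|f_{n}\|_{\dot H^{1}} = 1$ and $Q(f_{n}) \to 0$, and pass to a subsequential weak limit $f_{n} \rightharpoonup f_{*}$ in $\dot H^{1}$. The four constraints defining $G^{\bot}$ are bounded linear functionals, so $f_{*} \in G^{\bot}$. The compactness of multiplication by $|x|^{-b}W^{\alpha}$ from $\dot H^{1}$ into $L^{\frac{2d}{d+2},2}$ (Claim \ref{comp}) makes the potential part of $Q$ weakly continuous, while the kinetic part is weakly lower semicontinuous. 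If $f_{*} = 0$ then $Q(f_{n}) \to \tfrac{1}{2}$, a contradiction; if $f_{*} \neq 0$ then $Q(f_{*}) \leq 0$, and applying Lagrange multipliers to $f_{*}$ as a constrained minimizer of $Q$ on the unit sphere of $G^{\bot}$ forces $\mathcal{L}f_{*}$ into the span of the dual vectors representing the four constraints. The spectral identity $\sigma(\mathcal{L})\cap \mathbb{R} = \{0, \pm e_{0}\}$ from Lemma \ref{l:eigen}(d) then confines $f_{*}$ to $\spa\{W, iW, W_{1}, \mathcal{Y}_+, \mathcal{Y}_-\}$, and substitution into the four orthogonality constraints combined with $\mu \neq 0$ yields $f_{*} = 0$, the desired contradiction.
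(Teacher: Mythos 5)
Your strategy is genuinely different from the paper's, and the key obstacle you flag is in fact fatal as currently set up. You reduce the constraint structure to a single nondegeneracy condition $\mu := B(\mathcal{Y}_\pm,W) = -\tfrac{\alpha}{2}(\mathcal{Y}_1,W)_{\dot H^1}\neq 0$, but you do not prove it, and your "proof by threat" (if $\mu=0$ then the lemma is false) only shows the lemma \emph{implies} $\mu\neq 0$, not the converse. Critically, the paper establishes $(\mathcal{Y}_1,W)_{\dot H^1}\neq 0$ only later, in the proof of Theorem \ref{T1}, \emph{by invoking Lemma \ref{L:G}}: it observes that $\mu=0$ would force $W\in G^\bot$ and hence $Q(W)>0$, contradicting $Q(W)<0$ from Pohozhaev. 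So taking $\mu\neq 0$ as a known input here is circular unless you supply an independent argument (the vague remark about $L_\pm$ and $\ker L_+$ does not do this).

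The paper avoids that trap by using a different nondegeneracy: $B(\mathcal{Y}_+,\mathcal{Y}_-)\neq 0$. This is proved by a clean dimension count: if it vanished, then by (\ref{12241})--(\ref{12243}) the form $Q$ would be identically zero on the four-dimensional space $\spa\{iW,W_1,\mathcal{Y}_+,\mathcal{Y}_-\}$, while Proposition \ref{H} makes $Q$ positive definite on the codimension-three $H^\bot$, a contradiction. The same nondegeneracy then feeds into a second dimension count: if some $h\in G^\bot\setminus\{0\}$ had $Q(h)\le 0$, then $Q\le 0$ on $\spa\{iW,W_1,\mathcal{Y}_+,h\}$, and the four vectors are independent precisely because $B(\mathcal{Y}_+,\mathcal{Y}_-)\neq 0$, again contradicting Proposition \ref{H}. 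Your compactness step is the right idea and matches the paper, but the way you close it (Lagrange multipliers forcing $\mathcal{L}f_*$ into a dual span, then spectral confinement to $\spa\{W,iW,W_1,\mathcal{Y}_+,\mathcal{Y}_-\}$) is not carried out and has real difficulties: the weak limit $f_*$ has $\|f_*\|_{\dot H^1}\le 1$, not $=1$, so it is not a constrained minimizer; the Euler--Lagrange relation would be a generalized eigenvalue equation $\mathcal{L}f_*=\nu(-\Delta)f_*+\cdots$ rather than $\mathcal{L}f_*=\lambda f_*$, so Lemma \ref{l:eigen}(d) does not directly apply; and the final step again quietly uses $\mu\neq 0$. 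The paper's route through strict positivity on $G^\bot\setminus\{0\}$ makes all of this unnecessary. Also note that even granting $\mu\neq 0$, your decomposition bound $|\sigma|\le C_1\|g\|_{\dot H^1}$ only gives $Q(f)\ge (c_0-C_1^2|Q(W)|-2C_1C_B)\|g\|_{\dot H^1}^2$, where there is no reason the parenthesized constant is positive; so the decomposition alone does not establish positivity and does not replace the dimension-counting step.
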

\begin{proof}
We first claim that  $B(\mathcal{Y}_+,\mathcal{Y}_-)\neq0$. In fact, if   $B(\mathcal{Y}_+,\mathcal{Y}_-)=0$, then   $Q$  would be identically  $0$  on   $\text{Span }\left\{iW,W_1,\mathcal{Y}_+,\mathcal{Y}_-\right\}$ which is of dimension  $4$. But  $Q$ is, by  Proposition  \ref{H},  positive definite on  $H^\bot$,  which is of codimension   $3$, yielding a contradiction. 

We next claim that   $Q(h)>0$ on  $G^\bot \setminus \left\{0 \right\}$.  Assume by contradiction that   there exists  $h\in G^\bot \setminus \left\{0 \right\}$ such that   $Q(h)\le 0$.  Then by (\ref{12241})--(\ref{12243})
\begin{equation}
	Q|_{\text{Span }\left\{iW,W_1,\mathcal{Y}_+,h\right\}} \le 0.\notag
\end{equation}
If for some  $\alpha , \beta, \gamma ,\delta \in \mathbb{R} $  
\begin{equation}
	\alpha iW+\beta W_1+\gamma \mathcal{Y}_++\delta h=0,\notag
\end{equation}
then   $\gamma  B(\mathcal{Y}_+,\mathcal{Y}_-)=0$, which implies  $\gamma =0$.  Therefore the vectors  $iW,W_1,\mathcal{Y}_+,h$ are independent, since  $iW,W_1$ and  $h$    
are orthogonal in the real Hilbert space $\dot H^1$.   The fact that  $Q$ is nonpositive on a subspace of dimension  $4$ contradicts  Proposition \ref{H}.   

Finally, we prove the coercivity by a compactness argument. Suppose, by contradiction that there exists   $\left\{f_n \right\}\in G^\bot$  such that 
	\begin{equation}
		\lim_{n\rightarrow \infty }Q(f_n)=0\quad\text{and}\quad   \|f_n\|_{\dot H^1}=1.\notag
	\end{equation}
	Up to a subsequence, we may assume  $f_n {\rightharpoonup}f^*$ weakly in  $\dot H^1$. This implies  $f^*\in G^\bot$ and  
	\begin{equation}
		Q(f^*)\le \liminf_{n\rightarrow \infty }Q(f_n)=0.\label{28w2}
	\end{equation}      
Using H\"older's inequality and the decay of  $|x|^{-b}$ at infinity, it is easy to see that  $\int |x|^{-b}W^\alpha |\cdot|^2$ is a compact operator. 
Therefore   
\begin{eqnarray}
	&&\frac{1}{2}\int _{\mathbb{R} ^d}|x|^{-b}W^\alpha \left((\alpha +1)(\text{Re}f^*)^2+(\text{Im}f^*)^2\right)\notag\\
	&=&\lim_{n\rightarrow\infty } \frac{1}{2}\int _{\mathbb{R} ^d}|x|^{-b}W^\alpha \left((\alpha +1)(\text{Re}f_n)^2+(\text{Im}f_n)^2\right)\notag\\
	&=& \lim_{n\rightarrow\infty }\frac{1}{2} \|\nabla f_n\|_{L^2}-Q(f_n)=\frac{1}{2},\notag 
\end{eqnarray}
	which implies that  $f^*\neq0$. However, this together with  (\ref{28w2}) contradicts the strict positivity of  $Q$ on  $G^\bot \setminus\left\{0 \right\}$.    
\end{proof}
\section{Existence of special threshold solutions  $W^{\pm}$ }\label{s:3}
In this section, we
show the existence of the solutions  $W_\pm$ of Theorem \ref{T1}.
Following the arguments in \cite{2008GFA}, we first  construct approximate solutions  $W_k^a$
of (\ref{nls}) by use of the spectral property of the linearized operator  $\mathcal{L}$. Then we prove the existence of special threshold solutions  $W^a$ and  $W^{\pm}$ by a fixed point argument around  approximate solutions.

\subsection{A family of approximate solutions converging to  $W$.}\label{s6.1}
\begin{lemma}\label{l:app}
	Let  $a\in \mathbb{R}$. There exist functions  $(\Phi_j^a)_{j\ge 1}$ in  $L^\infty (\mathbb{R} ^d)\cap H^2(\mathbb{R}^d)\cap W^3L^{\frac{2d}{d+2},2}(\mathbb{R}^d)$, satisfying (\ref{12231}), such that  $\Phi_1^a=a\mathcal{Y}_+$ and if  
	\begin{equation}
		W^a_k(t,x):=W(x)+\sum_{j=1}^{k} e^{-je_0t}\Phi_j^a(x),\notag
	\end{equation} 
	then as  $t\rightarrow +\infty $, 
	\begin{equation}
		i\partial_{t}W_k^a+\Delta W^a_k+|x|^{-b}|W^a_k|^\alpha W^a_k=O(e^{-(k+1)e_0t})\quad \text{in  } \quad   WL^{\frac{2d}{d+2},2}(\mathbb{R}^d).\label{88w1}
	\end{equation}
\end{lemma}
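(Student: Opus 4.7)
The plan is to construct the family $(\Phi_j^a)_{j \geq 1}$ inductively on $j$, using the spectral properties of $\mathcal{L}$ from Lemma \ref{l:eigen} to solve a linear elliptic equation at each step. For the base case, set $\Phi_1^a := a\mathcal{Y}_+$. Writing the equation in the vectorial linearized form $\partial_t v + \mathcal{L} v + R(v) = 0$ from (\ref{Linear Eq}), the linear part $(\partial_t + \mathcal{L})(ae^{-e_0 t}\mathcal{Y}_+)$ vanishes by the eigenvalue relation $\mathcal{L}\mathcal{Y}_+ = e_0 \mathcal{Y}_+$, so only $R(ae^{-e_0 t}\mathcal{Y}_+)$ remains. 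Lemma \ref{nestimate} combined with $\mathcal{Y}_+ \in L^\infty \cap W^3L^{\frac{2d}{d+2},2}$ (Lemma \ref{l:eigen}(b)) bounds this remainder by $O(e^{-2e_0 t})$ in $WL^{\frac{2d}{d+2},2}$, as desired.

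For the inductive step, suppose $\Phi_1^a,\ldots,\Phi_k^a$ have been constructed in the stated function spaces, and set $v_k := W_k^a - W = \sum_{j=1}^k e^{-je_0 t}\Phi_j^a$. The rapid decay estimate (\ref{decay2}) together with $W(x) \sim \langle x\rangle^{-(d-2)}$ at infinity forces $\Phi_j^a/W \in L^\infty(\mathbb{R}^d)$ for each $j$, so that $\|W^{-1}v_k\|_{L^\infty_x} \leq Ce^{-e_0 t} < 1$ for $t$ large. Exploiting the real analyticity of $J$ on $\{|z|<1\}$ from (\ref{212}), I would expand
\[
R(v_k) = -i|x|^{-b}W^{\alpha+1}J(W^{-1}v_k) = -i|x|^{-b}W^{\alpha+1}\sum_{m+n \geq 2} c_{mn}\bigl(W^{-1}v_k\bigr)^m\bigl(\overline{W^{-1}v_k}\bigr)^n.
\]
Each monomial $(W^{-1}v_k)^m(\overline{W^{-1}v_k})^n$ expands into a finite sum of terms of the form $e^{-\ell e_0 t}\prod_i(W^{-1}\Phi_{j_i}^a)\prod_{i'}(W^{-1}\overline{\Phi_{j_{i'}}^a})$ with index sum $\ell$. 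Collecting orders of $e^{-\ell e_0 t}$ gives $R(v_k) = \sum_{\ell \geq 2} e^{-\ell e_0 t}\mathcal{F}_\ell\bigl[\Phi_1^a,\ldots,\Phi_{\ell-1}^a\bigr]$, where the order-$(k+1)$ coefficient $\mathcal{F}_{k+1}$ depends only on the previously constructed terms.

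Requiring that the addition of $e^{-(k+1)e_0 t}\Phi_{k+1}^a$ cancel the order-$e^{-(k+1)e_0 t}$ contribution to $\partial_t v_{k+1} + \mathcal{L} v_{k+1} + R(v_{k+1})$ then reduces, at that order, to the linear elliptic equation
\[
(\mathcal{L} - (k+1)e_0)\Phi_{k+1}^a = -\mathcal{F}_{k+1}.
\]
Since $(k+1)e_0 \in \mathbb{R}\setminus\sigma(\mathcal{L})$ by Lemma \ref{l:eigen}(d), this is uniquely solvable in $H^2$, and Lemma \ref{l:eigen}(c) upgrades the solution to $\Phi_{k+1}^a \in L^\infty \cap W^3L^{\frac{2d}{d+2},2}$ satisfying the decay (\ref{12231}), provided $\mathcal{F}_{k+1}$ satisfies the analogous hypotheses. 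I would verify the latter from Leibniz- and H\"older-type estimates in the Sobolev-Lorentz scale (Lemmas \ref{L:leibnitz}--\ref{L:6141}), using the induction hypothesis to control each $\Phi_j^a$ factor in $L^\infty \cap W^3L^{\frac{2d}{d+2},2}$ and the rapid decay in (\ref{12231}) of each factor to absorb the weights $|x|^{-b}W^{\alpha+1}$. The residual error in (\ref{88w1}) then comes from the tail $\sum_{\ell \geq k+2}e^{-\ell e_0 t}\mathcal{F}_\ell$, which is $O(e^{-(k+2)e_0 t})$ in $WL^{\frac{2d}{d+2},2}$ by a geometric-sum estimate driven by $\|W^{-1}v_{k+1}\|_{L^\infty_x} \leq Ce^{-e_0 t} \to 0$.

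The main obstacle is making the formal expansion of $J(W^{-1}v_k)$ rigorous in the Sobolev-Lorentz norm $WL^{\frac{2d}{d+2},2}$ and confirming that the collected coefficients $\mathcal{F}_{k+1}$ retain both pointwise boundedness and the rapid spatial decay required to apply Lemma \ref{l:eigen}(c). This is precisely the technical difficulty the authors emphasize in the introduction as the key departure from the polynomial-nonlinearity or Schwartz-class settings of \cite{Campos-Murphy,CFRoud:threshold,2008GFA,2009JFA}, and it is what forces the restrictions on $b$ and on dimensions $3 \leq d \leq 5$ (via Sobolev embedding in Lemma \ref{l:eigen}(b)).
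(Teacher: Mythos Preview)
Your proposal is correct and follows essentially the same approach as the paper: an inductive construction with $\Phi_1^a = a\mathcal{Y}_+$, the analytic expansion of $J(W^{-1}v_k)$ (enabled by $\Phi_j^a \in L^\infty$ together with the rapid decay (\ref{12231})) to isolate the order-$(k+1)$ coefficient, and the resolvent $(\mathcal{L} - (k+1)e_0)^{-1}$ from Lemma \ref{l:eigen}(c),(d) to produce $\Phi_{k+1}^a$ with the required regularity. Your $\mathcal{F}_{k+1}$ is exactly the paper's $\Psi_k$, and the verification you defer to Leibniz/H\"older estimates in the Sobolev--Lorentz scale is precisely what the paper carries out.
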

\begin{proof}
	For simplicity, we omit the superscript  $a$.  We will construct the functions  $\Phi_j=\Phi_j^a$ by induction on  $j$. Assume that  $\Phi_1,\cdots \Phi_k$ are known, and let 
	\begin{equation}
		v_k:=W_k-W=\sum_{j=1}^{k}e^{-je_0t}\Phi_j(x).\label{3272}
	\end{equation}
	By (\ref{Linear Eq}), assertion (\ref{88w1})  is equvalient to 
	\begin{equation}
		\varepsilon _k:=\partial_{t}v_k+\mathcal{L} (v_k)+R(v_k)=O(e^{-(k+1)e_0t})\quad \text{in  } \quad   WL^{\frac{2d}{d+2},2}(\mathbb{R}^d).\label{88w2}
	\end{equation}   
	
	\noindent\textbf{Step 1:}  k=1.  Let  $\Phi_1:=a\mathcal{Y}_+$   and  $v_1(t,x):=e^{-e_0t}\Phi_1(x)$. We have  $\partial_{t}v_1+\mathcal{L} (v_1)=0$ and thus 
	\begin{equation}
		\partial_{t}v_1+\mathcal{L} (v_1)+R(v_1)=R(v_1).\notag
	\end{equation}
	Since $v_1=ae^{-e_0t}\mathcal{Y}_+$ and $\mathcal{Y}_+\in H^2(\mathbb{R}^d)\cap W^3L^{\frac{2d}{d+2},2}(\mathbb{R}^d)$, 
it follows from Lemma \ref{nestimate} that  $R(v_1)=O({e^{-2e_0t}})\  \text{in }WL^{\frac{2d}{d+2},2}(\mathbb{R}^d)$.

	\noindent\textbf{Step 2:}  Induction. Let us assume that $\Phi_1, \ldots, \Phi_k$ are known and satisfy (\ref{88w2}) for some $k\geq 1$. To construct $\Phi_{k+1}$, we first claim that there exists $\Psi_k\in H^{\frac{1}{2}+\varepsilon} (\mathbb{R} ^d)\cap  WL^{\frac{2d}{d+2},2}(\mathbb{R}^d)$, satisfying (\ref{12231}),  such that for large $t$
	\begin{equation}
		\varepsilon _k(t,x)=e^{-(k+1)e_0 t}\Psi_k(x)+O\big(e^{-(k+2)e_0t}\big) \quad \text {in} \quad   WL^{\frac{2d}{d+2},2}(\mathbb{R}^d).\label{88w3}
	\end{equation}
	Indeed, substituting  $v_k=\sum_{j=1}^{k} e^{-je_0t}\Phi_j(x) $ into (\ref{88w2}), we obtain 
	\begin{equation}
		\varepsilon _k(t,x)=\sum_{j=1}^k e^{-je_0t}\big( {-j}e_0\Phi_j(x)+\mathcal{L}\Phi_j(x)\big) +R(v_k(t,x)).\notag
	\end{equation}
	Note that  $R(v_k)=-i|x|^{-b}W^{\alpha +1}J(W^{-1}v_k)$, where  $J$ defined in (\ref{212}) is real-analytic for  $\{|z|<1\}$  and satisfies  $J(0)=\partial_{z}J(0)=\partial_{\overline{z}}J(0)=0$.  For  $|z|\le 1/2$, we can expand 
	\begin{equation}
		J(z)=\sum_{j_1+j_2\ge2}a_{j_1j_2}z^{j_1}\overline{z}^{j_2},\label{213}
	\end{equation}    
	with normal convergence of the series and all its derivatives. 
	All the functions $\Phi_j\in L^\infty (\mathbb{R} ^d)$, so that for large  $t$, and all  $x$,  $|v_k(t,x)|\le \frac{1}{2}W(x)$.    Using (\ref{213}) to expand  $R(v_k)$, we found that  there exist  $F_j\in WL^{\frac{2d}{d+2},2}(\mathbb{R}^d)(1\le j\le k)$ and  $F_{k+1}\in H^{\frac{1}{2}+\varepsilon}  (\mathbb{R} ^d)\cap WL^{\frac{2d}{d+2},2}(\mathbb{R}^d)$ , satisfying (\ref{12231}) such that  
	\begin{equation}
		\varepsilon _k(t,x)=\sum_{j=1}^{k+1} e^{-je_0t} F_{j}(x)  + O\big(e^{-(k+2) e_0t}\big) \quad \text{ in } \quad   WL^{\frac{2d}{d+2},2}(\mathbb{R}^d).\notag
	\end{equation}
	By (\ref{88w2}) at rank  $k$,  $F_j=0$ for  $j\le k$ which shows (\ref{88w3}) with 
	$\Psi_k=F_{k+1}$.

By Lemma \ref{l:eigen}, $(k+1) e_0$ is not in the spectrum of $\mathcal{L}$.  Define 
	\begin{equation}
		\Phi_{k+1}:=-(\mathcal{L}-(k+1)e_0)^{-1} \Psi_k\notag
	\end{equation}
	which belongs to  $L^\infty (\mathbb{R} ^d)\cap H^2(\mathbb{R} ^d)\cap W^3L^{\frac{2d}{d+2},2}(\mathbb{R}^d)$ and satisfies (\ref{decay2}) by Lemma \ref{l:eigen}.  By definition,  $v_{k+1}=v_k+e^{-(k+1)e_0t}\Phi_{k+1}$. Furthermore 
	\begin{eqnarray}
		\varepsilon _{k+1}&:=&\partial_t v_{k+1}+\mathcal{L}v_{k+1}+R(v_{k+1})\notag\\
		&=& \partial_{t}v_k+\mathcal{L}v_k+R(v_k)+(\mathcal{L}-(k+1)e_0)\Phi_{k+1}e^{-(k+1)e_0t}+R(v_{k+1})\notag\\
 		&=&\varepsilon _k-e^{-(k+1)e_0t} \Psi_{k}+R(v_{k+1})-R(v_k).\notag
	\end{eqnarray}
	By (\ref{88w3}), $\varepsilon _k-e^{-(k+1)e_0t} \Psi_{k}=O\big(e^{-(k+2)e_0t}\big)$ in $WL^{\frac{2d}{d+2},2}(\mathbb{R}^d)$.   Writing as before, 
	\begin{equation}
		R(\cdot)=-i|x|^{-b}W^{\alpha +1}J(W^{-1}\cdot)\notag
	\end{equation}
	 and using the development (\ref{213}) of  $J$,   we get that 
\begin{equation}
	R(v_{k+1})-R(v_k)=O\big(e^{-(k+2)e_0t}\big) \quad\text{in}\quad  WL^{\frac{2d}{d+2},2}(\mathbb{R}^d),\notag
\end{equation}
which yields (\ref{88w2}) at rank  $k+1$.   The proof is complete.  
\end{proof}
\subsection{Construction of special   threshold solutions  $W^a$.}\label{s6.2}
In this subsection, we apply the fixed point argument to show the existence of special   threshold solutions  $W^a$.
\begin{proposition}\label{P:approximate}
	Let  $a\in \mathbb{R}$. There exists  $k_0>0$ such that for any  $k\ge k_0$, there exists  $t_k\ge 0$ and a solution  $W^a$ of (\ref{nls}) such that for  $t\ge t_k$,
	\begin{equation}
		\|W^a-W^a_k\|_{Z(t,+\infty )}\le e^{-(k+\frac{1}{2})e_0t}.\label{885}
	\end{equation}
	Furthermore,  $W^a$ is the unique solution of (\ref{nls}) satisfying (\ref{885}) for large  $t$;  $W^a$ is also independent of  $k$ and satisfies for large  $t$,
	\begin{equation}
		\|W^a(t)-W-ae^{-e_0t}\mathcal{Y}_+\|_{\dot H^{1}}\lesssim  e^{-\frac{3}{2}e_0t}.\label{88w4}
	\end{equation}
	Finally, $W^a\in L^2(\mathbb{R} ^d)$ if  $d=5$.   
\end{proposition}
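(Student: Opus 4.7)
The plan is to view the true solution as a perturbation $W^a = W_k^a + h$ of the approximate solution constructed in Lemma \ref{l:app}, and to obtain $h$ as a fixed point of the Duhamel map integrated backward in time from $+\infty$. By Lemma \ref{l:app} the error is $\varepsilon_k = O(e^{-(k+1)e_0 t})$ in $WL^{\frac{2d}{d+2},2}$, so plugging $W^a = W_k^a + h$ into (\ref{nls}) and comparing with (\ref{Linear Eq}) shows that $h$ should satisfy a Schrödinger equation with linear potential term $\mathcal{V}(h)$, a nonlinear remainder $R(v_k + h) - R(v_k)$ of the type handled in Lemma \ref{nestimate}, and the forcing $\varepsilon_k$. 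Writing this in Duhamel form
\[
h(t) \;=\; -i\int_t^{+\infty} e^{i(t-s)\Delta}\bigl[\mathcal{V}(h) + R(v_k + h) - R(v_k) + \varepsilon_k\bigr](s)\, ds
\]
and applying Proposition \ref{P:SZ} will be the starting point.

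The contraction is set up in the spirit of Lemma \ref{L:small solution}: for each $\tau \geq t_k$ I work on the short interval $I_\tau = [\tau, \tau + \tau_0]$ in the ball
\[
\mathcal{B}_\tau := \bigl\{ h : \|h\|_{Z(I_\tau)} \leq e^{-(k+\tfrac{1}{2})e_0 \tau}\bigr\},
\]
and then glue by telescoping over the intervals $[t_k, t_k+\tau_0], [t_k+\tau_0, t_k+2\tau_0], \ldots$. Proposition \ref{P:SZ} controls the Duhamel integral, Lemma \ref{lestimate} bounds the linear contribution by $\tau_0^{\alpha/\gamma}\|h\|_{Z(I_\tau)}$ (which is a small fraction of the ball radius for $\tau_0$ small), Lemma \ref{nestimate} bounds the nonlinear contribution by quadratic and higher powers of $\|h\|_{Z(I_\tau)}$ (which is even smaller once $\tau \geq t_k$ is large), and the forcing contributes $\simeq e^{-(k+1)e_0\tau}$, much smaller than the ball radius $e^{-(k+\frac{1}{2})e_0 \tau}$. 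Choosing $k_0$ and then $t_k$ large enough thus makes the map a strict contraction on each $\mathcal{B}_\tau$, yielding (\ref{885}) on $[t_k, +\infty)$.

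Uniqueness follows by applying the same contraction estimate to the difference of two solutions satisfying (\ref{885}), whose Duhamel equation has no forcing; independence of $k$ is then automatic since a solution produced at level $k' > k$ automatically satisfies the weaker estimate at level $k$. The asymptotic (\ref{88w4}) follows by isolating $\Phi_1^a = a\mathcal{Y}_+$ in the definition of $W_k^a$ with $k \geq 2$: the remaining terms $\sum_{j \geq 2} e^{-j e_0 t} \Phi_j^a$ are $O(e^{-2 e_0 t})$ in $\dot H^1$ uniformly in $t$ because $\Phi_j^a \in H^2 \subset \dot H^1$, and (\ref{885}) upgraded by the Strichartz piece $\|\nabla h\|_{L^\infty_t L^2_x(I_\tau)}$ of Proposition \ref{P:SZ} converts to an $\dot H^1$ control on $h$ with the same exponential rate. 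Finally, for the $L^2$ statement when $d=5$: one has $W \in L^2(\mathbb{R}^5)$, and the inductive construction in Lemma \ref{l:app} keeps each $\Phi_j^a \in L^2$, because $\mathcal{Y}_\pm \in H^2$ and the resolvents $(\mathcal{L} - (k+1)e_0)^{-1}$ in Lemma \ref{l:eigen}(c) preserve $L^2$; hence $W_k^a \in L^2_x$. The correction $h$ has finite mass at $t = t_k$ (by combining (\ref{885}) with the short-time Cauchy theory at the $L^2$ level), and conservation of mass then propagates $L^2$ regularity to all $t \geq t_k$.

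The main obstacle is balancing exponentials in the contraction: the linear term $\mathcal{V}(h)$ carries no exponential smallness, only the short-time gain $|I|^{\alpha/\gamma}$ from Lemma \ref{lestimate}, so one cannot close the argument on $[t_k, \infty)$ directly and must work on short intervals and assemble the estimates by summation, being careful that the rate $(k+\tfrac{1}{2})e_0$ is strictly less than $(k+1)e_0$ (to absorb the forcing) but large enough to dominate all terms coming from the previously constructed $\Phi_j^a$. A secondary subtlety is checking that the expansion (\ref{213}) applies uniformly on the intervals where $|W^{-1}(v_k + h)| \leq 1/2$, which is where the $L^\infty_x$ control on $\Phi_j^a$ from Lemma \ref{l:eigen}(b) and the smallness of $\|h\|_{L^\infty_t L^\infty_x}$ (obtained by Sobolev embedding from $\|h\|_{L^\infty_t \dot H^1} + \|h\|_{L^\infty_t H^{s}}$ for suitable $s$) becomes essential.
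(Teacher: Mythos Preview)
Your fixed-point construction and the arguments for uniqueness, independence of $k$, and (\ref{88w4}) are essentially the paper's (which in turn follows \cite{2008GFA}). The paper works directly in the weighted ball $B^k_Z=\{h:\sup_{t\ge t_k}e^{(k+\frac12)e_0t}\|h\|_{Z(t,+\infty)}\le 1\}$ rather than gluing short intervals, but this is cosmetic: closing the contraction in that weighted norm implicitly uses the same unit-interval decomposition you describe. Your ``secondary subtlety'' about needing $|W^{-1}(v_k+h)|\le 1/2$ is unnecessary at the fixed-point stage: Lemma~\ref{nestimate} relies on the pointwise bounds (\ref{1281}), which hold for all $z$ since $\alpha\ge 1$, so no $L^\infty$ smallness of $h$ is required there (the analyticity expansion (\ref{213}) is only used in Lemma~\ref{l:app}).

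The $L^2$ part for $d=5$ is where you diverge from the paper, and as written it has a gap. You invoke ``the short-time Cauchy theory at the $L^2$ level,'' but no such theory is available here: the equation is energy-critical and Theorem~\ref{T:CP} is only in $\dot H^1$. What \emph{would} salvage your route is an a~posteriori $L^2$-level Strichartz estimate on the integral equation for $h$: once $\|\nabla h(t)\|_{L^2}\lesssim e^{-(k+\frac12)e_0 t}$ is known, one checks via (\ref{2121}) and the $WL^{\frac{2d}{d+2},2}$ bound on $\varepsilon_k$ that $\mathcal V(h)$, $R(v_k+h)-R(v_k)$ and $\varepsilon_k$ all lie in $L^2_tL^{\frac{2d}{d+2},2}_x([t_k,\infty))$, and then Proposition~\ref{P:SZ} without the gradient gives $h\in L^\infty_tL^2_x$. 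The paper instead uses a localized-mass argument that avoids revisiting the nonlinear estimates: set $F_R(t)=\int|W^a|^2\psi(x/R)\,dx$, compute $F_R'(t)$ from the equation, bound it by $\|W^a(t)-W\|_{\dot H^1}\lesssim e^{-e_0t}$ via Hardy's inequality, integrate to $+\infty$, and let $R\to\infty$ to obtain $\|W^a(t)\|_{L^2}=\|W\|_{L^2}$. This uses only the $\dot H^1$ convergence already established.
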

\begin{proof}[Sketch of the proof.]
	The proof is exactly the same as \cite[ Proposition 6.3]{2008GFA}. 
	For the convenience of the readers, we briefly  sketch   the  proof of the existence of   $W^a$.   
	Let  $v_k:=W^a_k-W$ and  by (\ref{88w2}) it satisfies 
	\begin{equation}
		\varepsilon _k:=\partial_{t}v_k+\mathcal{L}(v_k)+R(v_k)=O(e^{-(k+1)e_0t}) \quad\text{in}\quad  WL^{\frac{2d}{d+2},2}(\mathbb{R}^d).\label{813x1}
	\end{equation}
	Let  $v^a:=W^a-W$ and  by (\ref{Linear Eq}),  $W^a$ is a solution of (\ref{nls}) if and only if   $v^a$ satisfies the equation  
	\begin{equation}
		\partial_{t}v^a+\mathcal{L}(v^a)+R(v^a)=0.\label{813x2}
	\end{equation}
	Let  $h:=W^a-W^a_k$, then  $h=v^a-v_k$.  Combining (\ref{813x1}) and  (\ref{813x2}), we deduce that 
	\begin{equation}
		i\partial_t h+\Delta h=-\mathcal{V}(h)-i R(v_k+h)+i R(v_k)+i\varepsilon _k,\notag
	\end{equation}
	where the linear operator  $\mathcal{V}$ is defined in (\ref{V}).   
	Thus the existence of a solution  $W^a$  of (\ref{nls}) satisfying (\ref{885}) for  $t\ge t_k$ may be written as the following fixed-point problem
	\begin{equation}
		\forall t\ge t_k,\quad h(t) =\mathcal{M}_k(h)(t) \quad\text{and}\quad \|h\|_{Z(t,+\infty )}\le e^{-(k+\frac{1}{2})e_0t}\quad\text{where}\notag
	\end{equation}
	\begin{equation}
		\mathcal{M}_k(h)(t):=-\int _t^{+\infty } e^{i(t-s)\Delta }[i\mathcal{V}(h(s))-R(v_k(s)+h(s))+R(v_k(s))-\varepsilon _k(s)]ds.\notag
	\end{equation}
	Let us fix  $k$ and  $t_k$. Consider the Banach space 
	\begin{equation}
		B^k_Z:=\left\{ h\in Z(t_k,+\infty );\sup _{t\ge t_k}e^{(k+\frac{1}{2})e_0t}\| h\|_{Z(t,+\infty )}\le1 \right\}.\notag 
	\end{equation}
	By using the Strichartz estimate, (\ref{813x1}) and   Lemma \ref{nestimate}, we can show that if  $t_k$ and  $k$ are large enough, the mapping  $\mathcal{M}_k$  is a contraction on  $B^k_Z$.      This proves the existence of a solution  $W^a$  of (\ref{nls}) satisfying (\ref{885}) for  $t\ge t_k$.
	
	Finally, we show that  $W^a\in L^2(\mathbb{R} ^d)$ if  $d=5$.    Define a positive radial function  $\psi$ on  $\mathbb{R} ^d$ such that 
	 $\psi=1$ if  $|x|\le1$ and  $\psi=0$ if  $|x|\ge2$. For  $R>0$ and large  $t$, define 
	 \begin{equation}
	 	F_R(t):=\int _{\mathbb{R} ^d} |U^a(t,x)|^2\psi(\frac{x}{R})dx.\notag
	 \end{equation}        
	 Then we have 
	 \begin{eqnarray}
	 	F_R'&=& \frac{2}{R}\text{Im}\int W^a\nabla \overline{W}^a\cdot (\nabla \psi)(\frac{x}{R})dx=\frac{2}{R}\text{Im}\int W\nabla (\overline{W}^{a}-W)\cdot (\nabla \psi) (\frac{x}{R})dx\notag\\
	 	&&+\frac{2}{R}\text{Im}\int (W^a-W)\nabla W\cdot (\nabla \psi)(\frac{x}{R})dx+\frac{2}{R}\text{Im}\int (W^{a}-W)\nabla (\overline{W}^a-W)\cdot (\nabla \psi)(\frac{x}{R})dx.\notag
	 \end{eqnarray}
	 Applying (\ref{88w4}) and Hardy's inequality, we obtain 
	 \begin{equation}
	 	|F_R'(t)|\lesssim   \|U^a(t)-W\|_{\dot H^1}( \|U^a(t)\|_{\dot H^1}+ \|W\|_{\dot H^1})  \lesssim e^{-e_0t}.\notag 
	 \end{equation} 
	 Integrating the above inequality from sufficiently large  $t$ to  $+\infty $, we get 
	 \begin{equation}
	 	\left|F_R(t)-\int _{\mathbb{R} ^d}|W(x)|^2\psi (\frac{x}{R})dx\right|\lesssim e^{-e_0t}.\notag
	 \end{equation}  
	 Letting  $R\rightarrow+\infty $, we get  $ \|W^a(t)\|_{L^2}= \|W\|_{L^2}  $  and  $W^a(t)\in L^2(\mathbb{R} ^d)$ when  $d=5$, which completes the proof by applying mass conservation law.     
\end{proof}

\subsection{Construction of  $W^{\pm}$. }\label{s6.3}
\begin{proof}
	[Proof of Theorem \ref{T1}.] 	Let $\mathcal{Y}_1:=\text{Re} \mathcal{Y}_+=\text{Re}  \mathcal{Y}_-$. We first claim that 
	$(W,\mathcal{Y}_1)_{\dot H^{1}}\neq 0.$
	In fact, if $(W,\mathcal{Y}_1)_{\dot H^{1}}=0$, then by the equation (\ref{Eq:W}) and the definition of  $B(\cdot,\cdot)$ in (\ref{B}), we  have 
\begin{equation}
	B(W,\mathcal{Y}_\pm )=\frac{1}{2}\int \nabla W\nabla \mathcal{Y}_1-\frac{\alpha +1}{2}\int |x|^{-b}W^{\alpha +1}\mathcal{Y}_1=-\frac{\alpha }{2}\int \nabla W\nabla \mathcal{Y}_1=0,\notag
\end{equation}
	so that $W\in G^{\bot}$ and  thus $Q(W)>0$ by Lemma \ref{L:G}.  
	However, by Pohozhaev's identity (\ref{identity:pohozhaev}):
	\begin{equation}
		Q(W)=\frac{1}{2}\int |\nabla W|^2-\frac{\alpha +1}{2}\int |x|^{-b}W^{\alpha +2}=-\frac{\alpha }{2}\int |x|^{-b}W^{\alpha +2}<0.\label{QW}
	\end{equation}

	 Replacing  $\mathcal{Y}_\pm$ by   $-\mathcal{Y}_\pm$ if necessary, we may assume
	\begin{equation}
		(W,\mathcal{Y}_1)_{\dot H^{1}}>0.\label{88w5}
	\end{equation}
	Let  
	\begin{equation}
		W^\pm:=W^{\pm1},\notag
	\end{equation}
	which yields two solutions of (\ref{nls}) for large  $t>0$. Then all the conditions
	of Theorem \ref{T1} are satisfied.  Indeed,  the limits 
	\begin{equation}
		 \|W^{\pm}(t)-W\|_{\dot H^1}\lesssim e^{-e_0t},\qquad t\ge0,\notag 
	\end{equation}
	are an immediate consequence of (\ref{88w4}), while 
	$E(W^\pm)=E(W)$ follows from the conservation of the energy and the fact that  $W^a$ tends to  $W$ in  $\dot H^{1}$.
	Furthermore, again by (\ref{88w4}) 
	\begin{equation}
		\|W^a\|_{\dot H^{1}}^2=\|W\|_{\dot H^{1}}^2+2ae^{-e_0t}(W,\mathcal{Y}_1)_{\dot H^{1}}+O(e^{-\frac{3}{2}e_0t}), \notag
	\end{equation} 
	which together with (\ref{88w5}) shows that for large  $t>0$, 
	\begin{equation}
		\|\nabla W^+(t)\|_{L^2}> \|\nabla W\|_{L^2} \quad\text{and}\quad  \|\nabla W^-(t)\|_{L^2}< \|\nabla W\|_{L^2} .\notag
	\end{equation}
	From Lemma \ref{l:coercive}, these inequalities remain valid for every  $t$  in the intervals of existence of   $W^{\pm}$.   
	Finally,  $W^-(t)$ scatters in the negative time direction follows from (\ref{1119w1}), and  $W^+(t)$ blows up in finite negative  time when  $d=5$  follows from Corollary \ref{c:210}.  
\end{proof}

\section{Modulation analysis.}\label{s:4}
In this section, we perform the modulation analysis for solutions in the small neighborhood of the  ground state. On energy surface of the ground state, the distance to this manifold is controlled by 
\begin{equation}
	 \mathbf{d} (u)=\left|  \int_{\mathbb{R} ^d} \left(|\nabla u(x)|^2-|\nabla W(x)|^2\right)dx \right|, \notag
\end{equation} 
as shown in the following result. The same result in the case of pure-power NLS can be found in \cite{R: Aubin,Lions85,R: Talenti}. \\
 \noindent \textbf{Notation.} 
If  $v$ is a function defined on  $\mathbb{R} ^d$, as a convention,  we write 
\begin{equation}
	v_{[\lambda_0]}	(x):= {\lambda_0^{-\frac{d-2}{2}}}v(\frac{x}{\lambda_0})\quad\text{and}\quad v_{[\theta _0,\lambda_0]}(x):=e^{i\theta _0} {\lambda_0^{-\frac{d-2}{2}}} v(\frac{x}{\lambda_0}).\notag
\end{equation} 
\begin{proposition}
	\label{P:W}
	There exists a function  $\varepsilon =\varepsilon ( \mathbf{d} )$, satisfying   $\lim _{ \mathbf{d} \rightarrow 0}\varepsilon ( \mathbf{d} )=0$, such that for any  
	  $u\in \dot H^1(\mathbb{R} ^d)$ with  $E(u)=E(W)$, the following inequality holds
	\begin{equation}
		\inf _{\theta \in \mathbb{R} ,\mu>0} \|u_{[\theta ,\mu]}-W\|_{\dot H^1}\le \varepsilon ( \mathbf{d}  (u)).\notag 
	\end{equation}  
\end{proposition}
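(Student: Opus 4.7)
The plan is to argue by contradiction using the bubble decomposition of Lemma~\ref{L:bubble} together with the sharpness of the Sobolev-type inequality in Proposition~\ref{P:GN}. Suppose the statement fails; then there exist $\delta_0>0$ and a sequence $\{u_n\}\subset \dot H^1(\mathbb{R}^d)$ with $E(u_n)=E(W)$, $\mathbf{d}(u_n)\to 0$, and
\[
\inf_{\theta\in\mathbb R,\ \mu>0}\|(u_n)_{[\theta,\mu]}-W\|_{\dot H^1}\ge \delta_0 \quad\text{for every } n.
\]
The condition $\mathbf{d}(u_n)\to 0$ yields $\|\nabla u_n\|_{L^2}^2\to A:=\|\nabla W\|_{L^2}^2$, and the energy identity $E(u_n)=E(W)$ combined with Pohozhaev's identity (\ref{identity:pohozhaev}) forces the potential energies to converge to that of $W$, namely $\int|x|^{-b}|u_n|^{\alpha+2}\,dx\to \int|x|^{-b}W^{\alpha+2}\,dx$.

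I would then apply Lemma~\ref{L:bubble} to $\{u_n\}$ and split the profile indices according to the dichotomy $x_n^j\equiv 0$ or $|x_n^j|/\lambda_n^j\to\infty$. For profiles of the second type, a change of variables followed by splitting the domain of integration and using the integrability of $|x|^{-b}$ near the origin together with its decay at infinity shows that their potential-energy contribution in (\ref{11233}) vanishes in the limit; hence only the centered bubbles (index set $S$) contribute. Writing $a_j:=\|\nabla\phi^j\|_{L^2}^2$ and using the sharp inequality (\ref{2161}) on each $\phi^j$ with $j\in S$, together with the decouplings (\ref{11232})--(\ref{11233}) and the vanishing in (\ref{241}), the potential- and kinetic-energy matchings read
\[
1\;\le\;\sum_{j\in S}\left(\frac{a_j}{A}\right)^{(\alpha+2)/2}\;\le\;\sum_{j\in S}\frac{a_j}{A}\;\le\;1,
\]
where the middle inequality uses $(\alpha+2)/2>1$ together with $a_j\le A$.

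Forcing equality everywhere gives exactly one surviving profile $\phi^{j_0}$ with $a_{j_0}=A$, all other $a_j=0$, $\|\nabla w_n^J\|_{L^2}\to 0$, and $\phi^{j_0}$ saturating (\ref{2161}). The characterization of optimizers in Proposition~\ref{P:GN} then yields $\phi^{j_0}(x)=e^{i\theta_0}\lambda_0^{(d-2)/2}W(\lambda_0 x)$ for some $\theta_0\in\mathbb R$ and $\lambda_0>0$; unwinding the rescaling in (\ref{11231}) produces parameters $\theta_n,\mu_n$ with $(u_n)_{[\theta_n,\mu_n]}\to W$ in $\dot H^1$, contradicting $\delta_0>0$. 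The modulus $\varepsilon$ is then recovered in the standard way by taking
\[
\varepsilon(\eta):=\sup\{\inf_{\theta,\mu}\|u_{[\theta,\mu]}-W\|_{\dot H^1} : E(u)=E(W),\ \mathbf{d}(u)\le\eta\}.
\]
I expect the principal technical point to be the vanishing of the potential-energy contribution of bubbles with $|x_n^j|/\lambda_n^j\to\infty$: because the weight $|x|^{-b}$ breaks translation invariance, this vanishing does not come for free from the profile decomposition and must be extracted by hand from the decay of $|x|^{-b}$ at infinity, as foreshadowed in the remark following Lemma~\ref{L:bubble}.
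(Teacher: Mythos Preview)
Your proof is correct and follows essentially the same route as the paper's: contradiction, bubble decomposition (Lemma~\ref{L:bubble}), the sharp inequality from Proposition~\ref{P:GN}, vanishing of the potential-energy contribution for bubbles with $|x_n^j|/\lambda_n^j\to\infty$, and a concavity argument forcing a single optimizing centered profile. The only cosmetic difference is ordering---you dispose of the moving-center profiles before the concavity step, whereas the paper first applies (\ref{2161}) uniformly to all (possibly translated) profiles to reduce to $J^*=1$ and only then rules out the moving center (Claim~\ref{C:361}); the ingredients are identical.
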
 

\begin{proof}
 Suppose  by contradiction that the claim does not hold; then there must exist  $\varepsilon _0>0$ and a sequence of  $\dot H^1$ functions   $\{f_n\}$     such that 
\begin{equation}
	E(f_n)=E(W) \quad\text{and }\quad \mathbf{d}(f_n)\rightarrow 0,\label{1124x1}
\end{equation}
but 
\begin{equation}
\inf _{\theta \in \mathbb{R} ,\mu>0} \|{(f_n)}_{[\theta ,\mu]}-W\|_{\dot H^1}>\varepsilon _0.\label{1123x1}
\end{equation}
  Applying bubble decomposition (Lemma \ref{L:bubble}) to  $\{f_n\}$, we obtain a subsequence in  $f_n$, (which for the sake of convenience is still denoted by  $f_n$)  satisfying the decomposition (\ref{11231}) and the properties (\ref{1124x2})--(\ref{11233}).  
  Since 
  \begin{equation}
  	\|\nabla f_n\|_{L^2}^2-\sum_{j=1}^{J} \|\nabla \phi^j\|_{L^2}^2- \|\nabla w_n^J\|_{L^2}^2 +\sum_{j=1}^{J} \|\nabla \phi^j\|_{L^2}^2\le  \|\nabla f_n\|_{L^2}^2,\notag 
  \end{equation}
  it follows from (\ref{11232}) and  $ \mathbf{d} (f_n)\rightarrow0$ that 
  \begin{equation}
  	\sum_{j=1}^{J^*} \|\phi^j\|_{\dot H^1}^2\le  \|W\|_{\dot H^1}^2.  \label{11234}
  \end{equation} 
  
  We next claim that 
   \begin{equation}
  	\|W\|_{\dot H^1}^{\alpha +2} \le \sum_{j=1}^{J^*} \|\phi^j\|_{\dot H^1}^{\alpha +2}.\label{11235}
  \end{equation}
In fact,  noting that by (\ref{1124x1}) 
  \begin{equation}
  	\int |x|^{-b}W^{\alpha +2}dx=\lim _{n\rightarrow \infty } \int |x|^{-b}|f_n|^{\alpha +2}dx,\notag
  \end{equation}
we deduce from (\ref{11233}),  (\ref{241})   and  Proposition  \ref{P:GN} that 
  \begin{equation}
  	\int |x|^{-b}W^{\alpha +2}dx\le  \limsup_{J\rightarrow J^*}\limsup _{n\rightarrow\infty } \sum_{j=1}^{J}\int |x|^{-b}|\phi^j(x-\frac{x_n^j}{\lambda_n^j})|^{\alpha +2}dx \le \sum_{j=1}^{J^*} \frac{ \|\phi^j\|_{\dot H^1}^{\alpha +2}}{ \|W\|_{\dot H^1}^{\alpha +2} }\int |x|^{-b}W^{\alpha +2}dx.\notag
  \end{equation}
This proves  (\ref{11235}).

Combining (\ref{11234}) and (\ref{11235}), we obtain  $J^*=1$ and 
  \begin{equation}
	f_n=(\lambda_n)^{-\frac{d-2}{2}}\varphi  (\frac{x-x_n}{\lambda_n})+w_n\quad\text{where}\quad  \||x|^{-b}|w_n|^{\alpha +2}\|_{L^1} + \|w_n\|_{\dot H^1}\rightarrow0.\label{12102}
\end{equation} 
Furthermore, either  $x_n\equiv0$ or  $\frac{|x_n|}{\lambda_n}\rightarrow\infty $ and   
\begin{equation}
	\|\varphi \|_{\dot H^1}= \|W\|_{\dot H^1},\ \int |x|^{-b}|\varphi |^{\alpha +2}dx=\int |x|^{-b}W^{\alpha +2}dx.\notag
\end{equation} 
Therefore, by   Proposition \ref{P:GN},  there exist  $\theta \in \mathbb{R} ,\mu>0$  such that 
\begin{equation}
\varphi =W_{[\theta ,\mu]}.\label{1271}
\end{equation}

\begin{claim}\label{C:361}
	The space parameter  $x_n$ in (\ref{12102}) must satisfy  $x_n\equiv0$.  
\end{claim}
\begin{proof}
	 Assume by contradiction that  $\frac{|x_n|}{\lambda_n}\rightarrow \infty $. We first prove 
	\begin{equation}
		\lim_{n\rightarrow \infty } \int |x|^{-b}|(\lambda_n)^{-\frac{d-2}{2}}\varphi (\frac{x-x_n}{\lambda_n})|^{\alpha +2}dx=\lim_{n\rightarrow\infty }\int |x+\frac{x_n}{\lambda_n}|^{-b}|\varphi (x)| ^{\alpha +2}dx=0.\label{12101}
	\end{equation}
	In fact, noting that for any  $\phi \in C_c^\infty (\mathbb{R} ^d)$,
	\begin{eqnarray}
		&&\int |x+\frac{x_n}{\lambda_n}|^{-b}|\phi (x)|^{\alpha +2}dx\notag\\
		&\lesssim & \int _{|x+\frac{x_n}{\lambda_n}|<\frac{1}{2}\frac{|x_n|}{\lambda_n}}    |x+\frac{x_n}{\lambda_n}|^{-b}|\phi (x)|^{\alpha +2}dx+\int _{|x+\frac{x_n}{\lambda_n}|>\frac{1}{2}\frac{|x_n|}{\lambda_n}} |x+\frac{x_n}{\lambda_n}|^{-b}|\phi (x)|^{\alpha +2}dx \notag\\
		&\lesssim &  \int _{|x|>\frac{1}{2}\frac{|x_n|}{\lambda_n} }  |x+\frac{x_n}{\lambda_n}|^{-b}|\phi (x)|^{\alpha +2}dx+(\frac{|x_n|}{\lambda_n})^{-b} \|\phi \|_{L^{\alpha +2}}^{\alpha +2} \notag
	\end{eqnarray} 
	tends to  $0$ as  $n\rightarrow\infty $,    we obtain (\ref{12101}) by a standard approximation argument. 
	
	Combining (\ref{12102}) and (\ref{12101}), we obtain the contradiction
	\begin{equation}
		\int |x|^{-b}W^{\alpha +2}dx=\lim_{n\rightarrow\infty }\int |x|^{-b}|f_n|^{\alpha +2}dx=0.\notag
	\end{equation} 
\end{proof}
It then follows from  (\ref{12102}),  (\ref{1271}) and Claim \ref{C:361} that 
  \begin{equation}
  	f_n(x)=\lambda_n^{-\frac{d-2}{2}}W_{[\theta ,\mu]}(\frac{x}{\lambda_n})+w_n \quad\text{with}\quad   \|w_n\|_{\dot H^1}\rightarrow0,\notag 
  \end{equation} 
  which contradicts (\ref{1123x1}).     Proposition \ref{P:W} is proved.  
\end{proof}

 Proposition \ref{P:W} together with the implicit theorem gives the following orthogonal decomposition. 
\begin{lemma}\label{l:implicit}
	There exists  $\delta_0>0$ such that for all  $u\in \dot H^{1}$ with  $E(u)=E(W)$,  $ \mathbf{d} (u)<\delta_0$, there exists a couple  $\theta ,\mu$ in  $\mathbb{R}\times (0,+\infty )$ with 
	\begin{equation}
		u_{[\theta ,\mu]}\bot iW,\qquad u_{[\theta ,\mu]}\bot W_1.\notag
	\end{equation}
	The parameters   $(\theta , \mu)\in \mathbb{R} \times (0,+\infty )$ are unique in  $\mathbb{R}/2\pi \mathbb{Z} \times \mathbb{R}$, and the mapping  $u\rightarrow (\theta ,\mu)$ is  $C^1$.  
\end{lemma}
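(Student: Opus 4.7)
\medskip

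The plan is a standard implicit function theorem argument anchored at the ground state $W$. Define the $C^1$ map
\begin{equation}
F:\mathbb{R}\times(0,\infty)\times \dot H^1\longrightarrow \mathbb{R}^2,\qquad F(\theta,\mu,u)=\bigl((u_{[\theta,\mu]},iW)_{\dot H^1},\,(u_{[\theta,\mu]},W_1)_{\dot H^1}\bigr).
\end{equation}
First I would verify $F(0,1,W)=0$: the first component vanishes because $W$ is real-valued, while the second vanishes because the $\dot H^1$ norm of $W_{[\lambda]}$ is independent of $\lambda$ by scale invariance, so differentiating in $\lambda$ at $\lambda=1$ yields $(W,W_1)_{\dot H^1}=0$.

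Next I would compute the partial Jacobian of $F$ in the variables $(\theta,\mu)$ at the base point. A direct differentiation gives $\partial_\theta u_{[\theta,\mu]}\bigr|_{(0,1,W)}=iW$ and $\partial_\mu u_{[\theta,\mu]}\bigr|_{(0,1,W)}=-W_1$ (using the definition (\ref{E:W_1}) of $W_1$ and the normalization $u=W$). Hence
\begin{equation}
\bigl(\partial_{(\theta,\mu)}F\bigr)\bigl|_{(0,1,W)}=\begin{pmatrix} \|W\|_{\dot H^1}^2 & 0 \\ 0 & -\|W_1\|_{\dot H^1}^2\end{pmatrix},
\end{equation}
using that $(iW,W_1)_{\dot H^1}=0$ since $W_1$ is real. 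This matrix is invertible provided $W_1\not\equiv 0$, which holds because $W$ is not scale invariant pointwise. The implicit function theorem then produces an $\dot H^1$ neighborhood $\mathcal{U}$ of $W$ and a $C^1$ map $v\mapsto(\tilde\theta(v),\tilde\mu(v))$ such that $F(\tilde\theta(v),\tilde\mu(v),v)=0$ and $(\tilde\theta(W),\tilde\mu(W))=(0,1)$, together with local uniqueness in a neighborhood of $(0,1)$.

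To conclude, I would transport this statement to an arbitrary $u$ with $E(u)=E(W)$ and $\mathbf{d}(u)<\delta_0$. Proposition \ref{P:W} yields parameters $(\theta_0,\mu_0)$ such that $\|u_{[\theta_0,\mu_0]}-W\|_{\dot H^1}\le\varepsilon(\mathbf{d}(u))$, and for $\delta_0$ small enough this places $u_{[\theta_0,\mu_0]}\in \mathcal{U}$. Applying the local map just constructed to $u_{[\theta_0,\mu_0]}$ produces $(\theta_1,\mu_1)$ near $(0,1)$, and composing the symmetries gives the desired pair $(\theta,\mu)=(\theta_0+\theta_1,\mu_0\mu_1)$; the $C^1$ dependence on $u$ follows from the $C^1$ dependence of both $(\theta_0,\mu_0)$ and the implicit function. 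For uniqueness in $\mathbb{R}/2\pi\mathbb{Z}\times\mathbb{R}$, any second pair $(\theta',\mu')$ satisfying the orthogonality conditions would make $u_{[\theta',\mu']}$ land in $\mathcal{U}$ (again shrinking $\delta_0$ if needed), and the local uniqueness part of the implicit function theorem forces agreement.

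The only mild technical point I expect to negotiate carefully is the regularity of the action $(\theta,\mu)\mapsto u_{[\theta,\mu]}$ as a map into $\dot H^1$ (it is $C^1$ jointly on a dense set but only continuous in general on $\dot H^1$); however, since $W\in\dot H^1$ is smooth enough, the partial derivatives $iW$ and $-W_1$ exist in $\dot H^1$, and the differential of $F$ can be taken in the distributional sense and then shown to be bounded linear, which is all that is required for the implicit function theorem in the parameter direction. This is the step where the choice of orthogonality directions $\{iW,W_1\}$—precisely the images of the two infinitesimal symmetries of the equation preserved at $W$—pays off.
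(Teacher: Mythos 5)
Your argument is correct and follows essentially the same route as the paper: both apply the implicit function theorem to the pair of functionals $(u_{[\theta,\mu]},iW)_{\dot H^1}$ and $(u_{[\theta,\mu]},W_1)_{\dot H^1}$ at the base point $(0,1,W)$, compute the same diagonal Jacobian with entries $\|W\|_{\dot H^1}^2$ and $-\|W_1\|_{\dot H^1}^2$, and then transport the local statement to general $u$ via Proposition \ref{P:W}. Packaging the two functionals into a single vector-valued map $F$ is only a cosmetic difference, and your parenthetical concern about the regularity of $(\theta,\mu)\mapsto u_{[\theta,\mu]}$ is a reasonable point that the paper leaves implicit as well.
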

\begin{proof}
Consider the following functionals on  $\mathbb{R}\times (0,\infty )\times \dot H^{1}$: 
	\begin{equation}
		J_0:(\theta ,\mu,f)\mapsto (f_{[\theta ,\mu]},iW)_{\dot H^{1}} \quad\text{and}\quad J_1:(\theta ,\mu,f)\mapsto (f_{[\theta ,\mu]},W_1)_{\dot H^{1}}.\notag 
	\end{equation}
By simple calculation,  we have  $J_0(0,1,W)=J_1(0,1,W)=0$ and 
	\begin{equation}
		\frac{\partial J_0}{\partial \theta }(0,1,W)=\int |\nabla W|^2,\qquad 	\frac{\partial J_0}{\partial \mu}(0,1,W)=0\notag
	\end{equation}
	\begin{equation}
		\frac{\partial J_1}{\partial \theta }(0,1,W)=0\qquad 	\frac{\partial J_1}{\partial \mu }(0,1,W)=-\int |\nabla W_1|^2.\notag
	\end{equation}
Thus by the implicit function
	theorem there exist  $\varepsilon _0,\eta_0>0$ such that for  $h\in \dot H^{1}$ with  $ \|h-W\|_{\dot H^1} <\varepsilon _0$, there exists a unique  $(\widetilde{\theta }(h),\widetilde{\mu}(h))\in C^1$ such that  $|\widetilde{\theta }|+|\widetilde{\mu}-1|<\eta_0$ and 
	\begin{equation}
 (h_{[\widetilde{\theta} ,\widetilde{\mu}]},iW)_{\dot H^{1}}=(h_{[\widetilde{\theta} ,\widetilde{\mu}]},W_1)_{\dot H^{1}}=0.		\label{291}
	\end{equation}   
		On the other hand,  by  Proposition \ref{P:W}, there exist a function  $\varepsilon $ and  $\theta _1,\mu_1$ such that 
	\begin{equation}
		\|u_{[\theta _1,\mu_1]}-W\|_{\dot H^1}\le \varepsilon ( \mathbf{d}  (u)).\notag
	\end{equation}  
	Therefore, for   $ \mathbf{d} (u)$  sufficiently small, we deduce from (\ref{291}) that  there exists    $(\widetilde{\theta }_1(u),\widetilde{\mu}_1(u))$ such that 
\begin{equation}
	((u_{[\theta _1,\mu_1]})_{[\widetilde{\theta }_1,\widetilde{\mu}_1]},iW)_{\dot H^1}=	((u_{[\theta _1,\mu_1]})_{[\widetilde{\theta }_1,\widetilde{\mu}_1]},W_1)_{\dot H^1}=0,\notag
\end{equation}
This completes the proof by taking  $\theta =\widetilde{\theta }_1+\theta _1$ and  $\mu=\widetilde{\mu}_1\mu_1$.    
\end{proof}

Let $u$ be a solution of (\ref{nls}) on an interval  $I$ such that  $E(u_0)=E(W)$ and write 
\begin{equation}
	 \mathbf{d}  (t):= \mathbf{d}  (u(t))=\left|  \int_{\mathbb{R} ^d} \left(|\nabla u(t,x)|^2-|\nabla W(x)|^2\right)dx \right|. \notag
\end{equation}
  According to  Lemma  \ref{l:implicit},  if $ \mathbf{d} (t)<\delta_0$  for all  $t\in I$,   there exist real parameters  $\theta (t),\mu(t)>0$ such that
\begin{equation}
	u_{[\theta (t),\mu(t)]}(t)=(1+\beta (t))W+\widetilde{u}(t),\label{814w1}
\end{equation}
where 
\begin{equation}
	1+\beta (t)=\frac{1}{\|W\|_{\dot H^{1}}^2}(u_{[\theta (t),\mu (t)]},W)_{\dot H^{1}}\quad\text{such that}\quad \widetilde{u}(t)\in H^\bot.\notag
\end{equation}
Define  $v(t)$ by 
\begin{equation}
	v(t):=\beta  (t)W+\widetilde{u}(t)=u_{[\theta (t),\mu (t)]}-W.\label{89w3}
\end{equation}
We can obtain  the following estimates regarding the parameter functions  in (\ref{814w1}) and (\ref{89w3}).  
\begin{lemma}
\label{L:modulation}
	Taking a smaller  $\delta_0$ if necessary, we have the following estimates on  $I$:
	\begin{equation}
		|\beta  (t)|\approx \|v(t)\|_{\dot H^{1}}\approx \|\widetilde{u}(t)\|_{\dot H^{1}}\approx  \mathbf{d} (u(t))\label{89w1}
	\end{equation}
	\begin{equation}
		|\beta  '(t)|+|\theta '(t)|+|\frac{\mu'(t)}{\mu(t)}|\lesssim \mu^2(t) \mathbf{d} (u(t)).\label{89w2}
	\end{equation}
\end{lemma}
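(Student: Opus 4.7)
\textbf{Plan for \eqref{89w1}.} I will derive the first three equivalences from an energy-expansion argument. The orthogonality $(\tilde u, W)_{\dot H^1_{\mathbb R}} = 0$ built into the definition of $\beta$ immediately yields the Pythagorean identity $\|v\|_{\dot H^1}^2 = \beta^2 \|W\|_{\dot H^1}^2 + \|\tilde u\|_{\dot H^1}^2$, so it is enough to show $|\beta| \approx \|\tilde u\|_{\dot H^1}$. I plan to combine the Taylor expansion \eqref{2211} with the conservation $E(u) = E(W)$ to get $Q(v) = O(\|v\|_{\dot H^1}^3)$, then expand $Q(v) = \beta^2 Q(W) + 2\beta B(W, \tilde u) + Q(\tilde u)$. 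Integration by parts against \eqref{Eq:W} should give $B(W, \tilde u) = -\frac{\alpha}{2} \int |x|^{-b} W^{\alpha+1} \tilde u_1\, dx$, which vanishes since this integral equals $(\tilde u_1, W)_{\dot H^1_{\mathbb R}} = 0$. Coupling $Q(W) < 0$ from \eqref{QW}, the coercivity $Q(\tilde u) \ge c\|\tilde u\|_{\dot H^1}^2$ from Proposition \ref{H} (applicable because $\tilde u \in H^\perp$), and the trivial upper bound $Q(\tilde u) \lesssim \|\tilde u\|_{\dot H^1}^2$, then absorbing the cubic error for $\|v\|_{\dot H^1}$ sufficiently small, will give $\beta^2 \approx \|\tilde u\|_{\dot H^1}^2$. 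The equivalence $|\beta| \approx \mathbf{d}(u)$ will then follow from the $\dot H^1$ scale-invariance of the kinetic energy and the expansion $\|\tilde U\|_{\dot H^1}^2 - \|W\|_{\dot H^1}^2 = 2\beta \|W\|_{\dot H^1}^2 + O(\beta^2)$.

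\textbf{Plan for \eqref{89w2}.} The strategy is to derive the evolution equation for $\tilde U(t) := e^{i\theta(t)}[u(t)]_{\mu(t)}$ and then project it against $iW$, $W_1$, $W$. Using the scale-covariance $[|x|^{-b}|u|^\alpha u]_\mu = \mu^2 |x|^{-b}|[u]_\mu|^\alpha [u]_\mu$ (valid because $\alpha = (4-2b)/(d-2)$), differentiating $\tilde U$ in $t$ and invoking \eqref{nls} will produce
\begin{equation*}
\partial_t \tilde U = i\mu^2\bigl[\Delta \tilde U + |x|^{-b}|\tilde U|^\alpha \tilde U\bigr] + i\theta'\,\tilde U - \frac{\mu'}{\mu}\Lambda \tilde U,
\end{equation*}
where $\Lambda := \tfrac{d-2}{2} + x\cdot\nabla$ satisfies $\Lambda W = W_1$. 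Substituting $\tilde U = W + v$ and using $\Delta W + |x|^{-b}W^{\alpha+1} = 0$ together with \eqref{V}--\eqref{Linear Eq} should rewrite this as
\begin{equation*}
\partial_t v + \mu^2[\mathcal L v + R(v)] = i\theta'(W + v) - \frac{\mu'}{\mu}(W_1 + \Lambda v).
\end{equation*}
I will then differentiate the time-independent orthogonality relations $(\tilde u, iW)_{\dot H^1_{\mathbb R}} = (\tilde u, W_1)_{\dot H^1_{\mathbb R}} = (\tilde u, W)_{\dot H^1_{\mathbb R}} = 0$ in $t$ and use $\partial_t \tilde u = \partial_t v - \beta' W$ together with the orthogonalities $(W, iW)_{\dot H^1_{\mathbb R}} = 0$ and $(W, W_1)_{\dot H^1_{\mathbb R}} = 0$ (the latter from scale-invariance of the kinetic energy) to obtain
\begin{equation*}
(\partial_t v, iW)_{\dot H^1_{\mathbb R}} = 0,\qquad (\partial_t v, W_1)_{\dot H^1_{\mathbb R}} = 0, \qquad (\partial_t v, W)_{\dot H^1_{\mathbb R}} = \beta'\|W\|_{\dot H^1}^2.
\end{equation*}
Pairing the modulated equation against $iW$, $W_1$ and $W$ will yield a $3\times 3$ linear system for $(\theta', \mu'/\mu, \beta')$ whose leading-order coefficient matrix is the diagonal matrix $\operatorname{diag}(\|W\|_{\dot H^1}^2, -\|W_1\|_{\dot H^1}^2, -\|W\|_{\dot H^1}^2)$, perturbed by terms of size $O(\|v\|_{\dot H^1})$. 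Its right-hand side equals $\mu^2\bigl[(\mathcal L v, \phi)_{\dot H^1_{\mathbb R}} + (R(v), \phi)_{\dot H^1_{\mathbb R}}\bigr]$ for $\phi \in \{iW, W_1, W\}$; Lemma \ref{lestimate} (for the linear part) and Lemma \ref{nestimate} used pointwise (for the nonlinear part) together with H\"older and Sobolev in the Lorentz framework should control these by $\mu^2 \|v\|_{\dot H^1}$. Inverting the near-diagonal system and using \eqref{89w1} to replace $\|v\|_{\dot H^1}$ by $\mathbf{d}(u)$ will deliver \eqref{89w2}.

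\textbf{Main obstacle.} The hard part will be controlling the projections $(\mathcal L v, \phi)_{\dot H^1_{\mathbb R}}$ and $(R(v), \phi)_{\dot H^1_{\mathbb R}}$ for $\phi \in \{iW, W_1\}$. After integration by parts these will pick up weighted integrals of the form $\int |x|^{-b-1} W^\alpha |\nabla \phi|\,|v|$ and $\int |x|^{-b} W^{\alpha-1}|v|^2|\phi|$, whose convergence at the origin depends on the local behavior of $W$ and $W_1$ (with $W(0) = 1$, $W_1(0) = \frac{d-2}{2}$ and $W'(r) = O(r^{1-b})$ as $r \to 0^+$, from Appendix \ref{App:1}). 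Matching these singular weights against the $\dot H^1$-norm of $v$ via Hardy's inequality and the Lorentz-space Sobolev embedding of Lemma \ref{L:sobolev} is exactly what will impose the quantitative restriction $0 < b < -\tfrac{(d-4)^2}{2} + 1$ recorded in Claim \ref{C:221}; outside this range the projection estimates cannot be absorbed and the modulation argument breaks down.
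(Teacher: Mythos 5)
Your proposal is correct and tracks the paper's own proof closely. For \eqref{89w1} the Pythagorean identity, the expansion $Q(v)=\beta^2 Q(W)+Q(\tilde u)$ (the cross term vanishing because $\tilde u\perp W$), energy conservation with \eqref{2211}, the sign $Q(W)<0$ from \eqref{QW}, and the coercivity of $Q$ on $H^\perp$ from Proposition~\ref{H} are exactly the ingredients the paper combines, and your final step recovering $|\beta|\approx\mathbf{d}(u)$ from expanding $\|W+v\|_{\dot H^1}^2-\|W\|_{\dot H^1}^2$ is the same. For \eqref{89w2} the paper passes to the self-similar time $s$ with $ds=\mu^2\,dt$ and writes the equation for $\tilde u$ rather than $v$, but your $t$-variable equation $\partial_t v+\mu^2[\mathcal L v+R(v)]=i\theta'(W+v)-\tfrac{\mu'}{\mu}(W_1+\Lambda v)$ is the same modulo the substitution $v=\beta W+\tilde u$ and the rescaling $\mu^2\partial_s=\partial_t$, and pairing against $W$, $iW$, $W_1$ to get a near-diagonal system is precisely what the paper does via equations (\ref{814q1})--(\ref{814q3}). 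Your ``main obstacle'' paragraph correctly identifies that the content of the argument lives in bounding the projections of $\mathcal L v$ and $R(v)$ (the paper's Claims~\ref{C:21} and~\ref{C:221}), where the weight $|x|^{-b}$ near the origin forces the range restriction on $b$. The only small inaccuracy is the appeal to Lemma~\ref{lestimate}: that lemma is a spacetime Strichartz-type bound on $\|\nabla\mathcal V(f)\|_{N(I)}$ and does not directly give the pointwise-in-time projection estimate you need; the paper instead runs the H\"older/Lorentz--Sobolev argument directly for a fixed time, as you correctly describe in words immediately afterward. This does not affect the validity of the plan.
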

The proof of Lemma \ref{L:modulation} is a consequence of  Proposition \ref{H} and of the equation satisfied by  $v$,  which will be discussed explicitly in the Appendix \ref{App:2}. 

\section{Global analysis-Virial.}\label{s:5}
In the previous section, we develop the modulation analysis which enables us to control the solution near the two dimensional manifold   $\{W_{[\theta ,\mu]}\}.  $ When the solution is away from the manifold, we use the monotonicity formula arising from Virial to control the solution. To this end, in this section we establish Virial estimates by incorporating the modulation estimates developed in Section \ref{s:4}. 

Let  $\varphi(x)$ be a smooth radial function such that 
\begin{equation}
	\varphi(r)=r^2,\ r\le1,\ \varphi(r)\ge 0\quad\text{and}\quad  \frac{d^2\varphi}{dr^2}(r)\le 2,\ r\ge0.\label{121w1}
\end{equation} 
Define  the truncated Virial 
\begin{equation}
	V_R(t):=\int_{\mathbb{R} ^d} \varphi_R(x)|u(t,x)|^2dx,\label{E:virial}
\end{equation}
where $\varphi _R(x)=R^2\varphi (\frac{x}{R})$.    For a solution  $u(t)$ of (\ref{nls}) with  $E(u)=E(W)$, the time derivatives of  $V_R(t)$  are computed as  
\begin{equation}
	\partial_{t}V_R(t,x)=2\text{Im} \int \overline{u}(t,x)\nabla u(t,x)\cdot \nabla  \varphi _R(x)dx,\qquad t\in \mathbb{R},\notag
\end{equation}
and 
\begin{equation}\label{dV_R}
	\partial_{tt}V_R(t)=\begin{cases}
		\ \ 4\alpha  \mathbf{d} (u(t))+A_R(u(t)) \qquad &\text{if }  \|u(t)\|_{\dot H^1}< \|W\|_{\dot H^1} , \\
		-4\alpha   \mathbf{d} (u(t))+A_R(u(t)) &\text{if }  \|u(t)\| _{\dot H^1}> \|W\| _{\dot H^1}.
	\end{cases} 
\end{equation}  
where
\begin{align*}
		A_{R}&(u(t)):=4\int _{|x|\ge R}(\frac{1}{r}\frac{\partial_{r}\varphi_R }{\partial r}-2) |\nabla u|^2dx+4\int_{|x|\ge R} (\frac{\partial_{rr}\varphi _R}{r^2}-\frac{\partial_{r}\varphi _R}{r^3})|x\cdot \nabla u|^2dx\notag\\
	&-\int \Delta ^2\varphi _R|u|^2dx-\frac{2\alpha }{\alpha +2}\int_{|x|\ge R} \left[\partial_{rr}\varphi _R+(d-1+\frac{2b}{\alpha })\frac{\partial_{r}\varphi _R}{r}-\frac{4(\alpha +2)}{\alpha }\right]|x|^{-b}|u|^{\alpha +2}]dx. \notag
\end{align*}
Indeed, an explicit calculation together with equation (\ref{nls}) yields 
\begin{eqnarray}
	\partial_{tt}V_R(t)&=&4\int \frac{\partial_{r}\varphi _R}{r}|\nabla u|^2dx+4\int (\frac{\partial_{rr}\varphi _R}{r^2}-\frac{\partial_{r}\varphi _R}{r^3})|x\cdot \nabla u|^2dx-\int |u|^2\Delta ^2\varphi _R\notag\\
	&&-\frac{2\alpha }{\alpha +2}\int \left[\partial_{rr}\varphi _R+(d-1+\frac{2b}{\alpha })\frac{\partial_{r}\varphi _R}{r}\right]|x|^{-b}|u|^{\alpha +2}dx\notag\\
	&=& 8(\int_{\mathbb{R}^d}|\nabla u(t)|^2-\int _{\mathbb{R}^d}|x|^{-b}|u(t)|^{\alpha +2} )+A_R(u(t)).\notag
\end{eqnarray}
By  $E(u)=E(W)$ and the Pohozhaev's identity (\ref{identity:pohozhaev}), we have 
\begin{equation}
	\int |\nabla u(t)|^2-\int |x|^{-b}|u|^{\alpha +2}dx=\begin{cases}
		\ \ \frac{\alpha }{2} \mathbf{d} (u(t))\qquad &\text{if }  \|u(t)\|_{\dot H^1}< \|W\|_{\dot H^1}  \\
		-\frac{\alpha }{2} \mathbf{d} (u(t))\qquad &\text{if }  \|u(t)\|_{\dot H^1}>\|W\|_{\dot H^1} \\
	\end{cases}\notag
\end{equation} 
which yields  (\ref{dV_R}).

The rest of this Section is devoted to giving proper estimates on  $\partial_{t}V_R(t)$ and  $A_R(t)$. 

\begin{lemma}[Virial estimate]
	\label{L:virial}
	Let  $u(t)$ be an  $\dot H^1$ solution of (\ref{nls}) with  $E(u)=E(W)$. For those  $t$ satisfying  $ \mathbf{d} (u(t))<\delta _0$, let 
	\begin{equation}
		u(t)_{[\theta (t),\mu(t)]}=W+v(t)\label{1211}
	\end{equation}     
	be the orthogonal decomposition of  $u(t)$ given by (\ref{814w1}) with the  bounds (\ref{89w1}) and (\ref{89w2}). We have 
	\begin{equation}
		|\partial_{t}V_R(t)|\lesssim R^2 \mathbf{d} (u(t)),\label{11291}
	\end{equation}  
	\begin{equation}
		A_R(u(t))\lesssim \int _{|x|\ge R}(\frac{|u(t,x)|^{\alpha +2}}{|x|^b}+\frac{|u(t,x)|^2}{|x|^2})dx,\label{11292}
	\end{equation}
	\begin{equation}
		|A_R(u(t))|\lesssim  \begin{cases}
			\int _{|x|\ge R} (|\nabla u(t)|^2+|x|^{-b}|u(t,x)|^{\alpha +2}+|x|^{-2}|u(t,x)|^2)dx,\\
			(\mu(t)R)^{-\frac{d-2}{2}} \mathbf{d} (u(t))+ \mathbf{d} (u(t))^2 \quad\text{if}\quad   \mathbf{d} (u(t))<\delta _0\quad\text{and}\quad |\mu (t)R|\gtrsim1.
		\end{cases}\label{11293}
	\end{equation}
\end{lemma}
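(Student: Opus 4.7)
I will prove the three estimates in sequence. Inequality (\ref{11291}) and the second bound in (\ref{11293}) rely on the modulation decomposition (\ref{1211}); (\ref{11292}) and the first bound in (\ref{11293}) follow from direct sign/size inspection of the coefficients appearing in $A_R$.

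For (\ref{11291}), start from
$$\partial_t V_R(t)=2\,\mathrm{Im}\int \overline u\,\nabla u\cdot\nabla\varphi_R\,dx,$$
substitute $u(t,x)=e^{-i\theta(t)}\mu(t)^{(d-2)/2}(W+v)(\mu(t)x)$, and change variables $y=\mu x$. The phase $e^{-i\theta}$ cancels in $\overline u\nabla u$ and the real product $W\nabla W$ has zero imaginary part, so all surviving terms are linear or quadratic in $v$. Combining $|\nabla\varphi_R|\lesssim R$ with the Hardy-type bound $\|f\|_{L^2(|y|\lesssim \mu R)}\lesssim \mu R\,\|\nabla f\|_{L^2}$, applied to both $v$ and $W$, produces a bound of size $R^2\|v\|_{\dot H^1}(\|W\|_{\dot H^1}+\|v\|_{\dot H^1})$, which by (\ref{89w1}) is $\lesssim R^2\mathbf{d}(u(t))$. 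An integration by parts to move $\nabla$ from $v_2$ onto $W$ yields a further term involving $W\Delta\varphi_R$, handled the same way via Hardy.

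For (\ref{11292}), inspect the coefficients in $A_R$. From $\partial_{rr}\varphi\leq 2$ and $\varphi(r)=r^2$ near zero, it follows that $\partial_r\varphi(s)/s\leq 2$ and the radial coefficients multiplying $|\nabla u|^2$ and $|x\cdot\nabla u|^2$ in $A_R$ are non-positive, so they can be dropped in an upper bound. The biharmonic term is supported in $\{|x|\geq R\}$ with $|\Delta^2\varphi_R|\lesssim R^{-2}\leq |x|^{-2}$. For the potential piece, the algebraic identity
$$2+2\bigl(d-1+\tfrac{2b}{\alpha}\bigr)-\tfrac{4(\alpha+2)}{\alpha}=0,$$
a consequence of $\alpha=(4-2b)/(d-2)$, forces its coefficient to vanish on $\{|x|\leq R\}$, leaving only the $\{|x|\geq R\}$ contribution. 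The first bound in (\ref{11293}) follows by the same inspection after taking absolute values, using that on $\{|x|\geq R\}$ each of $|\partial_r\varphi_R/r-2|$, $|\partial_{rr}\varphi_R/r^2-\partial_r\varphi_R/r^3|\cdot|x|^2$, and the potential coefficient is $O(1)$.

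For the second bound in (\ref{11293}), I plug in $u=e^{-i\theta}(W+v)_{[\cdot/\mu]}$, rescale to $y=\mu x$ so the integrals concentrate on $\{|y|\geq\mu R\}$, and split into pure-$W$, $W$-$v$ cross, and pure-$v$ parts. The asymptotics $W(y)\lesssim\langle y\rangle^{-(d-2)}$, $|\nabla W(y)|\lesssim\langle y\rangle^{-(d-1)}$, and the constraint $b<2$ give
$$\int_{|y|\geq\mu R}\bigl(|\nabla W|^2+|y|^{-2}W^2+|y|^{-b}W^{\alpha+2}\bigr)\,dy\lesssim(\mu R)^{-(d-2)}.$$
The cross terms yield $(\mu R)^{-(d-2)/2}\mathbf{d}$ by Cauchy--Schwarz against $\|v\|_{\dot H^1}\lesssim\mathbf{d}$, and the pure-$v$ terms yield $\mathbf{d}^2$; the hypothesis $\mu R\gtrsim 1$ guarantees the $W$-tail is controlled, so the cross term dominates the direct $(\mu R)^{-(d-2)}$ contribution. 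The main technical obstacle will be the nonlinear cross term in the potential integral: since $W$ is singular at the origin and the nonlinearity is only $C^1$, one must expand $|W+v|^{\alpha+2}$ via the real-analytic function $J$ of (\ref{213}) and estimate the remainder in Lorentz norms; fortunately, because the integration is confined to $\{|x|\geq R\}$ where $|x|^{-b}$ is bounded, the analysis reduces to Hölder and Sobolev embedding estimates of the type already used in Lemma \ref{nestimate}.
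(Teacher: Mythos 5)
Your proposal has two genuine gaps, both concerning the structure of $A_R$.

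\textbf{Sign of the $|x\cdot\nabla u|^2$ coefficient in (\ref{11292}).} You assert that the coefficient $\tfrac{\partial_{rr}\varphi_R}{r^2}-\tfrac{\partial_r\varphi_R}{r^3}$ multiplying $|x\cdot\nabla u|^2$ in $A_R$ is non-positive, and hence that both kinetic terms can simply be dropped. This is not implied by (\ref{121w1}). That coefficient has the sign of $r\partial_{rr}\varphi_R - \partial_r\varphi_R = R\bigl[s\varphi''(s) - \varphi'(s)\bigr]$ with $s=r/R$, and the hypotheses $\varphi(s)=s^2$ for $s\le 1$, $\varphi\ge 0$, $\varphi''\le 2$ do \emph{not} force $s\varphi''(s)\le\varphi'(s)$ for $s>1$: if $\varphi'$ dips below $2s$ and subsequently increases at the maximal permitted rate $\varphi''=2$, then $\varphi'(s)/s$ is increasing there and the coefficient is strictly positive. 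The paper's proof handles this by splitting $\{|x|\ge R\}$ according to $\Omega=\{r\partial_{rr}\varphi_R\ge\partial_r\varphi_R\}$: on $\Omega^c$ both kinetic coefficients are $\le 0$ and are dropped, while on $\Omega$ one uses $|x\cdot\nabla u|^2\le r^2|\nabla u|^2$ and then combines with the first kinetic term to produce $(\partial_{rr}\varphi_R-2)|\nabla u|^2\le 0$. Your argument would require imposing $\varphi'(s)/s$ monotone decreasing as an extra hypothesis on $\varphi$, which (\ref{121w1}) does not include.

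\textbf{The pure-$W$ contribution in the second bound of (\ref{11293}).} You estimate the pure-$W$ part of $A_{\mu R}$ by the tail integral $\int_{|y|\ge\mu R}\bigl(|\nabla W|^2+|y|^{-2}W^2+|y|^{-b}W^{\alpha+2}\bigr)\lesssim(\mu R)^{-(d-2)}$, then claim the cross term dominates. That domination is false in general: it would require $(\mu R)^{-(d-2)}\lesssim(\mu R)^{-(d-2)/2}\mathbf{d}$, i.e. $(\mu R)^{-(d-2)/2}\lesssim\mathbf{d}$, and $\mu R\gtrsim 1$ gives no control on $\mathbf{d}$ from below. The missing ingredient is the exact cancellation $A_{\mu R}(W)=0$, which follows from the Virial identity applied to the stationary solution $W$: $\partial_{tt}V_R\equiv 0$ for $u=W$, while the main virial term vanishes by Pohozhaev (\ref{identity:pohozhaev}). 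The paper therefore writes $|A_R(u)|=|A_{\mu R}(W+v)-A_{\mu R}(W)|$; the pure-$W$ contribution cancels identically, and only cross and pure-$v$ terms remain, giving the stated $(\mu R)^{-(d-2)/2}\mathbf{d}+\mathbf{d}^2$. A direct size estimate of the $W$-tail cannot replace this algebraic cancellation.

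A smaller point: the paper also proves (\ref{11291}) when $\mathbf{d}(u(t))\ge\delta_0$, directly from H\"older and Hardy together with $\|u\|_{\dot H^1}^2\le\|W\|_{\dot H^1}^2+\mathbf{d}\lesssim_{\delta_0}\mathbf{d}$; this global version is what is used later, e.g.\ in Lemma \ref{l5}. Your argument via the modulation decomposition covers only $\mathbf{d}<\delta_0$.
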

\begin{proof}
	We first estimate  $\partial_{t}V_R(t)$.  By H\"older and Hardy's inequality, we obtain  
	\begin{equation}
		|\partial_{t}V_R(t)|\lesssim  \int _{|x|\le 2R} R^2\frac{|u(t,x)|}{|x|}|\nabla u(t,x)|dx \lesssim R^2 \|u\|_{L_t^\infty \dot H_x^1}^2\lesssim R^2 ( \mathbf{d} (u(t))+ \|W\|_{\dot H^1}^2 ).\notag
	\end{equation}  
	This proves  (\ref{11291}) in the case of  $ \mathbf{d} (u(t))\ge \delta _0$.  To get the bound in the case  $ \mathbf{d} (u(t))<\delta _0$,  we make  the change of variable  $x=y/\mu (t)$ and then apply the decomposition (\ref{1211}) to obtain 
	\begin{eqnarray}
		\partial_{t}V_R(t)&=&2\text{Im} \frac{R}{\mu (t)}\int \mu (t)^{-\frac{d-2}{2}}\overline{u}(t,\frac{y}{\mu (t)})\mu (t)^{-\frac{d}{2}}(\nabla u)(t,\frac{y}{\mu (t)})\cdot (\nabla \varphi) (\frac{y}{R\mu (t)})dy\notag\\
		&=&2R^2\text{Im} \int \frac{1}{R\mu (t)}(W+\overline{v})\nabla (W+v)\cdot (\nabla \varphi)(\frac{y}{R\mu(t)})dy.\notag
	\end{eqnarray}
	Write  $	\text{Im} [(W+\overline{v})\nabla (W+v)]=\text{Im} (W\nabla v+\overline{v}\nabla W+\overline{v}\nabla v)$ 
	and note that  on the support of  $\nabla \varphi (y/R\mu (t)),\ 1/R\mu (t)$ is bounded by  $2/|y|$. As a consequence of Cauchy–Schwarz and Hardy's inequality, we get
	the bound  $$|\partial_{t}V_R(t)|\lesssim  R^2(\|v(t)\|_{\dot H^{1}}+\|v(t)\|_{\dot H^{1}}^2),$$
	which together with (\ref{89w1}) yields (\ref{11291}) for  $ \mathbf{d} (u(t))\le \delta_0$. 
	
	We now turn to estimating  $A_R(u(t))$. Denote by  $\Omega:=\left\{x\in \mathbb{R} ^d:r\partial_{rr}\varphi _R\ge \partial_{r}\varphi _R \right\}$. Noting that  $\frac{\partial_{r}\varphi _R}{\partial r}\le 2r$ by (\ref{121w1}), we have  
	\begin{eqnarray}
		&&\int _{|x|\ge R}(\frac{1}{r}\frac{\partial_{r}\varphi _R}{\partial_{}r}-2)|\nabla u|^2dx+\int _{|x|\ge R}(\frac{\partial_{rr}\varphi _R}{r^2}-\frac{\partial_{r}\varphi _R}{r^3})|x\cdot \nabla u|^2dx\notag\\
		&\le & \int _{\{|x|\ge R\}\cap \Omega} (\frac{1}{r}\frac{\partial_{r}\varphi _R}{\partial_{}r}-2)|\nabla u|^2dx+\int _{\{|x|\ge R\}\cap  \Omega} (\frac{\partial_{rr}\varphi _R}{r^2}-\frac{\partial_{r}\varphi _R}{r^3}) r^2 |\nabla u|^2dx\notag\\
		&=&\int _{\{|x|\ge R\}\cap \Omega} (\partial_{rr}\varphi _R-2)|\nabla u|^2dx\le0,\notag
	\end{eqnarray}  
	which gives immediately (\ref{11292}) and the first line in (\ref{11293}).  
	
	To get the second bound when  $ \mathbf{d} (u(t))<\delta _0$ and  $\mu (t)R\gtrsim 1$, we recall  $W(x)\approx O(|x|^{-(d-2)})$  for  $|x|\gtrsim1$.  This together with the     decomposition  (\ref{1211}) and Lemma \ref{L:modulation} yields 
	\begin{eqnarray}
		&&|A_R(u(t))|= |A_{\mu (t)R}((u(t))_{[\theta (t),\mu (t)]})|=|A_{\mu (t)R}(W+v(t))-A_{\mu (t)R}(W)|\notag\\
		&\lesssim &   \|\nabla W\| _{L^2(|x|\ge \mu (t)R)} \|\nabla v(t)\|_{L^2}+ \|\nabla v(t)\|_{L^2}^2\notag\\
		&&+ \|v\|_{L^{\frac{2d}{d-2},2}}( \|W\|_{L^{\frac{2d}{d-2},2}(|x|\ge \mu (t)R)}^{\alpha +1}+ \|v(t)\|_{L^{\frac{2d}{d-2},2}}^{\alpha +1}  )+ \|W/|x|\|_{L^2(|x|\ge \mu (t)R)}   \|\nabla v(t)\| _{L^2}\notag\\
		&\lesssim &  (\mu (t)R)^{-\frac{d-2}{2}}  \mathbf{d} (u(t))+( \mathbf{d} (u(t)))^2.\notag
	\end{eqnarray}
	Lemma \ref{L:virial} is proved.  
\end{proof}

\section{Convergence to  $W$ in the Subcritical Case}\label{S:subcritical}
In this section, we focus on characterizing the nonscattering threshold solutions   when the kinetic energy is less than that of the ground state  $W$. The main result is the following.
\begin{proposition}
	\label{p2}
	Let  $u$ be a solution of (\ref{nls}) satisfying
	\begin{equation}
		E(u_0)=E(W)\quad\text{and}\quad \|u_0\|_{\dot H^{1}}<\|W\|_{\dot H^{1}},\label{871}
	\end{equation}
	  and  $I=(T_-,T_+)$  be its maximal interval of existence.  Assume that   $u$  does not scatter for the positive time, i.e. 
	\begin{equation}
		\|u\|_{S(0,T_+)}=+\infty ,\label{872}
	\end{equation} 
	then   $T_+=+\infty $ and  there exists  $\theta _0\in \mathbb{R},\mu_0>0$ and   $c>0$ such that 
	\begin{equation}
		\|u(t)-W_{[\theta _0,\mu_0]}\|_{\dot H^{1}}\lesssim e^{-ct},\qquad t\ge0.\label{87s2}
	\end{equation}
	Moreover, in the negative time direction,  $u$ exists globally and obeys  
	\begin{equation}
		\|u\|_{S(-\infty ,0 )} <\infty .\label{1119w1}
	\end{equation}
\end{proposition}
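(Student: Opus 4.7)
The plan is to follow the Duyckaerts--Merle strategy adapted to the inhomogeneous setting. First, I would establish a compactness property: if $u$ does not scatter forward, then there is a continuous path $(\theta(t),\mu(t))\in\mathbb{R}\times(0,\infty)$ such that $\mathcal{K}:=\{u_{[\theta(t),\mu(t)]}(t):t\ge 0\}$ is precompact in $\dot H^1(\mathbb{R}^d)$. The proof runs a Lorentz-space profile decomposition in the style of Liu--Zhang \cite{Liu-Zhang} on $u(t_n)$ for $t_n\to T_+$; the subthreshold scattering Theorem~\ref{T:0} disposes of any profile with small kinetic energy, while Proposition~\ref{P:embed} rules out profiles whose translation parameters satisfy $|x_n|/\lambda_n\to\infty$ (they generate scattering solutions that cannot absorb the nonscattering mass). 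After a global translation this forces the spatial center of the remaining profile to stay at the origin, which is precisely the feature that lets the argument proceed without a radiality assumption. Global existence $T_+=+\infty$ is immediate from $\|u(t)\|_{\dot H^1}<\|W\|_{\dot H^1}$ together with Theorem~\ref{T:CP}(a).

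Second, I would apply the Virial estimates of Lemma~\ref{L:virial} to force $\mathbf{d}(u(t))\to 0$. In the subcritical case (\ref{dV_R}) reads $V_R''(t)=4\alpha\mathbf{d}(u(t))+A_R(u(t))$. Choosing $R/\mu(t)$ larger than a compactness radius, the bound (\ref{11292}) together with the precompactness of $\mathcal{K}$ makes $|A_R(u(t))|$ uniformly small, while (\ref{11291}) gives $|V_R'(t)|\lesssim R^2$. Integrating over $[0,T]$ and dividing by $T$ yields $\limsup_T T^{-1}\int_0^T \mathbf{d}(u(t))\,dt=0$, and compactness upgrades this averaged statement to $\lim_{t\to\infty}\mathbf{d}(u(t))=0$. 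Once $\mathbf{d}(u(t))<\delta_0$ for large $t$, the modulation decomposition (\ref{814w1}) of Lemma~\ref{l:implicit} becomes available, with parameter bounds (\ref{89w1})--(\ref{89w2}); note that $\mu(t)$ stays bounded above and below thanks to the compactness of $\mathcal{K}$ and the small-data scattering of Theorem~\ref{T:CP}(d). Choosing $R$ with $\mu(t)R\to\infty$ and invoking the sharper estimate (\ref{11293}), I get $V_R''(t)\ge 3\alpha\mathbf{d}(u(t))$ while $|V_R'(t)|\lesssim \mathbf{d}(u(t))$. Setting $f(t):=\int_t^\infty \mathbf{d}(u(s))\,ds$, integration of the Virial inequality yields $f(t)\le C\mathbf{d}(u(t))=-Cf'(t)$, hence $f(t)\lesssim e^{-ct}$; combined with the near-Lipschitz control of $\mathbf{d}$ coming from (\ref{89w2}), this upgrades to a pointwise bound $\mathbf{d}(u(t))\lesssim e^{-ct}$. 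Feeding this into (\ref{89w2}) gives $|\theta'(t)|+|\mu'(t)/\mu(t)|\lesssim e^{-ct}$, so $\theta(t)\to\theta_0$ and $\mu(t)\to\mu_0$ exponentially, which is (\ref{87s2}).

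For the backward statement, suppose by contradiction $\|u\|_{S(-\infty,0)}=+\infty$. Applying the forward analysis to the time-reversed solution $\overline{u(-\cdot)}$ produces $\theta_0',\mu_0',c'>0$ with $\|u(t)-W_{[\theta_0',\mu_0']}\|_{\dot H^1}\lesssim e^{c't}$ for $t\le 0$. Then $\mathbf{d}(u(t))\in L^1(\mathbb{R})$ and $V_R'(t)\to 0$ at both ends (by (\ref{11291})), so integrating the Virial identity $V_R''=4\alpha\mathbf{d}(u(t))+A_R(u(t))\ge 3\alpha\mathbf{d}(u(t))$ over $\mathbb{R}$ forces $\int_\mathbb{R}\mathbf{d}(u(t))\,dt=0$. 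Hence $\mathbf{d}\equiv 0$, and Lemma~\ref{l:coercive}(b) makes $u$ a symmetry image of $W$, contradicting $\|u_0\|_{\dot H^1}<\|W\|_{\dot H^1}$. The main obstacle I anticipate is the compactness step: one must carry out the profile decomposition in the Lorentz framework suited to the inhomogeneity $|x|^{-b}$ and exploit Proposition~\ref{P:embed} to eliminate spatial translation parameters. This is precisely the mechanism that permits a nonradial treatment in all dimensions $d=3,4,5$, in contrast with the homogeneous energy-critical NLS where dimensions $d=3,4$ require radial symmetry.
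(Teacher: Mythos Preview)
Your overall strategy matches the paper's (concentration compactness, Virial, modulation), but two steps are genuine gaps rather than details. First, the claim that $T_+=+\infty$ is ``immediate from $\|u(t)\|_{\dot H^1}<\|W\|_{\dot H^1}$ together with Theorem~\ref{T:CP}(a)'' is false in the energy-critical setting: bounded $\dot H^1$ norm does not preclude finite-time blowup, and Theorem~\ref{T:CP}(a) is only the blowup criterion. The paper obtains $T_+=+\infty$ from the compactness itself (Proposition~\ref{p1}(b)).

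Second, and more seriously, you assume that the modulation scale $\mu(t)$ ``stays bounded above and below thanks to the compactness of $\mathcal{K}$ and the small-data scattering of Theorem~\ref{T:CP}(d)''. Neither ingredient yields this: small-data scattering only applies when $\|u(t)\|_{\dot H^1}$ is small, which never happens here since $\|u(t)\|_{\dot H^1}^2\ge 2E(W)$, and compactness modulo scaling says nothing about the scale itself. In the paper this is the content of Lemma~\ref{L:lambda}, and its proof requires first extracting a sequence $t_n$ with $\mathbf{d}(u(t_n))\to 0$ (Lemma~\ref{l5}, which in turn uses the growth $\sqrt{t}\lambda(t)\to\infty$ from Proposition~\ref{p1}(c)), then establishing the scale-dependent integral bound of Lemma~\ref{L:integral for d(u)}, and only then bootstrapping to $\inf_t\lambda(t)>0$. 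Your argument is also circular in that you invoke $\mu(t)$ in the Virial step before knowing $\mathbf{d}(u(t))<\delta_0$, whereas $\mu(t)$ is only defined on that set; the paper separates the compactness parameter $\lambda(t)$ (always defined) from the modulation parameter $\mu(t)$ and proves they are comparable (Lemma~\ref{L:comparable}) precisely to avoid this. Relatedly, the claim that ``compactness upgrades the averaged statement to $\lim_{t\to\infty}\mathbf{d}(u(t))=0$'' is not a direct consequence of precompactness; the paper obtains pointwise convergence only after the exponential integral bound, via a contradiction argument using $|\beta'(t)|\lesssim\mathbf{d}(u(t))$.
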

We first establish some properties for the nonscattering threshold solutions in subsection \ref{s:6.1}, and then give the proof of  Proposition  \ref{p2} in subsection \ref{s:6.2}.  
\subsection{Properties for Nonlinear Subcritical Threshold Solutions.}\label{s:6.1}
\begin{proposition}
	\label{p1}
	Let  $u$ be a solution of (\ref{nls}) satisfying (\ref{871}),  and  $I=(T_-,T_+)$  be its maximal interval of existence.  If  
	\begin{equation}
		 \|u\|_{S(0,T_+)}=+\infty ,\label{213w4} 
	\end{equation}
	then  \\
	(a)  	There exists a function  $\lambda $  on  $[0,T_+ ) $ 	such that the set
	\begin{equation}
		K_+:=\left\{u_{[\lambda (t)]}(t),\ t\in [0,T_+  )\right\}\label{E:compact}
	\end{equation}
	is relatively compact in  $\dot H^{1}$.    \\
	(b)  $T_+=+\infty $. \\
	(c) The function  $\lambda$ in  (a) satisfies 
	\begin{equation}
		\lim_{t\rightarrow  +\infty  }\sqrt {t}\lambda (t)=+\infty. \label{lim}
	\end{equation}
\noindent 	An analogous assertion holds on negative time direction. 
\end{proposition}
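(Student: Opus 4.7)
My plan is to prove (a) via the profile decomposition (Lemma \ref{L:bubble}) combined with the nonlinear stability (Proposition \ref{P:stab}) and the scattering-with-translation result (Proposition \ref{P:embed}), and then to use the Virial machinery of Lemma \ref{L:virial} together with the variational analysis to establish (b) and (c). For part (a), I fix an arbitrary sequence $t_n \in [0, T_+)$ and apply Lemma \ref{L:bubble} to $\{u(t_n)\}$. Lemma \ref{l:coercive} gives $\|\nabla u(t_n)\|_{L^2}^2 < \|\nabla W\|_{L^2}^2$, so the kinetic decoupling (\ref{11232}) forces every profile to satisfy $\|\nabla \phi^j\|_{L^2}^2 < \|\nabla W\|_{L^2}^2$. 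Combining $E(u_0) = E(W)$ with (\ref{11232})--(\ref{11233}) and the vanishing of the remainder's potential energy (\ref{241}) further gives $E(\phi^j) \le E(W)$ for each profile, placing each nonlinear profile in the subthreshold regime of Theorem \ref{T:0}(a), so that each is global and scatters. A nonlinear profile superposition together with Proposition \ref{P:stab} then shows $\|u\|_{S(0, T_+)} < \infty$ whenever $J^\ast \ge 2$ or $\|w_n^1\|_{\dot H^1} \not\to 0$, contradicting (\ref{213w4}); hence exactly one profile $\phi^1$ remains with vanishing remainder. The dichotomy in Lemma \ref{L:bubble} leaves two cases for the translation: either $x_n^1 \equiv 0$, or $|x_n^1|/\lambda_n^1 \to \infty$; the latter is ruled out by Proposition \ref{P:embed}, which produces a global scattering nonlinear solution with data $u(t_n)$ for large $n$, contradicting non-scattering. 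Setting $\lambda(t_n) := \lambda_n^1$ therefore gives $u(t_n)_{[\lambda(t_n)]} \to \phi^1$ in $\dot H^1$ along a subsequence, and since $\{t_n\}$ was arbitrary, $K_+$ is relatively compact.

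For part (b), I suppose toward contradiction that $T_+ < \infty$. If $\lambda(t_n)$ were bounded along some $t_n \to T_+^-$, the compactness from (a) would yield a convergent subsequence of $u(t_n)$ itself in $\dot H^1$, after which Theorem \ref{T:CP} would extend $u$ past $T_+$, contradicting maximality; therefore $\lambda(t) \to \infty$ as $t \to T_+^-$. The second tail bound in (\ref{11293}) together with compactness then forces $|A_R(u(t))| \to 0$ as $t \to T_+^-$ for every fixed $R > 0$. Writing the subcritical Virial identity $\partial_{tt} V_R(t) = 4\alpha \mathbf{d}(u(t)) + A_R(u(t))$ and using the bound $|V_R'(t)| \lesssim R^2$ from (\ref{11291}), I arrive at a contradiction upon choosing $R$ small and integrating twice on an interval $[t_\ast, T_+)$: the left-hand side $V_R$ remains uniformly $O(R^4)$ on this interval, while the right-hand side grows at least like $2\alpha c (T_+ - t_\ast)^2$ provided $\mathbf{d}(u(t)) \ge c > 0$ there. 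To rule out the degenerate alternative $\mathbf{d}(u(t_n)) \to 0$, I invoke Proposition \ref{P:W} and Lemma \ref{L:modulation}: such a degeneration would force $u(t_n)$ to match a rescaled copy of $W$ up to a small error and, by continuity, would yield an $\dot H^1$ extension past $T_+$, again contradicting maximality.

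For part (c), I argue by contradiction: suppose there is a sequence $t_n \to \infty$ with $\sqrt{t_n}\,\lambda(t_n) \le M$. I choose $R_n = C_0/\lambda(t_n) \gtrsim C_0\sqrt{t_n}/M$ with $C_0$ large, so that, after a pigeonhole selection arranging that $\lambda$ does not drop drastically below $\lambda(t_n)$ on $[0, t_n]$, the tail bound (\ref{11293}) and compactness make $|A_{R_n}(u(t))|$ uniformly small there. Integrating the Virial identity from $0$ to $t_n$ produces $V_{R_n}'(t_n) - V_{R_n}'(0) = 4\alpha\int_0^{t_n} \mathbf{d}(u(t))\,dt + \int_0^{t_n} A_{R_n}(u(t))\,dt$; combining the lower bound $\int_0^{t_n} \mathbf{d}(u(t))\,dt \gtrsim t_n$ (forced by the non-scattering assumption together with the modulation dichotomy of Section \ref{s:4}) with the upper bound $|V_{R_n}'(t_n) - V_{R_n}'(0)| \lesssim R_n^2 \lesssim t_n/M^2$ from (\ref{11291}) yields a contradiction once $C_0/M$ is taken large. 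The principal obstacle across (b) and (c) is the control of $\mathbf{d}(u(t))$ from below: a priori the solution could dip arbitrarily close to the $W$-orbit along subsequences, where the subcritical Virial identity provides no useful sign. The remedy is the modulation decomposition (\ref{814w1})--(\ref{89w2}) from Section \ref{s:4}, which when combined with the non-scattering hypothesis shows that any visit to a neighborhood of the $W$-orbit either is short-lived (so $\mathbf{d}$ recovers on a positive-density set, as needed for the Virial lower bound) or else persists, in which case the modulation equations produce a competing convergence that is incompatible with the finite $T_+$ in (b) and with the pigeonhole bound $\sqrt{t_n}\,\lambda(t_n) \le M$ in (c).
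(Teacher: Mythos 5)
Your proposal has three genuine gaps, the most serious of which is in part (a).

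\textbf{Part (a): wrong decomposition.} You apply the static bubble decomposition (Lemma~\ref{L:bubble}), but this is not the tool the argument needs. Lemma~\ref{L:bubble} extracts no time-translation parameters $t_n^j$ and controls the remainder $w_n^J$ only in $L^{2d/(d-2)}$ (see \eqref{1124x2}), not in $\dot H^1$ and not in any Strichartz norm. The paper's proof instead invokes the time-translated linear profile decomposition of \cite[Prop.~3.2]{Liu-Zhang}, in which the remainder satisfies $\limsup_{J\to J^*}\limsup_{n\to\infty}\|e^{it\Delta}w_n^J\|_{S(0,+\infty)}=0$. Both features you are missing are essential: the Strichartz smallness of the linear evolution of the remainder is exactly what makes the superposition $u_n^J=\sum_j v_n^j+e^{it\Delta}w_n^J$ an approximate solution to which Proposition~\ref{P:stab} applies; and the time shifts $t_n^j$ are needed to distinguish profiles built by solving a Cauchy problem (when $t_n^j\equiv 0$) from those built by a wave operator (when $t_n^j\to\pm\infty$, see Theorem~\ref{T:CP}(c)). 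Your conclusion ``$u(t_n)_{[\lambda(t_n)]}\to\phi^1$ in $\dot H^1$'' rests on $\|w_n^1\|_{\dot H^1}\to 0$, which the static decomposition does not give and which can only be extracted a posteriori by the stability/contradiction argument you cannot run without the stronger remainder control.

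\textbf{Part (b): the small-$R$ limit is the wrong limit.} You propose to take $R$ small so that $V_R=O(R^4)$ beats the quantity $2\alpha c(T_+-t_*)^2$, but the Virial error $A_R$ does \emph{not} become negligible as $R\to 0$. Inspecting Lemma~\ref{L:virial}, the bounds on $A_R$ in \eqref{11293} are tail estimates that shrink when $\mu(t)R$ (equivalently $\lambda(t)R$) is large. In the regime $\lambda(t)\to\infty$ as $t\to T_+^-$, one wants a \emph{fixed} $R$ so that $\lambda(t)R\to\infty$. Also, the quantity $(T_+-t_*)^2$ on your right-hand side is a fixed finite number, not a growing one, so there is no ``growth versus $O(R^4)$'' tension to exploit. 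Finally, your dismissal of the degenerate case $\mathbf{d}(u(t_n))\to 0$ by ``continuity yields an $\dot H^1$ extension past $T_+$'' is not justified: with $\lambda(t_n)\to\infty$ the modulation decomposition says $u(t_n)\approx W_{[\theta_n,\lambda(t_n)^{-1}]}$, a concentrating bubble converging weakly (not strongly) to $0$, so $u(t_n)$ has no convergent subsequence and the local well-posedness clock does not restart. The paper explicitly defers (b) to \cite[Lemma~2.8, Step~2]{2008GFA}, whose actual argument is of a different nature (showing the solution becomes supported at a point and hence trivial).

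\textbf{Part (c): your key estimate contradicts the paper.} You assert $\int_0^{t_n}\mathbf{d}(u(t))\,dt\gtrsim t_n$ ``forced by the non-scattering assumption together with the modulation dichotomy.'' This is false: Lemma~\ref{l5} of the paper proves precisely $\lim_{T\to\infty}\frac{1}{T}\int_0^T\mathbf{d}(u(t))\,dt=0$ for nonscattering subcritical threshold solutions. The Duyckaerts--Merle argument for $\sqrt{t}\,\lambda(t)\to\infty$, to which the paper defers (\cite[Lemma~3.3, Step~2]{2008GFA}), instead shows that if $\sqrt{t_n}\,\lambda(t_n)$ stayed bounded then the free evolution would be a good approximate solution on $[0,t_n]$ (since the nonlinear time scale is $\lambda(t)^{-2}$), forcing $u$ to scatter by the stability theory --- a route that is entirely different from, and incompatible with, the Virial lower bound you propose.
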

\begin{proof}
(a) As the proof follows that of \cite[ Proposition 3.1]{Liu-Zhang}, we will only sketch the main steps. \\
\noindent\textbf{Step 1.}  For any sequence   $\{\tau_n\}_{n\in \mathbb{N} }\subset [0,T_+)$,  there exists  $\lambda_n$  such that
\begin{equation}
	\lambda_n^{-\frac{d-2}{2}}u(\tau_n,\frac{x}{\lambda_n}) \quad\text{converges strongly in }\quad  \dot H^1(\mathbb{R} ^d) \quad\text{(up to a subsequence)}.\label{2142}
\end{equation}
By continuity of  $u$  it suffices to consider  $\tau_n\rightarrow T_+$.   Applying the profile decomposition (c.f. \cite[ Proposition 3.2]{Liu-Zhang}) to  $\{u(\tau_n)\}_{n\in \mathbb{N} }$, we deduce that there exist $J^*\in\mathbb{N}\cup\{\infty\}$; profiles $\phi^j\in \dot H^1\setminus\{0\}$; scales $\lambda_n^j\in(0,\infty)$; space translation parameters $x_n^j\in\mathbb{R}^d$; time translation parameters $t_n^j$; and remainders $w_n^J$ such that the following  decomposition holds for $1\leq J\leq J^*$:
\begin{equation}
	u_n:=u(\tau_n)=\sum_{j=1}^J (\lambda ^j_n)^{-\frac{d-2}{2}}[e^{it_n^j\Delta}\phi^j](\frac{x-x_n^j}{\lambda _n^j}) + w_n^J;\label{213w6}
\end{equation}
here  $w_n^J\in \dot H^1(\mathbb{R}^d)$ obeys
\begin{equation} 
	\limsup_{J\to J^*}\limsup_{n\to\infty} \|e^{it\Delta}w_n^J\|_{S(0,+\infty )}= 0.\notag
\end{equation}
Moreover,  for any  $J\ge1$, the following   energy decoupling properties  hold 
\begin{equation}\label{energy-decoupling}
	\begin{aligned}
		&\lim_{n\to\infty} \bigl\{ \|\nabla u_n\|_{L^2}^2 - \sum_{j=1}^J \|\nabla \phi^j \|_{L^2}^2 - \|\nabla w_n^J\|_{L^2}^2\bigr\} = 0, \\
		&\lim_{n\to\infty} \bigl\{E(u_n) - \sum_{j=1}^J E(e^{it_n^j\Delta}\phi^j)-E(w_n^J)\bigr\} = 0.
	\end{aligned}
\end{equation}
Furthermore,   either $t_n^j\equiv 0$ or $t_n^j\to\pm\infty$, and that either $x_n^j\equiv 0$ or $\frac{|x_n^j|}{\lambda _n^j}\to\infty$.

We now show that   $J^*=1$. Suppose by  contradiction that   $J^*\ge2$.   It then follows from  (\ref{871}), (\ref{energy-decoupling})  and Lemma \ref{l:coercive} that 
\begin{equation}
	E(\phi^j)<E(W)\quad\text{and}\quad  \|\nabla \phi^j\|_{L^2}< \|\nabla W\|_{L^2}\quad\text{for all}\quad    1\le j\le J^*.\label{213w5}
\end{equation}
We now construct scattering solutions to (\ref{nls}) corresponding to each profile. 
First, if   $t_n^j \equiv 0$, then we take $v^j$ to be the solution to \eqref{nls} with initial data $\phi^j$. This solution scatters due to  (\ref{213w5}) and Theorem \ref{T:0}.  If instead   $t_n^j\to\pm\infty$, we let $v^j$ be the solution that scatters to $e^{it\Delta}\phi^j$ as $t\to\pm\infty$ (see Theorem \ref{T:CP}).
In either of these cases, we then define
\begin{equation}
	v_n^j(t,x) = (\lambda_n^j)^{-\frac{d-2}2} v^j\left(\tfrac{t}{(\lambda_n^j)^2}+t_n^j,\tfrac{x}{\lambda_n^j}\right).\notag
\end{equation}

We then  construct a corresponding nonlinear profile decomposition of the form
\begin{equation} 
	u_n^J(t,x) = \sum_{j=1}^J v_n^j(t,x) + e^{it\Delta} w_n^J(x).\notag
\end{equation}	
By construction, we get 
\begin{equation}
	 \lim_{n\rightarrow \infty }\|u_n^J(0)-u_n\|_{\dot H^1}=0.\notag
\end{equation}
 Moreover, the arguments of \cite[Lemma 3.8]{Liu-Zhang} imply 
 	\begin{align}
 	& \limsup_{J\to J^*}\limsup_{n\to\infty}\bigl\{ \|u_n^J\|_{L_t^\infty \dot H_x^1(\mathbb{R} \times \mathbb{R} ^d)} + \| u_n^J\| _{S(\mathbb{R} )}\bigr\}\lesssim 1, \notag \\
 	& \limsup_{J\to J^*}\limsup_{n\to\infty} \|\nabla[(i\partial_t + \Delta)u_n^J + |x|^{-b}|u_n^J|^\alpha  u_n^J] \|_{L^2_tL_x^{\frac{2d}{d+2},2}(\mathbb{R} \times \mathbb{R} ^d)} = 0. \notag
 \end{align}
 Applying the stability result (Proposition~\ref{P:stab}), we derive bounds for the solutions $u_n$ that contradict  (\ref{213w4}). 

Having established  $J^*=1$,  (\ref{213w6}) simplifies to
	\begin{equation}
	u_{n}(x)= \lambda _n^{-\frac{d-2}{2}} [e^{it_n\Delta}\phi](\frac{x-x_n}{\lambda _n})+w_n(x), \label{521}
\end{equation}
where  either $t_n\equiv0$ or  $t_n\rightarrow \pm \infty $ and either  $x_n\equiv0$ or  $\frac{|x_n|}{\lambda _n}\rightarrow \infty $.
We now observe that 
\begin{equation}
	 \|\nabla w_n(x)\|_{L^2}\rightarrow0\quad\text{as}\quad  n\rightarrow+\infty .\label{2141}
\end{equation}
Indeed, otherwise, by (\ref{energy-decoupling}) and Lemma \ref{l:coercive}, we see that  $E(\phi)<E(W)$ and  $ \|\nabla \phi\|_{L^2} < \|\nabla W\|_{L^2} $. By  the same arguments used above, we deduce that   $ \|u\|_{S(0,+\infty )} <+\infty $,  contradicting   (\ref{213w4}).  

To see that the space shifts must obey $x_n\equiv 0$, we note that if $\frac{|x_n|}{\lambda _n}\to\infty$ then Proposition~\ref{P:embed} yields global scattering solutions $v_n$ to \eqref{nls} with $$v_n(0)= \lambda _n^{-\frac{d-2}{2}} [e^{it_n\Delta}\phi](\frac{x-x_n}{\lambda _n}).$$  Applying the stability result (Proposition~\ref{P:stab}), this implies uniform space-time bounds for the solutions $u_n$, contradicting (\ref{213w4}). To see that the time shifts must obey $t_n\equiv 0$, we note that if $|t_n|\to\infty$ then the functions  $e^{it\Delta }u_{n}$ (which has asymptotically vanishing space-time norm) define good approximate solutions obeying global space-time bounds for $n$ large.  In particular, an application of Proposition~\ref{P:stab} would again yield uniform space-time bounds for the $u_n$, contradicting (\ref{213w4}). 

Finally, by (\ref{521}) and (\ref{2141}) we get 
\begin{equation}
	 \|\lambda_n^{\frac{d-2}{2}}u(\tau_n, \lambda_nx)-\phi\|_{\dot H^1}\rightarrow0\quad\text{as}\quad  n\rightarrow+\infty ,\notag
\end{equation}
which complete the proof of (\ref{2142}).  

\noindent\textbf{Step 2.} We define  $\lambda(t)$.  Note that for any  $t\in [0,T_+)$ 
\begin{equation}
	2E(W)=2E(u(t))\le  \|u(t)\|_{\dot H^1}^2\le  \|W\|_{\dot H^1}^2.\label{214x1}  
\end{equation} 
Fixing  $t\in [0,T_+)$, define 
\begin{equation}
	\lambda(t):=\sup \left\{\lambda>0,\quad\text{ such that } \quad\int _{|x|\le 1/\lambda}|\nabla u(t,x)|^2 dx=E(W) \right\}.\notag
\end{equation} 
By (\ref{214x1}),  $0<\lambda(t)<\infty $. Let  $(t_n)_n$ be a subsequence in  $[0,T_+)$.  As proven in Step 1, up to the extraction of a subsequence, there exists a sequence  $(\lambda_n)_n$ such that  $(u_{[\lambda_n]}(t_n))_n$  converges in  $\dot H^1$ to some  $v_0\in \dot H^1$.    One may check directly, using (\ref{214x1}) and the definition of  $\lambda(t)$  that 
\begin{equation}
	C^{-1}\lambda(t_n)\le \lambda_n\le C\lambda(t_n),\notag
\end{equation}         
which shows (extracting again subsequences if necessary) the convergence of $(u_{[\lambda(t_n)]}(t_n))_n$ in  $\dot H^1$. The compactness of  $\overline{K}_+$  is proven, which concludes the proof of  (a).  

The proofs of (b) and (c) are same as the proofs of [Lemma 2.8, Step 2] and [Lemma 3.3, Step 2] in \cite{2008GFA}, so we omit the details. 
\end{proof}

The next observation on  $\lambda(t)$ is that  $\lambda(t)$ is basically comparable to  $\mu (t)$ given by (\ref{814w1}) when the solution  $u(t)$ close to the manifold $\left\{W_{[\theta ,\mu]} \right\}$.  
\begin{lemma}
	\label{L:comparable}
	Let  $u$ be the solution of (\ref{nls}) on the time interval  $I$ satisfying (\ref{E:compact}). Suppose  $ \mathbf{d} (u(t))<\delta _0$ on  $I$, and hence the orthogonal decomposition (\ref{89w3}) holds.   Then there exist constants  $0<c<C<\infty $  such that 
	\begin{equation}
		c<\frac{\lambda(t)}{\mu(t)}<C\qquad \forall t\in I.\notag
	\end{equation}  
\end{lemma}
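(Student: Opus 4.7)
The plan is to argue by contradiction using the $\dot H^1$-precompactness of $K_+$ from Proposition \ref{p1}(a) together with the modulation decomposition \eqref{814w1}. Setting $\sigma(t) := \mu(t)/\lambda(t)$, a direct rescaling computation converts \eqref{89w3} into
\begin{equation*}
u_{[\lambda(t)]}(t) \;=\; e^{-i\theta(t)}\bigl[(1+\beta(t))\,W_{[1/\sigma(t)]} \;+\; [\widetilde u(t)]_{[1/\sigma(t)]}\bigr],
\end{equation*}
and by Lemma \ref{L:modulation} one has $|\beta(t)| + \|\widetilde u(t)\|_{\dot H^1} \lesssim \mathbf{d}(u(t)) < \delta_0$. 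Thus the leading profile of $u_{[\lambda(t)]}(t)$ is the rescaled ground state $W_{[1/\sigma(t)]}$ up to a perturbation of $\dot H^1$-size $O(\delta_0)$, and the task reduces to ruling out $\sigma(t_n) \to \infty$ and $\sigma(t_n) \to 0$ along any sequence $t_n \in I$, shrinking $\delta_0$ further if necessary.

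Suppose first $\sigma(t_n) \to \infty$. By precompactness of $K_+$, after passing to a subsequence $u_{[\lambda(t_n)]}(t_n) \to v_\infty$ strongly in $\dot H^1$; absolute continuity of $\int|\nabla v_\infty|^2$ then lets us fix $r>0$ with $\int_{|x|\le r}|\nabla v_\infty|^2 dx < \varepsilon$, so $\int_{|x|\le r}|\nabla u_{[\lambda(t_n)]}(t_n)|^2 dx < 2\varepsilon$ for all large $n$. On the other hand, applying the elementary inequality $|a+b|^2 \ge \tfrac{1}{2}|a|^2 - |b|^2$ to the gradient of the above decomposition and invoking scale-invariance of the $\dot H^1$-norm yields
\begin{equation*}
\int_{|x| \le r}|\nabla u_{[\lambda(t_n)]}(t_n)|^2 dx \;\ge\; \tfrac{(1-\delta_0)^2}{2}\int_{|y| \le r\sigma(t_n)}|\nabla W(y)|^2 dy \;-\; C\delta_0^2,
\end{equation*}
and since $\sigma(t_n)\to\infty$ the right-hand side approaches $\tfrac{(1-\delta_0)^2}{2}\|W\|_{\dot H^1}^2 - C\delta_0^2$. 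For $\delta_0$ and then $\varepsilon$ taken small enough, this contradicts the upper bound obtained from precompactness.

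The case $\sigma(t_n) \to 0$ is entirely symmetric: precompactness provides some $R>0$ with $\int_{|x|\ge R}|\nabla u_{[\lambda(t_n)]}(t_n)|^2 dx < 2\varepsilon$ for large $n$, while the decomposition now forces this integral to be at least $\tfrac{(1-\delta_0)^2}{2}\int_{|y|\ge R\sigma(t_n)}|\nabla W|^2 dy - C\delta_0^2$, which again tends to $\tfrac{(1-\delta_0)^2}{2}\|W\|_{\dot H^1}^2$, a contradiction. The main technical point to verify is that strong $\dot H^1$-precompactness of $K_+$ indeed yields uniform control on the localized kinetic energy (both on small balls near the origin and on exterior regions at infinity) along any sequence in $K_+$; this is a standard consequence of covering the precompact set by finitely many $\varepsilon$-balls centered at fixed $\dot H^1$-functions combined with absolute continuity of $|\nabla f|^2$ for each center.
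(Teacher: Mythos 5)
Your proof is correct and takes a genuinely different route from the paper's. The paper argues by weak convergence: if $\mu(t_n)/\lambda(t_n)$ diverges or vanishes, then $W_{[\lambda(t_n)/\mu(t_n)]}$ tends weakly to zero in $\dot H^1$, so the strong limit $V$ of $u_{[\lambda(t_n)]}(t_n)$ supplied by precompactness must coincide with the weak limit of the error term $(v(t_n))_{[-\theta(t_n),\lambda(t_n)/\mu(t_n)]}$, whose $\dot H^1$-norm is $O(\delta_0)$ by Lemma \ref{L:modulation}; weak lower semicontinuity then contradicts $\|V\|_{\dot H^1}^2 = \lim\|u(t_n)\|_{\dot H^1}^2 > \|W\|_{\dot H^1}^2 - \delta_0$. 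Your argument is a harder, localization-based variant of the same contradiction: you quantify that when $\sigma(t_n)=\mu(t_n)/\lambda(t_n)$ degenerates, the rescaled profile $W_{[1/\sigma(t_n)]}$ concentrates essentially all of its kinetic energy inside a small ball near the origin (for $\sigma\to\infty$) or outside a large ball (for $\sigma\to 0$), and since the modulation error is $O(\delta_0)$-small in $\dot H^1$ this forces $u_{[\lambda(t_n)]}(t_n)$ to have localized kinetic energy close to $\tfrac12\|W\|_{\dot H^1}^2$ in that region, contradicting the uniform small-ball/far-tail control that precompactness of $K_+$ gives. The paper's route is softer and shorter; yours is elementary and self-contained once the standard tightness property of precompact subsets of $\dot H^1$ is invoked, and in particular does not rely on the observation that the rescaled $W$ goes weakly to zero. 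One small cosmetic point: your factor $(1-\delta_0)^2/2$ should more precisely read $(1-C\delta_0)^2/2$ since $|\beta(t)|\approx\mathbf{d}(u(t))$ carries an implicit constant, but this changes nothing after shrinking $\delta_0$.
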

\begin{proof}
	We argue by contradiction.   Supposing this is not true, there must exist a sequence of times  $t_n\in I_n$  such that  
	\begin{equation}
		\frac{\mu(t_n)}{\lambda(t_n)}\longrightarrow0\quad\text{or}\quad \frac{\mu (t_n)}{\lambda(t_n)}\longrightarrow\infty .\notag
	\end{equation} 
	This implies directly that 
	\begin{equation}
		W_{[\frac{\lambda(t_n)}{\mu (t_n)}]}{\rightharpoonup}0\quad\text{weakly in}\quad \dot H^1.\label{124w7}
	\end{equation}
	From the compactness in (\ref{E:compact}), we can extract a subsequence and find $V\in \dot H^1$  such that  
	\begin{equation}
		(u(t_n))_{[\lambda(t_n)]}\rightarrow V \quad\text{in}\quad  \dot H^1\label{271}
	\end{equation} 
	which along with  $ \mathbf{d} (u(t))<\delta _0$ implies 
	\begin{equation}
		\|V\|_{\dot H^1}^2=\lim_{n\rightarrow \infty }  \|u(t_n)\|_{\dot H^1}^2= \|W\|_{\dot H^1}^2-\lim_{n\rightarrow\infty } \mathbf{d} (u(t_n))> \|W\|_{\dot H^1}^2-\delta _0.\label{124w6}    
	\end{equation} 
	On the other hand, rewriting the decomposition (\ref{89w3}) as
	\begin{equation}
		(u(t_n))_{{\lambda(t_n)}}=(W+v(t_n))_{[-\theta (t_n),\frac{\lambda(t_n)}{\mu (t_n)}]},\notag
	\end{equation}
	we deduce  from (\ref{124w7}) and (\ref{271}) that 
	\begin{equation}
		(v(t_n))_{[-\theta (t_n),\frac{\lambda(t_n)}{\mu (t_n)}]}\stackrel{}{\rightharpoonup}V\quad\text{weakly in}\quad \dot H^1.\notag
	\end{equation}
	Therefore, by the estimate  $ \|v(t)\|_{\dot H^1}\lesssim   \mathbf{d} (u(t)) $ in  (\ref{89w1}) 
	\begin{equation}
		\|V\|_{\dot H^1}^2\le \liminf_{n\rightarrow\infty }  \|(v(t_n))_{[-\theta (t_n),\frac{\lambda(t_n)}{\mu (t_n)}]}\|_{\dot H^1}\lesssim   \mathbf{d} (u(t_n))<\delta _0,\notag 
	\end{equation}
	which contradicts  (\ref{124w6}) by replacing  $\delta _0$ by a smaller one. Lemma \ref{L:comparable} is proved.  
\end{proof}
The precompactness in  Proposition \ref{p1} implies that  $u(t)$  keeps getting closer to the manifold  $\left\{W_{[\theta ,\mu]} \right\}$.  
\begin{lemma}\label{l5}
	Let  $u$ be a solution of (\ref{nls}), defined on  $[0,+\infty )$,   satisfying (\ref{871}) and (\ref{872}). Then  there exists a sequence  $t_n\rightarrow+\infty $ such that  $ \mathbf{d} (u(t_n))$  tends to  $0$ as  $n\rightarrow +\infty $.   
\end{lemma}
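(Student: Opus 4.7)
The plan is to argue by contradiction, combining the localized Virial identity of Section \ref{s:5} with the precompactness of the scaled orbit $K_+$ and the growth estimate $\sqrt{t}\lambda(t)\to\infty$ from Proposition \ref{p1}. Suppose the conclusion fails; then there exist $\eta>0$ and $t_0\geq 0$ with $\mathbf{d}(u(t))\geq\eta$ for every $t\geq t_0$. Since $\|u(t)\|_{\dot H^1}<\|W\|_{\dot H^1}$ for every $t$ by Lemma \ref{l:coercive}, the subcritical branch of (\ref{dV_R}) yields
\begin{equation}
\partial_{tt}V_R(t)\geq 4\alpha\eta+A_R(u(t)),\qquad t\geq t_0.\notag
\end{equation}
In parallel, the uniform bound $\mathbf{d}(u(t))\leq \|W\|_{\dot H^1}^2$ together with (\ref{11291}) gives $|\partial_t V_R(t)|\leq C_1 R^2$ for every $t$ and every $R>0$.

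First I would render the error term $A_R(u(t))$ uniformly small once $R\lambda(t)$ is sufficiently large. By the relative compactness of $\overline{K_+}$ in $\dot H^1$, for every $\varepsilon>0$ there exists $R_0>0$ with
\begin{equation}
\int_{|y|\geq R_0}\bigl(|\nabla v|^2+|y|^{-b}|v|^{\alpha+2}+|y|^{-2}|v|^2\bigr)\,dy<\varepsilon\quad\text{for every } v\in\overline{K_+}.\notag
\end{equation}
Writing $u(t,x)=\lambda(t)^{(d-2)/2}v_t(\lambda(t)x)$ with $v_t=u_{[\lambda(t)]}(t)\in K_+$, each of the three integrands appearing in the first line of (\ref{11293}) is scale-invariant under the $\dot H^1$-scaling, so a change of variables identifies the tail of $u(t)$ over $|x|\geq R$ with the tail of $v_t$ over $|y|\geq R\lambda(t)$. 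Inserting this into Lemma \ref{L:virial}, whenever $R\lambda(t)\geq R_0$ we obtain $|A_R(u(t))|\leq C_2\varepsilon$. Choosing $\varepsilon$ with $C_2\varepsilon<2\alpha\eta$ and freezing the corresponding $R_0=R_0(\eta)$, this gives $\partial_{tt}V_R(t)\geq 2\alpha\eta$ for every such $t$.

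Next I would compensate for the potential decay of $\lambda(t)$ by allowing $R$ to depend on the length of the interval under consideration. Fix $M>0$ to be chosen later. By (\ref{lim}), there exists $t_1\geq t_0$ with $\sqrt{t}\lambda(t)\geq M$ for $t\geq t_1$, so that $\lambda(t)\geq M/\sqrt{T}$ uniformly on $[t_1,T]$ for every $T>t_1$. Given such $T$, set $R:=R_0\sqrt{T}/M$; then $R\lambda(t)\geq R_0$ throughout $[t_1,T]$, hence $\partial_{tt}V_R\geq 2\alpha\eta$ on that interval. Integrating twice yields
\begin{equation}
\partial_t V_R(T)-\partial_t V_R(t_1)\geq 2\alpha\eta(T-t_1).\notag
\end{equation}
Combining with $|\partial_t V_R|\leq C_1R^2=C_1R_0^2 T/M^2$ and letting $T\to\infty$ with $M,t_1$ fixed forces $\alpha\eta\leq C_1 R_0^2/M^2$. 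Since $R_0$ depends only on $\eta$, choosing $M$ large enough produces a contradiction and completes the proof.

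The main obstacle is the delicate interaction between the truncation radius $R$ and the concentration scale $\lambda(t)$: we need $R$ large enough to swallow the tail errors in $A_R$ uniformly on the window $[t_1,T]$, yet small enough that $R^2$ cannot dominate the linear-in-$T$ lower bound produced by integrating the Virial. The quantitative lower bound $\lambda(t)\geq M/\sqrt{t}$ coming from Proposition \ref{p1}(c) is exactly what makes the choice $R=O(\sqrt{T}/M)$ viable, reducing $R^2$ to $O(T/M^2)$ so that $M$ beats $\eta$ without conflicting with the tail requirement.
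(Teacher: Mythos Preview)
Your argument is correct and follows essentially the same route as the paper: both combine the precompactness of $K_+$ (to control the tail term $A_R$), the growth estimate $\sqrt{t}\lambda(t)\to\infty$ (to justify a truncation radius $R\sim\sqrt{T}$), and the Virial identity in the subcritical regime. The only cosmetic difference is that the paper integrates to obtain $\tfrac{1}{T}\int_0^T\mathbf{d}(u(t))\,dt\to 0$ and then extracts a sequence, whereas you argue by contradiction from a uniform lower bound $\mathbf{d}(u(t))\ge\eta$; the underlying mechanism is identical.
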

\begin{proof}
	The compactness in (\ref{E:compact}) implies directly that for any  $\varepsilon >0$, 
	there exists  $\rho _\varepsilon >0$ sufficiently large so that 
	\begin{equation}
		\sup _{t\in [0,\infty  ) } \int _{|x|>\frac{\rho _\varepsilon}{\lambda (t)} }|\nabla u(t,x)|^2+|x|^{-b}|u(t,x)|^{\alpha +2}+|x|^{-2}|u(t,x)|^2dx<\varepsilon . \label{87s1}
	\end{equation}
	On the other hand,  by (\ref{lim}), there exists  $t_0$ such that 
	\begin{equation}
		\lambda (t)\ge \frac{\rho _\varepsilon }{\varepsilon \sqrt {t}},\qquad \forall t\ge t_0.\notag
	\end{equation}
	Fix  $T\ge t_0$ and let  $R=\varepsilon \sqrt {T}$. Then  $R\ge \frac{\rho _\varepsilon }{\lambda (t)}$ for  $t\in [t_0,T]$.   Applying Lemma \ref{L:virial} for  $t\in [t_0,T]$ and using (\ref{87s1}), we obtain 
	\begin{equation}
		|\partial_{t}V_R(t)|\lesssim R^2=\varepsilon ^2T,\label{1252}
	\end{equation}
	\begin{equation}
		|A_R(u(t))|\lesssim  \int _{|x|>R}|\nabla u|^2+|x|^{-b}|u|^{\alpha +2}+|x|^{-2}|u|^2dx\lesssim \varepsilon.\label{1253}
	\end{equation}
	Substituting (\ref{1253}) into the first line in   (\ref{dV_R}), we obtain  
	\begin{equation}
		\partial_{tt}V_R(t)\ge 4\alpha  \mathbf{d} (u(t))-C\varepsilon .\notag
	\end{equation}
	Integrating it over  $[t_0,T]$ and using (\ref{1252}), we have 
	\begin{equation}
		\int_{t_0}^T \mathbf{d} (u(t))dt\lesssim  \varepsilon (T-t_0)+\varepsilon ^2T.\notag
	\end{equation}
	As $\varepsilon>0$ was arbitrary,  this implies 
	\begin{equation}
		\lim _{T\rightarrow +\infty }\frac{1}{T}\int_0^T \mathbf{d} (u(t))dt=0.\notag
	\end{equation}
	The convergence of  $ \mathbf{d} (u(t))$  along a sequence of time is proved.  
\end{proof}

Lemma \ref{L:comparable} implies that we can replace  $\lambda(t)$ by  $\mu(t)$ on the interval where  $ \mathbf{d} (u(t))<\delta _0$. From the derivative estimate of  $\mu (t)$ in Lemma \ref{L:modulation}, it is reasonable to expect 
\begin{equation}
	\frac{|\lambda'(t)|}{\lambda^3(t)}\lesssim  \mathbf{d} (u(t))\qquad \forall t\ge0.\notag
\end{equation}     
In fact, by the argument in \cite[Lemma A.3]{YZZ},  we can modify  $\lambda(t)$ such that it is differentiable almost everywhere and satisfies 
\begin{equation}\label{E:lambda}
	\left|\frac{1}{\lambda^2(a)}-\frac{1}{\lambda^2(b)}\right|\lesssim  \int_a^b \mathbf{d} (u(t))dt,\qquad \forall [a,b]\subset [0,\infty ).
\end{equation}
Moreover, the compactness  property (\ref{E:compact})  still holds  for the modified  $\lambda(t)$.  

We will revist this estimate later when we prove the uniform lower bound for  $\lambda(t)$. Now we turn to considering the distance function  $ \mathbf{d} (u(t))$   with the goal of proving the exponential decay of  $ \mathbf{d} (u(t))$. We star by showing the following.   
\begin{lemma}
	\label{L:integral for d(u)}
	Let  $u$ be the solution of (\ref{nls}) satisfying (\ref{E:compact}). Then for any  $[a,b]\subset [0,\infty )$, 
	\begin{equation}
		\int_a^b \mathbf{d} (u(t))dt\lesssim  \sup _{t\in [a,b]}\frac{1}{\lambda^2(t)}[ \mathbf{d} (u(a))+ \mathbf{d} (u(b))].\label{1241}
	\end{equation}  
\end{lemma}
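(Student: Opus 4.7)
The plan is to integrate the truncated virial identity (\ref{dV_R}) twice over $[a,b]$, with a cut-off radius $R$ tailored to the infimum of $\lambda(t)$ on that interval. Since $\|u_0\|_{\dot H^1} < \|W\|_{\dot H^1}$, Lemma \ref{l:coercive} keeps us in the subcritical branch of (\ref{dV_R}), so $\partial_{tt} V_R(t) = 4\alpha\, \mathbf{d}(u(t)) + A_R(u(t))$ and integration produces
$$
4\alpha \int_a^b \mathbf{d}(u(t))\, dt = \partial_t V_R(b) - \partial_t V_R(a) - \int_a^b A_R(u(t))\, dt.
$$
By (\ref{11291}), the boundary terms are bounded by $R^2\bigl[\mathbf{d}(u(a)) + \mathbf{d}(u(b))\bigr]$, so the actual task is to show that the error $|A_R(u(t))|$ can be absorbed into the left-hand side as $\eta\, \mathbf{d}(u(t))$ for some universal $\eta < 2\alpha$.

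To this end, set $R := C_0 \sup_{t\in[a,b]} \lambda^{-1}(t)$ with $C_0$ a large constant, so that $R\lambda(t) \ge C_0$ uniformly on $[a,b]$. The absorption is delicate because different bounds in (\ref{11293}) are appropriate for different regimes, and this is the \textbf{main obstacle} of the proof. In the regime $\mathbf{d}(u(t)) < \delta_0$, the modulation decomposition of Lemma \ref{l:implicit} is available and Lemma \ref{L:comparable} gives $R\mu(t) \gtrsim C_0$, so the second bound in (\ref{11293}) yields
$$
|A_R(u(t))| \lesssim \bigl(C_0^{-(d-2)/2} + \delta_0\bigr)\, \mathbf{d}(u(t)).
$$
In the complementary regime $\mathbf{d}(u(t)) \ge \delta_0$, modulation is unavailable but the first bound in (\ref{11293}) together with the uniform spatial decay (\ref{87s1}) (a consequence of the compactness (\ref{E:compact})) gives $|A_R(u(t))| \lesssim \varepsilon(C_0)$, where $\varepsilon(C_0) \to 0$ as $C_0 \to \infty$; dividing by $\delta_0 \le \mathbf{d}(u(t))$, we convert this to $|A_R(u(t))| \le \bigl(\varepsilon(C_0)/\delta_0\bigr)\, \mathbf{d}(u(t))$.

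Choosing first $\delta_0$ sufficiently small (shrinking the value provided by Lemma \ref{l:implicit} if necessary) and then $C_0$ sufficiently large yields a combined constant $\eta < 2\alpha$ valid in both regimes. Absorbing the $A_R$-integral into the left-hand side finally gives
$$
\int_a^b \mathbf{d}(u(t))\, dt \lesssim R^2\bigl[\mathbf{d}(u(a)) + \mathbf{d}(u(b))\bigr] \lesssim \sup_{t\in[a,b]} \lambda^{-2}(t)\, \bigl[\mathbf{d}(u(a)) + \mathbf{d}(u(b))\bigr],
$$
which is (\ref{1241}). The only routine check is that $C_0$ and $\delta_0$ can be chosen compatibly; this is immediate once $\delta_0$ is fixed first and $C_0$ is selected afterward, since $C_0$ appears only through the small factor $C_0^{-(d-2)/2}$ in regime (i) and through $\varepsilon(C_0) \to 0$ in regime (ii).
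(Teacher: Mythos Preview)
Your argument is correct and essentially the same as the paper's own proof: both integrate the subcritical virial identity, bound the boundary terms by (\ref{11291}), and absorb $|A_R(u(t))|$ as a small multiple of $\mathbf{d}(u(t))$ by splitting into the two regimes $\mathbf{d}(u(t))<\delta_0$ and $\mathbf{d}(u(t))\ge\delta_0$, using respectively the second line of (\ref{11293}) with Lemma~\ref{L:comparable} and the first line of (\ref{11293}) with the compactness tail estimate (\ref{87s1}). The only cosmetic difference is that the paper first exploits the scaling invariance of (\ref{1241}) to normalize $\min_{[a,b]}\lambda(t)=1$ and then takes $R$ large, whereas you set $R=C_0\sup_{[a,b]}\lambda^{-1}(t)$ directly; these are equivalent formulations of the same choice.
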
 
\begin{proof}
	Estimate	(\ref{1241}) is scaling invariant; by rescaling the solution, we only need to prove the estimate with addition assumption  $\min _{t\in [a,b]}\lambda(t)=1$.    
	
	Let  $V_R(t)$ be defined by (\ref{E:virial}). Then 
 $	|\partial_{t}V_R(t)|\lesssim  R^2 \mathbf{d} (u(t))\notag$ 
	and 
	\begin{equation}
		\partial_{tt}V_R(t)=-4\alpha  \mathbf{d} (u(t))+A_R(u(t)).\notag
	\end{equation} 
	If  $ \mathbf{d} (u(t))<\delta _1<\delta _0$, the second line of (\ref{11293}) and Lemma \ref{L:comparable} imply 
	\begin{equation}
		|A_R(u(t))|\lesssim  (R^{-\frac{d-2}{2}}+\delta _1) \mathbf{d} (u(t)).\notag
	\end{equation} 
	If  $ \mathbf{d} (u(t))\ge \delta _1$, the first line of (\ref{11293}) and (\ref{87s1}) give that for  $R\ge \rho_\varepsilon$
	\begin{equation}
		|A_R(u(t))|\lesssim \varepsilon \lesssim \frac{\varepsilon }{\delta _1} \mathbf{d} (u(t)).\notag
	\end{equation}  
	Choosing  $R$  sufficiently large,  $\delta _1$  sufficiently small and then  $\varepsilon $  sufficiently small, we deduce that 
	\begin{equation}
		|A_R(u(t))|\le 2\alpha  \mathbf{d} (u(t)).\notag
	\end{equation}   
	Hence 
	\begin{equation}
		\partial_{tt}V_R(t)\le -2\alpha  \mathbf{d} (u(t))\qquad \forall t\in [a,b].\notag
	\end{equation}
	Integrating the above inequality from  $a$ to  $b$ gives (\ref{1241}).     
\end{proof}

The major obstacle of translating the integration estimate to the pointwise decay of  $ \mathbf{d} (u(t)) $ is the uniform lower bound of  $\lambda(t)$. We will show this is indeed the case knowing  $ \mathbf{d} (u(t))$  converges to  $0$ along a sequence of time, a result that can be deduced again from Virial analysis. We prove these results in the following Lemma. 
\begin{lemma}\label{L:lambda}
	Let  $u$ be the solution  of (\ref{nls}) satisfying (\ref{871}) and (\ref{872}). Then there exists a constant  $c>0$  such that 
	\begin{equation}
		\inf _{t\in [0,\infty )}\lambda(t)\ge c.\notag
	\end{equation}  
\end{lemma}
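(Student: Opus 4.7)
My approach is a direct argument combining the two key estimates from the preceding lemmas: the Lipschitz-type bound (\ref{E:lambda}) for $t \mapsto 1/\lambda^2(t)$ and the integral bound of Lemma \ref{L:integral for d(u)} for $\int \mathbf{d}(u(t))\,dt$. Since (\ref{E:lambda}) makes $1/\lambda^2$ continuous (hence $\lambda$ is positive and continuous), the only real issue is to bound $\lambda$ from below at infinity; on any compact interval $[0,T_0]$ continuity alone produces a bound. So I only need a uniform lower bound on $[T_0,\infty)$ for a carefully chosen $T_0$.

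The plan is to take $T_0 = t_{n_0}$, where $\{t_n\}$ is the sequence from Lemma \ref{l5} (so $\mathbf{d}(u(t_n))\to 0$), with $t_{n_0}$ chosen so that $\mathbf{d}(u(t_{n_0}))$ lies below a small threshold $\eta_0$ to be fixed. For any later $t_n > t_{n_0}$ from the same sequence with $\mathbf{d}(u(t_n)) < \eta_0$, set $M_n := \sup_{t \in [t_{n_0}, t_n]} 1/\lambda^2(t)$, which is finite by continuity. Applying (\ref{E:lambda}) between $t_{n_0}$ and a generic $t \in [t_{n_0},t_n]$, and then Lemma \ref{L:integral for d(u)} on $[t_{n_0},t_n]$, I obtain
\begin{equation*}
\frac{1}{\lambda^2(t)} \le \frac{1}{\lambda^2(t_{n_0})} + C \int_{t_{n_0}}^{t_n}\mathbf{d}(u(s))\,ds \le \frac{1}{\lambda^2(t_{n_0})} + C' M_n \bigl[\mathbf{d}(u(t_{n_0})) + \mathbf{d}(u(t_n))\bigr] \le \frac{1}{\lambda^2(t_{n_0})} + 2C' M_n \eta_0.
\end{equation*}
Taking the supremum over $t \in [t_{n_0},t_n]$ and fixing $\eta_0$ so that $2C'\eta_0 \le 1/2$ allows me to absorb $M_n$ into the left-hand side, yielding $M_n \le 2/\lambda^2(t_{n_0})$, a bound uniform in $n$. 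Letting $n \to \infty$ along the subsequence where $\mathbf{d}(u(t_n)) < \eta_0$ then gives $\sup_{t \ge t_{n_0}} 1/\lambda^2(t) \le 2/\lambda^2(t_{n_0})$, i.e.\ $\lambda(t) \ge \lambda(t_{n_0})/\sqrt{2}$ for every $t \ge t_{n_0}$. Combined with the obvious lower bound on $[0,t_{n_0}]$ from continuity of $\lambda$, this produces the constant $c>0$.

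The main obstacle I anticipate is the self-referential nature of the estimates: the bound on $\int \mathbf{d}(u(\cdot))$ involves $M_n$, while $M_n$ is itself controlled through that integral. The absorption closes only because $\mathbf{d}(u(\cdot))$ is small at \emph{both} endpoints of the interval $[t_{n_0},t_n]$, which is precisely what Lemma \ref{l5} guarantees (arbitrarily far out). Consequently, the ordering of choices is delicate: the threshold $\eta_0$ must be fixed before selecting $n_0$, and the later times $t_n$ must be restricted to those with $\mathbf{d}(u(t_n)) < \eta_0$. The continuity of $\lambda$ (really of $1/\lambda^2$) furnished by (\ref{E:lambda}) is also crucial, both to make $M_n$ finite and to guarantee no blow-up of $\lambda^{-1}$ on the initial compact piece.
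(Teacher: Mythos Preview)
Your proposal is correct and follows essentially the same argument as the paper: both combine the Lipschitz-type bound (\ref{E:lambda}) with the integral estimate of Lemma \ref{L:integral for d(u)} on intervals $[t_{n_0},t_n]$ with small $\mathbf d(u)$ at the endpoints, then absorb the self-referential supremum term by choosing the smallness threshold appropriately and finish with continuity on the remaining compact interval. The only cosmetic difference is that the paper phrases the threshold as a generic small $\varepsilon$ rather than your $\eta_0$, but the logic is identical.
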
   
\begin{proof}
	Let the sequence  $t_n$ be determined by Lemma \ref{l5}  such that   $ \mathbf{d} (u(t_n))\rightarrow0$  as  $n\rightarrow\infty $. Then for any  $\varepsilon >0$, there exists  $N_\varepsilon >0$  such that 
	\begin{equation}
		\mathbf{d} (u(t_{N_\varepsilon }))+ \mathbf{d} (u(t_m))\le \varepsilon \qquad \forall m\ge N_\varepsilon .\label{124w5}
	\end{equation}    
	Take any  $\tau\in [t_{N_\varepsilon },\infty )$ and any  $m\ge N_\varepsilon $  such that   $\tau\in [t_{N_\varepsilon },t_m]$. Applying (\ref{E:lambda}) on       $[t_{N_\varepsilon },\tau]$ and then using Lemma \ref{L:integral for d(u)}, we estimate 
	\begin{eqnarray}
		\left|\frac{1}{\lambda^2(\tau)}-\frac{1}{\lambda^2(t_{N_\varepsilon })}\right|&\lesssim &  \int _{t_{N_\varepsilon }}^\tau  \mathbf{d} (u(t))dt\lesssim  \int _{t_{N_\varepsilon }}^{t_m} \mathbf{d} (u(t))dt\notag\\
		&\lesssim &  \sup _{t\in [t_{N_\varepsilon },t_m]}\frac{1}{\lambda^2(t)}\times ( \mathbf{d} (u(t_{N_\varepsilon }))+ \mathbf{d} (u(t_m))).\notag
	\end{eqnarray} 
	It then follows from (\ref{124w5}) and the triangle inequality that 
	\begin{equation}
		\frac{1}{\lambda^2(\tau)}\le C\varepsilon  \sup _{t\in [t_{N_\varepsilon },t_m]}\frac{1}{\lambda^2(t)}+\frac{1}{\lambda^2(t_{N_\varepsilon })}\qquad \forall \tau\in [t_{N_\varepsilon },t_m].\notag
	\end{equation}
	Taking  $\varepsilon >0$  sufficiently small yields 
	\begin{equation}
		\sup _{\tau\in [t_{N_\varepsilon },t_m]}\frac{1}{\lambda^2(\tau)}\le \frac{2}{\lambda^2(t_{N_\varepsilon })} \quad\text{and thus}\quad \sup _{\tau\in [t_{N_\varepsilon },\infty )}\frac{1}{\lambda^2(\tau)}\le \frac{2}{\lambda^2(t_{N_\varepsilon })}\notag
	\end{equation}   
	by letting  $m\rightarrow \infty $. The uniform bound for  $\frac{1}{\lambda(\tau)}$ comes from this and the boundedness on the closed interval  $[0,t_{N_\varepsilon }]$. Lemma \ref{L:lambda} is proved.    
\end{proof}
\subsection{The proof of  Proposition  \ref{p2}.}\label{s:6.2}
\begin{proof}
	The assertion    $T_+=+\infty $  follows directly from   Proposition  \ref{p1}.    To prove (\ref{87s2}), the key is to show 
	\begin{equation}
		\lim_{t\rightarrow+\infty } \mathbf{d} (u(t))=0.\label{1232}
	\end{equation}  
	We star by proving 
	\begin{equation}
		\int_t^\infty  \mathbf{d} (u(s))\lesssim  e^{-ct}\qquad \forall t\ge0. \label{1231}
	\end{equation}
	In fact, by Lemmas  \ref{L:integral for d(u)} and \ref{L:lambda} 
	\begin{equation}
		\int _t^{t_n} \mathbf{d} (u(s))ds\lesssim  \mathbf{d} (u(t))+ \mathbf{d} (u(t_n)),\notag
	\end{equation}
	where  $\left\{t_n \right\}$ is the sequence in Lemma \ref{l5}  such that   $ \mathbf{d}(u(t_n))\rightarrow 0$. Letting  $n\rightarrow\infty $ gives immediately 
	\begin{equation}
		\int _t^\infty  \mathbf{d} (u(s))ds\lesssim  \mathbf{d} (u(t))\qquad \forall t\ge0,\notag
	\end{equation}   
	which together with Gronwall's inequality yields (\ref{1231}) for some  $c>0$.
	
	We now prove (\ref{1232}).     Assume that (\ref{1232}) does not hold. Then extracting a subsequence from  $(t_n)$, there exist  $0<\delta _1<\delta _0$  and   $t_n'>t_n$  such that 
	\begin{equation}
		\mathbf{d} (u(t_n'))=\delta _1\quad\text{and}\quad 0< \mathbf{d} (u(t))<\delta _1\qquad \forall t\in (t_n,t_n'),\label{1233}
	\end{equation}  
	where  $\delta _0$ is such that (\ref{814w1}) and Lemma \ref{L:modulation} hold. Let  $\beta  (t)$ be the parameter in the decomposition (\ref{814w1}) on the interval  $(t_n,t_n')$.  By Lemma \ref{L:modulation},  $|\beta  '(t)|\lesssim   \mathbf{d} (u(t))$ for  $t\in (t_n,t_n')$, thus (\ref{1231}) implies that  $\beta  (t_n)-\beta  (t_n')$ tends to  $0$. Furthermore, again by Lemma \ref{L:modulation},  $|\beta  (t)|\approx  \mathbf{d} (u(t))$, which shows that  $ \mathbf{d} (u(t_n'))$ tends to  $0$, contradicting (\ref{1233}). This proves (\ref{1232}).            
	
	As a consequence of (\ref{1232}), we may decompose  $u$ for large  $t$
	\begin{equation}
		u_{[\theta (t),\mu(t)]}=(1+\beta  (t))W+\widetilde{u}(t),\qquad \widetilde{u}(t)\in H^\bot .\notag
	\end{equation}
	Then (\ref{87s2}) is equivalent to the existence of    $\theta _\infty \in \mathbb{R},  \mu_\infty \in (0,\infty )  $ and  $c>0$  such that  
	\begin{equation}
		\mathbf{d} (u(t))+|\beta  (t)|+\|\widetilde{u}\|_{\dot H^{1}}+|\theta (t)-\theta _\infty |+|\mu(t)-\mu_\infty |\lesssim e^{-ct},\qquad t\ge0.\label{124w1}
	\end{equation}

	We star by proving  that there exists  $\mu_ \infty \in (0,\infty ) $  such that  
	\begin{equation}
		\lim_{t\rightarrow +\infty } \mu (t)=\mu _\infty .\label{124w2}
	\end{equation} 
	Combining the estimates (\ref{89w2}),  (\ref{E:lambda}) and (\ref{1231}) we immediately see that   $\frac{1}{\mu^2(t)}$ and  $\frac{1}{\lambda^2(t)}$  converge as  $t\rightarrow \infty $.   Therefore, by Lemma \ref{L:comparable},   the proof of (\ref{124w2}) reduces to preclude the possibility that 
	\begin{equation}
		\lim_{t\rightarrow \infty }\frac{1}{\lambda^2(t)}=0.\label{125x2}
	\end{equation} 
	Assume by contradiction that (\ref{125x2}) holds.  Recalling  $ \mathbf{d} (u(t_n))\rightarrow0$, for any  $\varepsilon >0$, there must exist   $N_0\in \mathbb{N} $  such that  
	\begin{equation}
		\frac{1}{\lambda(t)}<\varepsilon \qquad \forall t\ge t_{N_0}\quad\text{and}\quad  \mathbf{d} (u(t_n))<\varepsilon \qquad \forall n\ge N_0.\notag
	\end{equation}   
	Taking any  $t_*\ge t_{N_0}$ and applying (\ref{E:lambda}), (\ref{1241}) we obtain 
	\begin{eqnarray}
		\left| \frac{1}{\lambda^2(t_*)}-\frac{1}{\lambda^2(t_n)}\right|&\lesssim &  \left|\int _{t_*}^{t_n} \mathbf{d} (u(t))dt\right|\lesssim \int _{t_{N_0}}^{t_n}  \mathbf{d} (u(t))dt\notag\\
		&\le& C\sup _{t\in [t_{N_0},t_n]}\frac{1}{\lambda^2(t)}[ \mathbf{d} (u(t_n))+ \mathbf{d} (u(t_{N_0}))].\notag  
	\end{eqnarray} 
	Letting  $n\rightarrow\infty $ we have 
	\begin{equation}
		\frac{1}{\lambda^2(t_*)}\le C\sup _{t\in [t_{N_0},\infty )}\frac{1}{\lambda^2(t)} \mathbf{d} (u(t_{N_0}))\le C\varepsilon \sup _{t\in [t_{N_0},\infty )}\frac{1}{\lambda^2(t)}.\notag
	\end{equation} 
	Choosing  $C\varepsilon \le \frac{1}{2}$ and taking supremum in  $t_*$ over  $[t_{N_0},\infty )$, we obtain  $\frac{1}{\lambda(t)}=0$ for all  $t\ge t_{N_0}$, which is a contradiction. Therefore, we obtain (\ref{124w2}).  
	
	Estimate (\ref{124w1}) is then a straightforward consequence of  Lemma \ref{L:modulation} and the boundedness of  $\mu$. The proof of  (\ref{87s2}) is complete.

	Finally, we prove (\ref{1119w1}).   Assume by contradiction that  $ \|u\|_{S(T_- ,0)}=+\infty $.  Then by  Proposition \ref{p1} and  Lemma \ref{L:lambda}, applied forward and backward,   $T_-=-\infty $ and  there exits a function  $\lambda(t) $ defined on  $\mathbb{R} $ with uniform lower bound,  such that  the set  $\{(u(t))_{[\lambda(t)]},t\in \mathbb{R} \}$  is relatively compact in  $\dot H^1$. Furthermore
	\begin{equation}
		\lim _{t\rightarrow +\infty } \mathbf{d} (u(t))=\lim _{t\rightarrow -\infty } \mathbf{d} (u(t))=0.\notag
	\end{equation} 
	Then by Lemma \ref{L:integral for d(u)},  we have  $\int _{-\infty }^{+\infty } \mathbf{d} (u(t))dt=\lim _{n\rightarrow+\infty }\int _{-n}^{+n} \mathbf{d} (u(t))dt=0$. Thus  $ \mathbf{d} (u_0)=0$,  which contradicts (\ref{871}). This proves (\ref{1119w1}).    
\end{proof}
\section{Convergence to  $W$ in the Supercritical Case.} \label{s4}
In this section,  we characterize solutions to (\ref{nls})   if the kinetic energy is greater than that of the ground state. More precisely, we prove that if the threshold solution does not blow up  in finite time, then it   converges  exponentially to the ground state. 

\begin{proposition}\label{p3}
	Let  $u$ be a  radial solution to  (\ref{nls}) defined on  $[0,+\infty  ) $ satisfying 
	\begin{equation}
		E(u_0)=E(W) \quad\text{and}\quad \|u_0\|_{\dot H^{1}}>\|W\|_{\dot H^{1}}.\label{886}
	\end{equation} 
	Assume furthermore that  $u_0\in L^2(\mathbb{R}^d)$. Then there 	exist constants   $\theta _0\in \mathbb{R}$,  $\mu_0,c>0$ such that 
	\begin{equation}
		\|u(t)-W_{[\theta _0,\mu_0]}\|_{\dot H^{1}}\lesssim e^{-ct},\qquad \forall t\ge0.\notag		
	\end{equation}
	A similar result holds for negative times if  $u $ is defined on  $( -\infty ,0].$
\end{proposition}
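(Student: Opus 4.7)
The strategy is to mirror the architecture of the proof of Proposition~\ref{p2}, but to replace the concentration-compactness input (which was used to establish orbital compactness in the subcritical case) by the combination of $L^2$ mass conservation and radial Sobolev decay. The end goal is, as before, to prove $\mathbf{d}(u(t))\to 0$ exponentially and to promote this to exponential convergence of the modulation parameters.

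First I would initialize. By Lemma~\ref{l:coercive} applied under the hypothesis $\|u_0\|_{\dot H^1}>\|W\|_{\dot H^1}$ and $E(u_0)=E(W)$, the inequality $\|u(t)\|_{\dot H^1}>\|W\|_{\dot H^1}$ persists on $[0,+\infty)$. Since $u_0\in L^2$ and radial, the mass conservation law gives $\|u(t)\|_{L^2}=\|u_0\|_{L^2}$ for every $t\ge 0$, so the truncated Virial $V_R(t)=\int\varphi_R(x)|u(t,x)|^2\,dx$ of \eqref{E:virial} is well defined, positive, and satisfies $0\le V_R(t)\lesssim R^2\|u_0\|_{L^2}^2$, uniformly in $t$.

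Next I would run the Virial bootstrap on the supercritical identity
\begin{equation*}
\partial_{tt}V_R(t)=-4\alpha\,\mathbf{d}(u(t))+A_R(u(t)),
\end{equation*}
from \eqref{dV_R}. The key point is to bound $A_R$ uniformly in $t$ using Lemma~\ref{L:virial} together with the radial Strauss inequality $|u(t,x)|\lesssim |x|^{-(d-1)/2}\|u(t)\|_{L^2}^{1/2}\|u(t)\|_{\dot H^1}^{1/2}$, combined with $\|u(t)\|_{L^2}=\|u_0\|_{L^2}$. This controls the potential contribution $\int_{|x|\ge R}|x|^{-b}|u|^{\alpha+2}dx$ and the Hardy term $\int_{|x|\ge R}|x|^{-2}|u|^2dx$ by negative powers of $R$, times powers of $\|u(t)\|_{\dot H^1}$. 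The Virial formula, together with the uniform bound $V_R(t)\ge 0$ and $|\partial_tV_R(t)|\lesssim R\|u_0\|_{L^2}\|u(t)\|_{\dot H^1}$, lets one simultaneously obtain a uniform-in-$t$ bound on $\|u(t)\|_{\dot H^1}$ (so on $\mathbf{d}(u(t))$), and, after a time integration and a choice of $R$ large, the estimate
\begin{equation*}
\int_0^\infty \mathbf{d}(u(t))\,dt<\infty.
\end{equation*}
In particular there is a sequence $t_n\to+\infty$ with $\mathbf{d}(u(t_n))\to 0$.

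Finally I would transfer this to exponential decay and parameter convergence exactly as in Section~\ref{S:subcritical}. For sufficiently large $t$ one has $\mathbf{d}(u(t))<\delta_0$, and Lemma~\ref{L:modulation} provides the decomposition \eqref{89w3} with $|\beta(t)|\approx\mathbf{d}(u(t))$, $|\beta'(t)|+|\theta'(t)|+|\mu'(t)/\mu(t)|\lesssim\mu^2(t)\mathbf{d}(u(t))$. An integration by parts on the Virial combined with the sequence $t_n$ yields $\int_t^\infty\mathbf{d}(u(s))\,ds\lesssim \mathbf{d}(u(t))$, from which Gronwall gives $\mathbf{d}(u(t))\lesssim e^{-ct}$ and hence $\|u_{[\theta(t),\mu(t)]}-W\|_{\dot H^1}\lesssim e^{-ct}$. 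The exponential integrability of $\mu'/\mu$ and $\theta'$ then forces $\mu(t)\to\mu_0\in(0,\infty)$ and $\theta(t)\to\theta_0\in\mathbb{R}$ exponentially, and the triangle inequality closes the argument. \emph{Main obstacle.} The principal technical point, and the place where this proof genuinely departs from the subcritical case, is the lack of an a priori compact orbit. One cannot invoke Proposition~\ref{p1} or Lemma~\ref{L:comparable} to control $A_R(u(t))$ uniformly. The substitute is the radial Strauss decay plus the $L^2$ conservation, which must be coupled with a self-improving kinetic-energy bound driven by the positivity $V_R(t)\ge 0$. Making this bootstrap quantitative (so that the $R$-dependent remainder can be absorbed into $-2\alpha\mathbf{d}(u(t))$) is the delicate step.
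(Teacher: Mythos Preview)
Your overall architecture is right, and you correctly identify radial Sobolev plus mass conservation as the replacement for compactness. But the ``delicate step'' you flag is genuinely missing an ingredient, and a second ingredient is also absent.

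\textbf{Gap 1: controlling $A_R$ when $\mathbf{d}(u(t))$ is small.} Radial Sobolev plus mass gives you a bound of the form $A_R(u(t))\le C(R^{-\kappa}+R^{-2})$, which is fine for absorbing $A_R$ into $-2\alpha\mathbf{d}$ \emph{only when $\mathbf{d}(u(t))\ge\delta_0$}: choose $R$ large so that the right side is $\le 2\alpha\delta_0$. But when $\mathbf{d}(u(t))<\delta_0$, that fixed $O(R^{-\kappa})$ remainder cannot be absorbed into $-2\alpha\mathbf{d}(u(t))$, which may be arbitrarily small. The paper handles this regime differently: it first proves a lower bound $\mu(t)\ge c$ on the modulation scale (by taking the $L^2$ norm of both sides of \eqref{89w3} on a ball and using mass conservation), and then invokes the second line of \eqref{11293}, which gives $|A_R(u(t))|\lesssim (\mu(t)R)^{-(d-2)/2}\mathbf{d}(u(t))+\mathbf{d}(u(t))^2$. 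With $\mu(t)\ge c$ this is $\lesssim(R^{-(d-2)/2}+\delta_0)\mathbf{d}(u(t))$, which \emph{does} absorb. Your proposal never produces this $\mu$-lower bound, so the Virial differential inequality $\partial_{tt}V_R\le -2\alpha\mathbf{d}$ is not established for all $t$.

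\textbf{Gap 2: upper bound on $\mu(t)$.} You write that ``exponential integrability of $\mu'/\mu$ forces $\mu(t)\to\mu_0\in(0,\infty)$,'' but Lemma~\ref{L:modulation} only gives $|\mu'/\mu|\lesssim\mu^2\mathbf{d}$, so integrability of $\mathbf{d}$ does not by itself prevent $\mu\to\infty$. The paper rules this out by a separate argument: along the sequence $t_n$ with $\mathbf{d}(u(t_n))\to 0$, if $\mu(t_{n_k})\to\infty$ then one estimates $V_R(t_{n_k})$ via the decomposition \eqref{89w3} and shows $V_R(t_{n_k})\to 0$, contradicting $\partial_tV_R>0$ (which follows from $V_R>0$ and $\partial_{tt}V_R<0$ on $[0,\infty)$). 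With both $\mu\ge c$ and $\mu(t_n)\lesssim 1$ in hand, one then bootstraps to $\mu(t)\approx 1$ on the intervals $[t_n,\tau_n]$ and closes as you describe.

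Finally, the ``self-improving kinetic-energy bound'' you mention is unnecessary: in the supercritical regime $\|\nabla u(t)\|_{L^2}^2=\|W\|_{\dot H^1}^2+\mathbf{d}(u(t))$ identically, so no bootstrap on $\|u\|_{\dot H^1}$ is needed.
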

		\begin{corollary}\label{c:210}
	 Let  $u$  be a   radial solution of  (\ref{nls}) satisfying  (\ref{886}) and such that   $u_0\in L^2(\mathbb{R} ^d)$.   Then  $u$  is not defined on  $\mathbb{R} ^d$.  
\end{corollary}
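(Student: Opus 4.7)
\medskip

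\noindent\textbf{Proof proposal for Corollary \ref{c:210}.}

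The plan is to argue by contradiction: assume $u$ is defined on all of $\mathbb{R}$, and derive a contradiction via a truncated virial analysis exploiting the conserved $L^2$ mass. Applying Proposition \ref{p3} to $u$ both forward and backward in time, one obtains parameters $\theta_\pm\in\mathbb{R}$, $\mu_\pm>0$ and a constant $c>0$ such that
\begin{equation}
\|u(t)-W_{[\theta_\pm,\mu_\pm]}\|_{\dot H^1}\lesssim e^{-c|t|}\quad\text{for}\quad \pm t\ge 0,\notag
\end{equation}
so in particular $\mathbf{d}(u(t))\lesssim e^{-c|t|}$ for all $t\in\mathbb{R}$, which ensures $\int_{\mathbb{R}}\mathbf{d}(u(t))\,dt<\infty$. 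Moreover, the exponential convergence together with the modulation estimate (\ref{89w2}) forces $\mu(t)\to \mu_\pm$, so $\mu(t)\ge c_0>0$ uniformly on $\mathbb{R}$.

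Next consider the truncated virial $V_R(t)=\int_{\mathbb{R}^d}\varphi_R(x)|u(t,x)|^2\,dx$ with $\varphi$ as in (\ref{121w1}). Mass conservation (which applies since $u_0\in L^2$) yields $0\le V_R(t)\le R^2\|u_0\|_{L^2}^2$ uniformly in $t$. Using (\ref{dV_R}) in the supercritical case,
\begin{equation}
V_R''(t)=-4\alpha\,\mathbf{d}(u(t))+A_R(u(t)).\notag
\end{equation}
By (\ref{11291}) and the exponential decay of $\mathbf{d}(u(t))$, $|V_R'(\pm T)|\lesssim R^2 \mathbf{d}(u(\pm T))\to 0$ as $T\to\infty$. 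Integrating $V_R''$ over $(-T,T)$ and letting $T\to\infty$ therefore gives
\begin{equation}
4\alpha\int_{-\infty}^{\infty}\mathbf{d}(u(t))\,dt=\int_{-\infty}^{\infty}A_R(u(t))\,dt.\notag
\end{equation}

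The heart of the proof is to show that the right-hand side can be made arbitrarily small by choosing $R$ large. Fix $T_0>0$ such that $\mathbf{d}(u(t))<\delta_0$ for $|t|\ge T_0$. On this tail region the second line of (\ref{11293}), together with $\mu(t)\ge c_0$, gives
\begin{equation}
|A_R(u(t))|\lesssim R^{-\frac{d-2}{2}}\mathbf{d}(u(t))+\mathbf{d}(u(t))^2,\notag
\end{equation}
so $\int_{|t|\ge T_0}|A_R|\,dt\lesssim (R^{-\frac{d-2}{2}}+\delta_0)\int_{\mathbb{R}}\mathbf{d}(u(t))\,dt$. On the compact set $|t|\le T_0$, the family $\{u(t)\}_{|t|\le T_0}$ is relatively compact in $\dot H^1$ by continuity of the flow, so for any $\varepsilon>0$ we can choose $R$ large enough that the tail integrals appearing in the first line of (\ref{11293}) satisfy $|A_R(u(t))|\le\varepsilon$ uniformly for $|t|\le T_0$. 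Combining both estimates and inserting into the identity above, for $R$ large and $\delta_0$ sufficiently small (so that $CR^{-\frac{d-2}{2}}+C\delta_0<2\alpha$), one obtains
\begin{equation}
2\alpha\int_{\mathbb{R}}\mathbf{d}(u(t))\,dt\lesssim T_0\,\varepsilon.\notag
\end{equation}
Letting $\varepsilon\to 0$ (with $R\to\infty$ accordingly) forces $\mathbf{d}(u(t))\equiv 0$, hence $\|\nabla u(t)\|_{L^2}=\|\nabla W\|_{L^2}$, which by Lemma \ref{l:coercive}(b) means $u\equiv W$ up to the symmetries of the equation, contradicting the strict inequality $\|\nabla u_0\|_{L^2}>\|\nabla W\|_{L^2}$.

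The main obstacle is the double use of the two bounds in (\ref{11293}): one needs the sharp second bound on the outer region $|t|\ge T_0$ (which requires $\mu(t)$ to be uniformly bounded below, supplied by Proposition \ref{p3}), and the crude first bound on the compact window $|t|\le T_0$ combined with $\dot H^1$-compactness so that the tail $\int_{|x|\ge R}(|\nabla u|^2+|x|^{-b}|u|^{\alpha+2}+|x|^{-2}|u|^2)\,dx$ can be made uniformly small there. The balance of $R$, $T_0$, $\delta_0$ and $\varepsilon$ is the delicate point to execute carefully.
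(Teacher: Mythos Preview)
Your argument is correct and follows the same truncated-virial contradiction strategy as the paper. The paper's version is more economical, however: instead of re-estimating $A_R$ by splitting into the regions $|t|\le T_0$ and $|t|\ge T_0$, it simply invokes (\ref{C:A_R}), which already gives $A_R(u(t))\le 2\alpha\,\mathbf{d}(u(t))$ uniformly in $t$ for all $R\ge R_0$. This yields $V_R''(t)\le -2\alpha\,\mathbf{d}(u(t))<0$ on all of $\mathbb{R}$, so $V_R'$ is strictly decreasing; combined with $V_R'(t)\to 0$ as $t\to\pm\infty$ (from (\ref{11291}) and $\mathbf{d}(u(t))\to 0$, the latter coming from Proposition~\ref{p3} applied in both time directions), one obtains an immediate contradiction, with no need to balance $T_0$, $\varepsilon$, and $R$.

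One small wrinkle in your write-up: $\delta_0$ is the fixed modulation threshold from Lemma~\ref{L:modulation} and cannot be shrunk at will. The fix is routine: pick any $\delta_1\in(0,\delta_0)$ small enough that $C\delta_1<2\alpha$, choose $T_0$ so that $\mathbf{d}(u(t))<\delta_1$ for $|t|\ge T_0$, and run your argument with $\delta_1$ in place of $\delta_0$.
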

We start by proving the following Lemma. 
\begin{lemma}
	Suppose  $u$ is the solution in  Proposition \ref{p3}.   Then we have the following: \\
	(a) On the interval  $I$ where  $ \mathbf{d} (u(t))<\delta _0$, there exists  $c>0$   such that   $\mu(t)$  appearing in the modulation decomposition (\ref{814w1}) satisfies 
	\begin{equation}
		\mu (t)\ge c,\qquad \forall t\in I.\label{E:mu}
	\end{equation}
	\noindent (b) There exists  $R_0=R_0(\delta _0,W, \|u_0\|_{L^2} )$ such that for all  $R\ge R_0$ 
	\begin{equation}
		A_R(u(t))\le 2\alpha  \mathbf{d} (u(t)),\quad \forall t\in I.\label{C:A_R}
	\end{equation} 
\end{lemma}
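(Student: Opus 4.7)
The plan is to prove both parts using the modulation decomposition $u_{[\theta(t),\mu(t)]}=W+v(t)$ from (\ref{814w1}), with $\|v(t)\|_{\dot H^1}\lesssim \mathbf{d}(u(t))<\delta_0$ by (\ref{89w1}), combined with an ingredient specific to the supercritical setting: mass conservation $\|u(t)\|_{L^2}=\|u_0\|_{L^2}$, which is available from the hypothesis $u_0\in L^2(\mathbb{R}^d)$.

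For (a), I would start from the $L^2$-scaling identity $\|u_{[\theta,\mu]}\|_{L^2}=\mu\|u\|_{L^2}$, which combined with mass conservation and the modulation decomposition gives, for any ball $B_R$,
\[
\mu(t)\,\|u_0\|_{L^2} \;=\; \|u_{[\theta(t),\mu(t)]}\|_{L^2} \;\ge\; \|W+v(t)\|_{L^2(B_R)} \;\ge\; \|W\|_{L^2(B_R)} \;-\; \|v(t)\|_{L^2(B_R)}.
\]
By H\"older and the Sobolev embedding $\dot H^1\hookrightarrow L^{\frac{2d}{d-2}}$,
\[
\|v(t)\|_{L^2(B_R)} \;\lesssim\; |B_R|^{1/d}\,\|v(t)\|_{L^{\frac{2d}{d-2}}} \;\lesssim\; R\,\|v(t)\|_{\dot H^1} \;\lesssim\; R\,\delta_0,
\]
so that $\mu(t)\ge (\|W\|_{L^2(B_R)}-C R\delta_0)/\|u_0\|_{L^2}$. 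It then remains to choose $R$ so that the numerator is bounded away from zero, producing a constant $c=c(\delta_0,W,\|u_0\|_{L^2})>0$.

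The hard part is this choice of $R$, which must be handled differently in each dimension because, as noted in the remark following Theorem \ref{T1}, $W\in L^2$ only for $d=5$. For $d=5$ one may fix $R$ with $\|W\|_{L^2(B_R)}\ge \|W\|_{L^2}/2$ and (shrinking $\delta_0$ if necessary) ensure $CR\delta_0\le \|W\|_{L^2}/4$. For $d=3,4$, the asymptotic $W(x)\sim |x|^{-(d-2)}$ yields $\|W\|_{L^2(B_R)}^2\sim R$ for $d=3$ and $\sim\log R$ for $d=4$ as $R\to\infty$, while $C R\delta_0$ grows linearly in $R$; optimizing $R$ as a function of $\delta_0$ (taking $\delta_0$ small) still produces a strictly positive lower bound on $\mu(t)$. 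In every dimension the resulting $c$ depends only on $\delta_0$, $W$ and $\|u_0\|_{L^2}$, as claimed.

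Part (b) will then follow directly from (a) and Lemma \ref{L:virial}. Since $\mathbf{d}(u(t))<\delta_0$ and $\mu(t)\ge c$ on $I$, the second line of (\ref{11293}) gives, for $R\ge 1/c$,
\[
|A_R(u(t))| \;\lesssim\; (\mu(t)R)^{-\frac{d-2}{2}}\mathbf{d}(u(t))\;+\;\mathbf{d}(u(t))^2 \;\le\; \bigl[C_1(cR)^{-\frac{d-2}{2}}+C_2\delta_0\bigr]\mathbf{d}(u(t)).
\]
Shrinking $\delta_0$ so that $C_2\delta_0\le \alpha$ and then taking $R_0=R_0(c,\alpha)$ large enough that $C_1(cR_0)^{-(d-2)/2}\le \alpha$ yields $A_R(u(t))\le |A_R(u(t))|\le 2\alpha\mathbf{d}(u(t))$ for all $R\ge R_0$ and all $t\in I$, which proves (\ref{C:A_R}).
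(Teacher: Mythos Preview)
Your proof of (a) is correct and uses the same idea as the paper, but you overcomplicate the choice of ball. The paper simply takes the unit ball $\{|x|\le 1\}$: since $\|W\|_{L^2(|x|\le 1)}$ is a fixed positive number in every dimension $3\le d\le 5$, one immediately gets $\mu(t)\|u_0\|_{L^2}\ge \|W\|_{L^2(|x|\le 1)}-C\delta_0>0$ for $\delta_0$ small, with no dimension-dependent optimization of $R$ needed.

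Your proof of (b) is correct for the statement as literally written (for $t\in I$, where $\mathbf{d}(u(t))<\delta_0$), and it coincides with the paper's argument in that regime. However, the paper actually establishes the stronger conclusion that (\ref{C:A_R}) holds for \emph{all} $t\ge 0$, including times where $\mathbf{d}(u(t))\ge\delta_0$; this stronger version is precisely what is invoked in the proof of Proposition~\ref{p3}, where one needs $\partial_{tt}V_R(t)\le -2\alpha\,\mathbf{d}(u(t))$ globally in time. In the region $\mathbf{d}(u(t))\ge\delta_0$ the second line of (\ref{11293}) is unavailable (no modulation), and the paper instead exploits radiality and mass: the radial Sobolev inequality $|x|^{(d-1)/2}|u(x)|\lesssim \|u\|_{L^2}^{1/2}\|\nabla u\|_{L^2}^{1/2}$ together with $\|u(t)\|_{L^2}=\|u_0\|_{L^2}$ gives $\int_{|x|>R}|x|^{-b}|u|^{\alpha+2}\lesssim R^{-b-\frac{d-1}{2}\alpha}$ and $\int_{|x|>R}|x|^{-2}|u|^2\lesssim R^{-2}\|u_0\|_{L^2}^2$, which via (\ref{11292}) can be absorbed into $\mathbf{d}(u(t))\ge\delta_0$ by taking $R$ large. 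This is the genuinely new ingredient in this section; your argument omits it, which is fine for the lemma as stated but would leave a gap when one turns to Proposition~\ref{p3}.
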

\begin{proof}
	We first prove (\ref{E:mu}). Taking the  $L^2$ norm on both sides of (\ref{814w1}) and using  $ \|v(t)\|_{L^{\frac{2d}{d-2}}}\lesssim   \|v\|_{\dot H^1}\le C\delta _0  $ from Lemma \ref{L:modulation},  we have 
	\begin{eqnarray}
		\mu (t) \|u(t)\| _{L^2}&\ge&  \|W+v(t)\| _{L^2(|x|\le 1)}\notag\\
		&\ge & \|W\|_{L^2(|x|\le 1)}-C \|v(t)\|  _{L^{\frac{2d}{d-2}}}\ge  \|W\|_{L^2(|x|\le 1)}-C\delta _0.\notag 
	\end{eqnarray}  
	Inequality (\ref{E:mu}) then follows from the mass conservation. 
	
	We now turning to prove (\ref{C:A_R}).  By the radial Sobolev inequality 
	\begin{equation}
		\sup _{x\in \mathbb{R} ^d}|x|^{\frac{d-1}{2}}|u(x)|\lesssim   \|u\|_{L^2}^{1/2} \|\nabla u\|_{L^2}^{1/2},\notag  
	\end{equation}
	we have 
	\begin{equation}
			\int _{|x|>R} \frac{|u(t,x)|^{\alpha +2}}{|x|^b}\lesssim  R^{-b-\frac{d-1}{2}\alpha } \|u\|_{L^2}^{2+\frac{\alpha }{2}} \|\nabla u\|_{L^2}^{\frac{\alpha }{2}} .\notag
	\end{equation}
	This together with  the bound in (\ref{11292}) and Young's inequality implies 
	\begin{eqnarray}
		A_R(u(t))&\lesssim & \int _{|x|>R}\frac{|u(t,x)|^{\alpha +2}}{|x|^b}dx+\int _{|x|>R}\frac{|u(t,x)|^2}{|x|^2}dx\notag\\
		&\lesssim &  R^{-b-\frac{d-1}{2}\alpha }  ( \mathbf{d} (u(t)) + \|W\|_{\dot H^1}^2 )+R^{-2} \|u_0\|_{L^2}.\notag  
	\end{eqnarray} 
	By taking  $R$ large enough depending on  $ \|u_0\|_{L^2},\delta _0 $ and  $W$, we obtain (\ref{C:A_R}) in the case of  $ \mathbf{d} (u(t))\ge \delta _0$. In the remaining case when  $ \mathbf{d} (u(t))<\delta _0$, (\ref{C:A_R}) follows directly from  (\ref{E:mu}) and the second bound of (\ref{11293}).
\end{proof}

We are ready to prove  Proposition  \ref{p3}.  
\begin{proof}[\textbf{Proof of  Proposition \ref{p3}.}]
	By (\ref{dV_R}) and (\ref{C:A_R}), we have
	\begin{equation}
		\partial_{tt}V_R(t)\le -2\alpha  \mathbf{d} (u(t)),\qquad t\ge0.\label{ineq2}
	\end{equation}
	Thus, since  $\partial_{tt}V_R(t)<0$ and  $V_R(t)>0$ for all  $t\ge0$, it follows that    
	\begin{equation}
		\partial_{t}V_R(t)>0\quad\text{for all}\quad t>0.\label{126w1}
	\end{equation}
	Integrating (\ref{ineq2}) between  $t$ and  $T$, and using (\ref{11291}) we get 
	\begin{equation}
		2\alpha \int_t^T \mathbf{d} (u(s))ds\le  \partial_{t}V_R(t)-\partial_{t}V_R(T)\le \partial_{t}V_R(t)\lesssim  R^2 \mathbf{d} (u(t)).\notag
	\end{equation}    
	Letting  $T$ tend to infinity yields  $\int_t^\infty  \mathbf{d} (u(s))ds\lesssim  \mathbf{d} (u(t))$, and thus by the Gronwall's lemma, 
	\begin{equation}
		\int_t^\infty  \mathbf{d} (u(s))ds\lesssim e^{-ct},\qquad t\ge0. \notag
	\end{equation}  
	As a direct implication, there exists a sequence  $\left\{t_n \right\}\subset (0,\infty )$  such that   $\lim_{n\rightarrow \infty } \mathbf{d} (u(t_n))=0$. Therefore, we can perform the decomposition (\ref{814w1}) in the neighborhood of  $t_n$ for large  $n$. We claim that 
	\begin{equation}
		\mu (t_n)\lesssim 1.\label{1261}
	\end{equation}
	Indeed, if this is not true, passing to a subsequence, we have  $\mu (t_{n_k})\rightarrow \infty $. Along this subsequence we use the H\"older's  inequality and (\ref{89w3}) to estimate (recall that  $\varphi _R(x)=R^2\varphi (\frac{x}{R})$)
	\begin{eqnarray}
		V_R(t_{n_k})&=& \int _{|x|\le \varepsilon }\varphi _R(x)|u(t_{n_k},x)|^2dx+\int _{|x|> \varepsilon }\varphi _R(x)|u(t_{n_k},x)|^2dx\notag\\
		&\lesssim & R^2\varepsilon ^2 \|u(t_{n_k},x)\|_{L_x^{\frac{2d}{d-2}}}^2+R^4 \|u(t_{n_k},x)\|_{L_x^{\frac{2d}{d-2}}(|x|>\varepsilon )}^2\notag\\
		&\lesssim & R^2 \varepsilon ^2( \mathbf{d} (u(t_{n_k}))+ \|\nabla W\|_{L^2}^2 )+ R^4\|(u(t_{n_k}))_{[\theta (t_{n_k}),\mu (t_{n_k})]}\|_{L^{\frac{2d}{d-2}}(|x|\ge \varepsilon \mu (t_{n_k}))}^2\notag\\
		&\lesssim & R^2\varepsilon ^2( \mathbf{d} (u(t_{n_k}))+ \|\nabla W\|_{L^2}^2 )+R^4 \|W\|_{L^{\frac{2d}{d-2}}(|x|\ge \varepsilon \mu (t_{n_k}))}^2+R^4 \|v(t_{n_k})\|_{L^{\frac{2d}{d-2}}}^2.\notag
	\end{eqnarray} 
	Note that  $ \|v(t_{n_k})\|_{\dot H^1} \lesssim   \mathbf{d} (u(t_{n_k}))$ by (\ref{89w1}) and  $ \mathbf{d} (u(t_{n_k}))\rightarrow 0$.  
	Taking  $n_k\rightarrow\infty $ and  then  $\varepsilon \rightarrow 0$,    we obtain  $\lim_{n_k\rightarrow \infty }V_R(t_{n_k})=0$, which contradicts  (\ref{126w1}).  
	
	Next, we prove that  
	\begin{equation}
		\lim_{t\rightarrow \infty }  \mathbf{d} (u(t))=0.\label{124w3}
	\end{equation}    
	We argue by contradiction. If this is not true, there must exists  $c\in (0,\delta _0)$, a subsequence in  $\left\{t_n \right\}$  (for which we use the same notation) and another sequence  $\tau_n$ such that 
	\begin{equation}
		\label{126x1} \tau_n \in (t_n,t_{n+1}], \  \mathbf{d} (u(\tau_n))=c,\  \mathbf{d} (u(t))\in (0,c] \qquad \forall t\in [t_n,\tau_n].
	\end{equation}   
	Taking any  $t\in [t_n,\tau_n]$, we use the derivative estimate from Lemma \ref{L:modulation} and  (\ref{E:mu}) to obtain 
	\begin{equation}
		\left|\frac{1}{\mu (t_n)^2}-\frac{1}{\mu (t)^2}\right|\lesssim  \int _{t_n}^t|\frac{\mu' (t)}{\mu (t)^3}|dt\lesssim  \int _{t_n}^\infty  \mathbf{d} (u(t))dt\rightarrow0.\notag
	\end{equation}
	This together with the control from (\ref{E:mu}) and (\ref{1261}) implies 
	\begin{equation}
		\mu (t)\approx 1\qquad \forall t\in [t_n,\tau_n].\notag
	\end{equation}
	Inserting this into the estimate of  $\beta  (t)$ in (\ref{89w1}) we have 
	\begin{equation}
		|\beta (t_n)-\beta(\tau_n)|\le \int _{t_n}^{\tau_n} |\beta  '(t)|dt\lesssim  \int _{t_n}^{\tau_n}\frac{|\beta  '(t)|}{\mu ^2(t)}\lesssim  \int _{t_n}^\infty  \mathbf{d} (u(t))dt\rightarrow 0\notag
	\end{equation} 
	as  $n\rightarrow \infty $. This is a contradiction since $\beta  (t_n)\approx  \mathbf{d} (u(t_n))\rightarrow0$ and   $\beta  (\tau_n)\approx  \mathbf{d} (u(\tau_n))=c$ from (\ref{126x1}). The convergence of  $ \mathbf{d} (u(t))$ in (\ref{124w3}) is proved. 
	
	Given (\ref{124w3}), we can perform the decomposition for all  $t\ge T_0$ and repeat the same argument as that used to derive   (\ref{1261}) to show that  $\mu (t)\approx 1$. Finally, the exponential convergence of all the parameters follows from the same argument in the Section \ref{S:subcritical}.    
\end{proof}
\begin{proof}[\textbf{Proof of Corollary \ref{c:210}}.]
	 Let  $u$  be a solution of  (\ref{nls}) satisfying the assumptions of the corollary and defined on  $\mathbb{R} $.  Applying the arguments in the proof of  Proposition \ref{p3} to  $\overline{u}(-t)$, we know that (\ref{ineq2}) and (\ref{126w1}) also hold for the negative time. Moreover, we have 
	 \begin{equation}
	 	\lim_{t\rightarrow-\infty } \mathbf{d} (u(t))=0.\label{210x1}
	 \end{equation} 
	 By (\ref{11291}), (\ref{ineq2}) and (\ref{210x1}), we have that  $\partial_{tt}V_R(t)<0$ and  $\partial_{t}V_R(t)\rightarrow0$ as  $t\rightarrow\pm \infty $. This contradicts (\ref{126w1}) and completes the proof of Corollary \ref{c:210}.    
\end{proof}

\section{Uniqueness and the classification result}\label{s:8}
 In this section, we first follows the arguments in \cite{2008GFA} to establish a uniqueness result for threshold solutions converging to the ground state.  Then we  use the uniqueness results  to classify all threshold solutions for the energy critical inhomogeneous NLS (\ref{nls}), which will imply  the proof of   Theorem \ref{T2}. 
\subsection{Estimates on exponential solutions of the linearized equation.}
Let us consider the linearized equation with 
\begin{equation}
	\partial_{t}h+\mathcal{L}h=\varepsilon \label{Linear Eq2}
\end{equation}
where  $h$ and  $\varepsilon $  satisfy,  for  $t\ge 0$ 
\begin{equation}
	\|\varepsilon (t)\|_{L^{\frac{2d}{d+2},2}}+\|\nabla \varepsilon \|_{N(t,+\infty )}\lesssim  e^{-c_1t},\label{883}
\end{equation}
\begin{equation}
	\|h(t)\|_{\dot H^{1}}\lesssim  e^{-c_0t},\label{884}
\end{equation}
with  $0<c_0<c_1$. The following proposition asserts that 	 $h$ must decay
almost as fast as  $\varepsilon $, except in the direction  $\mathcal{Y}_+$ where the decay is of order  $e^{-e_0t}$. 
\begin{proposition}\label{P:decay of h}
	Consider  $h$ and  $\varepsilon $ satisfying (\ref{Linear Eq2}), (\ref{883}) and (\ref{884}). Then for any Strichartz couple  $(p,q)$
	\begin{itemize}
		\item if   $e_0\notin [c_0,c_1)$ , then for any   $\varepsilon >0$ 
		\begin{equation}
			\|h(t)\|_{\dot H^{1}} +\|\nabla h\|_{L^p(t,+\infty ;L^{q,2})}\le C_\eta e^{-(c_1-\eta)t};\notag
		\end{equation}
		\item if  $e_0\in [c_0,c_1)$, there exists  $A_+\in \mathbb{R}$ such that for any  $\eta>0$ 
		\begin{equation}
			\|h(t)-A_+e^{-e_0t}\mathcal{Y}_+\|_{\dot H^{1}}+\|\nabla (h-A_+e^{-e_0t}\mathcal{Y}_+)\|_{L^p(t,+\infty ;L^{q,2})}\le C_\eta e^{-(c_1-\eta)t}.\notag
		\end{equation}
	\end{itemize}
\end{proposition}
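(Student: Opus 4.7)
The plan is to decompose $h$ along the spectral structure of $\mathcal{L}$ provided by Lemma~\ref{l:eigen} and to solve the resulting scalar ODEs. Since the proof of Lemma~\ref{L:G} establishes $B(\mathcal{Y}_+,\mathcal{Y}_-) \neq 0$ and $B(\mathcal{Y}_\pm,\mathcal{Y}_\pm) = 0$, I set
\[
\alpha_\pm(t) := \frac{B(\mathcal{Y}_\mp, h(t))}{B(\mathcal{Y}_\mp,\mathcal{Y}_\pm)}, \qquad g(t) := h(t) - \alpha_+(t)\mathcal{Y}_+ - \alpha_-(t)\mathcal{Y}_-,
\]
which makes $B(\mathcal{Y}_\pm, g) = 0$. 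Pairing $\partial_t h + \mathcal{L}h = \varepsilon$ with $\mathcal{Y}_\mp$ via $B$ and using the antisymmetry (\ref{12242}) together with $\mathcal{L}\mathcal{Y}_\pm = \pm e_0\mathcal{Y}_\pm$ produces the decoupled scalar ODEs
\[
\alpha_+'(t) + e_0\alpha_+(t) = \zeta_+(t), \qquad \alpha_-'(t) - e_0\alpha_-(t) = \zeta_-(t),
\]
where $|\zeta_\pm(t)|\lesssim e^{-c_1 t}$ thanks to (\ref{883}), the regularity of $\mathcal{Y}_\pm$ from Lemma~\ref{l:eigen}, and H\"older in Lorentz spaces.

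The unstable ODE for $\alpha_-$, combined with $\alpha_-(t)\to 0$ implied by $\|h(t)\|_{\dot H^1}\lesssim e^{-c_0 t}$, selects the unique bounded solution $\alpha_-(t) = -\int_t^\infty e^{e_0(t-s)}\zeta_-(s)\,ds$, which satisfies $|\alpha_-(t)|\lesssim e^{-c_1 t}$. For the stable ODE for $\alpha_+$, writing $\gamma(t) := e^{e_0 t}\alpha_+(t)$ gives $\gamma'(t) = e^{e_0 t}\zeta_+(t)$. When $e_0 < c_1$ this right-hand side is integrable on $[0,\infty)$, so $A_+ := \lim_{t\to\infty}\gamma(t)$ exists and
\[
\alpha_+(t) = A_+ e^{-e_0 t} - \int_t^\infty e^{-e_0(t-s)}\zeta_+(s)\,ds = A_+ e^{-e_0 t} + O(e^{-c_1 t}).
\]
When $e_0 \geq c_1$, the forward Duhamel representation on $[0,t]$ gives $|\alpha_+(t)|\lesssim e^{-(c_1-\eta)t}$ for every $\eta > 0$, the loss being needed only in the resonant equality $e_0 = c_1$. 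If in addition $e_0 < c_0$, the a priori bound $|\alpha_+(t)|\lesssim e^{-c_0 t}$ defeats the $A_+ e^{-e_0 t}$ term and forces $A_+ = 0$. Together the sub-cases $e_0 < c_0$ and $e_0 \geq c_1$ exhaust $e_0 \notin [c_0, c_1)$ and yield the first alternative; the second alternative, with possibly nonzero $A_+$, arises precisely when $e_0 \in [c_0, c_1)$.

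The remainder $g$ satisfies $\partial_t g + \mathcal{L}g = \tilde\varepsilon := \varepsilon - \zeta_+\mathcal{Y}_+ - \zeta_-\mathcal{Y}_-$ with $\|\tilde\varepsilon(t)\|_{L^{\frac{2d}{d+2},2}} + \|\nabla\tilde\varepsilon\|_{N(t,+\infty)}\lesssim e^{-c_1 t}$ and the orthogonality $B(\mathcal{Y}_\pm, g) = 0$. To use the coercivity of $Q$ in Lemma~\ref{L:G}, I also require $\dot H^1$-orthogonality to $iW$ and $W_1$; since both lie in $\ker\mathcal{L}$ and obey $B(iW,\cdot) = B(W_1,\cdot) = 0$ by (\ref{12241}), subtracting pointwise multiples $\mu_{iW}(t)\,iW + \mu_{W_1}(t)W_1$ (with coefficients determined by the two orthogonality conditions) produces $\tilde g \in G^\perp$ that still satisfies an equation of the same form with source of order $e^{-(c_1-\eta)t}$. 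The antisymmetry (\ref{12242}) then yields $\tfrac{d}{dt}Q(\tilde g(t)) = 2B(\tilde g(t), \tilde\varepsilon(t))$; integrating on $[t,T]$, letting $T\to\infty$ (so $Q(\tilde g(T))\to 0$), and invoking $c\|\tilde g\|_{\dot H^1}^2 \leq Q(\tilde g)$, I obtain
\[
\|\tilde g(t)\|_{\dot H^1}^2 \lesssim \int_t^\infty \|\tilde g(s)\|_{\dot H^1}\, e^{-(c_1-\eta) s}\,ds,
\]
and a standard bootstrap closes at $\|\tilde g(t)\|_{\dot H^1}\lesssim e^{-(c_1-\eta)t}$, whence the same bound for $g$.

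Combining the three contributions yields the $\dot H^1$-bound in both alternatives of the statement. Upgrading to the Strichartz norm on $[t,+\infty)$ follows by applying Lemma~\ref{L:small solution} (in its linear variant, which is simpler because only a forcing term is present) to $h$ in the first case and to $h - A_+ e^{-e_0 t}\mathcal{Y}_+$ in the second case; each obeys a linear equation with source of exponentially decaying Strichartz norm. The main technical obstacle I anticipate is the resonant situation $e_0 = c_1$, which forces the unavoidable $\eta$-loss, together with the careful extraction of the kernel directions $iW, W_1$ needed to place the remainder in $G^\perp$ so that the coercivity of $Q$ from Lemma~\ref{L:G} can be invoked.
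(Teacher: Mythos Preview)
Your approach is essentially the same as the paper's: spectral decomposition along $\mathcal{Y}_\pm$, $iW$, $W_1$ and a $G^\perp$ remainder, scalar ODEs for $\alpha_\pm$, the $Q$-identity for the remainder, and iteration. Two small imprecisions are worth noting. First, the source in the equation for $\tilde g$ is not $\tilde\varepsilon$ but $\tilde\varepsilon - \mu_{iW}'\,iW - \mu_{W_1}'\,W_1$, and the coefficients $\mu_{iW}',\mu_{W_1}'$ are only $O(\|\tilde g\|_{\dot H^1})$ a priori, not $O(e^{-(c_1-\eta)t})$; however, since $B(\cdot,iW)=B(\cdot,W_1)=0$ by (\ref{12241}), these extra terms drop out of $\tfrac{d}{dt}Q(\tilde g)$, so your $Q$-inequality is still correct. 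Second, the step ``whence the same bound for $g$'' requires recovering $\mu_{iW},\mu_{W_1}$; this is done (as in the paper) by bounding $(iW,\mathcal{L}\tilde g)_{\dot H^1}$ and $(W_1,\mathcal{L}\tilde g)_{\dot H^1}$ by $C\|\tilde g\|_{\dot H^1}$ and integrating the resulting ODEs from $t$ to $\infty$. With these two points clarified the argument goes through and matches the paper's proof, including the iteration that upgrades $e^{-c_0 t}$ to $e^{-(c_1-\eta)t}$.
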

\begin{proof}
	As the proof follows that of \cite[Proposition 5.9]{2008GFA}, we will only sketch the main steps. By Lemma \ref{L:small solution}, it suffices to show that if   $e_0\notin [c_0,c_1)$ then 
	\begin{equation}
  \|h(t)\|_{\dot H^1}\le C_\eta e^{-(c_1-\eta)t};\label{33w1} 
	\end{equation}
	and if  $e_0\in [c_0,c_1)$ then 
	\begin{equation}
			\|h(t)-A_+e^{-e_0t}\mathcal{Y}_+\|_{\dot H^{1}}\le C_\eta e^{-(c_1-\eta)t}.\label{33w2} 
	\end{equation} 
	 In the sequel, we will assume without loss of generality   that $c_1\neq e_0$.  
	
	\noindent\textbf{Step 1:} Let us decompose  $h(t)$ as 
	\begin{equation}
		h(t)=\alpha _+(t)\mathcal{Y}_++\alpha _-(t)\mathcal{Y}_-+\widetilde{\alpha }(t)iW+\gamma (t)W_1+g(t),\label{331}
	\end{equation} 
	where 
	\begin{align}
		& \alpha _-:=\frac{B(h,\mathcal{Y}_+)}{B(\mathcal{Y}_+,\mathcal{Y}_-)},\qquad \alpha _+:=\frac{B(h,\mathcal{Y}_-)}{B(\mathcal{Y}_+,\mathcal{Y}_-)},\label{332}\\
		&\widetilde{\alpha }:=\frac{1}{ \|W\|_{\dot H^1}^2 }(h-\alpha _+\mathcal{Y}_+-\alpha _-\mathcal{Y}_-,iW)_{\dot H^1},\notag\\
		&\gamma :=\frac{1}{ \|W_1\|_{\dot H^1}^2 }(h-\alpha _+\mathcal{Y}_+-\alpha _-\mathcal{Y}_-,W_1)_{\dot H^1},\label{333}
	\end{align}
	so that by (\ref{12241}) and (\ref{12243})  $g\in G^\bot$.  Using equation (\ref{Linear Eq2}) and  (\ref{12242}),  we obtain the differential equations on the coefficients in (\ref{332})--(\ref{333}): 
\begin{align}
		\frac{d}{dt}(e^{e_0t}\alpha _+)&=e^{e_0t}\frac{B(\mathcal{Y}_-,\varepsilon )}{B(\mathcal{Y}_+,\mathcal{Y}_-)},\qquad \frac{d}{dt}(e^{-e_0t}\alpha _-)=e^{-e_0t}\frac{B(\mathcal{Y}_+,\varepsilon )}{B(\mathcal{Y}_+,\mathcal{Y}_-)},\label{334}\\
	\frac{dQ(h)}{dt}&=2B(h,\varepsilon ),\qquad \frac{d\widetilde{\alpha }}{dt}=\frac{(iW,\widetilde{\varepsilon })_{\dot H^1}}{ \|W\|_{\dot H^1}^2 },\qquad \frac{d\gamma }{dt}=\frac{(W_1,\widetilde{\varepsilon })_{\dot H^1}}{ \|W_1\|_{\dot H^1}^2 },\label{335}
\end{align}
	where 
	\begin{equation}
		\widetilde{\varepsilon }:=\varepsilon -\frac{B(\mathcal{Y}_-,\varepsilon )}{B(\mathcal{Y}_+,\mathcal{Y}_-)}\mathcal{Y}_+-\frac{B(\mathcal{Y}_+,\varepsilon )}{B(\mathcal{Y}_+,\mathcal{Y}_-)}\mathcal{Y}_--\mathcal{L}g.\label{33x2}
	\end{equation}
	
	\noindent\textbf{Step 2:}  Bounds on  $\alpha _-$ and  $\alpha _+$. We now claim   
	\begin{align}
		|\alpha _-(t)|&\lesssim  e^{-c_1t}, \label{336}\\
			|\alpha _+(t)|&\lesssim  \begin{cases}
			e^{-c_1t},\qquad \qquad&\text{ if } e_0\notin [c_0,c_1),\\
			e^{-c_1t}+e^{-e_0t},\qquad &\text{ if } e_0\in [c_0,c_1).
		\end{cases}\label{337}
	\end{align}
	Let us first show the following general bound on  $B$. 
	\begin{claim} \label{C:33}
		For any finite time-interval  $I$  of length  $|I|\le 1$, we have 
		\begin{equation}
				\int _I|B(f,g)|dt\lesssim |I|^{\frac{1}{2}}(\|\nabla f\|_{N(I)}+ \| f\|_{L^\infty (I,L^{\frac{2d}{d+2},2})} ) \|g\|_{L^\infty (I,H^2)}.\label{33w8} 
		\end{equation}
	\end{claim} 
	\begin{proof}
	Recall the definition of 	 the symmetric bilinear form  $B$ in (\ref{B}). By H\"older's inequality  
\begin{eqnarray}
		\int _I|B(f,g)|dt&\lesssim&  \int_I \int _{\mathbb{R} ^d}\left(|\nabla f||\nabla g|+|x|^{-b}  |f||g|\right)dxdt \notag\\
		&\lesssim   &\|\nabla f\|_{N(I)}  \|\nabla g\|_{L^2(I,L^{\frac{2d}{d-2},2})} + |I|\| f\|_{L^\infty (I,L^{\frac{2d}{d+2},2})} \|g\|_{L^\infty (I,L^{\frac{2d}{d-2-2b},2})}  ,	\notag
\end{eqnarray}
which together with the embedding  $H^2(\mathbb{R} ^d)\hookrightarrow L^{\frac{2d}{d-2-2b},2} (\mathbb{R} ^d)$ yields (\ref{33w8}).   
	\end{proof}
	Assumption (\ref{883}) on  $\varepsilon $, together with (\ref{334}) and Claim \ref{C:33}  yields  
	\begin{equation}
		e^{-e_0t}|\alpha _-(t)| =\int _t^\infty e^{-e_0s}|B(\mathcal{Y}_+,\varepsilon (s))|ds\lesssim  e^{-(e_0+c_1)t}.\notag
	\end{equation}
	This proves  (\ref{336}). 
	
Let us show (\ref{337}).  First assume that   $e_0<c_0$.  Thus by  assumption (\ref{884}) and (\ref{332}),  $e^{e_0t}\alpha _+(t)$ tends to  $0$ when  $t$ tends to infinity. Then using the    same argument as that used to derive (\ref{336}), we obtain 
  $|\alpha _+(t)|\lesssim  e^{-c_1t}$. 
  
  Now assume that  $  e_0\ge c_0$.  By (\ref{334})
	\begin{equation}
		\alpha _+(t)=e^{-e_0t}\alpha _+(0)+\frac{e^{-e_0t}}{B(\mathcal{Y}_+,\mathcal{Y}_-)} \int _0^te^{e_0s}B(\mathcal{Y}_-,\varepsilon (s))ds.\notag
	\end{equation}   
 	Assumption (\ref{883}) on  $\varepsilon $ together with   Claim \ref{C:33}  yields  
\begin{equation}
	|\alpha _+(t)|\lesssim  e^{-e_0t}+e^{-e_0t}  \int_0^t e^{e_0s}e^{-c_1s}ds\lesssim  e^{-e_0t}+e^{-c_1t}.\notag
\end{equation}
Estimate (\ref{337}) is proved.  
 
 \noindent\textbf{Step 3:}  Bounds on  $ \|g\|_{\dot H^1}, \widetilde{\alpha } $ and  $\gamma $.   We next prove 
 \begin{equation}
 	 \|g(t)\|_{\dot H^1}+|\widetilde{\alpha }(t)|+|\gamma (t)|\lesssim  e^{-(\frac{c_0+c_1}{2})t}.\label{33x1}
 \end{equation}
 Integrating the equation on  $Q$ in (\ref{335}) between  $t$ and  $+\infty $, and using Claim \ref{C:33}, assumptions (\ref{883}) and (\ref{884}), we get 
 $	|Q(h(t))|\lesssim  e^{-(c_0+c_1)t}.$ 
 Thus 
  \begin{align}
   	|Q(\alpha _+\mathcal{Y}_++\alpha _-\mathcal{Y}_-+\widetilde{\alpha }iW+\gamma W_1+g)|\lesssim  e^{-(c_0+c_1)t}\notag\\
   	|2\alpha _+\alpha _-B(\mathcal{Y}_+,\mathcal{Y}_-)+Q(g)|\lesssim  e^{-(c_0+c_1)t},\notag
  \end{align}
 which together with the bounds (\ref{336}) and (\ref{337}) implies  $|Q(g)|\lesssim  e^{-(c_0+c_1)t}$. As a consequence of the coercivity of  $Q$ on  $G^\bot$ (Lemma \ref{L:G}), we get the estimate on  $ \|g\|_ {\dot H^1}$ in   (\ref{33x1}). It remains to show the bounds on  $\widetilde{\alpha }$ and  $\gamma $.

 Consider the function   $\widetilde{\varepsilon }$ defined in  (\ref{33x2}). By assumption  (\ref{883}) and the equation (\ref{Eq:W})
 \begin{equation}
 	|(iW,\widetilde{\varepsilon }(s))_{\dot H^1}|\lesssim  e^{-c_1t}+\left|(W,\mathcal{L}g(s))_{\dot H^1}\right|  \lesssim  e^{-c_1t}+|\int |x|^{-b}W^{\alpha +1}\mathcal{L}g|.\notag
 \end{equation}
 Since  $|\nabla W|\lesssim  |x|^{-1}W$ and  $W=O(\langle x \rangle ^{-(d-2)})   $, it follows from H\"older's inequality that 
\begin{align}
	|\int |x|^{-b}W^{\alpha +1}\mathcal{L}g|&  \lesssim  \int |x|^{-b-1}W^{\alpha +1} |\nabla g|+|x|^{-2b}W^{2\alpha +1}|g|\notag\\
		&\lesssim  \int |x|^{-b-1}\langle x \rangle ^{-(d+2-2b)}|\nabla g|+\int |x|^{-2b}\langle x \rangle ^{-(d+6-4b)}|g|\notag \\
		&\lesssim   \|\nabla g\|_{L^2}+ \|g\|_{L^{\frac{2d}{d-2}}}\lesssim  \|g\|_{\dot H^1} .\notag   
\end{align}
 Thus  by the estimate of  $g$ in (\ref{33x1}), we have   $	|(iW,\widetilde{\varepsilon }(s))_{\dot H^1}|\lesssim  e^{-\frac{c_0+c_1}{2}t} $.   In view of  the second equation in (\ref{335}) and (\ref{33x2}),   we get the bound on  $\widetilde{\alpha }$  in (\ref{33x1}).  An analogous proof yields the bound on  $\gamma $.

 \noindent\textbf{Step 4:} Conclusion. Summing up estimates (\ref{336}), (\ref{337}) and (\ref{33x1}), we get, in view of decomposition (\ref{331}) of  $h$:
 \begin{equation}
 	 \|h(t)\|_{\dot H^1}\lesssim \begin{cases}
 	 	e^{-\frac{c_0+c_1}{2}t},\qquad\qquad\qquad &\text{if }e_0\notin [c_0,c_1)\\
 	 	e^{-e_0t}+ e^{-\frac{c_0+c_1}{2}t}&\text{if } e_0\in [c_0,c_1).
 	 \end{cases} \label{33w3}
 \end{equation} 
\noindent  \textbf{Proof of (\ref{33w1}):} If  $e_0\notin [c_0,c_1)$, then by the first line in (\ref{33w3}), the estimate of  $h$ in (\ref{884}) can be improved.    Iterating the argument we obtain the bound  $ \|h(t)\|_{\dot H^1} \le C_\eta e^{-(c_1-\eta)t}$ if  $e_0\notin [c_0,c_1)$, which yields the desired estimate (\ref{33w1}).    \\
\textbf{Proof of (\ref{33w2}):} Let us assume  $e_0\in [c_0,c_1)$.  Then the equation on  $\alpha _+$ in (\ref{332}) shows that  $e^{e_0t}\alpha _+(t)$ has a limit  $A_+$  when  $t\rightarrow +\infty $. Integrating the equation  on  $\alpha _+$ between  $t$ and  $+\infty $, we get (in view of Claim \ref{C:33}) 
\begin{equation}
	A_+-e^{e_0t}\alpha _+(t)=e^{e_0t} \int _t^{+\infty } \frac{B(\mathcal{Y}_+,\varepsilon (s))}{B(\mathcal{Y}_+,\mathcal{Y}_-)}ds=O(e^{(e_0-c_1)t}).\label{33w5}
\end{equation}       
Substituting   (\ref{33w5}) into the decomposition (\ref{331}),  and using the estimates (\ref{336}) and (\ref{33x1}),  we get 
 $ \|h(t)-A_+e^{-e_0t}\mathcal{Y}_+\|_{\dot H^1}\lesssim  e^{-\frac{c_0+c_1}{2}t} $. Furthermore,  $h_1(t):=h(t)-A_+e^{-e_0t}\mathcal{Y}_+$ satisfies, as  $h$, equation (\ref{Linear Eq2}). Thus the estimate (\ref{33w1})  shown in the preceding step implies (\ref{33w2}). The proof of  Proposition \ref{P:decay of h} is complete.    
\end{proof}

\subsection{Uniqueness}
\begin{lemma}\label{L:s61}
	If  $u$ is a solution of (\ref{nls}), defined on  $[t_0,+\infty )$, satisfies   $E(u)=E(W)$ and  
	\begin{equation}
		\|u(t)-W\|_{\dot H^{1}}\lesssim  e^{-ct}, \qquad \forall t\ge t_0,\label{810x1}
	\end{equation}
	for some constant  $c>0$. 
	Then  there exists a unique  $a\in \mathbb{R}$ such that  $u=W^a$, where  $W^a$ is constructed  in  Proposition \ref{P:approximate}.  
\end{lemma}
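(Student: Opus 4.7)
The plan is to follow the strategy of \cite{2008GFA}: identify \(a\) from the leading-order expansion of \(u-W\) along the unstable mode, then show that \(h:=u-W^a\) must decay faster than any exponential, which forces \(h\equiv 0\) by local well-posedness.

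Set \(v:=u-W\); by (\ref{Linear Eq}) it satisfies \(\partial_t v+\mathcal{L}v=-R(v)\). Combining the assumed pointwise decay \(\|v(t)\|_{\dot H^1}\lesssim e^{-ct}\) with Lemma \ref{L:small solution} (to upgrade to Strichartz norms) and Lemma \ref{nestimate} together with \(\alpha\ge 1\), one obtains
\[
\|R(v)(t)\|_{L^{\frac{2d}{d+2},2}}+\|\nabla R(v)\|_{N(t,\infty)}\lesssim e^{-2ct},
\]
so the hypotheses of Proposition \ref{P:decay of h} hold with \((c_0,c_1)=(c,2c)\). I would then iterate: whenever \(e_0\notin[c_0,c_1)\), the decay of \(v\) improves from \(e^{-c_0 t}\) to \(e^{-(c_1-\eta)t}\), and the argument repeats with the new, better rate; the first step at which \(e_0\in[c_0,c_1)\) occurs, the proposition produces a unique \(A_+\in\R\) such that
\[
\|v(t)-A_+ e^{-e_0 t}\mathcal{Y}_+\|_{\dot H^1}\lesssim e^{-(c_1-\eta)t}.
\]
Define \(a:=A_+\) (or \(a:=0\) if extraction never occurs, in which case \(v\) already decays faster than every exponential). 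Using the approximation (\ref{88w4}) for \(W^a\), the difference \(h:=u-W^a\) satisfies \(\|h(t)\|_{\dot H^1}\lesssim e^{-c_\ast t}\) with \(c_\ast>e_0\).

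Now comes the main bootstrap. Subtracting the equations satisfied by \(u\) and \(W^a\), and writing \(v^a:=W^a-W\), we obtain \(\partial_t h+\mathcal{L}h=R(v^a)-R(v)\). Since \(\|v\|_{Z(t,\infty)}+\|v^a\|_{Z(t,\infty)}\lesssim e^{-e_0 t}\) and \(\|h\|_{Z(t,\infty)}\lesssim e^{-c_\ast t}\) by Lemma \ref{L:small solution}, Lemma \ref{nestimate} gives
\[
\|(R(v)-R(v^a))(t)\|_{L^{\frac{2d}{d+2},2}}+\|\nabla(R(v)-R(v^a))\|_{N(t,\infty)}\lesssim e^{-(c_\ast+e_0)t}.
\]
Because \(c_\ast>e_0\), the eigenvalue \(e_0\) lies outside \([c_\ast,c_\ast+e_0-\eta)\), and Proposition \ref{P:decay of h} upgrades the rate of \(h\) to \(c_\ast+e_0-\eta\). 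Iterating, the decay rate of \(h\) grows additively by \(e_0-\eta\) per round, so \(\|h(t)\|_{\dot H^1}\lesssim_{C} e^{-Ct}\) for \emph{every} \(C>0\). Applying Proposition \ref{P:stab} on the unit interval \([T,T+1]\) with \(W^a\) as the approximate solution, one concludes \(\|h(T)\|_{\dot H^1}\lesssim\|h(T+1)\|_{\dot H^1}\), which can be made smaller than any prescribed positive quantity by taking \(C\) large; hence \(h(T)=0\) for every large \(T\), and the uniqueness portion of Theorem \ref{T:CP} propagates this to \(u\equiv W^a\). The uniqueness of \(a\) follows from (\ref{88w4}): two choices \(a_1\neq a_2\) would yield \((a_1-a_2)e^{-e_0 t}\mathcal{Y}_+=O(e^{-\frac{3}{2}e_0 t})\) in \(\dot H^1\), which is impossible.

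The hardest part will be carrying out the iterative bootstrap cleanly in the Lorentz--Strichartz framework rather than merely in \(\dot H^1\): Proposition \ref{P:decay of h} demands bounds on the source term in both \(L^{\frac{2d}{d+2},2}\) and \(\nabla N(t,\infty)\), and the non-smoothness of \(|x|^{-b}|u|^\alpha u\) at the origin combined with the low regularity of the nonlinearity (only \(C^{1,\alpha-1}\)) means that each application of Lemma \ref{nestimate} must be paired with Lemma \ref{L:small solution} to convert pointwise \(\dot H^1\) decay to decay in \(Z(t,\infty)\). One must also track the cumulative \(\eta\)-losses across infinitely many iterations of stage three so that the additive gain of \(e_0-\eta\) per step continues to hold uniformly, which is what ultimately forces the super-exponential decay.
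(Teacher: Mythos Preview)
Your two-stage plan (first extract $a$ via Proposition \ref{P:decay of h}, then bootstrap $h=u-W^a$ to super-exponential decay using the equation $\partial_t h+\mathcal{L}h=R(v^a)-R(v)$) is exactly the paper's proof, and your handling of the Lorentz--Strichartz bookkeeping via Lemmas \ref{L:small solution} and \ref{nestimate} is correct.

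The only substantive issue is your last step. From $\|h(t)\|_{\dot H^1}\le K(C)e^{-Ct}$ for every $C>0$ you cannot conclude $h(T)=0$ merely from one application of backward stability: $\|h(T)\|\le M\,\|h(T+1)\|\le M\,K(C)e^{-C(T+1)}$ is useless unless you control how $K(C)$ grows with $C$, which the bootstrap does not give. You could repair this by iterating backward stability ($\|h(T)\|\le M^n\|h(T+n)\|\le M^nK(C)e^{-C(T+n)}$ and choosing $C>\log M$), but the paper closes more cleanly: once $\|u-W^a\|_{Z(t,\infty)}\le e^{-mt}$ for some $m>(k+\tfrac12)e_0$ with $k\ge k_0$, the triangle inequality and (\ref{885}) show that $u$ satisfies the characterizing bound of $W^a$ in Proposition \ref{P:approximate}, and the uniqueness clause there gives $u=W^a$ directly. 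This avoids any backward-in-time argument and any tracking of iteration constants.
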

\begin{proof} 
	Let  $v:=u-W$.  Then by (\ref{Linear Eq}) $v$ satisfies the equation 
	\begin{equation}
		\partial_{t}v+\mathcal{L}v+R(v)=0,\qquad \forall t\ge t_0. \label{361}
	\end{equation}
	\noindent\textbf{Step 1:} We show that there exists  $a\in \mathbb{R} $ such that for any $\eta>0$,
	\begin{equation}
		\|v(t)-ae^{-e_0t}\mathcal{Y}_+\|_{\dot H^1}+ \| v(t)-ae^{-e_0t}\mathcal{Y}_+\|_{Z(t,+\infty )}\le C_\eta e^{-(2-\eta)e_0t}.\label{1127w1}  
	\end{equation}   
	Indeed, we will show 
	\begin{equation}
		\|v(t)\|_{\dot H^1}\lesssim e^{-e_0t},\qquad  \|R(v(t))\|  _{L^{\frac{2d}{d+2},2}}+ \|\nabla (R(v))\| _{N(t,+\infty )}\lesssim e^{-2e_0t},\label{1127w2}
	\end{equation}
	which together with  Proposition \ref{P:decay of h} gives (\ref{1127w1}). 
	By Lemma \ref{nestimate},  it suffices to show the first estimate. 
	
	By (\ref{810x1}) and Lemma \ref{nestimate},  we have 
	\begin{equation}
		\|R(v(t))\| _{L^{\frac{2d}{d+2},2}}+ \|\nabla R(v(t))\| _{N(t,+\infty )}\lesssim e^{-2ct}.\notag
	\end{equation}
	Then  Proposition \ref{P:decay of h} gives that 
	\begin{equation}
		\|v(t)\| _{\dot H^1}\lesssim e^{-e_0t}+e^{-\frac{3}{2}ct}.\notag
	\end{equation}
	If  $\frac{3}{2}c\ge e_0$, we obtain the first inequality in (\ref{1127w2}). If not, an iteration argument gives the first inequality in (\ref{1127w2}). 
	
	\noindent\textbf{Step 2:} Let us show,  for any  $m>0$,  
	\begin{equation}
		\|u(t)-W^a\|_{\dot H^1}+ \|u-W^a\|_{Z(t,+\infty )}\le e^{-m t}.\label{1127w3}  
	\end{equation} 
	This implies that  $u=W^a$, by uniqueness in  Proposition \ref{P:approximate}.  Therefore, the proof of Lemma \ref{L:s61} reduces to show (\ref{1127w3}).
	
	We now prove (\ref{1127w3}). According to Step 1, (\ref{1127w3}) holds for  $m=\frac{3}{2}e_0$. Let us assume (\ref{1127w3}) holds for some  $m=m_1>e_0$. We will show that it holds for  $m=m_1+\frac{e_0}{2}$, which will yield (\ref{1127w3}) by iteration. 
	
	Let   $v^a(t):=W^a(t)-W$. Then by (\ref{Linear Eq}) and (\ref{361})
	\begin{equation}
		\partial_{t}(v-v^a)+\mathcal{L}(v-v^a)=-R(v)+R(v^a).\notag
	\end{equation}    
	We have assumed that (\ref{1127w3}) holds for  $m=m_1$, i.e.  
\begin{equation}
	\|v(t)-v^a(t)\|_{\dot H^1}+ \|v-v^a\|_{Z(t,+\infty )}\le e^{-m_1t}  ,\notag
\end{equation}
which together with  Lemma \ref{nestimate} implies 
	\begin{equation}
		\|R(v(t))-R(v^a(t))\| _{L^{\frac{2d}{d+2},2}}+ \|\nabla (R(v)-R(v^a))\|_{N(t,+\infty )}\lesssim  e^{-(m_1+e_0)t}.\notag 
	\end{equation} 
It then follows from Proposition  \ref{P:decay of h} that 
	\begin{equation}
		\|v(t)-v^a(t)\| _{\dot H^1}+ \|v-v^a\|_{Z(t,+\infty )}\lesssim  e^{-(m_1+\frac{3}{4}e_0)t},\notag 
	\end{equation}
	which yields (\ref{1127w3}) with  $m=m_1+\frac{e_0}{2}$. By iteration, (\ref{1127w3}) holds for any  $m>0$. 
\end{proof}

\begin{corollary}\label{cor1}
	For any  $a\neq 0$, there exists  $T_a\in \mathbb{R}$ such that 
	\begin{equation}
		\begin{cases}
			W^a(t)=W^+(t+T_a) \quad\text{if}\quad  a>0,\\
			W^a(t)=W^-(t+T_a)\quad\text{if}\quad a<0. 
		\end{cases}\label{810x5}
	\end{equation}
\end{corollary}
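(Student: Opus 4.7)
The plan is to reduce Corollary \ref{cor1} to the uniqueness statement of Lemma \ref{L:s61} via a suitable time-translation. By the time-translation invariance of (\ref{nls}), for any $T\in\mathbb{R}$ the function $v(t):=W^a(t-T)$ is again a threshold solution with $E(v)=E(W)$. Combining the asymptotic (\ref{88w4}) with the identity $ae^{-e_0(t-T)}=(ae^{e_0T})e^{-e_0 t}$, I obtain for $t$ large
\[
\|v(t)-W-(ae^{e_0T})e^{-e_0 t}\mathcal{Y}_+\|_{\dot H^1}\lesssim e^{-\frac{3}{2}e_0 t},
\]
the implicit constant absorbing the fixed factor $e^{\frac{3}{2}e_0 T}$.

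Next I choose $T_a$ so as to normalize the leading coefficient $ae^{e_0T_a}$ to $\pm 1$: for $a>0$ set $T_a:=-e_0^{-1}\ln a$, and for $a<0$ set $T_a:=-e_0^{-1}\ln|a|$. With this choice, $v(t):=W^a(t-T_a)$ satisfies $\|v(t)-W\|_{\dot H^1}\lesssim e^{-e_0 t}$ for $t$ large, which is precisely hypothesis (\ref{810x1}) of Lemma \ref{L:s61}. That lemma therefore produces a unique $a'\in\mathbb{R}$ such that $v=W^{a'}$ on their common interval of existence.

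It remains to identify $a'$. Step~1 in the proof of Lemma \ref{L:s61} pins down the parameter in $W^{a'}$ precisely as the coefficient $A_+$ of $e^{-e_0 t}\mathcal{Y}_+$ furnished by Proposition \ref{P:decay of h} when applied to $v-W$. Our normalization of $T_a$ gives $A_+=\mathrm{sgn}(a)$, hence $v=W^{+1}=W^+$ if $a>0$ and $v=W^{-1}=W^-$ if $a<0$. Unwinding $v(t)=W^a(t-T_a)$ yields $W^a(s)=W^{\pm}(s+T_a)$, which is exactly (\ref{810x5}).

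The only point requiring any care is the identification $a'=\mathrm{sgn}(a)$: although intuitively clear, it rests on tracking through Step~1 of Lemma \ref{L:s61} to confirm that the constant it outputs is indeed the $\mathcal{Y}_+$-coefficient of the first nontrivial term in the asymptotic expansion. Once this is verified, the corollary is a bookkeeping consequence of the uniqueness already established, reflecting the intuitive fact that in the family $\{W^a\}_{a\in\mathbb{R}\setminus\{0\}}$ the parameter $a$ is only a time-shift away from $\pm 1$.
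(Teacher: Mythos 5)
Your proposal is correct and follows essentially the same route as the paper: normalize the leading coefficient of the $e^{-e_0 t}\mathcal{Y}_+$ term by a time translation (the sign convention of the shift differs but is harmless, and your unwinding lands exactly on the claimed form), invoke Lemma \ref{L:s61} to identify the translated solution with some $W^{a'}$, and then match the asymptotic expansion (\ref{88w4}) to pin down $a'=\mathrm{sgn}(a)$. The paper phrases the last identification via the uniqueness in Proposition \ref{P:approximate}, while you trace it through Step~1 of Lemma \ref{L:s61}, but these amount to the same comparison of the first-order coefficients.
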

\begin{proof}
	Let  $a>0$ and then choose  $T_a>0$ such that  $ae^{-e_0T_a}=1$. By (\ref{88w4}), for large  $t$,  
	\begin{equation}
		\|W^a(t+T_a)-W-e^{-e_0t}\mathcal{Y}_+\|_{\dot H^{1}}\lesssim e^{-\frac{3}{2}e_0t},\label{810x4}
	\end{equation}
	which implies  $\|W^a(t+T_a)-W\|_{\dot H^{1}}\lesssim e^{-e_0t}$.  By  Lemma \ref{L:s61},  there exists  $a'\in \mathbb{R} $ such that  $W^a(t+T_a)=W^{a'}(t)$.  Substituting it  into (\ref{810x4}), using (\ref{88w4}) and the uniqueness result  of  $W^a$ in  Proposition \ref{P:approximate},    we see that   $a'=1$. Hence  (\ref{810x5}) holds when  $a>0$.  The proof for  $a<0$ is similar and we omit the details. 
\end{proof}

\subsection{Proof of the classification result.}
Let us turn to the proof of Theorem \ref{T2}. Point  (b) is an immediate consequence of the variational characterization of  $W$ (Proposition 
\ref{P:GN}). 

Let us show (a). Let  $u$ be a solution of (\ref{nls})  satisfying (\ref{871}), and  $I=(T_-,T_+)$ be its maximal interval of existence.     If  $ \|u\|_{S(T_-,T_+)} <\infty $, then  $u$ exists globally and  scatters in  both time directions by Theorem \ref{T:CP}.   Assume now that  $\|u\|_{S(T_-,T_+)}=\infty $. Replacing if necessary  $u(t)$ by  $\overline{u}(t)$, we may assume that  $\|u\|_{S(0,T_+ )}=\infty $. By Proposition  \ref{p2},  $T_+=+\infty $ and    there exist  $\theta _0\in \mathbb{R}$,  $\mu_0>0$ and  $c>0$ such that  $\|u(t)-W_{[\theta _0,\mu_0]}\|_{\dot H^{1}}\lesssim e^{-ct}$. It then follows from Lemma \ref{L:s61} that there exists  $a<0$ such that  $u_{[-\theta _0,\mu_0 ^{-1}]}=W^a$. Thus by Corollary \ref{cor1}, 
\begin{equation}
	u(t)=W^-_{[\theta _0,\mu_0]}(t+T_a)\notag
\end{equation}
for some  $T_a\in \mathbb{R} $,  which shows (a). 

The proof of (c) is similar. Let    $u$ be  a solution of (\ref{nls}) defined on  $I$  such that  $E(u_0)=E(W)$,  $\|u_0\|_{\dot H^{1}}>\|\nabla W\|_{\dot H^{1}}$ and  $u_0\in L^2$ is radial. Assume that  $|I|$ is infinity. Without loss of generality, we may assume that  $u$ is defined on      $[0,+\infty  ) $. Then by Proposition  \ref{p3},  $\|u(t)-W_{[\theta _0,\mu_0]}\|_{\dot H^{1}}\lesssim e^{-ct}$.   Using Lemma \ref{L:s61} and the same argument as before, we have  
\begin{equation}
	u(t)=W^+_{[\theta _0,\mu_0]}(t+T_a)\notag
\end{equation}
for some  $T_a\in \mathbb{R}$, which shows (c).  The proof of Theorem \ref{T2} is complete.

\appendix

\section{Asymptotic behavior of  $G(r)$.}\label{App:1}
\begin{lemma}
	Let  $W$ be the ground state in (\ref{W}) and  $G(x)=G(|x|)\in \dot H^1_{\text{rad}}(\mathbb{R} ^d)$ solving 
	\begin{equation}
		G''+\frac{d-1}{r}G'-\frac{d-1}{r^2}G+\frac{\alpha +1}{r^b}W^\alpha G=0.\label{ODE:G}
	\end{equation}  
	Then 
	\begin{equation}
		\begin{cases}
			\text{As }r\rightarrow0^+,G(r)=O(r),\ G'(r)=O(1),\\
			\text{As }r\rightarrow\infty ,G(r)=O(r^{-(d-1)}),\ G'(r)=O(r^{-d}), 
		\end{cases}\notag
	\end{equation}  
\end{lemma}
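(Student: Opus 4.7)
The plan is to reduce the second-order ODE (\ref{ODE:G}) to a Schr\"odinger-type form without first-order term, analyze the two linearly independent solutions at the singular points $r=0$ and $r=\infty$ by comparing with the explicit Euler equation arising as the leading-order approximation, and then use the $\dot H^1$ integrability of $G$ to eliminate the ``bad'' solution at each endpoint.

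First I would substitute $u(r) := r^{(d-1)/2} G(r)$, which eliminates the first-derivative term and transforms (\ref{ODE:G}) into
\begin{equation*}
u''(r) = \left[\frac{d^2-1}{4r^2} - \frac{\alpha +1}{r^b}W^\alpha \right] u(r).
\end{equation*}
Near $r=0$, since $W(0)=1$ is smooth and $b<2$, the $r^{-2}$ term is dominant, so this is a regular-singular perturbation of the Euler equation $u''=\frac{d^2-1}{4r^2}u$, whose indicial exponents are $\tfrac{1+d}{2}$ and $\tfrac{1-d}{2}$. A standard Picard iteration in a weighted $L^\infty$ space around the Euler solutions $r^{(1\pm d)/2}$ produces two linearly independent solutions $u_\pm(r)$ of the full equation obeying $u_\pm(r)\sim r^{(1\pm d)/2}$ and $u_\pm'(r) \sim \tfrac{1\pm d}{2}r^{(-1\pm d)/2}$ as $r\to 0^+$. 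Translating back through $G = r^{-(d-1)/2}u$, any solution takes the form $G(r)= A\,r\,(1+o(1))+ B\,r^{1-d}(1+o(1))$ with $G'$ behaving accordingly. The hypothesis $G\in \dot H_{\text{rad}}^1(\mathbb{R}^d)$, i.e.\ $\int_0^1|G'(r)|^2 r^{d-1}\,dr<\infty$, forces $B=0$ (since the $B$-branch gives $|G'|^2 r^{d-1}\sim r^{-d-1}$, which is not integrable near $0$), and hence $G(r)=O(r)$ and $G'(r)=O(1)$ as $r\to 0^+$.

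At infinity, the same reduction applies: using $W(r)\sim C r^{-(d-2)}$ we get $\tfrac{\alpha+1}{r^b}W^\alpha \sim r^{-(4-b)}$, which decays strictly faster than $r^{-2}$ as $b<2$, so once again the Euler term dominates and the perturbation is integrable at infinity in the sense needed for Picard iteration around $r^{(1\pm d)/2}$. This yields two independent solutions at $\infty$ with $u\sim r^{(1+d)/2}$ (so $G\sim r$) and $u\sim r^{(1-d)/2}$ (so $G\sim r^{-(d-1)}$), and analogous leading-order behaviour for $u'$ and hence for $G'$. The $\dot H^1$ condition $\int^\infty|G'(r)|^2 r^{d-1}\,dr<\infty$ rules out the growing branch, leaving $G(r)=O(r^{-(d-1)})$ and $G'(r)=O(r^{-d})$ as $r\to\infty$. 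The main technical obstacle is setting up the perturbative construction so that it yields sharp asymptotics not only for $u_\pm$ but also for $u_\pm'$ at both endpoints; this requires tracking the differentiated integral equations in the Picard scheme and using $W\in C^\infty$ away from $0$ together with $b<2$ to ensure the perturbative kernels are integrable against the Euler fundamental pair on both $(0,1)$ and $(1,\infty)$.
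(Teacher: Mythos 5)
Your argument is correct and reaches the same conclusion as the paper, but by a somewhat different route, so a brief comparison is worthwhile. The paper works directly with the second--order form (\ref{ODE:G}), observes that $r=0$ is a regular--singular point with indicial exponents $1$ and $-(d-1)$, writes down the two Frobenius solutions $G_1(r)=r\sum a_n r^n$ and $G_2(r)=CG_1\ln r+r^{-(d-1)}\sum b_n r^n$, and discards $G_2$ via $\dot H^1$; it then handles $r\to\infty$ by the inversion $s=1/r$, which brings the equation to the same shape near $s=0$. You instead pass to the Liouville normal form $u=r^{(d-1)/2}G$, identify the dominant Euler operator $u''=\tfrac{d^2-1}{4r^2}u$ with indices $\tfrac{1\pm d}{2}$ at both endpoints, and build the two solutions by Picard iteration on the Volterra equation about the Euler fundamental pair; the $\dot H^1$ integrability of $G'$ then kills the branch $G\sim r^{-(d-1)}$ near $0$ and the branch $G\sim r$ near $\infty$. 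The two routes are conceptually parallel -- both are perturbation-at-a-regular-singular-point arguments with $\dot H^1$ used to select the admissible branch -- but your reduction has a concrete advantage worth noting: the coefficient $\tfrac{\alpha+1}{r^b}W^\alpha$ is \emph{not} analytic at $r=0$ (it behaves like $(\alpha+1)r^{-b}$ with $b$ noninteger, and $W$ itself involves $|x|^{2-b}$), so the paper's pure integer power-series ansatz is not literally justified, while the Volterra/Picard scheme only requires that the perturbation be $o(r^{-2})$ and locally integrable against the Euler pair, which is exactly what $0<b<2$ and $W(r)=O(\langle r\rangle^{-(d-2)})$ give you. In return, the paper's Frobenius presentation is shorter and postpones the inversion bookkeeping; your version has to track the differentiated Volterra kernels to get the claimed bounds on $G'$ as well as $G$, which you correctly flag as the main technical point that must be carried through. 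The numerology checks out on both ends: $u\sim r^{(1+d)/2}$ gives $G\sim r$ and $u\sim r^{(1-d)/2}$ gives $G\sim r^{-(d-1)}$; near $0$ the discarded branch yields $|G'|^2 r^{d-1}\sim r^{-d-1}\notin L^1(0,1)$, and near $\infty$ the discarded branch yields $|G'|^2 r^{d-1}\sim r^{d-1}\notin L^1(1,\infty)$; and the potential term $\tfrac{\alpha+1}{r^b}W^\alpha\sim r^{-(4-b)}$ at infinity is indeed $o(r^{-2})$ precisely because $b<2$.
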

\begin{proof}
	It is easy to see  $0$  is the regular-singular point of  the  ODE (\ref{ODE:G}); therefore there must exist two linear independent solutions in the form of a power series: 
	\begin{equation}
		G_1(r)=r\sum_{n=0}^{\infty }a_nr^n,\qquad a_0=1;\notag
	\end{equation}
	\begin{equation}
		G_2(r)=CG_1(r)\ln r+r^{-(d-1)}\sum_{n=0}^{\infty } b_nr^n,\qquad b_0=1.\notag
	\end{equation}
	General solutions to (\ref{ODE:G}) near  $0$  are
	\begin{equation}
		c_1G_1(s)+c_2G_2(s).\notag
	\end{equation}
	Since  $G(x) = G(|x|)\in \dot H^1_{\text{rad}}(\mathbb{R} ^d) $, clearly it must hold that  $c_2=0$  and
	thus we obtain the desired asymptotics of $G$ near  $0$.

	For the asymptotic behavior near infinity, we can reduce the issue into a similar situation by introducing the change of variable  $s=r^{-1}$. Equation (\ref{ODE:G}) in variable  $s$ is 
	\begin{equation}
		G_{ss}-\frac{d-3}{s}G_s-\frac{d-1}{s^2}G+(\alpha +1)s^{b-4}W^\alpha G=0.\notag
	\end{equation}  
	From a similar analysis, it has two linear independent solutions near  $s=0$.  Going back to  $r$  variable and using  $G(r)\in \dot H^1_{\text{rad}}(\mathbb{R} ^d)$ , we are able to select the right 
	asymptotics 
	\begin{equation}
		G(r)=O(r^{-(d-1)})\notag
	\end{equation}
	as $r\rightarrow \infty$.   The lemma is proved.
\end{proof}

\section{Proof of modulation results.}\label{App:2}
\begin{proof}[\textbf{Proof of Lemma \ref{L:modulation}.}]
	We first prove (\ref{89w1}).  	By  $v(t)=\beta  (t)W+\widetilde{u}(t)$ and  $\widetilde{u}\bot W$ in  $\dot H^1$ , we have  
	\begin{equation}
		\|v\|_{\dot H^{1}}^2=\beta  ^2\|W\|_{\dot H^{1}}^2+\|\widetilde{u}\|_{\dot H^{1}}^2.\label{89w4}
	\end{equation}
	Denote by   $\widetilde{u}_1$ and  $\widetilde{u}_2$ the real and
	imaginary parts of  $\widetilde{u}$. By the orthogonality of  $\widetilde{u}_1$ and  $\widetilde{u}_2$ with  $W$ in  $\dot H^{1}$
	and the equation  (\ref{Eq:W}),  we have
	\begin{equation}
		\int \nabla W\cdot \nabla \widetilde{u}_1=\int \nabla  W\cdot\nabla \widetilde{u}_2=\int |x|^{-b}W^{\alpha +1}\widetilde{u}_1=\int |x|^{-b}W^{\alpha +1}\widetilde{u}_2=0.\notag
	\end{equation}
	Hence $B(W,\widetilde{u})=0$ and  
	\begin{equation}
		Q(v)=Q(\widetilde{u}+\beta  W)=Q(\widetilde{u})+\beta ^2Q(W).\label{2212}
	\end{equation}
	From the scaling invariance of the energy, (\ref{89w3}) and (\ref{2211}), we have 
	\begin{equation}
		E(W)=E(u_{[\theta,\mu]})=E(W+v)=E(W)+Q(v)+O(\left\|v\right\|_{\dot H^1}^3).\label{2213}
	\end{equation}
	  As  $Q(W)<0$ by  (\ref{QW}), it follows from (\ref{2212}) and (\ref{2213}) that     
	  \begin{equation}
	  	|\beta  ^2|Q(W)|-Q(\widetilde{u})|=Q(v)\lesssim  \|v\|_{\dot H^{1}}^3. \notag
	  \end{equation}
	   This inequality together with  the coercivity property
	of  $Q$ in  Proposition \ref{H} implies that 
	\begin{equation}
		\|\widetilde{u}\|^2_{\dot H^{1}}\lesssim \|v\|_{\dot H^{1}}^3+\beta  ^2 \quad\text{and}\quad  \beta  ^2\lesssim  \|\widetilde{u}\|_{\dot H^{1}}^2+\|v\|_{\dot H^{1}}^3.\label{89w5}
	\end{equation}
Since $\|v\|_{\dot H^{1}}$ is small when  $ \mathbf{d} (u)$ is small by the variational characterization of  $W$, it follows from   (\ref{89w4}) and (\ref{89w5}) that 
	\begin{equation}
		|\beta  |\approx \|v\|_{\dot H^{1}}\approx \|\widetilde{u}\|_{\dot H^{1}},\notag
	\end{equation}
	for small  $ \mathbf{d} (u)$.
	This is the first part of (\ref{89w1}).  It remains to show the estimate on  $ \mathbf{d} (u)$.
	Developing the equation  $\left|\|W+v\|_{\dot H^{1}}^2-\|W\|_{\dot H^{1}}^2\right| =\mathbf{d} (u) $, we get
	\begin{equation}
		\left|\|v\|_{\dot H^{1}}^2+2(v,W)_{\dot H^{1}}\right|=\left|\|v\|_{\dot H^{1}}^2+2\beta\left\|W\right\|_{\dot H^1}^2\right| = \mathbf{d} (u),\notag
	\end{equation}
	which yields  $|\beta  (t)|\approx  \mathbf{d} (u(t))$.  Estimates (\ref{89w1}) are proved. 
	
	We now prove (\ref{89w2}).  Let us consider the self-similar variables  $y$ and  $s$ defined by
	\begin{equation}
		\mu(t)y=x,\qquad ds=\mu^2(t)dt.\notag
	\end{equation}
	Then (\ref{nls}) may be rewritten  as (for simplicity we drop the  $t$ dependence in  $\theta ,\mu$):
	\begin{eqnarray}
		i\partial_{s}u_{[\theta ,\mu]}+\Delta _yu_{[\theta ,\mu]}+|y|^{-b}|u_{[\theta ,\mu]}|^\alpha u_{[\theta ,\mu]}+\theta _su_{[\theta ,\mu]}+i\frac{\mu_s}{\mu}(\frac{d-2}{2}u_{[\theta ,\mu]}+y\cdot \nabla u_{[\theta ,\mu]})=0.\notag
	\end{eqnarray}
	Inserting  $u_{[\theta ,\mu]}=W+v$,  we get 
	\begin{eqnarray}
	&&	\partial_{s}v-i\Delta v-i(\alpha +1)|y|^{-b}W^\alpha v_1+|y|^{-b}W^\alpha v_2+R(v)\notag\\
	&&\qquad -i\theta _s(W+v)+\frac{\mu_s}{\mu}W_1+\frac{\mu_s}{\mu}(y\cdot\nabla v+\frac{d-2}{2}v)=0.\notag
	\end{eqnarray}
	Writing  $v=\beta  (s)W+\widetilde{u}$,  		we obtain the equation for  $\widetilde{u}=\widetilde{u}_1+i\widetilde{u}_2$:   
	\begin{eqnarray}
		&&\partial_{s}\widetilde{u}_1+i\partial_{s}\widetilde{u}_2+\beta _sW+(\Delta +|y|^{-b}W^\alpha )\widetilde{u}_2-i(\Delta +(\alpha +1)|y|^{-b}W^\alpha )\widetilde{u}_1-\alpha i\beta (s) |y|^{-b}W^{\alpha +1}\notag\\
		&&\quad -\theta _siW+\frac{\mu_s}{\mu}W_1=-R(v)+\theta _siv-\frac{\mu_s}{\mu}(\frac{d-2}{2}v+y\cdot \nabla v).\label{8102} 
	\end{eqnarray}
	\begin{claim}\label{C:21}
		 The  $\dot H^{1}$-scalar products
		of the right-hand term by  $W,\ iW$ and  $W_1$ are bounded up to a constant
		by  $\mathcal{E} (s)$, where  $\mathcal{E} (s)$ is defined by 
		\begin{equation}
			\mathcal{E} (s):=| \mathbf{d} (u(s))|(| \mathbf{d} (u(s))|+|\theta _s(s)|+|\frac{\mu_s}{\mu}(s)|).\notag
		\end{equation}
		\end{claim}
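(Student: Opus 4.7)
The plan is to estimate separately the $\dot H^1$--pairings of the three terms on the right-hand side of (8102), namely $-R(v)$, $\theta_s iv$, and $-(\mu_s/\mu)\Lambda v$ with $\Lambda:=\frac{d-2}{2}+y\cdot\nabla_y$, against each of the three fixed vectors $W$, $iW$, $W_1$. The guiding principles will be: (i) $R$ is quadratic (or higher) in $v$ near zero, so any of its pairings with a fixed smooth vector produces an $O(\|v\|_{\dot H^1}^2)=O(\mathbf{d}(u)^2)$ contribution; (ii) the modulation orthogonality $\widetilde u\perp_{\dot H^1}\{W,iW,W_1\}$ makes the linear pairings collapse to terms of order $|\beta|\approx\mathbf{d}(u)$; (iii) $\Lambda$ is skew-symmetric on $\dot H^1$, so that $(\Lambda v,\phi)_{\dot H^1}=-(v,\Lambda\phi)_{\dot H^1}$, turning the awkward second-order quantity $\Lambda v$ into a quantity where the orthogonality trick applies.

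For the nonlinear piece I would use the representation $R(v)=-i|x|^{-b}W^{\alpha+1}J(W^{-1}v)$ from (211) together with the two-sided bound $|J(z)|\lesssim|z|^2+|z|^{\alpha+1}$ from (1281), which gives the pointwise estimate $|R(v)|\lesssim|x|^{-b}[W^{\alpha-1}|v|^2+|v|^{\alpha+1}]$ already exploited in the proof of Lemma \ref{nestimate}. Integrating by parts, each pairing $(R(v),\phi)_{\dot H^1}$ with $\phi\in\{W,iW,W_1\}$ becomes $\textrm{Re}\int R(v)\overline{(-\Delta\phi)}$. Using the equation $-\Delta W=|x|^{-b}W^{\alpha+1}$ for $\phi=W$ (and $\phi=iW$), and computing $-\Delta W_1$ by differentiating (\ref{Eq:W}) along the scaling generator, all three contributions reduce, via the Lorentz H\"older inequality used in the proof of Lemma \ref{nestimate}, to $\|R(v)\|_{L^{2d/(d+2),2}}\cdot\||x|^{-b}W^{\alpha+1}\|_{L^{2d/(d-2),2}}\lesssim\|v\|_{\dot H^1}^2\approx\mathbf{d}(u)^2$, which falls within $\mathcal E(s)$.

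For the linear pieces, the phase term $\theta_s iv$ against $\phi\in\{W,iW,W_1\}$ produces $\theta_s(iv,\phi)_{\dot H^1}$; expanding $v=\beta W+\widetilde u$ and using the orthogonalities of $\widetilde u$ together with the real-Hilbert-space identities $(iW,W)_{\dot H^1}=(iW,W_1)_{\dot H^1}=0$ leaves only terms proportional to $\beta$ plus at most the real-imaginary cross-term $\int\nabla\widetilde u_2\cdot\nabla\phi$, each of which is bounded by $(|\beta|+\|\widetilde u\|_{\dot H^1})\|\phi\|_{\dot H^1}\lesssim\mathbf{d}(u)$, so the total is $\lesssim|\theta_s|\mathbf{d}(u)$. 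The scaling piece is handled identically: differentiating the scale-invariance of the $\dot H^1$ norm yields $(\Lambda v,\phi)_{\dot H^1}=-(v,\Lambda\phi)_{\dot H^1}$, while $\Lambda W=W_1$ by definition (\ref{E:W_1}), $\Lambda(iW)=iW_1$, and $\Lambda W_1$ is a fixed smooth function involving $W$ and its first derivatives; so for example $(\Lambda v,W)_{\dot H^1}=-(v,W_1)_{\dot H^1}=-\beta(W,W_1)_{\dot H^1}$, and the other two pairings are estimated in the same manner, each giving $\lesssim|\mu_s/\mu|\mathbf{d}(u)$. Summing all contributions yields the claimed $\mathcal E(s)$ bound.

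The main obstacle will be justifying that the integrations by parts and the Lorentz H\"older estimates are indeed uniformly controlled despite the simultaneous singularity of $|x|^{-b}$ and the algebraic decay of $W$ and $W_1$; this is exactly where the dimension restriction $3\le d\le 5$ together with the range $b<-\tfrac{(d-4)^2}{2}+1$ enters, since it guarantees $\alpha\ge 1$ (so that $J\in C^2$ and the quadratic bound $|J(z)|\lesssim|z|^2$ is available) and also $|x|^{-b}W^{\alpha+1}\in L^{2d/(d-2),2}$, which is precisely the integrability needed to close the estimate of $(R(v),\phi)_{\dot H^1}$.
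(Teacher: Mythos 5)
Your proposal is correct and tracks the same overall strategy as the paper: split the right-hand side of (\ref{8102}) into its three summands, bound the pairing of $R(v)$ by $\mathbf{d}(u)^2$, and bound the two modulation pieces by $|\theta_s|\mathbf{d}(u)$ and $|\mu_s/\mu|\mathbf{d}(u)$ respectively. The differences are technical but worth recording. For the nonlinear piece, the paper bounds $(R(v),W)_{\dot H^1}$ by first plugging in the pointwise bound $|R(v)|\lesssim |y|^{-b}(W^{\alpha-1}|v|^2+|v|^{\alpha+1})$, then inserting the explicit decay $W=O(\langle y\rangle^{-(d-2)})$, and finally applying ordinary H\"older with specific power weights; you instead integrate by parts once against $\Delta W=-|x|^{-b}W^{\alpha+1}$ and pair with the Lorentz dual, $|\int R(v)\,|x|^{-b}W^{\alpha+1}|\le\|R(v)\|_{L^{2d/(d+2),2}}\||x|^{-b}W^{\alpha+1}\|_{L^{2d/(d-2),2}}$, then invoke the already-proved bound $\|R(v)\|_{L^{2d/(d+2),2}}\lesssim\|v\|_{\dot H^1}^2$ from Lemma \ref{nestimate}. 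That reuse makes the argument modular and avoids rederiving the weighted integrals; the price is that you must check $\||x|^{-b}W^{\alpha+1}\|_{L^{2d/(d-2),2}}<\infty$, which forces $b<\tfrac{d-2}{2}$ at the origin --- exactly the constraint the paper also encounters (it invokes ``$0<b<\min\{1,\tfrac{d-2}{2}\}$'' in its own estimate), and this is consistent with the standing assumption $b<-\tfrac{(d-4)^2}{2}+1$ for $3\le d\le 5$. For the modulation pieces the paper writes only ``the others can be handled similarly,'' while you supply the missing detail: the scaling generator $\Lambda=\tfrac{d-2}{2}+y\cdot\nabla_y$ is skew-adjoint for the $\dot H^1$ inner product (since dilation is a unitary group on $\dot H^1$), so $(\Lambda v,\phi)_{\dot H^1}=-(v,\Lambda\phi)_{\dot H^1}$ with $\Lambda\phi$ an explicit $\dot H^1$ element for $\phi\in\{W,iW,W_1\}$, which transfers the unbounded operator to the fixed test function and yields the needed $O(\mathbf{d}(u))$ bound through the modulation orthogonality (or even just Cauchy--Schwarz). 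This is a correct and arguably necessary observation (a naive bound $|(\Lambda v,\phi)_{\dot H^1}|\le\|\Lambda v\|_{\dot H^1}\|\phi\|_{\dot H^1}$ fails because $\Lambda v\notin\dot H^1$ in general), so your write-up is actually more complete than the paper's on this point. The only cosmetic inefficiency is that for the phase term a direct $|(iv,\phi)_{\dot H^1}|\le\|v\|_{\dot H^1}\|\phi\|_{\dot H^1}\lesssim\mathbf{d}(u)$ already suffices without invoking orthogonality, but what you wrote is of course also valid.
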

		\begin{proof}[Proof of Claim \ref{C:21}.]
			We only show how to bound the  $\dot H^1$-scalar products of  $R(v)$ by  $W$, since the others can be handed similarly.   Note that by  (\ref{211}) and  (\ref{1281}), 
			\begin{equation}
				|R(v)|\lesssim  |y|^{-b}(W^{\alpha -1}|v|^2+|v|^{\alpha +1});\notag
			\end{equation}
			so that by the equation  (\ref{Eq:W}),  $0<b< \min \left\{ 1,\frac{d-2}{2} \right\}$ and the estimate (\ref{89w1}) that proved before 
 		\begin{eqnarray}
				&&|(R(v),W)_{\dot H^{1}}|=|(R(v),\Delta W)_{L^2}|  
				\lesssim  \int _{\mathbb{R} ^d}|y|^{-2b}(W^{\alpha -1}|v|^2+|v|^{\alpha +1})W^{\alpha +1}dy\notag\\
				&\lesssim &  \int _{\mathbb{R}^d} |y|^{-2b}{\langle y \rangle }^{-(8-4b)}|v|^2+|y|^{-2b}\langle y \rangle^{-(d+2-2b)}|v|^{\alpha +1}dy \notag\\
				&\lesssim&  \||y|^{-2b}{\langle y \rangle}^{-(8-4b)}\|_{L_y^{\frac{d}{2} }}\|v\|_{L^{\frac{2d}{d-2}}}^2+\||y|^{-2b}  \|_{L_y^{\frac{2d}{d-2+2b}}}\|v\|^{\alpha +1}_{L^{\frac{2d}{d-2}}}  \notag\\
				&\lesssim&    \|v(s)\|_{\dot H^1}^2+\|v(s)\|_{\dot H^1}^{\alpha +1} \lesssim   \mathbf{d} (u(s))^2+\mathbf{d} (u(s))^{\alpha+1} \lesssim \mathbf{d} (u(s))^2.\notag
		\end{eqnarray}
		\end{proof}

	Taking the inner product between (\ref{8102})   and  $W,iW$ and  $W_1$   in  $\dot H^{1}$, we obtain 
	\begin{equation}
		\beta _s\|W\|_{\dot H^{1}}^2=-(\Delta \widetilde{u}_2,W)_{\dot H^{1}}-(|y|^{-b}W^\alpha \widetilde{u}_2,W)_{\dot H^{1}}+O(\mathcal{E} (s))\label{814q1}
	\end{equation}
\begin{align}
		-\theta _s\|W\|_{\dot H^{1}}^2=(\Delta &\widetilde{u}_1,W)_{\dot H^{1}}+((\alpha +1)|y|^{-b}W^\alpha \widetilde{u}_1,W)_{\dot H^{1}}\notag\\
	&-\alpha \beta  (s)(|y|^{-b}W^{\alpha +1},W)_{\dot H^{1}}+O(\mathcal{E} (s))\notag
\end{align}
	\begin{equation}
		\frac{\mu _s}{\mu}\|W_1\|_{\dot H^{1}}^2=-(\Delta \widetilde{u}_2,W_1)_{\dot H^{1}}-(|y|^{-b}W^\alpha \widetilde{u}_2,W_1)_{\dot H^{1}}+O(\mathcal{E} (s)).\label{814q3}
	\end{equation}
	\begin{claim}\label{C:221}
		We have the following bounds: 
		\begin{equation}
			|(\Delta \widetilde{u},W)_{\dot H^1}|+|(|y|^{-b}W^\alpha \widetilde{u},W)_{\dot H^1}|\lesssim  \mathbf{d} (u(s)).\notag
		\end{equation}
		The same set of estimates also hold when  $W$ is replaced by  $W_1$.    
		\end{claim}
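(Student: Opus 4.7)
My plan is to convert each of the four inner products into an explicit weighted integral, using the identity $(f,g)_{\dot H^1} = -(f, \Delta g)_{L^2}$ together with the elliptic equations satisfied by $W$ and $W_1$. For $W$, this is simply the ground-state equation $-\Delta W = |y|^{-b}W^{\alpha+1}$ from \eqref{Eq:W}. For $W_1$, differentiating the scaling-invariant family $W_\lambda(y) := \lambda^{-(d-2)/2}W(y/\lambda)$ at $\lambda=1$ in the identity $-\Delta W_\lambda = |y|^{-b}W_\lambda^{\alpha+1}$ yields $-\Delta W_1 = (\alpha+1)|y|^{-b}W^\alpha W_1$, i.e.\ the fact $L_+ W_1 = 0$. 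Once each pairing is rewritten as a weighted integral, Cauchy--Schwarz or H\"older reduces the task to verifying integrability of explicit singular weights.

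For the first pairing, one integration by parts gives
\begin{equation}
(\Delta\widetilde u, W)_{\dot H^1} = \int \Delta\widetilde u\cdot |y|^{-b}W^{\alpha+1}\,dy = -\int \nabla\widetilde u\cdot \nabla(|y|^{-b}W^{\alpha+1})\,dy,\notag
\end{equation}
and Cauchy--Schwarz then bounds this by $\|\nabla\widetilde u\|_{L^2}\cdot \|\nabla(|y|^{-b}W^{\alpha+1})\|_{L^2}$. The pointwise estimate $|\nabla W|\lesssim |y|^{-1}W$ yields $|\nabla(|y|^{-b}W^{\alpha+1})|\lesssim |y|^{-b-1}W^{\alpha+1}$, which has rapid polynomial decay at infinity (from $W^{\alpha+1}\sim |y|^{-(d+2-2b)}$) and a $|y|^{-b-1}$ singularity at the origin whose $L^2$-integrability is controlled by the restriction on $b$. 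Together with $\|\nabla\widetilde u\|_{L^2}\approx \mathbf{d}(u(s))$ from \eqref{89w1}, this gives the first bound. The second pairing reduces even more directly:
\begin{equation}
(|y|^{-b}W^\alpha\widetilde u, W)_{\dot H^1} = \int |y|^{-2b}W^{2\alpha+1}\widetilde u\,dy,\notag
\end{equation}
which by H\"older in Lorentz spaces and the Sobolev embedding $\widetilde u\in L^{2d/(d-2),2}$ is dominated by $\|\nabla\widetilde u\|_{L^2}\cdot \||y|^{-2b}W^{2\alpha+1}\|_{L^{2d/(d+2),2}}$.

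The $W_1$-analogues go through by replacing $-\Delta W$ with $-\Delta W_1 = (\alpha+1)|y|^{-b}W^\alpha W_1$ in the preceding arguments. Since $W_1$ is bounded at the origin (indeed $W_1(0) = (d-2)/2$) and decays at infinity like $|y|^{-(d-2)}$, the arising weights have the same singular type near $0$ and the same decay at infinity as in the $W$-case, and the estimates are identical.

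The sole obstacle is verifying the near-origin integrability of the weights $|y|^{-b-1}W^{\alpha+1}\in L^2$ and $|y|^{-2b}W^{2\alpha+1}\in L^{2d/(d+2),2}$; the restriction $0<b<-(d-4)^2/2+1$ is precisely the condition that makes both hold (together with the finer Lorentz-norm refinements forced by the Sobolev embedding for $\widetilde u$) uniformly across the three dimensions $d\in\{3,4,5\}$.
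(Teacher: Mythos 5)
Your proposal is correct and follows essentially the same route as the paper: integrate by parts using $-\Delta W = |y|^{-b}W^{\alpha+1}$ and $L_+W_1 = 0$ to rewrite each $\dot H^1$-pairing as a weighted integral, then apply Cauchy--Schwarz for the first pairing and Lorentz-space H\"older with the Sobolev embedding for the second, exactly as in the paper's proof. One small quibble: your remark that $0<b<-\frac{(d-4)^2}{2}+1$ is ``precisely'' the condition making the weights integrable is an overstatement — the near-origin integrability needed here is $b<\min\{1,\frac{d-2}{2}\}$, which coincides with the stated restriction for $d=3,4$ but is strictly weaker for $d=5$ (where $\frac{3}{2}>\frac{1}{2}$); the binding constraint for $d=5$ actually comes from elsewhere in the modulation analysis.
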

		\begin{proof}[Proof of Claim \ref{C:221}.]
		Since    $0<b<\min \left\{ 1,\frac{d-2}{2} \right\}$,  it follows from  the  equation (\ref{Eq:W}), H\"older's inequality and (\ref{89w1}) that 
			\begin{eqnarray}
				&&|(\Delta \widetilde{u},W)_{\dot H^1}|=|(\nabla \widetilde{u},\nabla (|y|^{-b}W^{\alpha +1}))_{L^2}|\notag\\
				&\lesssim & \|\nabla \widetilde{u}\|_{L^2} \||y|^{-b-1} {\langle y \rangle}^{-(d+2-2b)}\|_{L^2_y}\lesssim   \|\widetilde{u}\|_{\dot H^1}  \lesssim   \mathbf{d} (u(s)),\notag
			\end{eqnarray}
			and 
			\begin{eqnarray}
				&& |(|y|^{-b}W^\alpha \widetilde{u},W)_{\dot H^1}|=|(|y|^{-b}W^\alpha \widetilde{u},|y|^{-b}W^{\alpha +1})_{L^2}|\notag\\
				&\lesssim & \|\widetilde{u}\|_{L^{\frac{2d}{d-2}}}  \||y|^{-2b} {\langle y \rangle}^{-(d+6-4b)}\|_{L^{\frac{2d}{d+2}}_y}  \lesssim  \|\nabla \widetilde{u}\|_{L^2}  \lesssim  \mathbf{d} (u(s)).\notag
			\end{eqnarray}
			Finally, as  $-\Delta W_1=(\alpha +1)W^\alpha W_1$, $W_1$ is bounded as  $|y|\rightarrow0$ and    decays faster than  $W$ as  $|y|\rightarrow\infty $, we have the same set of estimates when  $W$ is replaced by  $W_1$.      
		\end{proof}
	Consequently all the right-hand terms in the equations (\ref{814q1})--(\ref{814q3}) are bounded up to a constant by  $ \mathbf{d} (u(s))+\mathcal{E}(s)$. Taking  $\delta _0$  sufficiently small, we have   
	\begin{equation}
		|\beta  _s(s)|+|\theta _s(s)|+|\frac{\mu_s}{\mu}(s)|\lesssim  | \mathbf{d} (u(s))|.\notag
	\end{equation}
	Changing back to  $t$ variable we proved (\ref{89w2}).  
\end{proof}
\section{Spectral properties of the linearized operator.}\label{App:3}
In this Appendix, we follow the arguments in \cite{Campos-Murphy,2008GFA,DuyRouden:NLS:ThresholdSolution,2009JFA} to give the proof of  Lemma \ref{l:eigen}. 

\vspace{0.2cm}

\noindent \textbf{(a) Existence and symmetry  of the eigenfunctions.} Note that  $\overline{\mathcal{L} (v)}=-\mathcal{L} (\overline{v})$, so that if  $e_0>0$ is an eigenvalue for  $\mathcal{L} $ with eigenfunction  $\mathcal{Y}_+$,  $-e_0$ is an eigenvalue of  $\mathcal{L} $ with eigenfunction  $\overline{\mathcal{Y}}_+$. Let us show the
existence of  $\mathcal{Y}_+$. Writing  $\mathcal{Y}_1=\text{Re} \mathcal{Y}_+,\ \mathcal{Y}_2=\text{Im} \mathcal{Y}_+$, we must solve 
\begin{equation}
	\begin{cases}
		(\Delta +(\alpha +1)V)\mathcal{Y}_1=-e_0\mathcal{Y}_2,\\
		(\Delta +V)\mathcal{Y}_2=e_0\mathcal{Y}_1,
	\end{cases}\label{892}
\end{equation}
where  $V:=|x|^{-b}W^\alpha $.   
The operator  $-\Delta -V$ on  $L^2$ with domain  $H^2$ is self-adjoint and nonnegative, thus it has a unique square root  $(-\Delta -V)^{1/2}$ with domain  $H^1$.      
Assume that there exists  $f\in H^4$ such that 
\begin{equation}
	Pf=-e_0^2 f,\quad\text{where}\quad P:=(-\Delta -V)^{1/2}(-\Delta -(\alpha +1)V)(-\Delta -V)^{1/2}\notag
\end{equation} 
Then taking 
\begin{equation}
	\mathcal{Y}_1:=(-\Delta -V)^{1/2}f,\qquad \mathcal{Y}_2:=\frac{1}{e_0} (-\Delta -(\alpha +1)V)(-\Delta -V)^{1/2}f\notag
\end{equation}
would yield a solution of system (\ref{892}), showing the existence of  $\mathcal{Y}_+$ and  $\mathcal{Y}_-$. 

It suffices to show that the operator  $P$ on  $L^2$ with domain  $H^4$ has a strictly negative eigenvalue.    Note that  $0<b<\min \left\{ \frac{d}{4},d-2 \right\}$ by the assumptions made at the begining of section \ref{s:2}, it is straightforward to check that 
\begin{equation}
	P=(\Delta +V)^2-\alpha (-\Delta -V)^{1/2}V(-\Delta -V)^{1/2}\notag
\end{equation}
is a relatively compact, self-adjoint perturbation of  $\Delta ^2$; so that its essential spectrum is  $[0,+\infty )$. Thus the proof  reduces to show the following:   
\begin{equation}
	\exists f\in H^4 \quad \text{such that}\quad ((\Delta +(\alpha +1)V)(-\Delta -V)^{1/2}f,(-\Delta -V)^{1/2}f)_{L^2}>0.\label{893}
\end{equation}

We distinguish two cases.  First assume that  $d=3,4$,   so that  $W\in L^2(\mathbb{R} ^d)$.   We will use the localization method to prove (\ref{893}).  Let $W_a(x):=\chi\big( x/a \big) W(x)$, where $\chi$ is a smooth, radial function such that
$\chi(r)=1$ for $r\leq 1$ and $\chi(r)=0$ for $r\geq 2.$
We first claim
\begin{equation}
	\exists a>0\quad\text{such that }\quad  E_a:=\int (\Delta+(\alpha +1)V) W_a W_a >0.\label{894}
\end{equation}
Recall that $\Delta W=-|x|^{-b}W^{\alpha +1}$. Thus
\begin{equation}
	(\Delta+(\alpha +1)V)W_a=\alpha |x|^{-b}\chi\big( x/a \big) W^{\alpha +1} +\frac 2a (\nabla \chi) \big( x/a \big) \cdot \nabla W+\frac{1}{a^2} (\Delta \chi)\big(x/a \big) W.\notag
\end{equation}
Hence
\begin{align}
	&	\int (\Delta +(\alpha +1)V)W_a W_a\notag\\
	&=\alpha \int|x|^{-b}\chi^2(x/a) W^{\alpha +2}+\underbrace{\frac{2}{a} \int(\chi \nabla \chi) \big( x/a \big) \cdot \nabla W\, W}_{(A)}+\underbrace{\frac{1}{a^2}\int (\chi\Delta \chi)\big(x/a \big) W^2}_{(B)}.\notag
\end{align}
According to the explicit expression of  $W$,  $W\lesssim  |x|^{-(d-2)}$ and  $|\nabla W|\lesssim |x|^{-(d-1)}$ at infinity, which gives  $|(A)|+|(B)|\lesssim \frac{1}{a}$. Hence (\ref{894}).

Let us fix $a$ such that (\ref{894}) holds.
Recall that $W$ is not in $L^2$. Thus $\Delta+V$ is a selfadjoint operator on $L^2$, with domain $H^2$, and without eigenfunction.  In particular $\overline{R(\Delta+V)}=\text{Ker}\left\{\Delta +V\right\} ^{\bot}=L^2$. Hence for any  $\varepsilon >0$, we can find  $G_\varepsilon \in H^2$ such that 
\begin{equation}
	\|(\Delta +V)G_\varepsilon -(\Delta +V-1)W_a\|_{L^2}\le \varepsilon,\label{213w2}
\end{equation}
which together with  $ \|(\Delta +V-1)^{-1}\|_{L^2\rightarrow H^2}<+\infty $ implies
\begin{equation}
	\|(\Delta +V-1)^{-1}(\Delta +V)G_\varepsilon -W_a\|_{H^2}\lesssim \varepsilon .\label{895}
\end{equation}
Substituting (\ref{895}) into (\ref{894}), we obtain, for small  $\varepsilon >0$
\begin{equation}
	((\Delta +(\alpha +1)V)(\Delta +V-1)^{-1}(\Delta +V)G_\varepsilon,(\Delta +V-1)^{-1}(\Delta +V)G_\varepsilon)>0.\label{896}
\end{equation}
Since  $(\Delta +V-1)^{-1}(-\Delta -V)^{1/2}G_\varepsilon \in H^3$, there exists  $f\in H^4$ such that 
\begin{equation}
	 \|(\Delta +V-1)^{-1}(-\Delta -V)^{1/2}G_\varepsilon +f\|_{H^3}<\varepsilon ,\notag 
\end{equation}
which implies 
\begin{equation}
	\|(\Delta +V-1)^{-1}(\Delta +V)G_\varepsilon -(-\Delta -V)^{1/2}f\|_{H^2}\lesssim \varepsilon .\label{897}
\end{equation}
Substituting (\ref{897}) into (\ref{896}) and choosing  $\varepsilon >0$ sufficient small, we obtain (\ref{893}).  

Assume now that  $d=5$, so that  $W$  is in  $L^2(\mathbb{R} ^d)$.   In this case  $\text{R}(\Delta +V)^\bot =\text{Ker}(\Delta +V)=\text{Span}\left\{W\right\}$, and thus 
\begin{equation}
	\overline{\text{R}(\Delta +V)}=\{f\in L^2:(f,W)_{L^2}=0\}.\label{213w1}
\end{equation} 
Furthermore,   $\Delta +(\alpha +1)V$  is a self-adjoint compact perturbation of  $\Delta $ and 
\begin{equation}
((\Delta +(\alpha +1)V)W,W)_{L^2}=\alpha \int VW^2dx >0,\notag	
\end{equation} 
 which shows that  $\Delta +(\alpha +1)V$ has a positive eigenvalue.  Let  $Z$  be the eigenfunction for this eigenvalue. Recalling that   $(\Delta +(\alpha +1)V)W_1=0$  we get, for any real number   $\gamma  $ 
\begin{equation}
	\int _{\mathbb{R} ^d}(\Delta +(\alpha +1)V)(Z+\gamma W_1)(Z+\gamma W_1)=\int _{\mathbb{R} ^d}(\Delta +(\alpha +1)V)ZZ>0.\notag
\end{equation}
By explicit calculation,  $(W_1,W)_{L^2}\neq0$,  so that we can choose the real
number  $\gamma$  to have  $(Z+\gamma W_1,W)_{L^2}=0$. Hence by  $(\Delta +V)W=0$,
\begin{eqnarray}
	\left((\Delta +V-1)(Z+\gamma W_1),W\right)_{L^2}&=& \left(Z+\gamma W_1,(\Delta +V-1)W\right)_{L^2}\notag\\
	&=& -(Z+\gamma W_1,W)_{L^2}=0.\notag
\end{eqnarray}
By (\ref{213w1}),  we can choose, for any   $\varepsilon >0$  a function  $G_\varepsilon $  in   $H^2$  such that 
\begin{equation}
	 \|(\Delta +V)G_\varepsilon -(\Delta +V-1)(Z+\gamma W_1)\|_{L^2}<\varepsilon ,\notag 
\end{equation}
which is similar to (\ref{213w2}). 
As in the preceding case, we can find  $f\in H^4$  such that (\ref{893}) holds.  
 This completes the proof of  (a).

\vspace{0.5cm}

\noindent \textbf{(b) Decay of the eigenfunctions at infinty.}
Recall that the eigenfunctions  $\mathcal{Y}_+$ and  $\mathcal{Y}_-$ are complex conjugates. According to system (\ref{892}),   it suffices to show the decay result on  $\mathcal{Y}_1$ only. We first show the following property holds for all  $k$ and  $s$ 
\begin{equation}
	\forall \varphi \in C_0^\infty (\mathbb{R}^d\setminus\left\{0\right\} ),\ \forall R\ge1 ,\ \|\varphi (x/R)\mathcal{Y}_1\|_{H^s}\lesssim \frac{1}{R^k}.\  (\mathcal{P}_{k,s})\notag
\end{equation}
Recall that  $\mathcal{Y}_1=\sqrt {-\Delta -V}f_1$ with  $f_1\in H^4$, so that  $(\mathcal{P}_{0,3})$ is satisfied. We now show that for  $k\ge0, s\ge3$,  $(\mathcal{P}_{k,s})$ implies  $(\mathcal{P}_{k+1,s+1})$. Assume  $(\mathcal{P}_{k,s})$ and consider  $\varphi $ and  $\widetilde{\varphi }$ in  $C_0^\infty (\mathbb{R}^d\setminus\left\{0\right\} )$ such that  $\widetilde{\varphi }$ is  $1$ on the support of  $\varphi $.  Applying  $\Delta +V$ to the first equation of (\ref{892}) and combining the second equation, we obtain  
\begin{equation}
	(\Delta ^2+e_0^2)\mathcal{Y}_1=-V\Delta \mathcal{Y}_1-(\alpha +1)V^2\mathcal{Y}_1-(\alpha +1)\Delta (V\mathcal{Y}_1).\label{8151}
\end{equation}
By the explicit form of  $W$,  $V$ and all its derivatives decay at least as  $1/|x|^{4-b}$ at infinity. Thus (\ref{8151}) implies  $\|\varphi (x/R)(\Delta ^2+e_0^2)\mathcal{Y}_1\|_{H^{s-3}}\lesssim \frac{1}{R^{4-b}}\|\widetilde{\varphi }(x/R)\mathcal{Y}_1\|_{H^s}$. Hence 
\begin{equation}
	\|(\Delta ^2+e_0^2)(\varphi (\frac{x}{R})\mathcal{Y}_1)\|_{H^{s-3}}\lesssim  \frac{1}{R}\|\widetilde{\varphi }(x/R)\mathcal{Y}_1\|_{H^s}.\label{8152}
\end{equation}
By  $(\mathcal{P}_{k,s})$, the right-hand side of (\ref{8152}) is bounded by  $C/R^{k+1}$ for large  $R$. Furthermore,  $\Delta ^2+e_0^2$ is an isomorphism from  $H^{s+1}$ to  $H^{s-3}$, so that (\ref{8152}) implies  $\|\varphi (x/R)\mathcal{Y}_1\|_{H^{s+1}}\lesssim 1/R^{k+1}$, which yields exactly  $(\mathcal{P}_{k+1,s+1})$.  The proof of (\ref{decay1}) is complete. 

With the decay (\ref{decay1}) at infinity, and recalling  $H^2(\mathbb{R} ^d)\hookrightarrow L^2_{\text{loc}}(\mathbb{R} ^d)$, we have  $\mathcal{Y}_\pm\in L^{\frac{2d}{d+2},2}(\mathbb{R} ^d)$. After differentiating (\ref{892}), we obtain  $\mathcal{Y}_\pm \in W^3L^{\frac{2d}{d+2},2}(\mathbb{R} ^d)$, as desired.     

Finally, we prove that  $\mathcal{Y}_\pm \in L^\infty (\mathbb{R} ^d)$. 
In fact, by Lemma \ref{L:leibnitz} and (\ref{221w1}), we have 
\begin{eqnarray}
	 &&\||x|^{-b}W^\alpha \mathcal{Y}_\pm \|_{H^{\frac{1}{2}+\varepsilon }}\notag\\
	 &\lesssim &   \||x|^{-b}W^\alpha \|_{W^{\frac{1}{2}+\varepsilon }L^{\frac{d}{2-\varepsilon },2}} \|\mathcal{Y}_\pm\|_{L^{\frac{2d}{d-4+2\varepsilon },2}}+ \||x|^{-b}W^\alpha \|_{L^{d,2}} \| \mathcal{Y}_\pm\| _{W^{\frac{1}{2}+\varepsilon }L^{\frac{2d}{d-2},2}}\notag\\
	 &\lesssim &   \||x|^{-b}W^\alpha \|_{W^{\frac{1}{2}+2\varepsilon }L^{\frac{d}{2},2}}\|\mathcal{Y}_\pm\|_{H^2} +  \||x|^{-b}W^\alpha \|_{WL^{\frac{d}{2},2}}   \|\mathcal{Y}_\pm\|_{H^{\frac{3}{2}+\varepsilon }} \lesssim    \|\mathcal{Y}_\pm\|_{H^2}<+\infty ,\notag 
\end{eqnarray}
for  $\varepsilon >0$ sufficiently small.  This inequality together with the equation (\ref{892}) and the embedding  $H^{\frac{5}{2}+\varepsilon }(\mathbb{R} ^d)\hookrightarrow L^\infty (\mathbb{R} ^d)$ shows $\mathcal{Y}_\pm \in L^\infty (\mathbb{R} ^d)$.  

\vspace{0.5cm}

\noindent \textbf{(c) and (d).}  The proof of (c) is similar to (b), and (d) follows from  \cite[Corollory 5.3]{2008GFA}, so we omit the details.

\section*{Acknowledgements}
K. Yang was partially supported by   the Jiangsu Shuang Chuang Doctoral Plan and the Jiangsu Provincial Scientific Research Center of Applied Mathematics under Grant BK20233002.
T. Zhang was
partially supported by  NSFC of
China under Grants    11931010 and the Zhejiang Provincial Natural Science Foundation of China under Grant No. LDQ23A010001.

\end{document}